\newcommand{\Z}{\ensuremath{\mathbb{Z}}\xspace}
\newcommand{\Q}{\ensuremath{\mathbb{Q}}\xspace}
\newcommand{\R}{\ensuremath{\mathbb{R}}\xspace}
\newcommand{\A}{\ensuremath{\mathbb{A}}\xspace}
\newcommand{\F}{\ensuremath{\mathbb{F}}\xspace}
\newcommand{\Qp}{\ensuremath{\mathbb{Q}_{p}}\xspace}
\newcommand{\Zp}{\ensuremath{\mathbb{Z}_{p}}\xspace}
\newcommand{\Fp}{\ensuremath{\mathbb{F}_{p}}\xspace}
\newcommand{\D}{\mathcal{D}}
\newcommand{\De}{\Delta}
\newcommand{\m}{\ensuremath{\mathfrak{m}}\xspace}
\newcommand{\n}{\ensuremath{\mathfrak{n}}\xspace}
\newcommand{\nbar}{\ensuremath{\overline{\mathfrak{n}}}\xspace}
\newcommand{\OO}{\ensuremath{\mathcal{O}}\xspace}
\newcommand{\U}{\ensuremath{\mathcal{U}}\xspace}
\newcommand{\comment}[1]{}
\DeclareMathOperator{\End}{End}
\DeclareMathOperator{\Hom}{Hom}
\DeclareMathOperator{\Spec}{Spec}
\DeclareMathOperator{\Spf}{Spf}
\DeclareMathOperator{\Spa}{Spa}
\DeclareMathOperator{\Tor}{Tor}
\DeclareMathOperator{\Fil}{Fil}
\DeclareMathOperator{\Ker}{Ker}
\DeclareMathOperator{\Res}{Res}
\DeclareMathOperator{\rad}{rad}
\newcommand{\T}{\ensuremath{\mathbb{T}}\xspace}
\newcommand{\GL}{\ensuremath{\mathrm{GL}}\xspace}
\newcommand{\mbf}{\mathbf}
\newcommand{\mb}{\mathbb}
\newcommand{\mc}{\mathcal}
\newcommand{\ms}{\mathscr}
\newcommand{\mf}{\mathfrak}
\newcommand{\vp}{\varpi}
\newcommand{\ra}{\rightarrow}
\newcommand{\ctens}{\widehat{\otimes}}
\newcommand{\sub}{\subseteq}
\newcommand{\oo}{\mathcal{O}}
\newcommand{\ol}{\overline}
\newcommand{\bu}{\bullet}
\newcommand{\ka}{\kappa}
\newcommand{\al}{\alpha}
\newcommand{\wh}{\widehat}
\newcommand{\wt}{\widetilde}
\newtheorem{thmx}{Theorem}
\newtheorem{theorem}{Theorem}[subsection]
\newtheorem{proposition}[theorem]{Proposition}
\newtheorem{corollary}[theorem]{Corollary}
\newtheorem{lemma}[theorem]{Lemma}
\newtheorem*{conjecture}{Conjecture}
\theoremstyle{definition}
\newtheorem{definition}[theorem]{Definition}
\newtheorem{question}[theorem]{Question}
\newtheorem{remark}[theorem]{Remark}
\mathchardef\mhyphen="2D
\title{Extended eigenvarieties for overconvergent cohomology}
\author{Christian Johansson and James Newton}
\address{Department of Mathematical Sciences, Chalmers University of Technology and the University of Gothenburg, Gothenburg, Sweden}
\email{chrjohv@chalmers.se}
\address{Department of Mathematics, King's College London, Strand, London WC2R 
2LS, UK}
\email{j.newton@kcl.ac.uk}
\subjclass[2010]{11F33,11F80}
\keywords{$p$-adic automorphic forms, eigenvarieties}
\begin{document}
\maketitle
\paragraph{\textbf{Abstract}} Recently, Andreatta, Iovita and Pilloni have 
constructed 
spaces of overconvergent modular forms in characteristic $p$, together with a 
natural extension of the Coleman--Mazur eigencurve over a compactified (adic) 
weight space. Similar ideas have also been used by Liu, Wan and Xiao to study 
the boundary of the eigencurve. This all goes back to an idea of Coleman.
	
In this article, we construct natural extensions of eigenvarieties for arbitrary reductive groups $\mbf{G}$ over a number field which are split at all places above $p$. If $\mbf{G}$ is $\GL_2/\Q$, then we obtain a new construction of the extended eigencurve of Andreatta--Iovita--Pilloni. If $\mbf{G}$ is an inner form of $\GL_2$ associated to a definite quaternion algebra, our work gives a new perspective on some of the results of Liu--Wan--Xiao.
	
We build our extended eigenvarieties following Hansen's construction using overconvergent cohomology. One key ingredient is a definition of locally analytic distribution modules which permits coefficients of characteristic $p$ (and mixed characteristic). When $\mbf{G}$ is $\GL_{n}$ over a totally real or CM number field, we also construct a family of Galois representations over the reduced extended eigenvariety.

\emph{This is an updated version of the paper, including a correction to Lemma \ref{norm} which was incorrect in the published version}. 

\counterwithin{equation}{subsection}
\section{Introduction}

\subsection{The Halo conjecture}
The \emph{eigencurve}, introduced by Coleman and Mazur \cite{cm}, is a rigid 
analytic curve $\ms{E}^{rig}$ over $\Qp$ which parametrizes systems of Hecke 
eigenvalues of finite slope overconvergent modular forms. It comes equipped 
with a morphism $\ms{E}^{rig} \ra \mc{W}_{0}^{rig}$, called the weight map, 
whose target is known as \emph{weight space}. $\mc{W}_{0}^{rig}$ parametrizes 
continuous characters $\ka : \Zp^{\times} \ra \ol{\Q}_{p}^\times$ and is a 
disjoint union of a finite number of open unit discs. There is also a morphism 
$\ms{E}^{rig} \ra \mb{G}_{m}^{rig}$ which sends a system of Hecke eigenvalues 
to the $U_{p}$-eigenvalue; the $p$-adic valuation of the $U_{p}$-eigenvalue is 
known as the \emph{slope}. The geometry of $\ms{E}^{rig}$ encodes a wealth of 
information about congruences between finite slope overconvergent modular 
forms, and it is therefore not surprising that its study remains a difficult 
topic. In particular, we know very little about the global geometry of 
$\ms{E}^{rig}$ (for example, it is not known whether the number of irreducible 
components of $\ms{E}^{rig}$ is finite or not).

\medskip

Let $q=p$ if $p\neq 2$ and $q=4$ if $p=2$. The components of $\mc{W}_{0}^{rig}$ are parametrized by the characters $(\Z/q\Z)^{\times} \ra (\Z/q\Z)^{\times}$, and if we define $X:=X_{\ka}=\ka(\exp(q))-1$,  $X$ defines a parameter on each component. Very little is known about the global geometry of $\ms{E}^{rig}$ over the centre $|X|\leq q^{-1}$ and it seems likely to be rather complicated. Near the boundary, however, the situation turns out to be rather simple. Coleman and Mazur raised the question of whether the slope tends to zero as one moves along a component of $\ms{E}^{rig}$ towards the boundary of $\mc{W}_{0}^{rig}$. Buzzard and Kilford \cite{bk} investigated this question for $p=2$ (and tame level $1$) and proved a striking structure theorem: $\ms{E}^{rig}|_{\{ |X| >1/8 \}}$ is a disjoint union of connected components $(E_{i})_{i=0}^{\infty}$, and the weight map $E_{i} \ra \{ |X| >1/8 \}$ is an isomorphism for every $i$. Moreover, the slope of a point on $E_{i}$ with parameter $X$ is $i.v_{2}(X)$, where $v_{p}$ is the $p$-adic valuation (normalized so that $v_{p}(p)=1$). Out of this came a folklore conjecture; the following version is essentially \cite[Conjecture 1.2]{lwx}:

\begin{conjecture}
For $r\in (0,1)$ sufficiently close to $1$, $\ms{E}^{rig}|_{\{|X|>r\}}$ is a 
disjoint union of connected components $(E_{i})_{i=0}^{\infty}$ such that each 
$E_{i}$ is finite over $\{ |X| > r \}$. Moreover, there exists constants 
$\lambda_{i}\in \R_{\geq 0}$ for $i=0,1,\ldots$, strictly increasing and 
tending to infinity, such that if $x$ is a point on $E_{i}$ with weight 
parameter $X$, then the slope of $x$ is $\lambda_{i}v_{p}(X)$.  The sequence 
$(\lambda_{i})_{i=0}^{\infty}$ is a finite union of arithmetic progressions, 
after perhaps removing a finite number of terms.
\end{conjecture}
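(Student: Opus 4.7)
The plan is to study the Fredholm determinant (characteristic power series) $P(X,T) = \det(1 - T \cdot U_p)$ of the $U_p$ operator acting on a suitable Banach module $M$ of overconvergent modular forms over a neighbourhood of the boundary of $\mc{W}_0^{rig}$, and to read off the component structure near the boundary from the Newton polygon of $P(X,T)$ as a power series in $T$ with coefficients in the ring of analytic functions of $X$. Concretely, I would first choose a family of overconvergent distributions/forms parametrized by $X$ and produce an explicit orthonormal (or Banach) basis $\{e_j\}_{j\geq 0}$ of $M$ such that the matrix entries $u_{ij}(X)$ of $U_p$ are rigid analytic in $X$, and then bound $v_p(u_{ij}(X))$ in terms of $i,j$ and $v_p(X)$ near the boundary $|X|\to 1$.

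Given such bounds, the next step is to compute or estimate the Newton polygon of $P(X,T)$ viewed as a power series in $T$ over $\Q_p[[X]][1/X]$ localized near $|X|=1$. I expect that the $v_X$-adic valuations (equivalently, the $v_p$-valuations of coefficients evaluated at $X$ with $|X|$ close to $1$) of the coefficients $c_n(X)$ will satisfy $v_p(c_n(X)) = \big(\sum_{i<n}\lambda_i\big) v_p(X) + O(1)$ for an explicit strictly increasing sequence $\lambda_i$ determined by the combinatorics of the matrix of $U_p$ — this is the core numerical input. Once this is established, standard Newton polygon / Weierstrass preparation arguments over the annulus $\{r < |X| < 1\}$ factor $P(X,T)$ into a product of slope-$\lambda_i v_p(X)$ pieces, and each piece yields a connected component $E_i$ finite and flat over $\{|X|>r\}$, with the weight map having the claimed slope behaviour.

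The final step is the claim that $(\lambda_i)$ is eventually a finite union of arithmetic progressions. I would deduce this from a structural/periodic feature of the matrix of $U_p$ on the chosen basis: the dominant $X$-valuation contributions to $u_{ij}$ should come from a small number of ``diagonals'' that reflect the combinatorics of multiplication by $U_p$ on an Iwasawa-algebra-like model of the space of overconvergent forms (for instance, Mahler expansions of $p$-adic distributions on $\Z_p$, where $U_p$ acts with a transparent integral structure). From such a model the Newton polygon of the limit power series $P(0,T)$ (or rather its appropriate ``boundary specialization'') is piecewise linear with slopes that form a union of arithmetic progressions, and a continuity/perturbation argument transfers this to $P(X,T)$ for $|X|$ close to $1$.

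The main obstacle I anticipate is the explicit estimate in step one: one needs a model for overconvergent forms in which the $U_p$-matrix entries are simultaneously (a) analytic in the weight parameter $X$ across the whole boundary annulus and (b) controllable enough modulo suitable powers of $X$ and $p$ to pin down the Newton polygon exactly, not just up to bounded error. Achieving both at once is delicate because near the boundary the usual integral models of overconvergent forms degenerate, and one must instead work with the characteristic-$p$ / mixed-characteristic distribution modules of the sort constructed in the body of this paper; the hardest technical input is then the sharp lower bounds on $v_p(u_{ij}(X))$ that force the slopes to scale exactly linearly in $v_p(X)$ rather than merely being bounded by such a quantity.
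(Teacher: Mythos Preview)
The statement you are trying to prove is a \emph{conjecture} in the paper, not a theorem: it is the Halo conjecture, presented in the introduction as a folklore statement (essentially \cite[Conjecture~1.2]{lwx}) that remains open for the Coleman--Mazur eigencurve in general. The paper does not contain a proof of it, and indeed explains that only partial cases are known: Buzzard--Kilford for $p=2$ and tame level $1$, and Liu--Wan--Xiao for eigencurves of definite quaternion algebras (which, via Chenevier's $p$-adic Jacquet--Langlands, settles the conjecture for those components of the Coleman--Mazur eigencurve of generically Steinberg or supercuspidal type at some auxiliary prime). So there is no ``paper's own proof'' to compare your proposal against.

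Your outline is a reasonable description of the strategy that \emph{does} succeed in the definite quaternion case (and which the paper reinterprets in \S6.2--6.3 via the modules $\mc{D}^{1/p}_\kappa$): find an explicit ON-able model, bound the matrix entries of $U_p$, and read off the Newton polygon. The obstacle you anticipate at the end is exactly the reason the conjecture is open for $\GL_2/\Q$: the estimate one obtains from the paper's methods (Lemma~6.2.1 / \cite[Theorem~3.14]{lwx}) gives only an \emph{upper bound} $|c_n|\le|\varpi|^{\lambda(n)}$ on the coefficients of the Fredholm series, not the exact equality (or sharp lower bound) needed to pin down the slopes. For definite quaternion algebras, Liu--Wan--Xiao supplement this with a separate argument using classical points and the Jacquet--Langlands correspondence to show the bound is attained; no such argument is currently available for the full Coleman--Mazur eigencurve. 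Your proposal does not supply the missing ingredient, so as written it is a sketch of a known partial approach rather than a proof.
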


We will loosely refer to this as the `Halo conjecture' (the `halo' in question 
is the (disjoint union of) annuli $\{ |X| >r\}$). Let us assume $p\neq 2$ for 
simplicity. If $\ka$ is a point of $\mc{W}_{0}^{rig}$ then $U_{p}$ acts 
compactly on the space of overconvergent modular forms $M_{\ka}^{\dagger}$. The 
Fredholm determininants $\det(1-T.U_{p}\mid M_{\ka}^{\dagger}) \in 
\ol{\Q}_{p}[[T]]$ interpolate to an entire series 
$F=\sum_{n=0}^{\infty}a_{n}T^{n}$ with coefficients in 
$\Zp[[\Zp^{\times}]]=\oo(\mc{W}_{0}^{rig})^{\circ}$. Fix a character $\eta : 
\Zp^{\times} \ra \Fp^{\times}$ and consider the ideal $I_{\eta}=(p, [n]-\eta(n) 
\mid n=1,\ldots,p-1 )$. The quotient ring $\Zp[[\Zp^{\times}]]/I_{\eta}$ is 
isomorphic to $\Fp[[X]]$ via the map sending $[\exp(p)]$ to $1+X$. We may 
consider the reduction $\ol{F}_{\eta}$ of $F$ modulo $I_{\eta}$ and the 
character $\ol{\ka}_{\eta} : \Zp^{\times} \ra 
(\Zp[[\Zp^{\times}]]/I_{\eta})^{\times} \cong \Fp[[X]]^{\times}$. In an 
unpublished note, Coleman conjectured that there should exist an 
$\Fp((X))$-Banach space $\ol{M}_{\ol{\ka}_{\eta}}^{\dagger}$ of `overconvergent 
modular forms of weight $\ol{\ka}_{\eta}$' with a compact $U_{p}$-action such 
that $\det(1- T.U_{p} \mid \ol{M}_{\ol{\ka}_{\eta}}^{\dagger})=\ol{F}_{\eta}$, 
and promoted the idea that one should study the Halo conjecture via integral 
models of the eigencurve near the boundary of weight space.

\medskip

In \cite{aip}, Andreatta, Iovita and Pilloni proved Coleman's conjecture on the existence of $\ol{M}_{\ol{\ka}_{\eta}}^{\dagger}$ and constructed an integral model $\ms{E}^{\prime}$ of $\ms{E}^{rig}$, which lives in the category of analytic adic spaces \cite{hu2}. $\mc{W}_{0}^{rig}$ has a natural formal scheme model $\Spf \Zp[[\Zp^{\times}]]$, which may be viewed as an adic space $\mf{W}_{0}$ over the affinoid ring $(\Zp,\Zp)$. Apart from the points corresponding to the adic incarnation of $\mc{W}_{0}^{rig}$, $\mf{W}_{0}$ contains an additional $2(p-1)$ points in characteristic $p$, corresponding to the characters $\eta$ and $\ol{\ka}_{\eta}$. The latter points are analytic in Huber's sense (the former are not) and one may consider the analytic locus $\mc{W}_{0}=\mc{W}_{0}^{rig}\cup \{\ol{\ka}_{\eta} \mid \eta \}$ of $\mf{W}_{0}$ (viewing $\mc{W}_{0}^{rig}$ as an adic space). $\mc{W}_{0}$ may be viewed as a compactification of $\mc{W}_{0}^{rig}$, and Coleman's idea may be interpreted as saying that one should study the behaviour of $\ms{E}^{rig}$ near the boundary of $\mc{W}_{0}$ by extending the eigencurve to an adic space living over $\mc{W}_{0}$, turning global behaviour into local behaviour `at infinity'. At a point $\ol{\ka}_{\eta}$, one can no longer measure slopes using $p$. Instead, one has to use $X$. Noting that one can use $X$ not only at $\ol{\ka}_{\eta}$ but also `near' it, the Halo conjecture says that the $X$-adic slope is constant as one approaches $\ol{\ka}_{\eta}$. This supports the idea that an extension of $\ms{E}^{rig}$ exists, and that the $E_{i}$ in the Halo conjecture are $X$-adic Coleman families (i.e. subspaces which are finite over their images in weight space, and of constant slope). In this framework, the slopes of $U_{p}$ should be the $\lambda_{i}$, with multiplicity the degree of $X_{i}$ over $\{ |X| > r \}$. The Halo conjecture asserts, remarkably, that a Coleman family centered at a point over $\ol{\ka}_{\eta}$ extends to some locus $\{ |X| > r \}$ in the component corresponding to $\eta$, where $r$ is independent of the family (and in particular its slope). 

\medskip

In \cite{lwx}, Liu, Wan and Xiao prove the Halo conjecture for eigencurves for 
definite quaternion algebras. Here the construction of (overconvergent) 
automorphic forms is of a combinatorial nature. Those authors succeeded in 
proving the Halo conjecture by calculations on some relatively explicit ad hoc 
integral models of spaces of overconvergent automorphic forms. They construct 
one space over the whole of $\mf{W}_{0}$, with a possibly non-compact 
$U_{p}$-action, and another model over $\{|X| >p^{-1} \}$ with a compact 
$U_{p}$-action. By Chenevier's $p$-adic Jacquet--Langlands correspondence 
\cite{che2}, this proves the Halo conjecture for the components of the 
Coleman--Mazur eigencurve of (generically) Steinberg or supercuspidal type at 
some prime $q \ne p$.

\subsection{Extended eigenvarieties for overconvergent cohomology}

The main goal of this paper is to construct extensions of eigenvarieties for a 
very general class of connected reductive groups $\mbf{G}$ over $\Q$. In 
particular, we give a new construction of the extended eigencurve 
$\ms{E}^{\prime}$ appearing in \cite{aip}.  Our construction also gives a 
conceptual framework for many of the results in \cite{lwx} (and establishes a 
generalisation of some of their results which was described as an `optimistic 
expectation' in \cite[Remark 3.26(2)]{lwx}). See Theorem \ref{lwxboundary} for 
an 
interpretation of some of their results using the extended eigencurve.

Our construction of these eigenvarieties appears in Section \ref{eigenvarieties}. For the purposes of the introduction, we have the following vague statement:
\begin{thmx}\label{vagueA}
Let $F$  be a number field and let $\mbf{H}$ be a connected reductive group 
over $F$ which is split at all places above $p$. Set $\mbf{G}={\rm 
Res}_{\Q}^{F}\mbf{H}$. Then the eigenvarieties for $\mbf{G}$ constructed in 
\cite{han} naturally extend to adic spaces $\ms{X}_{\mbf{G}}$ over the extended 
weight space $\mc{W} = \Spa(\Zp[[T_{0}^{\prime}]],\Zp[[T_{0}^{\prime}]])^{an}$, 
where $T_{0}^{\prime}$ is a certain quotient of the $\Zp$-points $T_{0}$ of a 
maximal torus in a suitable model of $\mbf{G}$ over $\Zp$.
\end{thmx}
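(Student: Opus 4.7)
The plan is to follow Hansen's construction of eigenvarieties via overconvergent cohomology \cite{han}, adapted to the adic setting, with the essential new ingredient being a theory of distribution modules on the Iwahori that interpolates across the characteristic $p$ boundary of the extended weight space $\mc{W}$.

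First, I would cover $\mc{W}$ by analytic affinoid adic opens $U = \Spa(R, R^+) \subseteq \mc{W}$ and, for each such $U$ together with a radius of analyticity $h$, construct an orthonormalisable Banach $R$-module $D_U^h$ equipped with a continuous action of the Iwahori and the Atkin--Lehner monoid, specialising to Stevens-type locally analytic distributions at classical weights in characteristic $0$. Since at a characteristic $p$ weight the fibre ring contains no residue disc on which classical $p$-adic analytic function theory applies, one cannot produce $D_U^h$ by base change from a single $\Zp[[T_{0}^{\prime}]]$-module. Instead, I would build $D_U^h$ by first constructing integral models of polynomial-like functions on the Iwahori unipotent, twisting by the universal character of $T_{0}^{\prime}$ over $R$, and then $p$-adically completing; checking that this construction sheafifies over $\mc{W}$ and has the expected specialisations at all types of weight is the real content of the key ingredient referred to in the abstract, and the principal technical obstacle.

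Second, with $D_U^h$ in hand I would form the Borel--Serre cohomology $H^*_c(\Gamma, D_U^h)$ of an arithmetic subgroup $\Gamma \subset \mbf{G}(\Q)$, exactly as in \cite{han}. A central element $u_t$ of the relevant monoid (the appropriate analogue of $U_p$) factors through a smaller, more analytic submodule $D_U^{h^{\prime}} \hookrightarrow D_U^h$, and therefore acts compactly on $H^*_c(\Gamma, D_U^h)$. Compactness and independence of $h$ yield Fredholm determinants $F_U(T) = \det(1 - T u_t \mid H^*_c(\Gamma, D_U^h))$ which glue over the cover to a global entire Fredholm series on $\mc{W}$, cutting out a spectral variety $\ms{Z}_{\mbf{G}}$ flat and locally finite over $\mc{W}$.

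Finally, the extended eigenvariety $\ms{X}_{\mbf{G}}$ is obtained from $\ms{Z}_{\mbf{G}}$ by the Buzzard--Coleman gluing machine: over each slope-adapted affinoid of $\ms{Z}_{\mbf{G}}$, slope decomposition of $H^*_c(\Gamma, D_U^h)$ yields a finite projective module over $\oo(U)$ on which the Hecke algebra acts, giving a coherent sheaf of commutative Hecke algebras whose relative spectrum is $\ms{X}_{\mbf{G}}$. The only novelty at this stage is that the formal argument has to be carried out in Huber's category of analytic adic spaces rather than the rigid category, which is essentially routine. The hard part of the proof lies in the first step, namely the construction and analysis of the distribution modules $D_U^h$ across the characteristic $p$ boundary.
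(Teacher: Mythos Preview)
Your outline follows essentially the same strategy as the paper, and the identification of the distribution modules as the key technical ingredient is correct. However, there are two concrete technical points where your proposal as written would not go through.

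First, you compute the Fredholm determinant on $H^*_c(\Gamma, D_U^h)$. This does not work: the cohomology groups are not known to be potentially orthonormalisable (or even to satisfy property $(Pr)$), so there is no Fredholm theory for compact operators on them. The paper instead computes $\det(1 - T\wt{U}_{\ka,r} \mid C^{\ast}(K,\D_{\ka}^{r}))$ on the Borel--Serre \emph{chain complex}, whose terms are finite direct sums of copies of $\D_{\ka}^{r}$ and hence potentially ON-able. The slope decomposition is then performed at the level of complexes, and only afterwards does one pass to cohomology. This is precisely Hansen's innovation in \cite{han}, and the paper is careful to preserve it.

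Second, you say slope decomposition yields a finite \emph{projective} $\oo(U)$-module. At the chain level this is true (Proposition~\ref{horizontal}), but the slope-$\leq h$ cohomology is in general only finitely generated, not projective; projectivity holds in special cases (e.g.\ $\GL_2$, Lemma~\ref{proj}) but not for general $\mbf{G}$. The coherent sheaf $\ms{H}^{\ast}$ on the Fredholm hypersurface is obtained from the cohomology of a complex of projectives, and the Hecke algebra sheaf $\ms{T}$ is its image in the endomorphism sheaf --- no projectivity of $\ms{H}^{\ast}$ is needed or claimed.

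On the construction of $D_U^h$ itself: your sketch (``integral models of polynomial-like functions, twist, complete'') is in the right spirit but misses the mechanism the paper actually uses. The paper defines norms $||-||_r$ on $\D_{\ka}$ by transporting Schneider--Teitelbaum-type $r$-norms from the distribution algebra $\D(\ol{N}_1,R)$ of a uniform subgroup, and then proves these agree with the quotient norms coming from $\D(I,R)$ --- this equality is what guarantees the Iwahori acts by isometries. No appeal to ``polynomial-like functions'' or analytic sheaves on the unipotent is made; the construction is purely norm-theoretic and this is what allows it to work uniformly over Banach--Tate $\Zp$-algebras of mixed or positive characteristic.
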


The assumption that $\mbf{H}$ is split at all places above $p$ is made for 
convenience only; it makes it easy to define 
a `canonical' Iwahori subgroup. We believe that it should be relatively 
straightforward to generalise our constructions to general quasi-split $\mbf{G}$ over $\Q$ (or to the setting of \cite{loe}). The resulting theory would, however, be 
even more notationally 
cumbersome, so we have decided to stick to the simpler (but still very general) 
situation in this paper. 

\medskip

As a secondary goal, we show (Theorem \ref{gluedgalois}) that when $\mbf{G}={\rm Res}_{\Q}^{F}\GL_{n/F}$, where $F$ is a CM or totally real number field, the reduced eigenvariety that we construct carries a Galois determinant (in the language of \cite{che}) satisfying the expected compatibilities between Frobenii at unramified places and the eigenvalues of Hecke operators. This shows that, in these cases, the new systems of Hecke eigenvalues that we construct in characteristic $p$ carry arithmetic information.

\begin{thmx}\label{B}
There exists an $n$-dimensional continuous determinant $D$ of $G_{F}$ with values in $\oo^+(\ms{X}_{\mbf{G}}^{red})$ such that
$$ D(1 - X\, Frob_{v})=P_{v}(X) $$
for unramified places $v$, where $P_{v}(X)$ is the usual Hecke polynomial (\ref{Heckepoly}).
\end{thmx}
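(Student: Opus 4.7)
The plan is to construct $D$ first on the rigid locus $\ms{X}_{\mbf{G}}^{rig} := \ms{X}_{\mbf{G}} \times_{\mc{W}} \mc{W}^{rig}$ via classical interpolation, and then extend it to $\ms{X}_{\mbf{G}}^{red}$ using Zariski density of the rigid part. On $\ms{X}_{\mbf{G}}^{rig,red}$ this is essentially Hansen's construction in \cite{han}: classical cohomological points are Zariski dense, and at such points the known theorems attaching Galois representations to regular algebraic cuspidal automorphic representations of $\GL_{n}(\A_{F})$ (for $F$ CM or totally real) produce continuous semisimple $\rho_{\pi} \colon G_{F} \to \GL_{n}(\Qbar_{p})$ compatible with the unramified Hecke data. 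Chenevier's interpolation of continuous determinants (\cite{che}) then packages these into a continuous determinant $D^{rig} \colon G_{F} \to \oo(\ms{X}_{\mbf{G}}^{rig,red})$ with $D^{rig}(1-X\,\Frob_{v})=P_{v}(X)$.

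The key new step is to prove that $\ms{X}_{\mbf{G}}^{rig}$ is Zariski dense in $\ms{X}_{\mbf{G}}^{red}$, i.e.\ that the restriction $\oo^+(\ms{X}_{\mbf{G}}^{red}) \to \oo^+(\ms{X}_{\mbf{G}}^{rig,red})$ is injective. Locally on $\ms{X}_{\mbf{G}}$ one has an affinoid $U = \Spa(A,A^+)$ that is finite over an affinoid $V = \Spa(B,B^+)$ of $\mc{W}$, where $A$ is realised as a Hecke algebra acting on a slope-decomposed piece $M$ of overconvergent cohomology. Since the underlying distribution modules are $p$-torsion-free by construction, so is $M$ over $B^+$, and hence $A \hookrightarrow \End_{B^+}(M)$ is $p$-torsion-free. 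Therefore $A \to A[1/p] = \oo(U^{rig})$ is injective, and patching over an admissible cover yields global density.

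With density in hand, the extension proceeds as follows. The Hecke polynomials $P_{v}(X)$ already have coefficients in $\oo^+(\ms{X}_{\mbf{G}}^{red})$, since the unramified Hecke operators act by bounded integral endomorphisms on overconvergent cohomology. By Chebotarev the Frobenii $\Frob_{v}$ are topologically dense in $G_{F}$, so for each $g \in G_{F}$ the characteristic-polynomial coefficients $D^{rig}(1-Xg)$ arise as limits of polynomial expressions in the $P_{v}$-coefficients. Working locally and using closedness of the image of $\oo^+(\ms{X}_{\mbf{G}}^{red})$ inside $\oo^+(\ms{X}_{\mbf{G}}^{rig,red})$ in the adic topology, these limits remain in $\oo^+(\ms{X}_{\mbf{G}}^{red})$ and define, by the injectivity above, a unique continuous determinant $D \colon G_{F} \to \oo^+(\ms{X}_{\mbf{G}}^{red})$; the determinant axioms for $D$ follow from those for $D^{rig}$ by continuity. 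The main obstacle I anticipate is the integral bookkeeping around $\oo^+$-structures on the extended eigenvariety --- in particular the $p$-torsion-freeness of the distribution modules in the mixed-characteristic setting and the closedness statement needed to pass to the limit --- all of which are the natural integral analogues of standard arguments on rigid eigenvarieties.
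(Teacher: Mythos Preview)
Your proposal has a genuine gap at the density step, and this gap is not a matter of bookkeeping but a real obstruction.

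You claim that $\ms{X}_{\mbf{G}}^{rig}$ is Zariski dense in $\ms{X}_{\mbf{G}}^{red}$, arguing that the slope-$\le h$ cohomology $M$ is $p$-torsion-free because the distribution modules are. This inference fails: $M = H^{\ast}(K,\ms{D}_{\U})_{\le h}$ is the cohomology of a bounded complex of finite projective $\oo(\U)$-modules, and cohomology of $p$-torsion-free complexes can certainly acquire $p$-torsion. More conceptually, the density you need is equivalent to the assertion that no irreducible component of $\ms{X}_{\mbf{G}}^{red}$ lies entirely in $\{p=0\}$, and the paper explicitly raises this as an open question (see Section~1.4.1). So your reduction to the rigid eigenvariety cannot go through in this generality.

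The paper's argument is organised quite differently and avoids density entirely. For every maximal ideal of a local piece $\ms{X}_{\U,h}^{red}$ --- including the characteristic~$p$ ones --- one produces a Galois determinant directly, using Scholze's theorem on torsion classes. Concretely, one introduces filtrations $\Fil^{j}$ on the integral distribution modules $\D_{\ka}^{<r,\circ}$ with finite graded pieces, gets determinants into the finite Hecke algebras $\T_{\ka,j}^{r}$ (up to a bounded nilpotent ideal) from Scholze, passes to the completed product $\wh{\T}_{\ka,\Fil}^{r}$, and then constructs a continuous map $\wh{\T}_{\ka,\Fil}^{r} \to \T_{\ka,\le h}^{r,\circ}$ (Proposition~\ref{cty}). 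This yields determinants at all closed points. The global gluing is then done by proving that $\oo^{+}(\ms{X}^{red})$ is \emph{compact} (Corollary~\ref{compact2}) and applying \cite[Example~2.32]{che}. There is no appeal to classical points or to the rigid locus.
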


Our proof of Theorem \ref{B} is an adaptation of an argument due to the first author and David Hansen in the rigid setting, which will appear in a slightly refined form in \cite{ch}. It crucially uses Scholze's results on Galois determinants attached to torsion classes \cite{sch}, as well as filtrations on distribution modules constructed by Hansen in \cite{han2}.

To end our brief discussion of the results established in this paper, we 
explain one interpretation of the phrase `naturally extend' in Theorem 
\ref{vagueA}. Suppose for simplicity that $F = \Q$, $\mbf{G}(\R)$ is compact 
modulo centre, and $\mbf{G}(\Qp)$ may be identified with $\GL_n(\Qp)$. We let 
$T_0\sub \GL_n(\Qp)$ denote the diagonal matrices with entries in $\Zp$ (in 
this case $T_{0}=T_{0}^{\prime}$). Modules of overconvergent automorphic forms 
for $\mbf{G}$ (and some fixed tame level, which we suppress) were constructed 
in \cite{chegln} (see also \cite{loe}). If we denote by $U$ the Hecke operator 
corresponding to $$
\begin{pmatrix}
1 &    &  & \\
  &  p &  & \\
  &  & \ddots & \\
  &          & & p^{n-1}
\end{pmatrix}
$$ then this acts compactly on the spaces of overconvergent automorphic forms, and so for each continuous character $\kappa: T_0 \ra \ol{\Q}_p^\times$ there is a characteristic power series $F_\ka \in \ol{\Q}_p[[T]]$ given by the determinant of $1-TU$ on the space of overconvergent automorphic forms of weight $\ka$. The following Theorem is a consequence of our eigenvariety construction, together with Corollary \ref{global}.

\begin{thmx}
The characteristic power series $F_\ka$ glue together to $F_{\mc{W}}\in \oo(\mc{W})\{\{T \}\}$, an entire function on affine $1$-space over $\mc{W}$.

Suppose $\ol{\ka}: T_0 \ra \F_q((X))^\times$ is a continuous character, with $q$ a power of $p$. Then we give an interpretation of the specialisation $F_{\mc{W},\ol{\ka}}$ of $F_{\mc{W}}$ at $\ol{\ka}$ as the characteristic power series of $U$ acting on an $\F_q((X))$-Banach space of overconvergent automorphic forms of weight $\ol{\ka}$.

The Fredholm hypersurface $\ms{Z}$ cut out by $F_{\mc{W}}$ is locally quasi-finite, flat and partially proper over $\mc{W}$ and the eigenvariety $\ms{X}$ comes equipped with a finite map to $\ms{Z}$. 
\end{thmx}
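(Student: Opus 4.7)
The plan is to piece together the three assertions from the main eigenvariety construction. Throughout one works with the family of overconvergent cohomology, or overconvergent automorphic forms in the definite compact case of the statement, built in Section \ref{eigenvarieties}; the key input is the locally analytic distribution module over $\mc{W}$ that admits coefficients in characteristic $p$ and mixed characteristic.

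To produce $F_\mc{W}$, we cover $\mc{W}$ by a family of open affinoids $\mc{U}_i$ on each of which the module of overconvergent automorphic forms organises as an orthonormalisable $\oo(\mc{U}_i)$-Banach module on which the Hecke operator $U$ acts compactly. Buzzard--Coleman style Fredholm theory, adapted to this adic setting, then yields a characteristic power series $F_{\mc{U}_i} \in \oo(\mc{U}_i)\{\{T\}\}$, and these glue on overlaps by the uniqueness of Fredholm determinants under flat base change. Corollary \ref{global} packages this gluing into a single element $F_\mc{W} \in \oo(\mc{W})\{\{T\}\}$, which is the desired entire function on $\A^1 \times \mc{W}$; at a classical weight $\ka$, specialisation recovers $F_\ka$.

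For the specialisation at a characteristic-$p$ point $\ol{\ka}: T_0 \to \F_q((X))^\times$, the same mechanism applies: $\ol{\ka}$ lies in some affinoid $\mc{U}_i \sub \mc{W}$, and the fibre at $\ol{\ka}$ of the $\oo(\mc{U}_i)$-Banach module of overconvergent automorphic forms is an $\F_q((X))$-Banach space carrying a compact $U$-action, whose characteristic power series equals $F_{\mc{W},\ol{\ka}}$ by flat base change of Fredholm determinants. This is the Banach space of ``overconvergent automorphic forms of weight $\ol{\ka}$'' referred to in the statement.

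Finally, the Fredholm hypersurface $\ms{Z} = V(F_\mc{W}) \sub \A^1 \times \mc{W}$ inherits the local structure asserted in the statement, namely being locally quasi-finite, flat, and partially proper over $\mc{W}$, from the standard theory of Fredholm hypersurfaces once we verify that the factorisation lemmas carry over to the adic setting. The finite map $\ms{X} \to \ms{Z}$ is then produced by Hansen's eigenvariety machine \cite{han} applied to our family. The main obstacle throughout is that the classical Coleman--Buzzard Fredholm theory underlying this paradigm was developed over $\Qp$-affinoid algebras; the technical core of the paper is to set up the analogous theory when the weight parameters are allowed to lie in characteristic $p$, and to verify that compactness of $U$ together with Riesz-type slope decompositions continue to make sense for the new distribution modules. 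Once this foundation is in place the remainder of the argument is the Coleman--Buzzard--Hansen construction transplanted to adic spaces.
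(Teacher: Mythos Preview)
Your proposal is correct and matches the paper's approach: the theorem is a summary statement whose ingredients are exactly the ones you identify --- Corollary \ref{global} for the gluing of the $F_\kappa$, the base-change compatibility of Fredholm determinants (Proposition \ref{3.1.2}(2)) for the specialisation at $\ol{\ka}$, the adic Fredholm-hypersurface theory of \cite[Th\'eor\`eme B.1]{aip} for the structural properties of $\ms{Z}$, and the eigenvariety construction of \S\ref{eigenvarieties} for the finite map $\ms{X}\to\ms{Z}$. Two small remarks: the base-change of Fredholm determinants used here does not require flatness (any bounded homomorphism with compatible multiplicative pseudo-uniformizer suffices), and the modules in play are only \emph{potentially} orthonormalisable, which is all the Fredholm machinery needs.
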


\subsection{Outline of the construction}
The eigenvarieties that we extend are those constructed using overconvergent cohomology (sometimes also referred to as overconvergent modular symbols). Overconvergent cohomology was developed by Ash and Stevens \cite{ste,as}, and the eigenvarieties were constructed by Hansen \cite{han}. Let us recall their construction in the special case of the Coleman--Mazur eigencurve and $p\neq 2$. Let $R$ be an affinoid $\Qp$-algebra in the sense of rigid analytic geometry and let $\ka : \Zp^{\times} \ra R^{\times}$ be a continuous homomorphism. It is well known that $\ka$ is locally analytic, and in particular analytic on the cosets of $1+p^{s}\Zp$ for all sufficiently large $s$. For such $s$, we consider the Banach $R$-module $\mc{A}_{\ka}[s]$ of functions $f : p\Zp \ra R$ which are analytic on the cosets of $p^{s}\Zp$. The monoid
$$ \De = \left\{ \gamma=\begin{pmatrix} a & b \\ c & d \end{pmatrix}\in 
M_{2}(\Zp) \mid p|c,\, a\in \Zp^{\times}, ad-bc\ne 0 \right\} $$
acts on $\mc{A}_{\ka}[s]$ from the right by 
$$(f.\gamma)(x) = \ka(a+bx) f \left( \frac{c+dx}{a+bx} \right).$$
We consider the dual space $\D_{\ka}[s]=\Hom_{R,cts}(\mc{A}_{\ka}[s],R)$, with the dual left action of $\De$. A key point is that the matrix $t=\left( \begin{smallmatrix} 1 & 0 \\ 0 & p \end{smallmatrix} \right)$ acts compactly on $\D_{\ka}[s]$; it factors through the compact injection $\D_{\ka}[s] \hookrightarrow \D_{\ka}[s+1]$. Fix an integer $N\geq 5$ (for simplicity) which is coprime to $p$, and consider the congruence subgroup $\Gamma=\Gamma_{1}(N) \cap \Gamma_{0}(p)\sub \De$. We may view $\D_{\ka}[s]$ as a local system $\wt{\D}_{\ka}[s]$ on the complex modular curve $Y(\Gamma)=\Gamma \backslash \mc{H}$ and consider the singular cohomology group
$$ H^{1}(Y(\Gamma), \wt{\D}_{\ka}[s]) = H^{1}(\Gamma, \D_{\ka}[s]), $$
where the right hand side is group cohomology (in general we would consider 
cohomology in all degrees, but it turns out that $H^{i}(\Gamma, \D_{\ka}[s])=0$ 
if $i\neq 1$; cf. \S \ref{cm}). It carries an action of the Hecke operator 
$U_{p}$. Considering these spaces for varying $s$ and $R=\oo(\U)$, where $\U 
\sub \mc{W}_{0}^{rig}$ is an affinoid open subset, Hansen shows how to 
construct an eigenvariety from the Ash--Stevens cohomology groups by a clever 
adaptation of the eigenvariety 
construction of Coleman \cite{co} (in the one-dimensional case) and Buzzard 
\cite{bu} (the general case)\footnote{Note that it is not clear how to 
topologise the 
$R$-modules 
	$H^{1}(\Gamma, \D_{\ka}[s])$, nor that they can be made into potentially 
	ON-able Banach $R$-modules, so even in this 
	special case 
	we are combining Hansen's eigenvariety construction with the Fredholm 
	theory of Coleman and Buzzard, rather than using the 
	Coleman--Mazur--Buzzard eigenvariety construction.}. This eigenvariety 
	turns 
	out to 
equal the 
Coleman-Mazur eigencurve. 
To extend this construction to $\mc{W}_{0}$, the key point is to define 
generalizations of the modules $\D_{\ka}[s]$ for all open affinoid subsets $\U 
\sub \mc{W}_{0}$. Let $R=\oo(\U)$ and let $\ka : \Zp^{\times} \ra R^{\times}$ 
be the induced character. The first thing to note is that $\ka$ is continuous 
but need not be locally analytic anymore, so one cannot directly copy the 
definition of $\D_{\ka}[s]$. One could try to instead use the space 
$\mc{A}_{\ka}$ of all continuous functions $p\Zp \ra R$. This carries an action 
of $\De$ by the same formula, and we may consider its dual $\D_{\ka}$. However, 
the action of $t$ is no longer compact, so one has to do something different.

\medskip

Let $f : p\Zp \ra R$ be a continuous function and let $f(x)=\sum_{n\geq 0} c_{n} \begin{pmatrix} x/p \\ n \end{pmatrix}$ be its Mahler expansion. Recall that, when $\U \sub \mc{W}_{0}^{rig}$ (i.e.~when $R$ is a $\Qp$-algebra), a theorem of Amice (see \cite[Th\'{e}or\`{e}me I.4.7]{colmez}) says that $f$ is analytic on the cosets of $p^{s+1}\Zp$ if and only $|c_{n}|p^{n/p^{s}(p-1)} \ra 0$ as $n \ra \infty$. Here $|-|$ is any $\Qp$-Banach algebra norm such that $|p|=p^{-1}$. Dually, we may identify $\D_{\ka}$ with the ring of formal power series
$$  \sum_{n\geq 0} d_{n}T^{n} $$
where $T^{n}$ is the distribution $f \mapsto c_{n}(f)$ and $d_{n}$ is bounded as $n \ra \infty$. The analytic distribution module $\D_{\ka}[s]$ is defined by the weaker condition that $|d_{n}|p^{-n/p^{s}(p-1)}$ is bounded as $n \ra \infty$. We may define norms $||-||_{r}$, for $r\in [1/p,1)$, on $\D_{\ka}$ by $||\sum_{n\geq 0} d_{n}T^{n}||_{r} =\sup_{n} |d_{n}|r^{n}$. Let $\D_{\ka}^{r}$ denote the completion of $\D_{\ka}$ with respect to $||-||_{r}$; it may be explicit described as the ring of power series $\sum_{n\geq 0} d_{n}T^{n}$ where $|d_{n}|r^{n} \ra 0$. While the $\D_{\ka}[s]$ are not among the $\D_{\ka}^{r}$, one sees that $\varprojlim_{r \ra 1}\D_{\ka}^{r} = \varprojlim_{s \ra \infty} \D_{\ka}[s]$, so the norms allow one to recover the space of locally analytic distributions. As an aside, we remark it is possible to recover the $\D_{\ka}[s]$ on the nose from the $||-||_{r}$, but we will not need them for the construction of eigenvarieties. 

\medskip

The upshot of considering the norms $||-||_{r}$ is that they may be constructed on $\D_{\ka}$ for any open affinoid $\U \sub \mc{W}_{0}$, by the formula given above. It is, however, not clear a priori that the norms interact well with the action of $\De$. As a monoid $\De$ is generated by the Iwahori subgroup $I=\De \cap \GL_{2}(\Zp)$ and the element $t$. The element $t$ acts via multiplication by $p$ on $p\Zp$ and it is not too hard to see that it induces a norm-decreasing map $(\D_{\ka},||-||_{r}) \ra (\D_{\ka},||-||_{r^{1/p}})$ and that the inclusions $\D_{\ka}^{s} \sub \D_{\ka}^{r}$ for $r<s$ are compact. Thus $t$ induces a compact operator on $\D_{\ka}^{r}$ as desired. The action of $I$ is more complicated to analyse, but it turns out that $I$ acts by isometries for sufficiently large $r$ (depending only on $\ka$). To see this, it is useful to find a different description of $||-||_{r}$. This description, which we will outline below, is one of the key technical innovations of this paper. It is the analogue, in our setting of norms, of the observation in the rigid case that if $\ka$ is $s$-analytic then the $I$-action on $\mc{A}_{\ka}$ preserves $\mc{A}_{\ka}[s]$.

\medskip

In \cite{st}, Schneider and Teitelbaum generalised the norms defined above to 
the spaces $\D(G,L)$ of continuous distributions on a \emph{uniform} pro-$p$ 
group $G$ \cite[Definition 4.1]{ddms} valued in a finite extension $L$ of 
$\Qp$. To recall this construction briefly, a choice of a minimal set of 
topological generators of $G$ induces an isomorphism $G\cong \Zp^{\dim G}$ of 
$p$-adic manifolds and using multi-variable Mahler expansions one may identify 
$\D(G,L)$ (as an $L$-Banach space) with $\oo_{L}[[T_{1},\ldots,T_{\dim 
G}]][1/p]$, and we put ($m=\dim G$)
$$ ||\sum_{\n_{i}\geq 0} d_{n_{1},\ldots,n_{m}}T_{1}^{n_{1}}\ldots 
T_{m}^{n_{m}}||_{r} = \sup |d_{n_{1},\ldots ,n_{m}}|r^{n_{1}+\cdots+n_{m}}. $$
Schneider and Teitelbaum show that these norms are submultiplicative and independent of the choice of minimal generating set. We generalize the construction of these norms to the module $\D(G,R)$ of distributions on a uniform group $G$ valued in a certain class of normed $\Zp$-algebras $R$ that we call \emph{Banach--Tate} $\Zp$-algebras. These include the rings $R=\oo(\U)$ for $\U \sub \mc{W}_{0}$ open affinoid (for a suitable choice of norm) and generalize the constructions in the previous paragraph, which was the special case $G=p\Zp$. Moreover, the action of $g\in G$ on $\D(G,R)$ via left or right translation is an isometry for $||-||_{r}$ (for any $r$).

\medskip 

Let $B_{0}=\{ \gamma \in I \mid c=0 \}$ be the upper triangular Borel and let $\ol{N}_{1}=\{ \left( \begin{smallmatrix} 1 & 0 \\ x & 1 \end{smallmatrix} \right) \mid x\in p\Zp \} \cong p\Zp$; $I$ has an Iwahori decomposition $I \cong \ol{N}_{1}\times B_{0}$. Extend $\ka$ to a character of $B_{0}$ by setting $\ka(\gamma)=\ka(a)$. We have an $I$-equivariant injection $f \mapsto F$ of $\mc{A}_{\ka}$ into the space $\mc{C}(I, R)$ of continuous functions $F : G \ra R$ given by $F(\ol{n}b)=f(\ol{n})\ka(b)$, with $\ol{n}\in \ol{N}_{1}$ and $b\in B_{0}$. Here $I$ acts on $\mc{C}(I,R)$ via left translation. The image is the set of functions $F$ such that $F(gb)=\ka(b)F(g)$ for all $g\in I$ and $b\in B_{0}$. Dually, we obtain an $I$-equivariant surjection $\D(I,R) \ra \D_{\ka}$. If we pretend, momentarily, that $I$ is uniform, then we may consider the quotient norm on $\D_{\ka}$ induced from $||-||_{r}$ on $\D(I,R)$ and one can show that for sufficiently large $r$, this quotient norm agrees with the previously defined $||-||_{r}$ on $\D_{\ka}$. This shows that $I$ acts by isometries on $(\D_{\ka},||-||_{r})$ for sufficiently large $r$. In reality $I$ is not uniform, but one can adapt the argument by working with a suitable open uniform normal subgroup of $I$.

\medskip

This summarizes our construction of the modules $\D_{\ka}^{r}$ which we use to construct the eigenvariety. From the $\D_{\ka}^{r}$, we construct variants $\D_{\ka}^{<r}$ and function modules $\mc{A}_{\ka}^{r}\sub \mc{A}_{\ka}$ as well. When $R$ is a Banach $\Qp$-algebra, then the $\mc{A}_{\ka}[s]$ and $\D_{\ka}[s]$ appearing in \cite{han} are equal to $\mc{A}_{\ka}^{r}$ and $\D_{\ka}^{<r}$, respectively, for $r=p^{-1/p^{s}(p-1)}$. It is easiest, however, to use the modules $\D_{\ka}^{r}$ to construct the eigenvariety since they are potentially orthonormalizable. Using the $\D_{\ka}^{r}$, the construction of the eigenvariety follows \cite{han}, and amounts largely to generalizing various well-known results from rigid geometry and non-archimedean functional analysis. Our arguments also generalize from the case of $\mbf{G}=\GL_{2/\Q}$ to the general case considered in \cite{han} (as stated in Theorem \ref{vagueA}). In particular, our methods work for groups that do not have Shimura varieties (such as $\mbf{G}={\rm Res}_{\Q}^{F}\GL_{n/F}$ for $n\ge 3$), which are intractable by the methods of \cite{aip} (see also \cite{aip2}).

\begin{remark}
	In independent work, Daniel Gulotta \cite{gulottapub} has used a similar
	definition of distribution modules to extend Urban's construction \cite{urb} of equidimensional eigenvarieties for reductive groups possessing
	discrete series.
\end{remark}

\subsection{Questions and future work}

\subsubsection{Generalizations of the Halo conjecture}
It is interesting to consider how the Halo conjecture might generalize beyond the case of $\GL_2/\Q$. For general $\mbf{G}$ we raise the following questions:
\begin{question}
	Does every irreducible component of the extended eigenvariety $\ms{X}_{\mbf{G}}$ contain a point in the locus $p=0$?
\end{question}\begin{question}	
	Are there irreducible components of $\ms{X}_{\mbf{G}}$ contained in the locus $p = 0$?
\end{question}
In the case of $\mbf{G} = \GL_2/\Q$ it is a consequence of the Halo conjecture 
that every irreducible component contains a characteristic $p$ point. Similary, 
when $\mbf{G}$ is an inner form of $\GL_2/\Q$ associated to a definite 
quaternion algebra over $\Q$, it is a consequence of the results of \cite{lwx} 
that every irreducible component contains a characteristic $p$ point (see 
Theorem \ref{lwxboundary}). In general, we regard an affirmative answer to the 
first question as a very weak version of the Halo conjecture. 

If a component has a characteristic $p$ point, it becomes possible to study 
characteristic $0$ points in the component (if they exist) by passing to the 
characteristic $p$ point, or to points approximating the characteristic $p$ 
point. In the case of $\GL_2/\Q$ (or its inner forms), components which have a 
characteristic $p$ point have a Zariski dense set of points corresponding to 
(twists of) classical modular forms of weight $2$. One argument in this spirit 
appears in \cite{px}, which has been used by the authors in combination with 
the methods of \cite{lwx} to establish new cases of the parity conjecture for 
the Bloch--Kato Selmer groups associated to Hilbert modular forms. We 
essentially do this by showing that there is a classical parallel weight $2$ 
point on every irreducible component of an eigenvariety for a definite 
quaternion algebra over a totally real field in which $p$ splits completely. 
See \cite{jn3} for more details.  

\subsubsection{Dimensions of irreducible components and functoriality}
We note here that the theory of global irreducible components for the adic 
spaces we work with requires some explanation (see \cite{con} for the rigid 
case). We have done this in a sequel to this paper \cite{jn2}, where we also 
generalise some of the results of \cite{han}. In particular, we show that the 
lower bound for the dimension of irreducible components \cite[Theorem 
1.1.6]{han} and (a variant of) the interpolation of Langlands functoriality 
\cite[Theorem 5.1.6]{han} generalise to our extended eigenvarieties. 

One application of the interpolation of Langlands functoriality is that in the 
case of $\GL_{2/\Q}$ (or its inner forms), \cite{bp} and \cite{lwx} show that the 
extended eigenvarieties contain the usual rigid eigenvarieties as a proper 
subspace. Applying functoriality (cyclic base change, for example) then shows 
that this is true for a larger class of groups. See \cite{jn2} for more details.

\subsubsection{Galois representations}
In \cite{aip} the natural question is raised as to whether the Galois representations attached to characteristic $p$ points of the extended eigencurve are trianguline (in an appropriate sense). One can similarly ask this question for the characteristic $p$ Galois representations constructed in this paper. Note that in our level of generality, it is still only conjectural that the characteristic $0$ Galois representations carried by the eigenvariety are trianguline, but this is known, for example, in the case where $\mbf{G}$ is a definite unitary group defined with respect to a CM field. It would also be interesting to construct a `patched extended eigenvariety' in this setting, extending the construction of \cite{bhs}, and we hope to study this in the near future.

\subsection{An outline of the paper} Let us describe the contents of the paper. Section \ref{prelim} collects what we need about the eigenvariety machine and the notion of slope decompositions, and introduces some functional-analytic terminology that we will need throughout the paper. Since the key point of the paper is the construction of certain norms, we adopt terminology that puts emphasis on the norm, as opposed to merely the underlying topology. We give a definition of a slope decomposition (a concept introduced in \cite{as}) that differs slightly from the definitions that appear in the literature. This is necessary since the definition given in \cite{as} neither localizes nor glues well, and so is not suitable for the construction of eigenvarieties. Our definition is a formalization of an informal definition that the first author learnt from conversations with David Hansen.

\medskip

In Section \ref{distalg}, we carry out the construction of the norms on the $\D_{\ka}$, following the outline above. We first discuss the generalization of the Schneider--Teitelbaum norms to distributions on a uniform group $G$ valued in a certain class of normed $\Zp$-algebras that we call Banach--Tate $\Zp$-algebras. These include, for example, all Banach $\Qp$-algebras in the usual sense, as well as Tate rings $R=\oo(\U)$ with $\U$ an affinoid open subset of weight space (equipped with a suitable norm). We show that, in a precise sense, the completion of $\D(G,R)$ with respect to the family of norms $(||-||_{r})_{r\in [1/p,1)}$ only depends on the underlying topology of $R$. Imposing some additional conditions on the norm (which is always possible in practice), we then construct the modules $\D_{\ka}^{r}$, $\D_{\ka}^{<r}$ and $\mc{A}_{\ka}^{r}$ as outlined above.

\medskip
 Section \ref{evars} then uses the modules $\D_{\ka}^{r}$ to construct the eigenvariety, following the strategy in \cite{han}. Since the $\D_{\ka}^{r}$ are potentially orthonormalizable, the construction simplifies somewhat. We end the section by generalizing the `Tor-spectral sequence' \cite[Theorem 3.3.1]{han} to our setting, which is a key tool for analyzing the geometry of eigenvarieties, and use it give a description of the `points' of the eigenvariety valued in a local field. We use this description in Section \ref{galrep} when we construct Galois determinants.

\medskip

In Section \ref{gl2} we discuss the relationship of our work with that of 
\cite{aip} and \cite{lwx}. We show that when $\mbf{G}=\GL_{2/\Q}$, our 
construction, over the normalization of the weight space $\mc{W}_{0}$ discussed 
above, produces the same eigencurve as in \cite{aip} (this normalization is 
only different from $\mc{W}_{0}$ if $p=2$). When $\mbf{G}$ is the algebraic 
group over $\Q$ associated with the units of a definite quaternion algebra over 
$\Q$, we show that our framework gives a conceptual proof of \cite[Theorem 
3.16]{lwx}, which is a key ingredient in their proof of the Halo conjecture. In 
essence, the numerical estimate of \cite[Theorem 3.16]{lwx} falls out directly 
from our proof of compactness of the $U_{p}$-operator. Thus, it is possible to 
view our proof of compactness of suitable `$U_{p}$-like' operators (known as 
controlling operators) as a generalization of \cite[Theorem 3.16]{lwx}, as 
asked for in \cite[Remark 3.26(2)]{lwx}. Since this numerical estimate doesn't 
appear strong enough to establish the Halo conjecture in more general 
situations, we have restricted ourselves to proving the statement in the 
setting of \cite{lwx} as an illustration of our method.

\medskip

Finally, Appendix \ref{app} proves various results that we need on the class of Tate rings whose associated affinoid adic spaces appear as the local pieces of our eigenvarieties; some of these results might be of independent interest.

\subsection*{Acknowledgments} This paper was inspired by \cite{aip}, and C.J. 
wishes to thank Vincent Pilloni for some interesting discussions on \cite{aip} 
and David Hansen for innumerable and invaluable discussions on overconvergent 
cohomology and eigenvarieties. He would also like to thank Konstantin Ardakov 
for patiently answering several questions, and both authors would like to thank 
him for providing us with a proof of Lemma \ref{uniform}, and allowing us to 
include it here. We wish to thank John Bergdall and Kevin Buzzard for various 
conversations about the boundary of the eigencurve, as well as helpful comments 
on an earlier version of this paper. We also thank Judith Ludwig, who pointed out an error in the published version of this paper which is corrected here. Finally, we wish to thank the anonymous 
referees for their careful reading and helpful comments. C.J. was supported by 
NSF grant DMS-1128155 during the main work on this paper, and by the Herchel Smith Foundation during the revision. J.N. was supported by ERC Starting Grant 306326. Work on this project was also supported by the EPSRC Platform Grant EP/I019111.

\section{Preliminaries}\label{prelim}
The goal of this section is to set up some functional analytic terminology and theory. Specifically, we require the results of \cite[\S 2-3]{bu} on Fredholm determinants, Riesz theory and the construction of spectral varieties in a level of generality that is intermediate between the settings of \cite{bu} and \cite{co} (cf. also \cite[Annexe B]{aip}). For example, we need to work over coefficient rings arising from affinoid opens in the adic space $\mathcal{W}_0$ discussed in our Introduction. These rings are complete topological rings which are \emph{Tate} in the language of Huber (cf. \cite[\S 1]{hu1}). The topology on these rings is induced by a norm, and to discuss the spectral theory of compact operators it is convenient to fix such a norm. This gives rise to a class of normed rings which we call Banach--Tate rings (see Definition \ref{defBT}). 

The proofs in \cite{bu} go through with little to no change when working over Banach--Tate rings, so we will be rather brief. All norms etc. will be non-archimedean so we will ignore this adjective. All rings will be commutative unless otherwise specified.

\subsection{Fredholm determinants over Banach--Tate rings}\label{fred}
\begin{definition}\label{normedrings} Let $R$ be a ring. A function $|-|\, :\, 
R \ra \R_{\geq 0}$ is called a \emph{seminorm} if (for all $r,s\in R$)
\begin{enumerate}
\item $|0|=0$, $|1|=1$;

\item $|r+s|\leq {\rm max}(|r|,|s|);$

\item $ |rs|\leq |r||s|$.
\end{enumerate}
If in addition $|r|=0$ only if $r=0$, then we say that $|-|$ is a \emph{norm}. A ring $R$ together with a (semi)norm will be called a (semi)normed ring. A normed ring $R$ is called a Banach ring if the metric induced by the norm is complete. 

If $f: R \rightarrow S$ is a morphism of normed rings, we say that $f$ is \emph{bounded} if there is a constant $C > 0$ such that $|f(r)| \le C|r|$ for all $r\in R$. 

We say that two norms $|-|$, $|-|^{\prime}$ on a ring $R$ are \emph{equivalent} 
if they induce the same topology. We say that they are 
\emph{bounded-equivalent} if there are constants $C_{1},C_{2}>0$ such that 
$C_{1}|r|\leq |r|^{\prime}\leq C_{2}|r|$ for all $r\in R$ (note that this is 
stronger than equivalence, cf.~Lemma \ref{norm}). Let $R$ be a normed ring. We 
say that $r\in R$ is \emph{multiplicative} if $|rs|=|r||s|$ for all $s\in R$.
\end{definition}

\begin{definition}\label{defBT}
Let $R$ be a normed ring. We say that $R$ is \emph{Tate} if $R$ contains a 
multiplicative\footnote{i.e.~a unit which is multiplicative in the sense we 
	just defined, as well as being a unit for multiplication!} unit $\vp$ such 
	that $|\vp|< 
1$. We call such a 
$\vp$ a \emph{multiplicative pseudo-uniformizer}. If $R$ is also complete, we 
say that $R$ is a \emph{Banach--Tate ring}. If $R$ is a Tate normed ring and 
$\vp$ is a multiplicative pseudo-uniformizer, then we define the corresponding 
valuation $v_{\vp}$ on $R$ by $v_{\vp}(r)=-\log_{a}|r|$, where $a=|\vp^{-1}|$. 
\end{definition}

We remark that it is easy to see that a unit $\vp$ in a normed ring $R$ is multiplicative if and only if $|\vp^{-1}|=|\vp|^{-1}$. A multiplicative pseudo-uniformizer $\vp$ is a uniform unit in the sense of Kedlaya-Liu \cite[Remark 2.3.9(b)]{kl}. 

\begin{remark}\label{tateremark}
Let $R$ be a Tate normed ring, with $\vp$ a multiplicative pseudo-uniformizer. 
\begin{enumerate}
\item The underlying topological ring is a Tate ring in the language of Huber; the unit ball $R_{0}$ is a ring of definition and $\vp$ is a topologically nilpotent unit. Conversely, assume $R$ is a Tate ring and $\vp\in R$ is a topologically nilpotent unit, contained in some ring of definition $R_{0}$. If $a\in \R_{>1}$, then we may define a norm on $R$ by $|r|=\inf  \{a^{-n} \mid r\in \vp^{n}R_{0},\, n\in \Z \}$. Equipped with this norm, $R$ is a Tate normed ring with unit ball $R_{0}$ and $\vp$ is a multiplicative pseudo-uniformizer.

\medskip
\item A Banach--Tate ring $A$ is the same thing as a Banach algebra $A$ satisfying $|A^{m}|\neq 1$ in the language of Coleman \cite[\S 1]{co} (and what we call a Banach ring is what Coleman calls a Banach algebra). Here $A^{m}$ denotes the set of multiplicative units of $A$. Additionally, when $R$ is a Banach--Tate ring and $R^{+}$ is a ring of integral elements, a choice of a multiplicative pseudo-uniformizer $\vp$ may be used to identify the Gelfand spectrum $\mc{M}(R)$ of bounded multiplicative seminorms on $R$ (\cite[\S 1.2]{be}) with the maximal compact Hausdorff quotient of the adic spectrum $\Spa(R,R^{+})$ (\cite{hu1}); see \cite[Definition 2.4.6]{kl}. Concretely, $\vp$ gives us a natural way of viewing a rank $1$ point in $\Spa(R,R^{+})$ as a bounded multiplicative seminorm. 
\end{enumerate}
\end{remark}

\begin{definition}
Let $R$ be a normed ring. A \emph{normed $R$-module} is an $R$-module $M$ equipped with a function $||-||\, :\, M \ra \R_{\geq 0}$ such that (for all $m,n\in M$ and $r\in R$)
\begin{enumerate}
\item $||m||=0 \iff m=0$;

\item $||m+n|| \leq \max(||m||,||n||); $

\item $||rm|| \leq |r|.||m||.$
\end{enumerate}
We remark that if $r\in R$ is a multiplicative unit, then one sees easily that $||rm||=|r|.||m||$ for all $m\in M$. If $R$ is a Banach ring and $M$ is complete, we say that $M$ is a \emph{Banach $R$-module}.
\end{definition}

Let $R$ be a Tate normed ring. If $M$ and $N$ are normed $R$-modules, then a 
homomorphism $\phi\, :\, M \ra N$ is a continuous $R$-linear map. In this case, 
continuity of an $R$-linear map $\phi$ is equivalent to boundedness; i.e.~there 
exists $C\in \R_{>0}$ such that $||\phi(m)||\leq C||m||$ for all $m\in M$. In 
this case we set $|\phi|=\sup_{m\neq 0} |\phi(m)|.|m|^{-1}$ as usual; 
$\Hom_{R,cts}(M,N)$ becomes a normed $R$-module with respect to this norm. The 
open mapping theorem holds in this context; see e.g. \cite[Lemma 2.4(i)]{hu2}.

\medskip

Let $R$ be a Noetherian Banach--Tate ring. The results of \cite[\S 3.7.2]{bgr} hold in the context of Banach--Tate rings with the same proofs (thanks to the open mapping theorem), so $R$ being Noetherian is equivalent to all ideals being closed. Moreover, the results of \cite[\S 3.7.3]{bgr} hold for \emph{Noetherian} Banach--Tate rings with the same proofs. In particular, any finitely generated $R$-module carries a canonical complete topology, and any abstract $R$-linear map between two finitely generated $R$-modules is continuous and strict with respect to the canonical topology.

\begin{definition}
Let $R$ be a Banach--Tate ring and let $I$ be a set. We define $c_{R}(I)$ to be the set of sequences $(r_{i})_{i\in I}$ in $R$ tending to $0$ (with respect to the filter of subsets of $I$ with finite complement). It is a Banach $R$-module when equipped with the norm $||(r_{i})||=\sup_{i\in I}|r_{i}|$. 

We say that a Banach $R$-module $M$ is \emph{(potentially) orthonormalisable} (or \emph{(potentially) ON-able} for short) if there exists a set $I$ such that $M$ is $R$-linearly isometric (resp. merely $R$-linearly homeomorphic) to $c_{R}(I)$. A set in $M$ corresponding to the set $\{ e_{i}=(\delta_{ij})_{j} \mid i\in I \}\sub c_{R}(I)$ under such a map is called an \emph{(potential) ON-basis}.

Finally, we say that a Banach $R$-module $M$ has \emph{property $(Pr)$} if it is a direct summand of a potentially ON-able Banach $R$-module. 
\end{definition}

If $M \ra N$ is a continuous morphism of ON-able Banach $R$-modules, then we 
may define its matrix for a fixed ON-basis on $M$ and one on $N$ as on \cite[p. 
65]{bu}, and the properties stated there hold in this situation as well. A 
morphism $\phi : M \ra N$ between general Banach $R$-modules is said to be of 
\emph{finite rank} if the image of $\phi$ is contained in a finitely generated 
submodule of $N$. More generally, $\phi$ is said to be \emph{compact} (or 
\emph{completely continuous}) if it is a limit of finite rank operators in 
$\Hom_{R,cts}(M,N)$. If $R$ is Noetherian, \cite[Lemma 2.3, Proposition 
2.4]{bu} go through with the same proofs (using a multiplicative 
pseudo-uniformizer $\vp$ for what Buzzard calls $\rho$ in the proof of Lemma 
2.3) and we see that if $\phi : M\ra N$ is a continuous $R$-linear map between 
ON-able Banach $R$-modules with matrix $(a_{ij})$ with respect to some bases 
$(e_{i})_{i\in I}$ of $M$ and $(f_{j})_{j\in J}$ of $N$, then $\phi$ is compact 
if and only if $\lim_{j\ra \infty}\sup_{i\in I} |a_{ij}|=0$. When $M=N$ and 
$(e_{i})_{i\in I}=(f_{j})_{j\in J}$ this allows us to define the 
\emph{characteristic power series}, or \emph{Fredholm determinant}, 
$\det(1-T\phi)$ of a compact $\phi$ using the recipe on \cite[p. 67]{bu} and 
one sees that $\det(1-Tu)\in R\{\{T\}\}$, where $R\{\{T\}\}=\{ 
\sum_{n}a_{n}T^{n} \in R[[T]] \mid |a_{m}|M^{m} \ra 0\, \forall M\in \R_{\geq 
0} \}$ is the ring of entire power series in $R$.

\medskip

Moving on, we remark that \cite[Lemma 2.5, Corollary 2.6]{bu} are true in our 
setting with the same proofs. In particular, the notion of the Fredholm 
determinant extends to compact operators on potentially ON-able $M$, and may be 
computed using a potential ON-basis. It will be useful (at least 
psychologically) for us to know that these notions remain unchanged if we 
replace the norm on $R$ by an equivalent one. First, we remark that changing 
the norm on $R$ to an equivalent one doesn't change the topology on $c_{R}(I)$ 
(for $I$ arbitrary). This can be seen directly, but it is also a consequence of 
the following lemma, which we will need later.

\begin{lemma}\footnote{The published version of this paper contains an incorrect statement here. See \cite{erratum} for more details.}\label{norm}
Let $R$ be a complete Tate ring, and let $\vp,\pi\in R$ be topologically nilpotent units. Assume that we have two equivalent norms $|-|_{\vp}$ and $|-|_{\pi}$ on $R$ (inducing the intrinsic topology) such that $\vp$ is multiplicative for $|-|_{\vp}$ and $\pi$ is multiplicative for $|-|_{\pi}$. Then we may find constants $C_{1},C_{2},s>0$ such that, if $|a|_\pi < 1$,
\[
|a|_\vp \leq C_1,
\]
and if $|a|_\pi \geq 1$, then
\[
|a|_\vp \leq C_2|a|_\pi^s.
\]
\end{lemma}
\begin{proof}
For the first inequality, first note that we can find a constant $D_1 < 1$ such that if $|a|_\pi \leq D_1$ then $|a|_\vp \leq 1$ (since the norms induce the same topology). Choose an integer $m>0$ such that $|\vp^m|_\pi \leq D_1$ and set $C_1 = |\vp|_\vp^{-m}$. So, if $|a|_\pi \leq 1$, then $|\vp^m a|_\pi \leq D_1$ and hence $|\vp^m a|_\vp \leq 1$. Since $\vp$ is multiplicative for $|-|_\vp$, we deduce that $|a|_\vp \leq |\vp|_\vp^{-m} = C_1$.

\medskip

We now prove the second inequality. Assume that $|a|_\pi \geq 1$. Set
\[
n= \left\lceil \frac{\log |a|_{\pi}}{\log |\vp^{m}|_{\pi}^{-1}} \right\rceil 
;
\] 
then $n \geq 0$ by the assumption on $a$ and the definition of $m$. By definition, we have that $|\vp^{m}|_{\pi}^{n}|a|_{\pi} \leq 1$, and hence $|\vp^{mn}a|_{\pi} \leq 1$. Therefore we have $|\vp^{mn}a|_{\vp}\leq  C_1$. By multiplicativity of $\vp$ for $|-|_{\vp}$ we get $ |a|_{\vp} \leq C_1 |\vp|_{\vp}^{-mn} = C_1^{n+1}$. Then we have
\[
|a|_{\vp} \leq C_1^{n+1} \leq C_1^{\frac{\log |a|_{\pi}}{\log 
		|\vp^{m}|_{\pi}^{-1}}+2} = C_2 |a|_{\pi}^{s}
\]
where we have put $s =(\log_{C_1}|\vp^{m}|_{\pi}^{-1})^{-1}$ and $C_2 = C_1^2$; note that $s>0$. This finishes the proof. 
\end{proof}
If $R$ has two equivalent norms $|-|$ and $|-|^{\prime}$ with a common multiplicative pseudo-uniformizer $\vp$ such that $|\vp|=|\vp|^{\prime}$, then a similar argument shows that $|-|$ and $|-|^{\prime}$ are {bounded-equivalent}, as can be seen from the following lemma:
\begin{lemma}\label{norm2}
	Let $R$ be a complete Tate ring, and let $\vp\in R$ be a topologically nilpotent unit. Assume that we have two equivalent norms $|-|_1$ and $|-|_2$ on $R$ (inducing the intrinsic topology) such that $\vp$ is multiplicative for both $|-|_1$ and $|-|_2$. Then we may find constants $C_{1},C_{2} > 0$ such that
	\[
	C_1|a|_1^s \leq |a|_2 \leq C_2 |a|_1^s \]
	for all $a\in R$, where $s$ is determined by $|\varpi|_2 = |\varpi|_1^s$.
\end{lemma}
\begin{proof}
	We again fix a constant $D_1 < 1$ such that if $|a|_1 \leq D_1$ then $|a|_2 \leq 1$. Choose an integer $m>0$ such that $|\vp|^m_1 \leq D_1$. For a non-zero $a$, we set
	\[
	n= \left\lceil \frac{\log |a|_{1}}{\log |\vp^{m}|_{1}^{-1}} \right\rceil.
	\] 
Equivalently, $n$ is defined so that $|\vp^{-m(n-1)}|_1 < |a|_1 \leq |\vp^{-mn}|_1$. So $|a\varpi^{mn}|_1 \le 1$ and hence $|a|_2 \le |\varpi|_2^{-m(n+1)}$. Let $s$ be such that $|\varpi|_2 = |\varpi|_1^s$. Then $|a|_2 \le |\varpi|_1^{-m(n+1)s} \le C_2|a|_1^s$, where $C_2 = |\varpi|_1^{-2ms}$. Swapping $|-|_1$ and $|-|_2$ we get a similar inequality $|a|_1 \le C_1' |a|_2^{1/s}$. 
\end{proof}

Suppose then that $(M,|-|)$ is a Banach $(R,|-|)$-module, and $(M,|-|^{\prime})$ is a Banach $(R,|-|^{\prime})$-module, where $|-|$ and $|-|^{\prime}$ are equivalent on both $R$ and $M$. A Banach $(R,|-|)$-module isomorphism $(M,|-|) \cong (c_{R}(I),|-|)$ is then the same thing as a Banach $(R,|-|^{\prime})$-module isomorphism $(M,|-|^{\prime}) \cong (c_{R}(I),|-|^{\prime})$, since $(c_{R}(I),|-|) \cong (c_{R}(I),|-|^{\prime})$ as \emph{topological} $R$-modules via the identity map (one can argue as in Proposition \ref{normindep1}, or, perhaps better, formulate the basic notions of functional analysis introduced here in purely topological terms as in \cite[\S 2]{gulottapub}). Thus $(M,|-|)$ is potentially ON-able if and only if $(M,|-|^{\prime})$ is potentially ON-able, and $(e_{i})_{i\in I}$ is a potential ON-basis for $(M,|-|)$ if and only if it is a potential ON-basis for $(M,|-|^{\prime})$. It follows, at least when $R$ is Noetherian (which is all we need), that an operator $\phi$ is compact on a potentially ON-able $(M,|-|)$ if and only if it is compact on a potentially ON-able $(M,|-|^{\prime})$, and the Fredholm determinant is the same. 

\medskip

We remark that the results \cite[Lemma 2.7-Corollary 2.10]{bu} hold over Noetherian Banach--Tate rings $R$, again with the same proofs. We can extend the notion of Fredholm determinants of compact operators on Banach $R$-modules with property (Pr) as on \cite[p.72-73]{bu}, and the results there hold over Noetherian Banach--Tate rings. One also sees that having property (Pr) is stable when changing the norms on $(R,M)$ to equivalent ones, as is compactness of operators and the Fredholm determinants for compact operators are unchanged. We summarise the results of this section with the following Proposition:

\begin{proposition}
	Let $R$ be a Noetherian Banach--Tate ring. If $M$ is a Banach $R$-module with property $(Pr)$ and $\phi: M\rightarrow M$ is compact then there is a well-defined Fredholm determinant \[\det(1-T\phi|M) \in R\{\{T\}\}.\] If we change the norms on $(R,M)$ to equivalent ones, then $M$ still has property $(Pr)$, $\phi$ is still compact, and the Fredholm determinant is unchanged. 
\end{proposition}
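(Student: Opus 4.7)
The plan is to synthesize the discussion leading up to the proposition into a clean statement, with the main technical work being the matrix characterization of compactness and the invariance under equivalent norms. For the existence of the Fredholm determinant, I would first handle the potentially ON-able case by fixing a potential ON-basis $(e_i)_{i\in I}$ of $M$ and writing $\phi$ as a matrix $(a_{ij})$. Since $R$ is Noetherian, the Banach--Tate adaptation of \cite[Lemma 2.3, Proposition 2.4]{bu} (with any multiplicative pseudo-uniformizer $\vp$ playing the role of Buzzard's $\rho$) applies, giving the characterization $\lim_{j\to\infty}\sup_i |a_{ij}| = 0$ of compactness. Buzzard's formula from \cite[p.~67]{bu} then yields a formal series in $R\{\{T\}\}$, and the Banach--Tate version of \cite[Lemma 2.5]{bu} shows it is independent of the chosen potential ON-basis.

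To extend to modules with property $(Pr)$, I would write $N = M \oplus M'$ for some potentially ON-able Banach $R$-module $N$, extend $\phi$ to $\phi \oplus 0$ on $N$, and define $\det(1-T\phi\mid M) := \det(1-T(\phi\oplus 0)\mid N)$. Independence of the chosen complement is the content of \cite[Corollary 2.6]{bu}, whose proof transfers verbatim to the Noetherian Banach--Tate setting as already observed in the text preceding the proposition.

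For the invariance statement, the crucial observation is that $c_R(I)$, viewed as a topological $R$-module, depends only on the topology of $R$ and not on the specific choice of norm: a sequence in $R$ tends to zero topologically. Hence a bijection $M \to c_R(I)$ is a topological $R$-module isomorphism for the norm $|-|$ if and only if it is for an equivalent norm $|-|'$, so property $(Pr)$ and the set of potential ON-bases are intrinsic to the topology. Similarly, $\Hom_{R,\mathrm{cts}}(M,M)$ has a topology determined only by the topologies of $R$ and $M$, so compactness (closure of the finite rank operators in this topology) is preserved under passage to equivalent norms. Fixing a common potential ON-basis for both norms, the matrix $(a_{ij})$ of $\phi$ is literally the same in either norm, so Buzzard's formal expression yields the identical series in $R\{\{T\}\}$.

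The only genuine obstacle is verifying that the Noetherian hypothesis suffices to run the matrix-characterization step (as opposed to Buzzard's discretely valued hypothesis), but this is exactly what Lemma \ref{norm} and the discussion immediately preceding the proposition have already arranged: a multiplicative pseudo-uniformizer $\vp$ is available, and rescaling by integer powers of $\vp$ suffices to approximate compact operators by finite rank ones column by column. No fundamentally new argument is needed; the proof is a bookkeeping exercise consolidating the material of Section \ref{fred}.
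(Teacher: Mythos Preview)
Your proposal is correct and follows the paper's approach: this proposition is explicitly a summary of the preceding discussion in \S\ref{fred}, and your synthesis (matrix characterization via \cite[Proposition 2.4]{bu}, extension to potentially ON-able via \cite[Lemma 2.5, Corollary 2.6]{bu}, extension to property $(Pr)$ as on \cite[pp.~72--73]{bu}, and invariance via a common potential ON-basis) matches it.

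One imprecision worth flagging: your claim that the operator norm topology on $\Hom_{R,\mathrm{cts}}(M,M)$ is determined by the topologies alone is not justified. Equivalent Banach--Tate norms need not be bounded-equivalent (Lemma \ref{norm} gives only $C_1|a|^{s_1}\le |a|'\le C_2|a|^{s_2}$), so the unit balls and hence the operator norms can genuinely differ. The paper instead argues compactness invariance via the matrix criterion---which is precisely why the Noetherian hypothesis appears: fixing a common potential ON-basis, the matrix $(a_{ij})$ is literally the same, and the condition $\lim_j\sup_i|a_{ij}|=0$ transfers between equivalent norms by the inequalities of Lemma \ref{norm}. Since you invoke exactly this matrix argument in your final paragraph anyway, the proof goes through; just drop or replace the sentence about the $\Hom$-topology being intrinsic.
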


\subsection{Riesz theory, slope factorizations and slope decompositions} We continue to let $R$ denote a Banach--Tate ring. If $Q\in R[T]$, we write $Q^{\ast}(T):=T^{\deg Q}Q(1/T)$. We recall the following definitions:
\begin{definition}\label{freddef}
 A \emph{Fredholm series} is a formal power series $F=1+\sum_{n \geq 1}a_{n}T^{n}\in R\{\{T\}\}$. A polynomial $Q\in R[T]$ is called \emph{multiplicative} if the leading coefficient of $Q$ is a unit (in other words, if $Q^{\ast}(0)\in R^{\times}$).
 
 Two entire series $P,Q \in  R\{\{T\}\}$ are said to be \emph{relatively prime} if the ideal $(P,Q) = R\{\{T\}\}$.
\end{definition}

The proof of \cite[Theorem 3.3]{bu} goes through without changes; we state it for completeness (see also \cite[Theoreme B.2]{aip}). Implicit in this is that \cite[Lemma 3.1]{bu} holds with the same proof; we will make use of this later.

\begin{theorem}\label{riesz}
Assume that $R$ is Noetherian. Let $M$ be a Banach $R$-module with property (Pr) and let $u : M \ra M$ be a compact operator with $F=det(1-Tu)$. Assume that we have a factorization $F=QS$ where $S$ is a Fredholm series, $Q\in R[T]$ is a multiplicative polynomial, and $Q$ and $S$ are relatively prime in $R\{\{T\}\}$. Then $\Ker Q^{\ast}(u)\sub M$ is finitely generated and projective and has a unique $u$-stable closed complement $N$ such that $Q^{\ast}(u)$ is invertible on $N$. The idempotent projectors $M \ra \Ker Q^{\ast}(u)$ and $M \ra N$ lie in the closure of $R[u]\sub \End_{R,cts}(M)$. The rank of $\Ker Q^{\ast}(u)$ is $\deg Q$, and $\det(1-Tu\mid \Ker Q^{\ast}(u))=Q$. Moreover, $u$ is invertible on $\Ker Q^{\ast}(u)$, and $\det(1-Tu\mid N)=S$.
\end{theorem}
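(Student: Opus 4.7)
The plan is to follow Buzzard's proof of \cite[Theorem 3.3]{bu}, which is an adaptation of Serre's classical Riesz theory for compact operators, verifying at each step that the argument transfers to the Noetherian Banach--Tate setting using the functional-analytic infrastructure developed earlier in this section. First I would reduce to the case where $M$ is potentially orthonormalizable: since $M$ satisfies property $(Pr)$, it is a direct summand of some potentially ON-able $\widetilde{M} = M \oplus M'$, and extending $u$ by zero on $M'$ gives a compact operator with the same Fredholm determinant. Because the idempotents constructed below lie in the closure of $R[u]$, they commute with the projection onto $M$, so a decomposition of $\widetilde{M}$ will restrict to the required decomposition of $M$.

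The heart of the argument is the construction of the projector onto $\Ker Q^*(u)$ via a Bezout relation. Relative primality of $Q$ and $S$ in $R\{\{T\}\}$ yields $A,B \in R\{\{T\}\}$ with $AQ + BS = 1$, and evaluating at $u$ (legitimate since entire power series applied to a bounded operator converge in non-archimedean analysis) gives the operator identity $\mathrm{id} = A(u)Q(u) + B(u)S(u)$. The crucial observation is that $Q^*(u) = u^d + c_{d-1}u^{d-1} + \cdots + q_d \cdot \mathrm{id}$ is a compact perturbation of the unit $q_d \cdot \mathrm{id}$, since the constant term $q_d = Q^*(0)$ is the leading coefficient of $Q$, a unit by the multiplicativity hypothesis. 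The Fredholm alternative, generalized to Noetherian Banach--Tate rings, then produces a $u$-stable decomposition $M = \Ker Q^*(u) \oplus N$ with $\Ker Q^*(u)$ finitely generated and projective (as a direct summand of $M$ that is also a finitely generated $R$-module) and $N$ closed; the Bezout identity expresses the associated projector explicitly as a limit of polynomials in $u$, placing it in the closure of $R[u]$.

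The remaining verifications are largely formal. Multiplicativity of Fredholm determinants over the direct sum gives $F = \det(1 - Tu \mid \Ker Q^*(u)) \cdot \det(1 - Tu \mid N)$; uniqueness of coprime factorizations in $R\{\{T\}\}$ forces these factors to be $Q$ and $S$ respectively (the first is a polynomial, namely the reverse characteristic polynomial of $u$ on the finitely generated summand $\Ker Q^*(u)$, and the second is the complementary entire series), whence the rank of $\Ker Q^*(u)$ equals $\deg Q$. Invertibility of $u$ on $\Ker Q^*(u)$ follows because $\det(u\mid \Ker Q^*(u)) = \pm Q^*(0) = \pm q_d \in R^\times$, and invertibility of $Q^*(u)$ on $N$ is immediate from the direct sum decomposition.

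The main obstacle is not the Serre--Buzzard argument itself, which is classical, but rather confirming that the Fredholm alternative --- closed image, finitely generated projective kernel, explicit Bezout-derived projector in the closure of $R[u]$ --- remains valid over Noetherian Banach--Tate rings. This rests on the preceding section's verification that \cite[Lemmas 2.3--2.10]{bu} generalize to this setting, using the open mapping theorem together with the canonical topology on finitely generated $R$-modules (the latter requiring $R$ Noetherian). Once this infrastructure is in place, the transfer of Buzzard's proof is essentially verbatim.
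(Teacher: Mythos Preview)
Your proposal is correct and follows the same approach as the paper: both defer the bulk of the statement to \cite[Theorem 3.3]{bu} (noting that the proofs there carry over to Noetherian Banach--Tate rings), and both handle the invertibility of $u$ on $\Ker Q^{\ast}(u)$ via $\det(u\mid \Ker Q^{\ast}(u)) = Q^{\ast}(0) \in R^{\times}$.

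The one place your argument diverges is in deducing $\det(1-Tu\mid N) = S$. You appeal to ``uniqueness of coprime factorizations in $R\{\{T\}\}$,'' but this requires knowing that the two factors $Q' := \det(1-Tu\mid \Ker Q^{\ast}(u))$ and $S' := \det(1-Tu\mid N)$ are themselves coprime, which you have not verified. The paper's route is cleaner: Buzzard's theorem already gives $Q' = Q$, so $F = QS = QS'$ yields $Q(S - S') = 0$; since $Q(0) = 1$, $Q$ is not a zero divisor in $R\{\{T\}\}$, and $S = S'$ follows immediately. This avoids any appeal to a uniqueness statement for coprime factorizations.
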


\begin{proof}
Apart from the last sentence, this is (a minor reformulation of) \cite[Theorem 3.3]{bu}. To see that $u$ is invertible on $\Ker Q^{\ast}(u)$, note that $\det(u\mid \Ker Q^{\ast}(u))=Q^{\ast}(0)\in R^{\times}$. To see that $\det(1-Tu \mid N)=S$, write $S^{\prime}=\det(1-Tu \mid N)$ and note that $F=\det(1-Tu \mid \Ker Q^{\ast}(u))\det(1-Tu \mid N)=QS^{\prime}$. Hence $Q(S-S^{\prime})=0$, and $Q$ is not a zero divisor since $Q(0)=1$, so $S=S^{\prime}$.
\end{proof}

The following Lemma may be extracted from the proof of \cite[Lemma 5.6]{bu}; we give the short proof for completeness.

\begin{lemma}\label{link}
Assume that $R$ is Noetherian. Let $M$ and $M^{\prime}$ be two Banach $R$-modules with property (Pr) and assume that we have a continuous $R$-linear map $v : M \ra M^{\prime}$ and a compact $R$-linear map $i : M^{\prime} \ra M$. Set $u=iv$ and $u^{\prime}=vi$. Then $u$ and $u^{\prime}$ are both compact and $\det(1-Tu)=\det(1-Tu^{\prime})$; call this entire power series $F$. If $F=QS$ is a factorization as in Theorem \ref{riesz}, then $i$ restricts to an isomorphism between $\Ker Q^{\ast}(u^{\prime})$  and $\Ker Q^{\ast}(u)$.
\end{lemma}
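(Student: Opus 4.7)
The plan is to verify the three claims in order, exploiting the fundamental intertwining identity $Q^{\ast}(iv)\cdot i = i \cdot Q^{\ast}(vi)$ (and its analogue for $v$) that holds for any polynomial because $(iv)^{k}i = i(vi)^{k}$ for all $k\geq 0$.

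For compactness of $u$ and $u'$, one observes that the set of compact operators forms a two-sided ideal in the algebra of continuous operators: if $i_{n}\to i$ in $\Hom_{R,cts}(M',M)$ with each $i_{n}$ of finite rank, then $i_{n}v\to iv$ and $vi_{n}\to vi$, with each $i_{n}v$ and $vi_{n}$ of finite rank, so $u,u'$ are both compact. For the determinant identity $\det(1-Tu)=\det(1-Tu')$, I would first reduce to the case where $i$ is of finite rank by approximating $i$ by finite rank operators $i_{n}$ and using that $\det(1-T\cdot)\colon \Hom_{R,cts}^{comp}\to R\{\{T\}\}$ is continuous (this is a standard consequence of the matrix description of Fredholm determinants recalled just before Theorem \ref{riesz}, cf.\ the analogue in \cite[\S 2]{bu}). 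When $i$ is finite rank, write $i=\sum_{k=1}^{n}m_{k}\otimes \phi_{k}$ with $m_{k}\in M$ and $\phi_{k}\in \Hom_{R,cts}(M',R)$; then $iv$ and $vi$ are respectively conjugate (via $v$ and the identity) to the $n\times n$ matrices with $(k,\ell)$-entries $\phi_{k}(vm_{\ell})$ and $\phi_{k}(vm_{\ell})$, so their Fredholm determinants coincide.

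For claim (3), let $N=\Ker Q^{\ast}(u)$ and $N'=\Ker Q^{\ast}(u')$. The key observation is the intertwining: since $Q^{\ast}(iv)\cdot i = i\cdot Q^{\ast}(vi)$, the operator $i$ sends $N'$ into $N$; similarly $Q^{\ast}(vi)\cdot v=v\cdot Q^{\ast}(iv)$ shows that $v$ sends $N$ into $N'$. Now Theorem \ref{riesz} supplies the crucial fact that $u=iv$ is invertible on $N$ and $u'=vi$ is invertible on $N'$. Consequently, on $N'$ the composition $i\circ v|_{N}=u|_{N}$ is a bijection and on $N$ the composition $v\circ i|_{N'}=u'|_{N'}$ is a bijection, which forces $i\colon N'\to N$ to be simultaneously injective and surjective, and similarly for $v$. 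Continuity of the inverse is automatic: $N$ and $N'$ are finitely generated (even projective) over the Noetherian Banach--Tate ring $R$, so they carry canonical topologies for which every $R$-linear map is continuous (as recalled in \S\ref{fred}).

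The one step that requires some care is the finite-rank reduction for the determinant identity, because one must verify that the map $\phi\mapsto \det(1-T\phi)$ is continuous on the space of compact operators with the operator norm topology, uniformly enough to commute with the limit $i_{n}v\to iv$ (and dually). This is not really an obstacle in our setting since it is already built into the matrix-coefficient description of the Fredholm determinant used to define $\det(1-Tu)\in R\{\{T\}\}$: a small perturbation of the matrix of a compact operator gives a small perturbation of every coefficient of the characteristic series, so both sides converge to a common limit. All the remaining work consists of bookkeeping with intertwining relations, which is routine.
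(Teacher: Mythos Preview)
Your proof is correct and follows essentially the same strategy as the paper: both use the intertwining relations $Q^{\ast}(iv)\cdot i = i\cdot Q^{\ast}(vi)$ and the invertibility of $u$ on $\Ker Q^{\ast}(u)$ supplied by Theorem \ref{riesz}, and both defer the compactness and determinant statements to the standard Buzzard--Serre theory (the paper simply cites \cite[Proposition 2.7]{bu}, which encapsulates the finite-rank reduction you sketch).

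There is one small but genuine difference in the isomorphism step. For injectivity of $i$ on $\Ker Q^{\ast}(u')$, the paper argues directly: if $i(x')=0$ then $u'(x')=vi(x')=0$, so $Q^{\ast}(u')(x')=Q^{\ast}(0)\cdot x'$; since $x'\in\Ker Q^{\ast}(u')$ and $Q^{\ast}(0)\in R^{\times}$ (as $Q$ is multiplicative), this forces $x'=0$. You instead invoke the invertibility of $u'$ on $\Ker Q^{\ast}(u')$ (also from Theorem \ref{riesz}) and run the symmetric argument: $i$ and $v$ restrict to maps $N'\to N$ and $N\to N'$ whose compositions $iv=u|_{N}$ and $vi=u'|_{N'}$ are both bijections, so each is a bijection. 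Your version is cleaner and more symmetric; the paper's version has the minor advantage of only needing invertibility of $u$ (not $u'$), though since both come from the same theorem this is not a real economy.
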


\begin{proof}
Compactness of $u$ and $u^{\prime}$ and the equality of their Fredholm determinants follows from \cite[Proposition 2.7]{bu}. Now assume we have a factorization $F=QS$. If $x^{\prime}\in \Ker Q^{\ast}(u^{\prime})$, then $Q^{\ast}(u)(i(x^{\prime}))=i(Q^{\ast}(u^{\prime})(x^{\prime}))=0$ so $i(\Ker Q^{\ast}(u^{\prime}))\sub \Ker Q^{\ast}(u)$. Furthermore, if $i(x^{\prime})=0$ then $u^{\prime}(x^{\prime})=0$, so $Q^{\ast}(u^{\prime})(x^{\prime})=Q^{\ast}(0).x^{\prime}=0$ and hence $x^{\prime}=0$, so $i$ is injective on $\Ker Q^{\ast}(u^{\prime})$. For surjectivity onto $\Ker Q^{\ast}(u)$, let $x\in \Ker Q^{\ast}(u)$ and choose $y\in \Ker Q^{\ast}(u)$ with $u(y)=x$ (possible by Theorem \ref{riesz}). Then one checks, similarly to the computation above, that $v(y)\in \Ker Q^{\ast}(u^{\prime})$, and hence $i(v(y))=u(y)=x$, which gives us surjectivity and finishes the proof. \end{proof}

Next, we let $K$ be a field, complete with respect to a non-trivial non-archimedean absolute value. We briefly define the Newton polygon of a power series $F\in K[[T]]$, following \cite[\S 4.2]{as} (in this special case). A subset $\mc{N}\sub \R^{2}$ is said to be \emph{sup-convex} if it is convex and, whenever a point $(a,b)\in \mc{N}$, $\mc{N}$ contains the whole half-line $\{(a,b+t)\mid t\geq 0\}$ above it. Given an arbitrary subset $S\sub \R^{2}$, there is a unique smallest sup-convex set containing $S$, which we will denote by $\mc{H}_{+}(S)$. If $I\sub \Z_{\geq 0}$ and $\omega\, :\, I \ra \R$ is a function, then any set of the form $\mc{H}_{+}(\{(n,\omega(n))\mid n\in I\}$ is called a \emph{Newton polygon}. We refer to \cite[\S 4.2]{as} for the notions of vertices, edges and slopes of a Newton polygon.

\begin{definition}
Let $F=\sum_{n\geq 0} a_{n}T^{n}\in K[[T]]$. Fix a pseudo-uniformizer $\vp\in K$ and consider the corresponding valuation $v_{\vp}$. The \emph{Newton polygon of $F$} is the Newton polygon $\mc{H}_{+}(\mc{S}(F))$, where
$$ \mc{S}(F)=\{(n,v_{\vp}(a_{n})) \mid n\in I_{F} \}\sub \R^{2}$$
with $I_{F}=\{ n\in \Z_{\geq 0} \mid a_{n}\neq 0 \}$.
\end{definition}

Let $h\in \R$. We say that a power series $F\in K[[T]]$ has slope $\leq h$ (or $>h$) if all slopes of its Newton polygon are $\leq h$ (or $>h$). Now consider a Banach--Tate ring $R$ with a multiplicative pseudo-uniformizer $\vp$. We say that $F\in R[[T]]$ has slope $\leq h$ (or $>h$) if, for any $x$ in the Gelfand spectrum $\mc{M}(R)$ with residue field $K_{x}$, the specialization $F_{x}\in K_{x}[[T]]$ has slope $\leq h$ (or $>h$).

\begin{definition}
Let $R$ be a Banach--Tate ring with a fixed multiplicative pseudo-uniformizer $\vp$. Let $F\in R\{\{T\}\}$ be a Fredholm series and let $h\in \R$. A \emph{slope $\leq h$-factorization} of $F$ is a factorization $F=QS$ in $R\{\{T\}\}$ where $Q$ is a multiplicative polynomial of slope $\leq h$ and $S$ is a Fredholm series of slope $>h$.
\end{definition}
\begin{remark}\label{slopetop}
If $R$ is a complete Tate ring with a fixed topologically nilpotent unit $\vp$, then the notions of slope factorizations and slope $\leq h$ or $> h$ are independent of the choice of a norm on $R$ with $\vp$ multiplicative. Moreover, one can define all these notions directly without choosing a norm on $R$.
\end{remark}

Recall that an element $a\in R$ is called \emph{quasinilpotent} if its spectral 
seminorm\footnote{The definition of the spectral seminorm is recalled in 
Appendix \ref{app}.} $|a|_{sp}$ is $0$. This is equivalent to $|a|_{x}=0$ for 
all $x\in 
\mc{M}(R)$ by \cite[Corollary 1.3.2]{be}. The set of quasinilpotent elements 
form an ideal of $R$, which is the kernel of the Gelfand transform $R \ra 
\prod_{x\in \mc{M}(R)}K_{x}$. We note that it is easy to see that a quasinilpotent element is topologically nilpotent. For the kinds of ring $R$ which 
appear in practice in this paper, the quasinilpotent elements are just the 
nilpotent elements (this follows from Theorem \ref{su3}), and the proof of the 
following lemma is simpler. However, we will avoid imposing 
additional technical assumptions at this stage.

\begin{lemma}\label{coprime}
Let $R$ be a Banach--Tate ring with a fixed multiplicative pseudo-uniformizer $\vp$ and let $h\in \Q_{\geq 0}$. Let $S$ be a Fredholm series of slope $>h$ and $Q$ a multiplicative polynomial of slope $\leq h$. Then $Q$ and $S$ are relatively prime.
\end{lemma}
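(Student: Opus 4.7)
The plan is to reduce to showing that $\bar S$, the image of $S$ in $R\{\{T\}\}/(Q)$, is a unit; for then any equation $\bar B\bar S=1$ lifts to $BS-AQ=1$ in $R\{\{T\}\}$, exhibiting $1\in(Q,S)$. Since $Q$ is multiplicative I may divide out its (unit) leading coefficient and assume $Q$ is monic of degree $d$, without altering $(Q,S)$. A Weierstrass-type division for entire series by the monic polynomial $Q$ then identifies $R\{\{T\}\}/(Q)$ with $A:=R[T]/(Q)$, a finite free $R$-module of rank $d$; the point to verify is that the coordinates of $\bar T^n\in A$ in the basis $1,\bar T,\ldots,\bar T^{d-1}$ grow at most geometrically in the coefficients of $Q$, so reduction mod $Q$ gives a well-defined surjection $R\{\{T\}\}\to A$ with kernel $Q\cdot R\{\{T\}\}$.

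Now $\bar S\in A$ is a unit iff the $R$-linear endomorphism $\mu_{\bar S}$ of $A$ given by multiplication by $\bar S$ is invertible, iff its determinant $\Delta_S:=\det_R(\mu_{\bar S})\in R$ is a unit. I test this pointwise: for each $x\in\mathcal{M}(R)$ with residue field $K_x$ (fixing an algebraic closure $\overline{K}_x$), the base change $A\otimes_R\overline{K}_x=\overline{K}_x[T]/(Q_x)$ decomposes as $\prod_j\overline{K}_x[T]/(T-\beta_j)^{m_j}$ for the roots $\beta_j$ of $Q_x$ with multiplicities $m_j$. A direct block-diagonal computation (multiplication by $S_x$ on $\overline{K}_x[T]/(T-\beta_j)^{m_j}$ is upper triangular with diagonal entry $S_x(\beta_j)$) yields $\Delta_S|_x=\prod_j S_x(\beta_j)^{m_j}$.

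The slope hypothesis now does the work: the Newton polygon of $Q_x$ has all slopes $\le h$, so its roots satisfy $v(\beta_j)\ge -h$, while any root $\gamma\in\overline{K}_x$ of the Fredholm series $S_x$ satisfies $v(\gamma)<-h$ because $S_x$ has all slopes $>h$. The two root sets are disjoint, so $S_x(\beta_j)\neq 0$ for every $j$, and consequently $|\Delta_S|_x\neq 0$ for every $x\in\mathcal{M}(R)$.

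The main obstacle is the final step: passing from ``$\Delta_S$ nonzero at every Gelfand point of $R$'' to ``$\Delta_S\in R^\times$'' in the full generality of a Banach--Tate ring. The plan is to pass to the uniformization $R/\mathfrak{q}$ (with $\mathfrak{q}$ the closed ideal of quasi-nilpotents), where the Gelfand transform is injective and Berkovich's spectral theory, combined with compactness of $\mathcal{M}(R)$ and continuity of $x\mapsto|\Delta_S|_x$, yields invertibility from the pointwise non-vanishing; lifting back to $R$ then uses that $1+q$ is invertible for quasi-nilpotent $q$ via a convergent geometric series. As the paper's remark preceding the lemma notes, for the rings of interest here quasi-nilpotents coincide with nilpotents (via Theorem \ref{su3}), and this last step collapses to routine commutative algebra.
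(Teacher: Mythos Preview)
Your proof is correct, but it takes a different route from the paper's. The paper works with Coleman's resultant $\Res(Q,S)\in R\{\{T\}\}$ (from \cite[\S A3]{co}): after checking that $\Res(Q_x,S_x)\in K_x^\times$ at every $x\in\mc{M}(R)$ via the slope hypothesis, it deduces that $\Res(Q,S)=a_0+T\cdot F(T)$ with $a_0\in R^\times$ and $F$ having quasinilpotent coefficients, and then spends most of the argument proving that such a series is a unit in $R\{\{T\}\}$ (by showing the formal inverse $\sum_n T^nF^n$ is entire). Your approach instead reduces modulo $Q$ to the finite free $R$-algebra $R[T]/(Q)$ and takes the norm $\Delta_S=\det_R(\mu_{\bar S})\in R$, which specializes at each $x$ to $\prod_j S_x(\beta_j)^{m_j}$; the two quantities are morally the same (Coleman's resultant, when one argument is a monic polynomial, reduces pointwise to the classical resultant, which is this norm), but your choice keeps the invariant in $R$ rather than $R\{\{T\}\}$ and so avoids the entire-series invertibility argument entirely.

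One remark: your final paragraph is more cautious than necessary. The step ``$|\Delta_S|_x\neq 0$ for all $x\in\mc{M}(R)$ implies $\Delta_S\in R^\times$'' is exactly \cite[Corollary 1.2.4]{be}, valid for any commutative Banach ring, and the paper itself invokes this result for the constant term $a_0$. You do not need to pass to the uniformization or reason about quasinilpotents for an element of $R$; that detour is only forced on the paper because its invariant lives in $R\{\{T\}\}$ and has spurious higher-order terms.
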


\begin{proof}
We will use Coleman's resultant $\Res$, for which we refer to \cite[\S A3]{co} (the reader may also benefit from the discussion on \cite[p. 74]{bu}). By \cite[Lemma A3.7]{co} it suffices to prove that $\Res(Q,S)$ is a unit in $R\{\{T\}\}$. Pick $x\in \mc{M}(R)$ and specialize to $K_{x}$. Then $\Res(Q,S)_{x}=\Res(Q_{x},S_{x})$ and since $Q_{x}$ has slope $\leq h$ and $S_{x}$ has slope $>h$ we see that $\Res(Q,S)_{x}\in K_{x}\{\{T\}\}^{\times}=K_{x}^{\times}$. By \cite[Corollary 1.2.4]{be} we see that $\Res(Q,S)=a_{0}+T.F(T)$ where $a_{0}\in R^{\times}$ and $F(T)\in R\{\{T\}\}$ has quasinilpotent coefficients. Multiplying by $a_{0}^{-1}$ we see that it suffices to prove that if $F\in R\{\{T\}\}$ has quasinilpotent coefficients, then $1-T.F(T)\in R\{\{T\}\}^{\times}$.

\medskip

To prove this, we use an argument suggested to us by a referee, which is more efficient than our original argument. First note that the formal inverse of $1-T.F(T)$ is 
$G(T)=\sum_{n\geq 0}T^{n}.F(T)^{n}$, so we need to show that this is entire. Setting $H(T)=T.F(T)$, it suffices to show that if $H(T)$ is any entire power series with $H(0)=0$ and with quasinilpotent coefficients, then $\sum_{n\geq 0}H(T)^n $ converges and is entire. In fact, it suffices to prove that $\sum_{n\geq 0}H(T)^n \in R\langle T \rangle$, since if we have proved this we can apply this to $H(\vp^{-N}T)$ for all $N$ (which still has quasinilpotent coefficients) to deduce that $\sum_{n\geq 0}H(T)^n$ is entire.

\medskip

So, to show this, it suffices to show that $H(T)$ is topologically nilpotent in $R\langle T \rangle$, where we equip $R\langle T \rangle$ with the Gauss norm coming from the norm on $R$. Since the coefficients of $H$ tend to $0$, we may write $H(T)=\sum_{n=1}^{N}h_n T^n + \sum_{n>N}h_n T^n$, with $|h_n|<1$ for $n>N$. Then the tail $\sum_{n>N}h_n T^n$ is topologically nilpotent (it has Gauss norm $<1$) and the terms $h_n T^n$ are topologically nilpotent for all $n$ since $h_n$ is quasinilpotent (and hence topologically nilpotent). So $H(T)$ is a finite sum of topologically nilpotent elements, hence topologically nilpotent. This finishes the proof.
\end{proof}

Continue to let $R$ be a Banach--Tate ring with a fixed multiplicative pseudo-uniformizer $\vp$. The following is a minor variation of \cite[Definition 4.6.1]{as}.

\begin{definition}
Let $M$ be an (abstract) $R$-module, let $u\, :\, M \ra M$ be an $R$-linear map and let $h\in \Q$. An element $m\in M$ is said to have slope $\leq h$ with respect to $u$ if there is a multiplicative polynomial $Q\in R[T]$ such that
\begin{enumerate}
\item $Q^{\ast}(u).m=0$;

\item The slope of $Q$ is $\leq h$.
\end{enumerate}
We let $M_{\leq h}\sub M$ denote the subset of elements of slope $\leq h$.
\end{definition}

\begin{lemma}(\cite[Proposition 4.6.2]{as})
$M_{\leq h}$ is an $R$-submodule of $M$, which is stable under $u$.
\end{lemma}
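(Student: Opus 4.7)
The plan is to verify the three closure properties directly from the definition, producing witnesses explicitly.

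For stability under $u$, suppose $m \in M_{\leq h}$ has witness polynomial $Q$, so $Q^*(u).m = 0$. Since $u$ commutes with $Q^*(u)$, I compute $Q^*(u)(u.m) = u(Q^*(u).m) = 0$, so the same $Q$ witnesses $u.m \in M_{\leq h}$. Similarly, for closure under $R$-scalar multiplication, if $r \in R$ and $m$ has witness $Q$, then $R$-linearity of $u$ (and hence of $Q^*(u)$) gives $Q^*(u)(r.m) = r.Q^*(u).m = 0$, so $r.m \in M_{\leq h}$ with the same witness.

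The substantive step is closure under addition. Given $m_1, m_2 \in M_{\leq h}$ with witnesses $Q_1, Q_2$, the natural candidate is $Q := Q_1 Q_2$. First, I would check that the star operation is multiplicative on polynomials: if $\deg Q_i = d_i$, then
\[ (Q_1 Q_2)^*(T) = T^{d_1+d_2}(Q_1 Q_2)(1/T) = \bigl(T^{d_1}Q_1(1/T)\bigr)\bigl(T^{d_2}Q_2(1/T)\bigr) = Q_1^*(T)\, Q_2^*(T). \]
Since the polynomials $Q_1^*(u), Q_2^*(u)$ commute, I then get
\[ Q^*(u)(m_1 + m_2) = Q_2^*(u)\bigl(Q_1^*(u).m_1\bigr) + Q_1^*(u)\bigl(Q_2^*(u).m_2\bigr) = 0. \]

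It remains to verify that $Q = Q_1 Q_2$ is itself a multiplicative polynomial of slope $\leq h$. Multiplicativity is immediate because the leading coefficient of $Q$ is the product of the leading coefficients of $Q_1$ and $Q_2$, hence a unit. The slope condition is the only genuinely non-formal point: by definition, I must check that for every $x \in \mathcal{M}(R)$ with residue field $K_x$, the specialization $Q_x = (Q_1)_x (Q_2)_x$ has Newton polygon with all slopes $\leq h$. Since each $(Q_i)_x$ is multiplicative (its leading coefficient is the image of a unit, hence nonzero in $K_x$), each has the same degree as $Q_i$, and the standard fact over the complete valued field $K_x$ that the Newton polygon of a product of polynomials is obtained by concatenating the slopes of the factors shows that the slopes of $Q_x$ are the union (with multiplicity) of the slopes of $(Q_1)_x$ and $(Q_2)_x$, all of which are $\leq h$ by hypothesis. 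This completes the verification that $m_1 + m_2 \in M_{\leq h}$, and hence that $M_{\leq h}$ is a $u$-stable $R$-submodule of $M$.
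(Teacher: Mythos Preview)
Your proposal is correct and follows essentially the same approach as the paper: the paper also reduces to showing that $Q_1Q_2$ is a multiplicative polynomial of slope $\leq h$, and checks the slope condition by specializing to residue fields $K_x$ and invoking the standard Newton polygon concatenation fact. You have simply spelled out the details (the identity $(Q_1Q_2)^* = Q_1^*Q_2^*$ and the slope-concatenation argument) more explicitly than the paper does.
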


\begin{proof}
It is clear from the definition that $M_{\leq h}$ is closed under multiplication, and stable under $u$. It therefore suffices to prove that it is closed under addition, for which it suffices to prove that if $Q_{1}$ and $Q_{2}$ are two multiplicative polynomials of slope $\leq h$, then so is $Q_{1}Q_{2}$. To see this it suffices to specialize to the case when $R$ is a field and the norm is an absolute value. The assertion is then well known (for example, the argument in the proof of  \cite[Proposition 4.6.2]{as} carries over without change).
\end{proof}

\begin{definition}(\cite[Definition 4.6.3]{as})
Let $M$ be an $R$-module with an $R$-linear map $u\, :\, M \ra M$ and let $h\in \Q$. A slope $\leq h$-decomposition of $M$ is an $R[u]$-module decomposition $M=M_{h}\oplus M^{h}$ such that
\begin{enumerate}
\item $M_{h}$ is a finitely generated $R$-submodule of $M_{\leq h}$;

 \item For every multiplicative polynomial $Q\in R[T]$ of slope $\leq h$, the map $Q^{\ast}(u)\, : \, M^{h} \ra M^{h}$ is an isomorphism of $R$-modules.
\end{enumerate}
\end{definition}

\begin{proposition} We keep the above notation. If $M$ has a slope $\leq h$-decomposition $M_{h}\oplus M^{h}$, then it is unique, and $M_{h}=M_{\leq h}$ (in particular the latter is finitely generated over $R$). We will from now on write $M_{>h}$ for the unique complement. Moreover, slope decompositions satisfy the following functorial properties:

\begin{enumerate} 
\item Let $f\, :\, M \ra N$ be a morphism of $R[u]$-modules with slope $\leq h$-decompositions. Then $f(M_{\leq h})\sub N_{\leq h}$ and $f(M_{>h})\sub N_{>h}$. Moreover, both ${\rm Ker}(f)$ and ${\rm Im}(f)$ have slope $\leq h$-decompositions.

\item Let $C^{\bu}$ be a complex of $R[u]$-modules and suppose that each $C^{i}$ has a slope $\leq h$-decomposition. Then every $H^{i}(C^{\bu})$ has a slope $\leq h$-decomposition, explicitly given by $H^{i}(C^{\bu})=H^{i}(C^{\bu}_{\leq h}) \oplus H^{i}(C^{\bu}_{>h})$.
\end{enumerate}
\end{proposition}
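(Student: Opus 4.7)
The plan is to first pin down $M_h = M_{\leq h}$ (making the first summand canonical), then prove uniqueness of the complement $M^h$ by an annihilation trick, and finally derive the functorial properties from the same idea. For $M_h = M_{\leq h}$: the inclusion $M_h \sub M_{\leq h}$ is part of axiom (1). For the reverse, given $m \in M_{\leq h}$ with $Q^*(u) m = 0$ for some multiplicative polynomial $Q$ of slope $\leq h$, I would decompose $m = a + b$ along $M_h \oplus M^h$, apply $Q^*(u)$, and invoke axiom (2) (that $Q^*(u)$ is an isomorphism of $M^h$) to force $b = 0$, hence $m = a \in M_h$.

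The key technical input is the following annihilation trick: since $M_{\leq h} = M_h$ is $R$-finitely generated, taking the product of multiplicative slope-$\leq h$ polynomials annihilating each member of a finite generating set yields a single multiplicative polynomial $Q$ of slope $\leq h$ (such polynomials being closed under products, as slopes combine and the leading coefficient remains a unit) that annihilates all of $M_{\leq h}$. For uniqueness of the complement, suppose $M = M_{\leq h} \oplus M^h = M_{\leq h} \oplus \widetilde{M}^h$ and define the $R[u]$-linear map $\phi\, :\, M^h \ra M_{\leq h}$ sending $m$ to its $M_{\leq h}$-component under the second decomposition. Since $Q$ annihilates $M_{\leq h}$ and $\phi$ is $R[u]$-linear, $0 = Q^*(u) \circ \phi = \phi \circ Q^*(u)|_{M^h}$; surjectivity of $Q^*(u)$ on $M^h$ (axiom (2)) then forces $\phi = 0$, so $M^h = \widetilde{M}^h$. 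We may henceforth unambiguously denote the complement $M_{>h}$.

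For functoriality (1), $f(M_{\leq h}) \sub N_{\leq h}$ is immediate by applying $f$ to the relation $Q^*(u) m = 0$. For $f(M_{>h}) \sub N_{>h}$ I would apply the annihilation trick again: pick $Q$ killing $N_{\leq h}$; for $m \in M_{>h}$, lift to $m' \in M_{>h}$ with $Q^*(u) m' = m$ (using axiom (2)) and decompose $f(m') = a + b \in N_{\leq h} \oplus N_{>h}$, obtaining $f(m) = Q^*(u) a + Q^*(u) b = Q^*(u) b \in N_{>h}$. The induced slope decompositions of $\Ker f$ and $\im f$ are then $(\Ker f \cap M_{\leq h}) \oplus (\Ker f \cap M_{>h})$ and $f(M_{\leq h}) \oplus f(M_{>h})$, with axioms transferring by restriction (and Noetherianness of $R$ ensuring finite generation of $\Ker f \cap M_{\leq h}$ inside the finitely generated $M_{\leq h}$). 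Statement (2) is then formal: by (1) the differentials $d^i\, :\, C^i \ra C^{i+1}$ respect the decompositions, yielding a splitting of complexes $C^\bullet = C^\bullet_{\leq h} \oplus C^\bullet_{>h}$, and cohomology preserves direct sums; the slope decomposition axioms for the cohomology summands are inherited since $Q^*(u)$ already annihilates each $C^i_{\leq h}$ and is an isomorphism on each $C^i_{>h}$.

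The main obstacle, and the unifying technical point, is the uniqueness of the complement: it requires combining finite generation of $M_{\leq h}$ (to produce a single multiplicative polynomial annihilating it), $R[u]$-linearity of the correction map $\phi$, and surjectivity of $Q^*(u)$ on $M^h$. Once this is in hand, the remaining statements are essentially formal verifications of the axioms.
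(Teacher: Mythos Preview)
Your proof is correct and is precisely the direct argument underlying the paper's approach: the paper identifies slope $\leq h$-decompositions with $\mc{S}$-decompositions for $\mc{S}=\{Q^{\ast}(u): Q \text{ multiplicative of slope }\leq h\}$ and cites \cite[Proposition 4.1.2]{as}, whose content is exactly your annihilation trick and the consequences you draw from it. Your remark that Noetherianness of $R$ is needed to guarantee finite generation of $\Ker f \cap M_{\leq h}$ is a fair observation, tacitly assumed in the paper's applications.
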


\begin{proof}
The proof is identical to that of \cite[Lemma 4.6.4]{as}: one equates slope $\leq h$-decompositions with $\mc{S}$-decompositions (as defined and studied in \cite[\S 4.1]{as})) for the set $\mc{S}\sub R[u]$ of all $Q^{\ast}(u)$ where $Q$ is a multiplicative polynomial of slope $\leq h$. The properties then stated follow from general facts about $\mc{S}$-decompositions, recorded in \cite[Proposition 4.1.2]{as}.
\end{proof}

\begin{definition}
Let $R$ be a Banach--Tate ring with a fixed multiplicative pseudo-uniformizer $\vp$ and let $M$ be a Banach $R$-module. Assume that $M$ has a slope-$\leq h$ decomposition $M=M_{\leq h}\oplus M_{>h}$. If $f : R\ra S$ is a bounded morphism of Banach--Tate rings such that $f(\vp)$ is a multiplicative pseudo-uniformizer in $S$, we say that the slope-$\leq h$ decomposition is \emph{functorial for $R\ra S$} if the decomposition $M\ctens_{R}S = (M_{\leq h} \otimes_{R}S) \oplus (M_{>h} \ctens_{R}S)$ is a slope-$\leq h$ decomposition of $M \ctens_{R}S$ (using $f(\vp)$ to define slopes for $S$). We say that the slope-$\leq h$ decomposition is \emph{functorial} if it is functorial for all such bounded homomorphisms of Banach--Tate rings out of $R$.
\end{definition}

\begin{theorem}
Let $R$ be a Noetherian Banach--Tate ring with a fixed multiplicative pseudo-uniformizer $\vp$, and let $M$ be a Banach $R$-module with property (Pr). Let $u$ be a compact $R$-linear operator on $M$, with Fredholm determinant $F(T)=\det(1-Tu)$. If $M$ has a slope $\leq h$-decomposition which is functorial with respect to $R \ra K_{x}$ for all $x\in \mc{M}(R)$, then $F$ has a slope $\leq h$-factorization. Conversely, if $F$ has a slope-$\leq h$ factorization, then $M$ has a functorial slope-$\leq h$ decomposition.
\end{theorem}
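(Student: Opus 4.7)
The plan is to prove both directions by leveraging Theorem \ref{riesz}.

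\textbf{Forward direction.} Suppose $M = M_h \oplus M^h$ is a functorial slope $\leq h$-decomposition, and set $Q := \det(1 - Tu\mid M_h)$, $S := \det(1 - Tu \mid M^h)$, so that $F = QS$ by multiplicativity of Fredholm determinants on direct summands. Since $M_h \sub M_{\leq h}$ is finitely generated, taking a product of annihilators for a finite generating set yields a single multiplicative polynomial $P$ of slope $\leq h$ with $P^*(u)|_{M_h} = 0$; writing $P^*(T) = c + Tg(T)$ with $c = P^*(0) \in R^\times$, the relation $c + ug(u) = 0$ on $M_h$ shows $u|_{M_h}$ is invertible, so $Q$ is a multiplicative polynomial of degree $\rk M_h$. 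For the slope conditions I appeal to the functoriality hypothesis: over each residue field $K_x$, $(M_h)_x$ is finite-dimensional and contained in the slope $\leq h$-subset of $M_x$, so the eigenvalues of $u_x$ on $(M_h)_x$ have valuation $\leq h$, giving $Q_x$ slope $\leq h$ and hence $Q$ slope $\leq h$. Applying condition (ii) base-changed to $K_x$ to the polynomial $1 - \alpha T$ for each nonzero $\alpha \in K_x$ of valuation $\leq h$ shows no such $\alpha$ is an eigenvalue of $u_x$ on $(M^h)_x$, so $S_x$ and hence $S$ have slope $> h$.

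\textbf{Converse direction.} Suppose $F = QS$ is a slope $\leq h$-factorization. By Lemma \ref{coprime}, $Q$ and $S$ are coprime in $R\{\{T\}\}$, so Theorem \ref{riesz} produces a decomposition $M = \Ker Q^*(u) \oplus N$ with $\Ker Q^*(u)$ finitely generated projective of rank $\deg Q$. Set $M_h := \Ker Q^*(u)$ and $M^h := N$; condition (1) is immediate, and functoriality of the resulting decomposition under bounded base changes is inherited from that of the factorization $F = QS$ combined with the uniqueness of Riesz complements.

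The heart of the argument is verifying condition (2): for every multiplicative polynomial $P$ of slope $\leq h$, $P^*(u)$ must be invertible on $M^h$. My plan is a companion-matrix trick. Let $U$ be the companion matrix of the monic polynomial $P^*(T)$ acting on $R^{\deg P}$, so that $\det(1 - TU \mid R^{\deg P}) = P(T)$ and $P^*(U) = 0$ by Cayley--Hamilton. Form $\wt M := M^h \oplus R^{\deg P}$ with the compact block-diagonal operator $\wt u := u|_{M^h} \oplus U$: then $\wt M$ has property (Pr) and $\det(1 - T\wt u \mid \wt M) = S \cdot P$, which is a slope $\leq h$-factorization. Applying Theorem \ref{riesz} to this data yields $\wt M = \Ker P^*(\wt u) \oplus N'$ with $\Ker P^*(\wt u)$ projective of rank $\deg P$. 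Block-diagonality gives $\Ker P^*(\wt u) = \Ker P^*(u|_{M^h}) \oplus R^{\deg P}$, so rank comparison forces $\Ker P^*(u|_{M^h}) = 0$. The projector onto $N'$ lies in $\overline{R[\wt u]}$, which sits inside the closed subalgebra of block-diagonal endomorphisms of $\wt M$; this projector therefore has the form $e_1 \oplus e_2$, and matching kernels with $0 \oplus R^{\deg P}$ forces $e_1 = \mathrm{id}$ and $e_2 = 0$, giving $N' = M^h \oplus 0$. The invertibility of $P^*(\wt u)|_{N'}$ from Theorem \ref{riesz} is then exactly the invertibility of $P^*(u|_{M^h})$, as required.

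\textbf{Main obstacle.} The difficult step is verifying condition (2) in the converse: a direct attempt to invert $P^*(u)$ on $M^h$ would require lifting spectral arguments from residue fields to $R$, which is delicate over a general Noetherian Banach--Tate ring. The companion-matrix augmentation sidesteps this by making $P$ a factor of a new Fredholm determinant, so that a second application of the Riesz machinery produces invertibility for free.
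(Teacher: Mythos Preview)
Your forward direction follows the paper's line of argument, with one small imprecision: testing only the linear polynomials $1-\alpha T$ with $\alpha\in K_x$ does not quite show that $S_x$ has slope $>h$, since the inverse roots of $S_x$ may lie in $\overline{K_x}\setminus K_x$. The clean fix is to apply condition (ii) over $K_x$ not to a linear polynomial but to the slope-$\leq h$ factor $Q'$ of the Newton factorization of $S_x$ (which always exists over a field); invertibility of $(Q')^*(u_x)$ on $(M^h)_x$ forces $\deg Q'=0$. The paper simply asserts this step without detail, so your version is not worse, just slightly mis-aimed.

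Your converse direction is correct but takes a genuinely different route for condition (2). The paper argues in one line: since $S=\det(1-Tu\mid N)$ has slope $>h$ and $P$ has slope $\leq h$, Lemma \ref{coprime} gives $(P,S)=R\{\{T\}\}$, and then \cite[Lemma 3.1]{bu} (already noted to hold in this setting) yields invertibility of $P^*(u)$ on $N$ directly. Your companion-matrix augmentation instead reproves the needed special case of \cite[Lemma 3.1]{bu} from scratch, via a second application of Theorem \ref{riesz} to the enlarged pair $(\wt M,\wt u)$; the rank comparison and the block-diagonality of the Riesz projector then pin down $N'=M^h\oplus 0$. This is correct and self-contained, at the cost of a longer argument and an implicit second invocation of Lemma \ref{coprime} (needed to feed the factorization $P\cdot S$ into Theorem \ref{riesz}). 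The paper's route is shorter once one accepts Buzzard's lemma as established; yours has the virtue of deriving everything from the Riesz decomposition alone.
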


\begin{proof}
Assume that $M$ has a slope-$\leq h$ decomposition $M=M_{\leq h} \oplus M_{>h}$ 
with is functorial with respect to $R \ra K_{x}$ for all $x\in \mc{M}(R)$. Then 
both of these spaces satisfy property (Pr) and are $u$-stable, and hence we have
$$ F = \det(1-Tu \mid M_{\leq h})\det(1-Tu \mid M_{>h}). $$
We claim that this is a slope-$\leq h$ factorization.  Put $Q=\det(1-Tu \mid M_{\leq h})$, $S=\det(1-Tu \mid M_{>h})$. Pick $x\in \mc{M}(R)$ with residue field $K_{x}$ and specialize. We have $Q_{x}=\det(1-Tu \mid M_{\leq h}\otimes_{R}K_{x})$ and $S_{x}=\det(1-Tu \mid M_{>h}\ctens_{R}K_{x})$. By assumption $M\ctens_{R}K_{x} = (M_{\leq h} \otimes_{R}K_{x}) \oplus (M_{>h}\ctens_{R}K_{x})$ is a slope-$\leq h$ decomposition, so $Q_{x}$ has slopes $\leq h$ and $S_{x}$ has slopes $>h$ and so $F=QS$ is a slope-$\leq h$ factorization. 

\medskip

Conversely, assume that $F$ has a slope-$\leq h$ factorization $F=QS$. By Lemma \ref{coprime} $Q$ and $S$ are relatively prime, so we may apply Theorem \ref{riesz} to get a $u$-stable decomposition $M = \Ker Q^{\ast}(u) \oplus N$. It is easy to see that this decomposition is functorial, as is a slope-$\leq h$ factorization, so it suffices to prove that this decomposition is a slope-$\leq h$ decomposition. First, since $Q$ has slope $\leq h$ we see that $\Ker Q^{\ast}(u)\sub M_{\leq h}$ (and we know it's finitely generated). It remains to show that for every multiplicative polynomial $P$ of slope $\leq h$, $P^{\ast}(u)$ is invertible on $N$. By Lemma \ref{coprime} $P$ and $S$ are relatively prime. Since $S=\det(1-Tu\mid N)$ (by Theorem \ref{riesz}), it follows from \cite[Lemma 3.1]{bu} that $P^{\ast}$ is invertible on $N$, as desired.
\end{proof}

\begin{corollary}
With notation and assumptions as in the theorem, a slope-$\leq h$ decomposition of $M$ is functorial if and only if it is functorial for the natural map $R \ra K_{x}$ for all $x\in \mc{M}(R)$. 
\end{corollary}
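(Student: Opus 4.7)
The plan is to deduce this corollary directly from the theorem just proved, together with the uniqueness of slope decompositions stated earlier in the Proposition on slope decompositions.

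One direction is immediate: if the slope-$\leq h$ decomposition $M=M_{\leq h}\oplus M_{>h}$ is functorial in the full sense, then in particular it is functorial for the maps $R\ra K_{x}$ for each $x\in\mc{M}(R)$, since each such $K_{x}$ is a Banach--Tate ring (the point $x$ being a bounded multiplicative seminorm on $R$, the image of $\vp$ is a multiplicative pseudo-uniformizer of $K_{x}$) and the map $R\ra K_{x}$ is a bounded morphism of Banach--Tate rings.

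For the converse, assume the given slope-$\leq h$ decomposition is functorial for every map $R\ra K_{x}$. By the first half of the theorem, this implies that $F=\det(1-Tu)$ admits a slope-$\leq h$ factorization $F=QS$. Now invoke the converse half of the theorem: the existence of a slope-$\leq h$ factorization produces a \emph{functorial} slope-$\leq h$ decomposition of $M$. By the uniqueness statement in the Proposition on slope decompositions, this functorial decomposition must coincide with the originally given decomposition $M=M_{\leq h}\oplus M_{>h}$. Hence the given decomposition is itself functorial, completing the proof.

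The only thing to be careful about is making sure that the two appeals to the theorem are compatible, i.e.~that the decomposition produced by the converse direction of the theorem is genuinely the same $R[u]$-module decomposition as the one we started with; this is exactly what the uniqueness of slope decompositions gives us, so there is no serious obstacle.
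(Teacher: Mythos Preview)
Your proof is correct and is exactly the intended argument: the paper states this corollary without proof, as it follows immediately from the two directions of the theorem together with the uniqueness of slope-$\leq h$ decompositions. Your observation that uniqueness is what guarantees the functorial decomposition produced by the theorem coincides with the given one is precisely the point.
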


\subsection{Fredholm hypersurfaces}

 In this section we discuss the notion of Fredholm hypersurfaces and relate this to slope factorizations and decompositions. We will use Huber's adic spaces as our framework for non-archimedean geometry, and we will use standard notions and notation from this theory freely, refering to the basic references \cite{hu2, hu3}. 

\medskip

Any Tate ring $R$ with a Noetherian ring of definition has an associated 
affinoid adic space $\Spa(R,R^{+})$, for any ring of integral elements $R^{+}$, 
by \cite[Theorem 2.5]{hu2}. Fix an $R^{+}$ and consider $X=\Spa(R,R^{+})$. We 
will frequently be interested in affine $1$-space over $X$. As an adic space 
over $(\Z,\Z)$, we have $\A^{1}=\Spa(\Z[T],\Z)$; it represents the functor $X 
\mapsto \oo(X)$ on the category of adic spaces (we note that the functor $X 
\mapsto \oo^{+}(X)$ is represented by the `closed unit disc' 
$\Spa(\Z[T],\Z[T])$). The fibre product $\A^{1}_{X}:=X 
\times_{\Spa(\Z,\Z)}\A^{1}$ exists, but it is no longer affinoid. Indeed, 
if we pick a topologically nilpotent unit $\vp\in R$, it can be checked that 
the fibre product is 
given by
$$ \A_{X}^{1} = \bigcup_{m\geq 0} \Spa(R \langle \vp^{m}T \rangle, R^{+}\langle \vp^{m}T\rangle ) $$
with respect to the transition maps coming from the natural inclusions. The ring of global functions on $\A^{1}_{X}$ is the ring of entire power series $R\{\{T\}\}$. Pick a topologically nilpotent unit $\vp\in R$. If $h\in \Q$ then, writing $h=m/n$ with $m\in \Z$ and $n\in \Z_{\geq 1}$, we define an affinoid subset $\mb{B}_{X,h}\sub \mb{A}^{1}_{X}$ by
$$ \mb{B}_{X,h}=\{ |T^{n}|\leq |\vp^{-m}| \} \sub \mb{A}^{1}_{X}. $$
We have $\mb{A}^{1}_{X}=\bigcup_{h\in \Q} \mb{B}_{X,h}$. 

Let $R$ be a complete Tate ring with a Noetherian ring of definition and let $F\in R\{\{T\}\}$ be a Fredholm series. Put $X=\Spa(R,R^{\circ})$. The closed subvariety $Z(F):=\{F=0\}\sub \A^{1}_{X}$ is called the \emph{Fredholm hypersurface} of $F$, or sometimes the \emph{spectral variety} of $F$. It carries a projection map $Z(F)\ra X$, which is flat, locally quasi-finite and partially proper by \cite[Theoreme B.1]{aip}.

\begin{definition}
Let $R$ be a complete Tate ring with a Noetherian ring of definition, and pick a topologically nilpotent unit $\vp\in R$. Let $F$ be a Fredholm series with Fredholm hypersurface $Z=Z(F)\sub \A^{1}_{X}$, where $X=\Spa(R,R^{\circ})$. Let $h\in \mb{Q}_{\geq 0}$ and let $U\sub X$ be an open affinoid in $X$; put $Z_{U,h}=Z\cap \mb{B}_{U,h}\sub \A^{1}_{X}$ (this is an open affinoid subset of $Z$). We say that the pair $(U,h)$ is a \emph{slope datum} for $(X,F)$ if $Z_{U,h} \ra U$ is finite of constant degree (if the pair $(X,F)$ is clear form the context, we occasionally just say that $(U,h)$ is a slope datum).
\end{definition}

\begin{theorem}\label{slopeadapted}
Let $R$ be a complete Tate ring with a Noetherian ring of definition, and pick a topologically nilpotent unit $\vp\in R$. Let $F$ be a Fredholm series over $R$ with spectral variety $Z=Z(F)\sub \mb{A}^{1}_{X}$, where $X=\Spa(R,R^{\circ})$. Let $U\sub X$ be an open affinoid and let $h\in \Q_{\geq 0}$. Then:

\begin{enumerate}
\item $(U,h)$ is a slope datum for $(X,F)$ if and only if $F$ has a slope $\leq h$-factorization in $\oo_{X}(U)\{\{T\}\}$.

\item The collection of all $Z_{U,h}$ for all slope data $(U,h)$ is an open cover of $Z$.
\end{enumerate}
\end{theorem}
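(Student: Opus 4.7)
The plan is to follow the Coleman--Buzzard--AIP framework (cf.\ \cite[Annexe B]{aip}), reducing both (1) and (2) to fibrewise Newton polygon calculations combined with a Hensel/Weierstrass-type deformation. The key input from Newton polygon theory is that over a complete valued field $K_x$, a polynomial $Q \in K_x[T]$ with unit leading coefficient has slope $\leq h$ precisely when its roots (in an algebraic closure) have valuations $\geq -h$, i.e.\ lie in $\mb{B}_{x,h}$; dually, a Fredholm series of slope $>h$ has no zeros in $\mb{B}_{x,h}$ and is therefore invertible in $\oo(\mb{B}_{x,h})$.

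For (1) $(\Leftarrow)$: given a slope factorization $F|_U = QS$, show fibrewise that $S$ is invertible in $\oo(\mb{B}_{U,h})$, so $(F) = (Q)$ there, giving $Z_{U,h} = \{Q = 0\} = \Spa(\oo_X(U)[T]/(Q))$. Since $Q$ is multiplicative of degree $d := \deg Q$, this is finite of constant degree $d$ over $U$, making $(U,h)$ a slope datum. Conversely, given a slope datum $(U,h)$, let $B := \oo(Z_{U,h})$, a finite projective $\oo_X(U)$-module of rank $d$. Since $F(0) = 1$, $T$ acts invertibly on $B$; the characteristic polynomial $P(T)$ of multiplication by $T$ on $B$ has unit constant term, and $Q(T) := T^d P(1/T)$ is a multiplicative polynomial of degree $d$ with $Q(0) = 1$, of slope $\leq h$ since all its roots correspond to $T$-coordinates of points of $Z_{U,h} \sub \mb{B}_{U,h}$. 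To conclude, one shows $F = QS$ with $S \in \oo_X(U)\{\{T\}\}$ a Fredholm series of slope $>h$: fibrewise Weierstrass preparation gives $F_x = Q_x \wt{S}_x$ with $\wt{S}_x$ of slope $>h$, and coprimality with $Q_x$ (Lemma \ref{coprime}) ensures this factorization is unique; a global resultant bound in the spirit of the proof of Lemma \ref{coprime} then promotes the formal quotient $S := F/Q$ to an entire Fredholm series of slope $>h$.

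For (2), let $z \in Z$ above $x \in X$. The $T$-coordinate of $z$ has finite valuation at $x$, so $z \in \mb{B}_{x, h_0}$ for some $h_0 \in \Q_{\geq 0}$. Since $F_x$ is entire, its Newton polygon has only finitely many slopes in $[0, h_0 + 1]$, so I pick $h \in \Q \cap (h_0, h_0 + 1)$ avoiding these. Then $F_x$ has a slope $\leq h$ factorization over $K_x$, which lifts via the Hensel-type deformation from (1) $(\Rightarrow)$ to a factorization $F|_U = QS$ over some affinoid neighborhood $U \ni x$. By (1), $(U,h)$ is a slope datum containing $z$. The main obstacle in this plan is the globalization step in (1) $(\Rightarrow)$: showing that the formal quotient $F/Q$ converges as an entire Fredholm series on $\mb{A}^1_U$, rather than merely fibrewise. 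This requires uniform control over resultants across rank-$1$ specializations, which in the Banach--Tate rather than rigid $\Qp$-affinoid setting of this paper requires some care beyond the classical Coleman--Buzzard arguments.
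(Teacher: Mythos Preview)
Your argument for (1)$(\Leftarrow)$ matches the paper's, and extracting $Q$ from the characteristic polynomial $P$ of $m_T$ on $\oo(Z_{U,h})$ is the right idea for (1)$(\Rightarrow)$. But there is a slip: $T^d P(1/T)$ has roots at the \emph{reciprocals} of the eigenvalues of $m_T$, so its roots are not the $T$-coordinates of points of $Z_{U,h}$ and it need not have slope $\leq h$; the polynomial you want is $Q(T) = P(T)/P(0)$.

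More seriously, the globalisation step does not work as written. Resultant bounds in the style of Lemma~\ref{coprime} certify coprimality, not divisibility, and pointwise entireness of $S_x = F_x/Q_x$ does not uniformly bound the coefficients of the formal quotient $S = F/Q$ (note that $Q$ has zeros in every $\mb{B}_{U,N}$ with $N \geq h$, so one cannot simply invert it there). The argument of \cite[Corollaire~B.1]{aip}, which the paper invokes, runs differently: since $Z_{U,h} \to U$ is finite (hence proper) and $Z|_U \to U$ is separated, the open immersion $Z_{U,h} \hookrightarrow Z|_U$ is also closed; this clopen decomposition, together with the identification $\oo(Z_{U,h}) \cong \oo_X(U)[T]/(Q)$, forces $(F) \subset (Q)$ in each $\oo(\mb{B}_{U,N})$ for $N \gg 0$, so the $S_N = F/Q$ glue to an entire $S$. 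Finally, your sketch of (2) appeals to a ``Hensel-type deformation from (1)$(\Rightarrow)$'' that does not actually appear in your (1)$(\Rightarrow)$; the Hensel/Newton lifting (where a resultant-type estimate \emph{does} enter, to control convergence of the iteration) is a separate argument, and in \cite[Th\'eor\`eme~B.1]{aip} it is established independently of, and logically prior to, the factorisation statement.
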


\begin{proof}
This follows almost directly from \cite[Theoreme B.1, Corollaire B.1]{aip}. The second assertion is shown by tracing through the proof of \cite[Lemma B.1, Theoreme B.1]{aip}; adapting the proof of \cite[Theoreme B.1]{aip} slightly one sees that one may take the sets to be of the form $Z_{U,h}$ (the degree is locally constant on $U$, so constancy of the degree can be arranged). For the first assertion, the statement that if $(U,h)$ is a slope datum then $F$ has a slope-$\leq h$ factorization in $\oo_{X}(U)\{\{T\}\}$ is \cite[Corollaire B.1]{aip}. Conversely, if $F=QS$ is a slope-$\leq h$ factorization in $\oo_{X}(U)\{\{T\}\}$, then $S$ is a unit in $\oo(\mb{B}_{U,h})$. Therefore $\oo(Z_{U,h})=\oo(\mb{B}_{U,h})/(F)= \oo(\mb{B}_{U,h})/(Q)$ is finite of constant degree equal to $\deg Q$ over $U$, and hence $Z_{U,h} \ra U $ is finite of constant degree. 
\end{proof}

More generally, let $X$ be an analytic adic space locally of the form $\Spa(R,R^{\circ})$ for $R$ a complete Tate ring with a Noetherian ring of definition, and let $F$ be a Fredholm series over $X$ with Fredholm hypersurface $Z$. If $U\sub X$ is an open affinoid and $h\in \Q_{\geq 0}$, we say that $(U,h)$ is a slope datum for $(X,F)$ if $\oo(U)$ is Tate and there is a topologically nilpotent unit $\vp\in\oo(U)$ such that $Z_{U,h}$, defined using this choice of $\vp$, is finite flat of constant degree over $U$.  

\medskip

When constructing eigenvarieties, it will be useful to consider a slightly more general notion. Let $X$ be an analytic adic space as above and let $F$ be a Fredholm series over $X$, with associated hypersurface $Z$. Write $\pi : Z \ra X$ for the projection. We let  $\ms{C}ov(Z)$ denote the set of all open affinoid $V\sub Z$ such that $\pi(V)\sub X$ is open affinoid, $\oo(\pi(V))$ is Tate, and the map $\pi|_{V} : V \ra \pi(V)$ is finite of constant degree.  Then we have the following theorem.

\begin{theorem}\label{cover}
Keep the notation and assumptions of the paragraph above. Then $\ms{C}ov(Z)$ is an open cover of $Z$. If $V\in \ms{C}ov(Z)$, then there exists a factorization $F=QS$ in $\oo(\pi(V))\{\{T\}\}$, where $Q$ is a multiplicative polynomial of degree $\deg \pi|_{V}$, $S$ is a Fredholm series, $Q$ and $S$ are relatively prime, and we have $\oo(V)=\oo(\pi(V))[T]/(Q)$ and $\oo^+(V)=(\oo(\pi(V))[T]/(Q))^\circ$. Conversely, if such a factorization of $F$ exists in $\oo(U)\{\{T\}\}$, where $U\sub X$ is open affinoid and $\oo(U)$ is Tate, then $V=\Spa(\oo(U)[T]/(Q), (\oo(U)[T]/(Q))^{\circ})$ is naturally an element of $\ms{C}ov(Z)$. 
\end{theorem}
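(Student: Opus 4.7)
The plan is to dispose of (3) and (1) quickly and save the main work for (2). For (3), starting from a factorization $F=QS$ with $Q$ multiplicative of degree $d$ (which after rescaling by a unit we may assume monic) and coprime to $S$, I set $V_Q=\Spa(\oo(U)[T]/(Q),(\oo(U)[T]/(Q))^\circ)$; monicity makes $V_Q\to U$ finite of constant degree $d$, and coprimality gives a Chinese Remainder decomposition $\oo(U)\{\{T\}\}/(F)\cong \oo(U)[T]/(Q)\times\oo(U)\{\{T\}\}/(S)$, realising $V_Q$ as the clopen subspace of $Z\cap\pi^{-1}(U)$ cut out by the corresponding idempotent. For (1), Theorem~\ref{slopeadapted}(2) already covers $Z$ by the $Z_{U,h}$ attached to slope data, each of which manifestly lies in $\ms{C}ov(Z)$.

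For the forward direction (2), let $V\in\ms{C}ov(Z)$ with $U=\pi(V)$ and $V\to U$ finite of constant degree $d$. Since $V$ is quasi-compact and $\mb{A}^1_U=\bigcup_{h\in\Q_{\geq 0}}\mb{B}_{U,h}$ is an increasing union of open affinoids, $V\subset\mb{B}_{U,h}$ for some $h$. Theorem~\ref{slopeadapted} then produces a finite affinoid cover $\{U_i\}$ of $U$ for which each $(U_i,h)$ is a slope datum, yielding local factorizations $F|_{U_i}=Q_iS_i$ with $Q_i,S_i$ coprime by Lemma~\ref{coprime}. Since $V\cap\pi^{-1}(U_i)$ is open in $Z$ and also closed (being finite, hence proper, over $U_i$), it is a clopen component of $Z_{U_i,h}=\Spa(\oo(U_i)[T]/(Q_i),\ldots)$, corresponding via an idempotent to a sub-factorization $Q_i=Q_i^VQ_i^W$ with $\oo(V\cap\pi^{-1}(U_i))=\oo(U_i)[T]/(Q_i^V)$. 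Using the flatness of $Z\to X$ noted in the excerpt, $\oo(V)$ is finite projective of constant rank $d$ over $\oo(U)$; I would then define $Q\in\oo(U)[T]$ as the characteristic polynomial of multiplication by $T$ on $\oo(V)$, which is monic of degree $d$ and restricts to $Q_i^V$ on each $U_i$ by base change.

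The main obstacle, as I expect it, is the global assembly of these local data into the asserted factorization $F=QS$ with $(Q,S)=\oo(U)\{\{T\}\}$ and $\oo(V)=\oo(U)[T]/(Q)$. Monicity of $Q$ permits Euclidean division $F=QA+R$ in $\oo(U)\{\{T\}\}$ with $\deg R<d$; since $Q|_{U_i}=Q_i^V$ divides $F|_{U_i}$, the remainder vanishes on each $U_i$, hence $R=0$ by the sheaf property of $\oo\{\{T\}\}$ on $X$, giving $F=QS$ with $S=A$. Cayley--Hamilton provides a surjection $\oo(U)[T]/(Q)\twoheadrightarrow\oo(V)$, which is locally the identity on each $U_i$ and hence a global isomorphism. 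For coprimality it suffices to show that $S$ is a unit in $\oo(V)$; modulo $Q_i^V$ one computes $S|_{U_i}\equiv Q_i^WS_i$, and both $Q_i^W$ (by the coprime decomposition $Q_i=Q_i^VQ_i^W$) and $S_i$ (by Lemma~\ref{coprime}) are units, so $S$ is locally and thus globally a unit. The identification of $\oo^+(V)$ with $(\oo(U)[T]/(Q))^\circ$ reduces to the same identification for each $Z_{U_i,h}$, which is visible from its explicit presentation, together with the sheaf property of integral elements.
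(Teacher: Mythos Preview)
Your argument is largely correct but takes a different route from the paper. The paper simply cites \cite[Theoreme B.1, Corollaire B.1]{aip} for the fact that $\ms{C}ov(Z)$ covers $Z$ and for the forward factorization (your (1) and (2)), proves the converse (your (3)) by an argument close to yours but carried out in each $\oo(U)\langle\vp^m T\rangle/(F)$ rather than in the non-affinoid quotient $\oo(U)\{\{T\}\}/(F)$, and handles the $\oo^+$ assertion in one line via Lemma~\ref{finitepowerbounded} (a finite extension of a Tate ring with $\oo^+=\oo^\circ$ again has $\oo^+=\oo^\circ$), which is cleaner than your local-plus-sheaf reduction. Your detailed treatment of (2) is thus essentially a reconstruction of the argument behind \cite[Corollaire B.1]{aip}.

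Two points in that reconstruction need repair. First, Theorem~\ref{slopeadapted}(2) does not furnish a cover $\{U_i\}$ of $U$ with $(U_i,h)$ a slope datum for your \emph{fixed} $h$: the slope-$\leq h$ factorization can fail on every neighbourhood of a point where $h$ occurs as a slope with jumping multiplicity. The fix is to allow varying $h_i\geq h$ (choose $h_i$ off the Newton polygon of $F$ at each point, as in the proof of \cite[Theoreme B.1]{aip}); then $V\cap\pi^{-1}(U_i)\subset\mb{B}_{U_i,h}\subset\mb{B}_{U_i,h_i}$ and your clopen argument inside $Z_{U_i,h_i}$ goes through unchanged. Second, the step from ``$V\cap\pi^{-1}(U_i)$ is a clopen summand of $\Spa(\oo(U_i)[T]/(Q_i))$'' to a polynomial factorization $Q_i=Q_i^VQ_i^W$ with $\oo(V\cap\pi^{-1}(U_i))=\oo(U_i)[T]/(Q_i^V)$ is not automatic over a general base ring; you need that a finite projective $\oo(U_i)$-algebra of constant rank $d$ which is generated by a single element $T$ is isomorphic to $\oo(U_i)[T]/(\chi_T)$, which follows from Cayley--Hamilton together with the observation that a surjection between locally free modules of the same rank is an isomorphism. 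With these two repairs your direct proof of (2) goes through.
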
  

\begin{proof}
The assertions about rings of integral elements follow immediately from the rest by Lemma \ref{finitepowerbounded}. The first two parts are \cite[Theoreme B.1, Corollaire B.1]{aip}. Note that if $\pi: V \rightarrow U$ is a finite flat morphism, with $U \subset X$ and $V \subset Z$ open affinoid, then $\pi$ is open by \cite[Lemma 1.7.9]{hu3}.  For the last part, it is clear that $V \ra U$ is finite and surjective of constant degree $\deg Q$, so it remains to see that $V$ is naturally an open subset of $Z$. For this we may work locally over $U$. Set $B=\oo(U)$. For each $n$ we have compatible morphisms 
$$ B[T]/(Q) \ra B \langle \vp^{n}T \rangle / (Q) \leftarrow B \langle \vp^{n}T \rangle /(F). $$
The second map is the projection onto the first factor in the decomposition $B \langle \vp^{n}T \rangle /(F) \cong B \langle \vp^{n}T \rangle /(Q) \times B \langle \vp^{n}T \rangle /(S) $ which results from the fact that $Q$ and $S$ are relatively prime. Thus $\{Q=0\}\sub Z$ is open and closed in $Z$. Moreover, when $n$ is sufficiently large, we claim that the first map is an isomorphism. To see this, consider the quotient map $p : B[T] \ra B[T]/(Q)$ and equip the target with a submultiplicative norm that induces the canonical topology. For large enough $n$ we will have $|p(\vp^{n}T)|\leq 1$ and hence $p(B_{0}[\vp^{n}T])\sub (B[T]/(Q))_{0}$ (here we are using $-_{0}$ to denote unit balls), so $p$ is continuous for the topology on $B[T]$ coming from the inclusion $B[T]\sub B\langle \vp^{n}T \rangle$, and we may complete to obtain a morphism $B\langle \vp^{n}T \rangle \ra B[T]/(Q)$ with kernel $QB\langle \vp^{n}T \rangle$. This gives an inverse to the map $B[T]/(Q) \ra B \langle \vp^{n}T \rangle / (Q)$, proving the claim. Thus we may identify $V$ with $\{Q=0\}\sub Z$, which shows that $V$ is naturally an open subset of $Z$.
\end{proof}

\section{Relative distribution algebras}\label{distalg}

\subsection{Relative distribution algebras and norms}

A $p$-adic analytic group will in this paper always mean a $\Qp$-analytic group. Let $R$ be a Banach--Tate ring. We denote the unit ball of $R$ by $R_{0}$. If there exists a norm-decreasing homomorphism $\Zp\ra R$, where we equip $\Zp$ with the usual norm $|x|_{p}=p^{-\mathrm{ord}_{p}(x)}$, we call such $R$ (together with the map $\Zp \ra R$) a \emph{Banach--Tate $\Zp$-algebra}. The goal of this section is to extend some of the constructions of \cite[\S 4]{st} to the case of continuous functions and distributions valued in such $R$. In particular, we construct $R$-valued analogues of (locally) analytic distribution algebras for compact $p$-adic analytic groups. We begin with a lemma on the existence of Banach--Tate $\Zp$-algebra norms.

\begin{lemma}\label{BTZp}
Let $R$ be a Noetherian Banach--Tate ring with norm $|-|$ and a multiplicative pseudo-uniformizer $\vp$. Assume that there exists a continuous homomorphism $\Zp \ra R$ (necessarily unique). Then there exists a Banach-Tate $\Zp$-algebra norm $|-|^{\prime}$ on $R$ which is bounded-equivalent to $|-|^{s}$ for some $s>0$, and such that $\vp$ is a multiplicative pseudo-uniformizer for $|-|^{\prime}$.
\end{lemma}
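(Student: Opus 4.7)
The approach is to define $|-|' := |-|^s$ for a sufficiently large real exponent $s > 0$; the conclusion will then follow from a direct scaling argument. The only ``obstacle'' is choosing $s$ large enough to make the homomorphism $\Zp \to R$ norm-decreasing; everything else is preserved by the exponentiation $|-| \mapsto |-|^{s}$. I would carry out three checks.

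First, I would verify that $|-|^{s}$ is a norm on $R$ for any $s > 0$. The ultrametric inequality $|r+t|^{s} \leq (\max(|r|,|t|))^{s} = \max(|r|^{s},|t|^{s})$ and submultiplicativity $|rt|^{s} \leq |r|^{s}|t|^{s}$ are preserved under raising to a positive power, and $|r|^{s} = 0 \iff r = 0$. The unit balls for $|-|$ and $|-|^{s}$ coincide, so the two norms induce the same topology; in particular $R$ is complete under $|-|^{s}$. The multiplicativity $|\vp r| = |\vp||r|$ carries over as $|\vp r|^{s} = |\vp|^{s} |r|^{s}$, and $|\vp|^{s} < 1$ still holds, so $\vp$ remains a multiplicative pseudo-uniformizer with respect to $|-|^{s}$. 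Bounded-equivalence to $|-|^{s}$ is tautological.

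Second, I would check that the map $\Zp \to R$ has image in the unit ball $R_{0}$. Since $|-|$ is non-archimedean with $|1|=1$, induction on $m$ gives $|m| \leq 1$ for all $m \in \Z$. By density of $\Z$ in $\Zp$, continuity of $\Zp \to R$, and closedness of $R_{0}$, the image of all of $\Zp$ lies in $R_{0}$, so $|n|^{s} \leq 1$ for every $n \in \Zp$.

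Finally, writing an arbitrary $n \in \Zp$ as $n = p^{k}u$ with $u \in \Zp^{\times}$, the desired inequality $|n|^{s} \leq |n|_{p} = p^{-k}$ reduces, via $|u|^{s} \leq 1$ from the previous step, to arranging $|p|^{s} \leq p^{-1}$. Because $p$ is topologically nilpotent in $\Zp$, its image in $R$ is topologically nilpotent, so $|p| < 1$ (or $|p| = 0$, in which case any $s > 0$ works). Otherwise any real $s \geq \log p / \log|p|^{-1}$ suffices, and $|-|' := |-|^{s}$ then satisfies every requirement of the lemma.
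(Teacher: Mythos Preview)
Your argument has a genuine gap at the final step. You claim that since the image of $p$ in $R$ is topologically nilpotent, we have $|p|<1$. Topological nilpotence gives $|p^{n}|\to 0$, which by Fekete's lemma forces the \emph{spectral} seminorm $|p|_{sp}=\lim_n |p^{n}|^{1/n}<1$; but since $|-|$ is only submultiplicative one can have $|p|=1$ while $|p|_{sp}<1$. Concretely, take $A=\Zp[[T]]$ and the rational subset $\{|p^{2}|\le |T|\ne 0\}$ of $\Spa(A,A)$: the ring $R$ has ring of definition $R_{0}=\Zp[[T]]\langle p^{2}/T\rangle$ and $\vp=T$ is a multiplicative pseudo-uniformizer for the norm of Remark~\ref{tateremark}(1). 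One checks that $p\notin TR_{0}$ (the image of $p$ in $R_{0}/TR_{0}\cong (\Z/p^{2})[Y]$ is nonzero), so $|p|=1$; yet $p^{2}=T\cdot(p^{2}/T)\in TR_{0}$, so $p$ is topologically nilpotent and $\Zp\to R$ is continuous. For this $R$ no power $|-|^{s}$ satisfies $|p|^{s}\le p^{-1}$, so your proposed $|-|':=|-|^{s}$ cannot work.

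This is exactly the obstruction the paper's proof addresses. They pass to the finite free extension $S=R[\vp^{1/m}]$ for $m$ large enough that $|p|_{sp}<|\vp|^{2/m}$, observe that $p\vp^{-1/m}$ is then topologically nilpotent in $S$, and build a new norm from a ring of definition $S_{2}\ni p\vp^{-1/m}$, which forces $|p|_{2}<1$. Restricting to $R$ and raising to a suitable power then gives the desired $|-|'$. The Noetherian hypothesis (which your argument never uses) is what lets them put the canonical topology on $S$ and carry norms back and forth; the construction is genuinely more than a rescaling of $|-|$.
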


\begin{proof}
Note first that a norm $|-|^{\prime}$ on $R$ is a Banach--Tate $\Zp$-algebra norm if and only if $|p|^{\prime} \leq p^{-1}$, so we need to check this. By continuity of $\Zp \ra R$ we have $p\in R^{\circ\circ}$. Choose $m\in \Z_{\geq 1}$ such that $|p|_{sp}< |\vp|^{2/m}$ and consider the finite free $R$-algebra $S=R[\vp^{1/m}]=R[X]/(X^{m}-\vp)$. We equip $S$ with its canonical topology as a finite $R$-module; then the induced subspace topology on $R\sub S$ agrees with the original topology on $S$. Thus we have $p,\vp^{1/m}\in S^{\circ\circ}$. Now equip $S$ with a submultiplicative $R$-Banach module norm $|-|_{S}$ that induces the canonical topology. Note that $\vp$ is a multiplicative pseudo-uniformizer for $|-|_{S}$ with $|\vp|_{S}=|\vp|$, and that $(R,|-|) \ra (S,|-|_{S})$ is norm-decreasing. We then have $|p\vp^{-1/m}|_{S,sp}<|\vp|^{1/m}<1$ by construction, so $p\vp^{-1/m}$ is topologically nilpotent in $S$. We can then choose a ring of definition $S_{2}$ of $S$ containing $\vp^{1/m}$ and $p\vp^{-1/m}$ and consider the norm
$$ |s|_{2}=\inf \{ |\vp|^{k/m} \mid s \in \vp^{k/m}S_{2} \}. $$
Since $p\in \vp^{1/m}S_{2}$ we have $|p|_{2}<1$, and we may hence find $s>0$ such that $|p|_{2}^{s}\leq p^{-1}$. Restricting the norm $|-|^{\prime}:=|-|_{2}^{s}$ to $R\sub S$ then gives the desired norm.
\end{proof}

\begin{definition}
Let $X$ be a compact topological space and let $A$ be a topological $\Zp$-algebra.
\begin{enumerate}
\item We let $\mc{C}(X,A)$ denote the $A$-module of all continuous $A$-valued functions on $X$ , and let $\mc{C}_{sm}(X,A)$ denote the subspace of all locally constant functions.

\item We put $\D(X,A)=\Hom_{A,cts}(\mc{C}(X,A),A)$.

\end{enumerate}
\end{definition}

When $A$ is a normed ring, we may topologize $\mc{C}(X,A)$ and $\mc{C}_{sm}(X,A)$ using the supremum norm, and we may give $\D(X,A)$ the corresponding dual/operator norm. If $A$ is complete, this makes $\mc{C}(X,A)$ into a complete $A$-module. When the topology on $X$ is profinite, $\mc{C}_{sm}(X,A)$ is dense in $\mc{C}(X,A)$ and, if $R$ is a Banach--Tate $\Zp$-algebra, the natural map $\mc{C}(X,\Zp) \ctens_{\Zp}R \ra \mc{C}(X,R)$ is a topological isomorphism. Similarly $\mc{C}(X,\Zp) \ctens_{\Zp}R_{0} \cong \mc{C}(X,R_{0})$ where $R_{0}$ is the unit ball of $R$.

\medskip

Continue to let $X$ be a profinite set and $R$ a Banach--Tate $\Zp$-algebra with unit ball $R_{0}$ and a multiplicative pseudo-uniformizer $\vp$. Note that $\D(X,R_{0})=\Hom_{R_{0}}(\mc{C}(X,R_{0}),R_{0})$ (i.e. continuity with respect to the $\vp$-adic topology is automatic) and that this is the unit ball in $\D(X,R)$. We may equip $\D(X,R_{0})$ with the weak topology coming from the family of maps $\D(X,R_{0}) \ra R_{0}$ given by $\mu \mapsto \mu(f)$ for $f\in \mc{C}(X,R_{0})$ and the $\vp$-adic topology on $R_{0}$. We will refer to this topology as the \emph{weak-star topology} on $\D(X,R_{0})$. When $X=G$ is a profinite group, $\D(G,R)$ carries a convolution product
$$ (\mu \ast \nu)(f) = \mu(g \mapsto \nu(h\mapsto f(gh))).$$
One checks directly that $\delta_{g}\ast\delta_{h}=\delta_{gh}$ for all $g,h\in G$, where $\delta_{g}$ denotes the Dirac distribution at $g$. This product preserves $\D(G,R_{0})$. We sum up some basic properties of the weak-star topology.

\begin{lemma}\label{weak-star}
If $X$ is finite (hence discrete) the weak-star topology on $\D(X,R_{0})$ coincides with the $\vp$-adic topology. In general, if $X=\varprojlim_{n}X_{n}$ is an inverse limit of finite sets $X_{n}$, we have a natural isomorphism $\D(X,R_{0}) \cong \varprojlim_{n}\D(X_{n},R_{0})$ which identifies the weak-star topology on the source with the inverse limit topology where $\D(X_{n},R_{0})$ is equipped with the $\vp$-adic topology. When $X=G$ is a profinite group this is a ring homomorphism, and multiplication on $\D(G,R_{0})$ is jointly continuous with respect to the weak-star topology.
\end{lemma}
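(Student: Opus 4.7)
The plan is to handle the three assertions in order, with part (2) being the key technical step, since both (1) and (3) reduce quickly to routine considerations once (2) is established.

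For assertion (1), when $X$ is finite, the characteristic functions $\{1_x\}_{x\in X}$ form an $R_{0}$-basis of $\mc{C}(X,R_{0})$, realizing it as the free $R_{0}$-module $R_{0}^{X}$ with sup-norm equal to the coordinate-wise norm. Dually, $\D(X,R_{0})$ is free of rank $|X|$ on the dual basis of Dirac distributions, with the $\vp$-adic topology being the product of $\vp$-adic topologies on $R_{0}$. The weak-star topology is the initial topology for the maps $\mathrm{ev}_{f}:\mu \mapsto \mu(f)$, and restricting to $f=1_{x}$ for $x\in X$ already recovers the product topology, so the two topologies coincide.

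For assertion (2), pullback along the projections $X\twoheadrightarrow X_{n}$ gives inclusions $\mc{C}(X_{n},R_{0})\hookrightarrow \mc{C}(X,R_{0})$ whose union is $\mc{C}_{sm}(X,R_{0})$, and this union is dense in $\mc{C}(X,R_{0})$ in the sup-norm (equivalently the $\vp$-adic topology). Restriction thus gives a compatible family of maps $\D(X,R_{0})\to \D(X_{n},R_{0})$, and since every $R_{0}$-linear $\mu:\mc{C}(X,R_{0})\to R_{0}$ is automatically continuous and $R_{0}$ is $\vp$-adically complete, any compatible system of restrictions extends uniquely by continuity; this yields the algebraic isomorphism $\D(X,R_{0})\cong \varprojlim_{n}\D(X_{n},R_{0})$. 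For the topological comparison, the inverse-limit topology is generated by subbasic opens of the form $\{\mu:\mu|_{\mc{C}(X_{n},R_{0})}\in \vp^{N}\D(X_{n},R_{0})\}$, which by (1) equals $\{\mu:\mu(f)\in \vp^{N}R_{0}\ \forall f\in \mc{C}(X_{n},R_{0})\}$ — visibly weak-star open. Conversely, for a weak-star subbasic open $\{\mu:\mu(f)\in \vp^{N}R_{0}\}$, density furnishes $n$ and $g\in \mc{C}(X_{n},R_{0})$ with $f-g\in \vp^{N}\mc{C}(X,R_{0})$; by $R_{0}$-linearity of $\mu$, the condition $\mu|_{\mc{C}(X_{n},R_{0})}\in \vp^{N}\D(X_{n},R_{0})$ then forces $\mu(f)=\mu(g)+\mu(f-g)\in \vp^{N}R_{0}$, showing the inverse-limit topology refines the weak-star topology. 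The main obstacle is this density-based comparison, but it is essentially formal once one exploits that any $R_{0}$-linear functional on $\mc{C}(X,R_{0})$ automatically takes $\vp^{N}\mc{C}(X,R_{0})$ into $\vp^{N}R_{0}$.

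For assertion (3), when $G$ is a profinite group, one may take the $X_{n}=G_{n}$ to be finite quotient groups; the multiplication $G\times G\to G$ then factors through $G_{n}\times G_{n}\to G_{n}$, so for $f\in \mc{C}(G_{n},R_{0})$ the function $g\mapsto \nu(h\mapsto f(gh))$ lies in $\mc{C}(G_{n},R_{0})$ and depends only on the restrictions $\mu|_{\mc{C}(G_{n},R_{0})}$ and $\nu|_{\mc{C}(G_{n},R_{0})}$. Thus the isomorphism of (2) identifies convolution on $\D(G,R_{0})$ with the inverse limit of convolutions on $\D(G_{n},R_{0})$, making it a topological ring isomorphism. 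Joint continuity of multiplication on $\D(G,R_{0})\cong \varprojlim_{n}\D(G_{n},R_{0})$ with the inverse-limit topology reduces to joint continuity on each $\D(G_{n},R_{0})$, which is immediate since $\D(G_{n},R_{0})$ is a finite free $R_{0}$-module with the $\vp$-adic topology and convolution is $R_{0}$-bilinear.
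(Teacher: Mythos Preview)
Your proof is correct and follows essentially the same approach as the paper's: both establish the algebraic isomorphism via density of $\mc{C}_{sm}(X,R_{0})$ in $\mc{C}(X,R_{0})$, compare the topologies by reducing weak-star opens to those coming from locally constant functions, and deduce joint continuity of multiplication from the finite-level case. Your version simply spells out in more detail the density step that the paper compresses into a sentence.
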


\begin{proof}
The first assertion is straightforward. For the second, note that the formation of $\D(X,R_{0})$ is covariantly functorial in $X$, so the maps $X \ra X_{n}$ induce a natural map $\D(X,R_{0}) \ra \varprojlim_{n}\D(X_{n},R_{0})$ which is continuous by the first assertion when we equip the source and the target with the topologies in the statement of the lemma. Moreover it is easily checked to be a ring homomorphism when $X=G$ is a profinite group. Unraveling, we see that this morphism is the natural morphism
$$ \Hom_{R_{0}}(\mc{C}(X,R_{0}),R_{0}) \ra \Hom_{R_{0}}(\mc{C}_{sm}(X,R_{0}),R_{0}) $$
induced by the inclusion $\mc{C}_{sm}(X,R_{0}) \sub \mc{C}(X,R_{0})$. Since this subspace is dense for the $\vp$-adic topology, we see that the map is an isomorphism. To check that it also a homeomorphism, note first that by the same density one may define the weak-star topology using only locally constant functions. It is then straightforward to check that all basic opens from locally constant functions come by pullback from basic opens on the $\D(X_{n},R_{0})$, which implies that the map is a homeomorphism. Finally, multiplication is jointly continuous for the $\vp$-adic topology on $\D(G_{n},R_{0})$ for all $n$, and hence jointly continuous on the inverse limit. This implies the last assertion.
\end{proof}

Let $X=\varprojlim_{n}X_{n}$  be a countable inverse limit of finite sets, viewed as a profinite set. 
We define $R_{0}[[X]] := \varprojlim_{n} R_{0}[X_{n}]$ and $R[[X]] := 
(\varprojlim_{n} R_{0}[X_{n}])[1/\vp]$; these are $R_{0}$- resp. $R$-modules 
and independent of the choice of the $X_{n}$'s (here, if $B$ is a ring and $S$ 
is a finite set, $B[S]$ denotes the free $B$-module generated by the set $S$). 
If the $X_{n}$ are groups (so $X$ is profinite group) then they carry natural 
algebra structures. We may topologize $R_{0}[[X]]$ in two ways; either giving 
it the natural inverse limit topology or the $\vp$-adic topology. We give 
$R[[X]]$ the topology induced from the $\vp$-adic topology on $R_{0}[[X]]$, 
which is compatible with viewing $R[[X]]$ as an $R$-Banach module with unit 
ball $R_{0}[[X]]$. 

\begin{proposition}
Let $X=\varprojlim_{n}X_{n}$ be a profinite set. There is a natural $R$-Banach module isomorphism $R[[X]] \ra \D(X,R) $ sending $[x]$ to $\delta_{x}$. It restricts to an $R_{0}$-module isomorphism $R_{0}[[X]] \ra \D(X,R_{0})$ which identifies the inverse limit topology on the source with the weak-star topology on the target. If $X=G$ is a profinite group, then these maps are ring homomorphisms.
\end{proposition}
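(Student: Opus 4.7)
My plan is to build the isomorphism first at the integral level, over $R_0$ and for finite sets, and then extend by passing to the inverse limit and inverting $\vp$. The ring structure can be handled at the very end.

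For a finite set $Y$, $\mc{C}(Y, R_0) = R_0^Y$ is a finite free $R_0$-module with basis the characteristic functions of points, so its $R_0$-dual is naturally identified with $R_0[Y]$ via $[y] \mapsto \delta_y$, and this identification is both an $R_0$-linear isomorphism and a homeomorphism for the $\vp$-adic topology on both sides. Applying this to each $X_n$ and passing to the inverse limit, I obtain
$$R_0[[X]] = \varprojlim_n R_0[X_n] \xrightarrow{\sim} \varprojlim_n \D(X_n, R_0) \cong \D(X, R_0),$$
where the last isomorphism and the identification of topologies (inverse limit on the source, weak-star on the target) is precisely Lemma \ref{weak-star}. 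Unwinding the construction, this map sends the element $[x] \in R_0[[X]]$ attached to $x \in X$ (the compatible system of basis elements at each finite level) to the Dirac distribution $\delta_x$, as desired.

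Next, I invert $\vp$. The Banach module $R[[X]]$ is defined so that $R_0[[X]]$ is its unit ball, giving $R[[X]]$ its natural gauge norm. On the other hand, the operator norm on $\D(X, R) = \Hom_{R, \mathrm{cts}}(\mc{C}(X, R), R)$ has unit ball exactly $\D(X, R_0) = \Hom_{R_0}(\mc{C}(X, R_0), R_0)$, since an $R$-linear functional $\mu$ has operator norm $\le 1$ if and only if it sends every $f$ with $\|f\|_\infty \le 1$ (equivalently, $f \in \mc{C}(X,R_0)$) into $R_0$; this is where multiplicativity of $\vp$ enters. My integral isomorphism identifies these two unit balls compatibly, so the induced map $R[[X]] \to \D(X, R)$ is an isometric isomorphism of Banach $R$-modules.

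Finally, when $X = G$ is a profinite group, I need to check multiplicativity. On the $\D$-side, the convolution satisfies $\delta_g \ast \delta_h = \delta_{gh}$, and on the $R_0[[G]]$-side the group-algebra multiplication satisfies $[g][h] = [gh]$. At each finite level $G_n$, the isomorphism $R_0[G_n] \xrightarrow{\sim} \D(G_n, R_0)$ is therefore a ring homomorphism (both sides being finite free $R_0$-modules on $G_n$ with matching multiplication tables). By Lemma \ref{weak-star}, the convolution on $\D(G, R_0)$ is the inverse limit of these, and the inverse-limit description of $R_0[[G]]$ makes its multiplication the inverse limit of the $R_0[G_n]$ multiplications. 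Thus the map is a ring map on unit balls, and hence on the full $R[[G]]$ after inverting $\vp$. The main content is Lemma \ref{weak-star}; the only new input is the elementary finite-case duality, and the subtlety to watch is the identification of unit balls that makes the map an isometry, which depends crucially on $\vp$ being a \emph{multiplicative} pseudo-uniformizer.
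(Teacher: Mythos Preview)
Your proof is correct and follows essentially the same approach as the paper: establish the duality $R_0[X_n]\cong \D(X_n,R_0)$ at each finite level, pass to the inverse limit, invoke Lemma \ref{weak-star} for the identification $\varprojlim_n \D(X_n,R_0)\cong \D(X,R_0)$ and the topology, and then invert $\vp$. Your explicit discussion of unit balls and isometry is a welcome bit of extra care, but note that the paper already records (just before Lemma \ref{weak-star}) that $\D(X,R_0)$ is the unit ball of $\D(X,R)$, so this step is already in place.
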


\begin{proof}
Define compatible maps $R_{0}[X_{n}] \ra \Hom_{R_{0}}(\mc{C}(X_{n},R_{0}), 
R_{0})$ by $[x] \mapsto \delta_{x}$; one checks directly that this is an 
isomorphism of topological $R_{0}$-modules, and that it is a ring homomorphism 
when $X$ is a profinite group. Taking inverse limits we get 
$$ R_{0}[[X]] \overset{\sim}{\longrightarrow} 
\Hom_{R_{0}}(\mc{C}_{sm}(X,R_{0}),R_{0})=\mc{D}(X,R_{0}). $$

Lemma \ref{weak-star} shows that this identifies the inverse limit topology on 
the source with the weak-star topology on the target. Inverting $\varpi$ we get 
the desired isomorphism $R[[X]] \ra \D(X,R)$ which is clearly a Banach module 
isomophism since it identifies the respective unit balls $R_0[[X]]$ and 
$\D(X,R_0)$. 
\end{proof}

Recall the notion of a uniform pro-$p$ group from \cite[Definition 4.1]{ddms}. When $G$ is a uniform pro-$p$ group, $\Zp[[G]]$  may be identified with a ring of non-commutative formal power series
$$ \left\{ \sum_{\alpha}d_{\alpha}\mbf{b}^{\alpha} \mid d_{\alpha}\in \Zp \right\} $$
where $d$ is the dimension of $G$, $\alpha=(\alpha_{1},\ldots,\alpha_{d})\in 
\mb{Z}_{\geq 0}^{d}$ is a multi-index, $g_{1},\ldots,g_{d}$ is a minimal set of 
topological generators of $G$, $b_{i}=[g_{i}]-1$ and 
$\mbf{b}^{\alpha}:=b_{1}^{\alpha_{1}}\cdots b_{d}^{\alpha_{d}}$. Our next goal 
is to show that the analogous description holds for $R_{0}[[G]]$, with the same 
commutation relations between the $\mbf{b}^{\alpha}$. 

\begin{proposition}\label{amice}
Let $G=\Zp^{d}$. For $\alpha\in \mb{Z}_{\geq 0}^{d}$, let $E_{\alpha}\, :\, \Zp^{d} \ra \Zp $ denote the function
$$ E_{\alpha}(x_{1},\ldots,x_{d})= \left( \begin{matrix} x_{1} \\ \alpha_{1} 
\end{matrix} \right)\cdots  \left( \begin{matrix} x_{d} \\ \alpha_{d} 
\end{matrix} \right). $$
Then the Amice transform 
$$ \mu \mapsto \sum_{\alpha}\mu(E_{\alpha})T_{1}^{\alpha_{1}}\cdots 
T^{\alpha_{d}}_{d} $$
defines an algebra isomorphism $\mc{D}(\Zp^{d},R_{0}) 
\overset{\sim}{\longrightarrow} R_{0}[[T_{1},\ldots,T_{d}]] $ which identifies 
the weak-star topology on the source with the product topology 
$R_{0}[[T_{1},\ldots,T_{d}]] = \prod_{\alpha} R_{0}.T^{\alpha_{1}}_{1}\cdots 
T_{d}^{\alpha_{d}}$ on the target.
\end{proposition}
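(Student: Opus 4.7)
The plan is to reduce to the classical multivariate Mahler/Amice expansion and then dualize. First I would recall Mahler's theorem in $d$ variables: the functions $E_{\alpha}$ form a $\Zp$-Banach basis of $\mc{C}(\Zp^{d},\Zp)$ in the sense that $f \mapsto (c_{\alpha}(f))_{\alpha}$ is an isometric isomorphism of $\Zp$-Banach modules $\mc{C}(\Zp^{d},\Zp) \cong c_{0}(\Z_{\geq 0}^{d},\Zp)$, with $f = \sum_{\alpha}c_{\alpha}(f)E_{\alpha}$ converging uniformly. Combining this with the identification $\mc{C}(\Zp^{d},\Zp)\ctens_{\Zp}R_{0}=\mc{C}(\Zp^{d},R_{0})$ noted in the preceding discussion, I obtain an $R_{0}$-module isomorphism $\mc{C}(\Zp^{d},R_{0}) \cong c_{R_{0}}(\Z_{\geq 0}^{d})$, with every $f$ uniquely of the form $\sum_{\alpha} c_{\alpha}(f)E_{\alpha}$ where $c_{\alpha}(f)\in R_{0}$ tend to $0$ in the $\vp$-adic topology.

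Dualizing term by term, the $R_{0}$-module $\D(\Zp^{d},R_{0})=\Hom_{R_{0}}(\mc{C}(\Zp^{d},R_{0}),R_{0})$ is identified with $\prod_{\alpha}R_{0}$ via $\mu \mapsto (\mu(E_{\alpha}))_{\alpha}$. The inverse assigns to a tuple $(d_{\alpha})\in \prod_{\alpha}R_{0}$ the distribution $f\mapsto \sum_{\alpha}c_{\alpha}(f)d_{\alpha}$, which converges in $R_{0}$ because $c_{\alpha}(f)\to 0$ and the $d_{\alpha}$ are uniformly bounded in $R_{0}$. Identifying $\prod_{\alpha}R_{0}$ with $R_{0}[[T_{1},\ldots,T_{d}]]$ by $(d_{\alpha})\mapsto \sum_{\alpha}d_{\alpha}T_{1}^{\alpha_{1}}\ldots T_{d}^{\alpha_{d}}$ yields the Amice transform as stated. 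For the topologies: by Lemma \ref{weak-star}, the weak-star topology is generated by basic opens of the form $\{\mu:\mu(f)\in U\}$ for $f\in \mc{C}(\Zp^{d},R_{0})$ (equivalently $f\in \mc{C}_{sm}$), whereas the product topology is generated by evaluations at the $E_{\alpha}$. One direction is clear since each $E_{\alpha}$ lies in $\mc{C}(\Zp^{d},R_{0})$. For the other, given $f=\sum_{\alpha}c_{\alpha}E_{\alpha}$ and $n\geq 1$, pick a finite set $S$ with $c_{\alpha}\in \vp^{n}R_{0}$ for $\alpha\notin S$; then $\mu(f)\equiv \sum_{\alpha\in S}c_{\alpha}\mu(E_{\alpha})\pmod{\vp^{n}R_{0}}$, so evaluation at $f$ factors up to $\vp^{n}$ through the finitely many projections to coordinates $\alpha\in S$, proving the two topologies agree.

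Finally, I would check that convolution corresponds to multiplication of power series. The key computational input is the multivariate Vandermonde identity
\[ E_{\alpha}(x+y) = \sum_{\beta+\gamma=\alpha} E_{\beta}(x)E_{\gamma}(y), \]
obtained by taking the product of the one-variable identities $\binom{x_{i}+y_{i}}{\alpha_{i}}=\sum_{k}\binom{x_{i}}{k}\binom{y_{i}}{\alpha_{i}-k}$. Plugging into the definition of convolution,
\[ (\mu\ast\nu)(E_{\alpha}) = \mu\bigl(x\mapsto \nu(y\mapsto E_{\alpha}(x+y))\bigr) = \sum_{\beta+\gamma=\alpha}\mu(E_{\beta})\nu(E_{\gamma}), \]
which is precisely the $T^{\alpha}$-coefficient of the product of the Amice transforms of $\mu$ and $\nu$; the interchange of $\nu$ with a finite sum is legitimate and the resulting identity is the coefficient identity defining multiplication in $R_{0}[[T_{1},\ldots,T_{d}]]$. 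The main point requiring care is really just the cleanness of the base change in step one (that Mahler's theorem genuinely provides an orthonormal basis compatible with completed tensor product against $R_{0}$), which is what the preliminary discussion of $\mc{C}(X,\Zp)\ctens_{\Zp}R_{0}\cong \mc{C}(X,R_{0})$ is designed to supply; everything else is formal manipulation of Mahler expansions.
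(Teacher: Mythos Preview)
Your proposal is correct and follows essentially the same approach as the paper: reduce to the classical Mahler basis via $\mc{C}(\Zp^{d},\Zp)\ctens_{\Zp}R_{0}\cong \mc{C}(\Zp^{d},R_{0})$, dualize to identify $\D(\Zp^{d},R_{0})$ with $\prod_{\alpha}R_{0}$, and then check the topologies and algebra structures. The paper's proof is only a sketch, so your explicit truncation argument for the topology comparison and the Vandermonde computation for multiplicativity are exactly the details the paper leaves to the reader.
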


\begin{proof}
Once again this is a simple extension of a well known result, so we content ourselves with a sketch. The key observation is that $\mc{C}(\Zp^{d},R_{0})\cong \mc{C}(\Zp^{d},\Zp) \ctens_{\Zp}R_{0}$. Then it is clear that
$$ \mc{D}(\Zp^{d},R_{0}) \overset{\sim}{\longrightarrow} \prod_{\alpha} R_{0} $$
via $\mu \mapsto (\mu(E_{\alpha}))_{\alpha}$ and it is straightforward to check that this identifies the weak topology on the source with the product topology on the target (it is the statement that the $E_{\al}$ suffice to define the weak-star topology). To finish, we remark that the computation that the algebra structures match up is identical to the well known one in the case $R_{0}=\Zp$.
\end{proof}

Note that the topology on $R_{0}[[T_{1},\ldots,T_{d}]]$ described in the 
Proposition is equal to the $(\vp,T_{1},\ldots,T_{d})$-adic topology. Let us 
return to the case of a general uniform pro-$p$ group $G$. The ring $\Zp[[G]]$ 
is described by formal power series as above. Following \cite{st}, let us 
define elements $c_{\beta\gamma,\alpha}\in \Zp$ by 
\begin{equation}\label{cbga}\mbf{b}^{\beta}\mbf{b}^{\gamma} = \sum_{\alpha}c_{\beta\gamma,\alpha}\mbf{b}^{\alpha}.\end{equation}
We remark that, for fixed $\alpha$, $c_{\beta\gamma,\alpha} \ra 0$ as 
$|\beta|+|\gamma| \ra +\infty$ (here and elsewhere, for a multi-index $\alpha$ 
we define $|\alpha|=\alpha_{1}+\cdots +\alpha_{d}$). This follows from 
\cite[Lemma 4.1(ii)]{st}.

\begin{proposition}\label{D-explicit}
Let $G$ be a uniform pro-$p$ group and use the notation above. Then $R_{0}[[G]]$ may be identified with the ring of formal power series
$$ \left\{ \sum_{\alpha}d_{\alpha}\mbf{b}^{\alpha} \mid d_{\alpha}\in R_0 \right\} $$
with multiplication given by
$$ \left( \sum_{\beta}d_{\beta}\mbf{b}^{\beta} \right) \left( \sum_{\gamma}e_{\gamma}\mbf{b}^{\gamma} \right) = \sum_{\alpha} \left( \sum_{\beta,\gamma} d_{\beta}e_{\gamma}c_{\beta\gamma,\alpha} \right) \mbf{b}^{\alpha}. $$
\end{proposition}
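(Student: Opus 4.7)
The plan is to reduce to the already-established case of $G = \Z_p^d$ (Proposition \ref{amice}), then transport the multiplicative structure from $\Z_p[[G]]$ using density and joint continuity in the weak-star topology.

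First, since $G$ is uniform pro-$p$ of dimension $d$, the map $\Z_p^d \to G$ sending $(x_1,\ldots,x_d)$ to $g_1^{x_1}\cdots g_d^{x_d}$ is a homeomorphism (a standard fact about uniform pro-$p$ groups; see \cite[Theorem 4.9]{ddms}). This yields a topological $R_0$-module isomorphism $\mc{C}(G,R_0) \cong \mc{C}(\Z_p^d, R_0)$, hence dually an isomorphism $\D(G,R_0) \cong \D(\Z_p^d, R_0)$ respecting the weak-star topologies. Combining with Proposition \ref{amice}, and noting that under the resulting isomorphism the coordinate $T_i$ corresponds to the distribution $\delta_{g_i} - \delta_e = [g_i] - 1 = b_i$ (both are dual to the function $E_{e_i}$ under the above coordinates), I obtain a bijection
\[
R_0[[G]] \;\longleftrightarrow\; \Big\{\sum_\alpha d_\alpha \mathbf{b}^\alpha : d_\alpha \in R_0\Big\},
\]
which is a homeomorphism when the right-hand side is equipped with the product (equivalently $(\vp, b_1,\ldots,b_d)$-adic) topology. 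This establishes the underlying $R_0$-module identification.

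Next, I would verify the multiplication formula. For integer coefficients, the relations \eqref{cbga} give the multiplication in $\Z_p[[G]]$, with $c_{\beta\gamma,\alpha} \to 0$ as $|\beta|+|\gamma| \to \infty$ for each fixed $\alpha$, by \cite[Lemma 4.1(ii)]{st}. For general $d_\beta, e_\gamma \in R_0$, I note that finite truncations $\sum_{|\beta|\le N}d_\beta \mathbf{b}^\beta$ converge in the weak-star topology to $\sum_\beta d_\beta \mathbf{b}^\beta$ (by Proposition \ref{amice} applied to $\Z_p^d$, the weak-star topology is the product topology in the basis $\mathbf{b}^\alpha$). By the joint continuity of multiplication on $R_0[[G]]$ in the weak-star topology (Lemma \ref{weak-star}), the product $(\sum_\beta d_\beta \mathbf{b}^\beta)(\sum_\gamma e_\gamma \mathbf{b}^\gamma)$ is the weak-star limit of products of truncations, and each such product expands via the $\Z_p$-relations \eqref{cbga} as $\sum_\alpha (\sum_{|\beta|\le N,\,|\gamma|\le N} d_\beta e_\gamma c_{\beta\gamma,\alpha})\mathbf{b}^\alpha$.

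To finish, I must justify that the inner sum $\sum_{\beta,\gamma} d_\beta e_\gamma c_{\beta\gamma,\alpha}$ converges in $R_0$ for each fixed $\alpha$, and that the coefficient on $\mathbf{b}^\alpha$ in the limit is indeed this value. Since $d_\beta, e_\gamma \in R_0$ are uniformly bounded in norm by $1$, and $|c_{\beta\gamma,\alpha}|_p \to 0$ (hence $|c_{\beta\gamma,\alpha}| \to 0$ in $R$, since $\Z_p \to R$ is norm-decreasing) as $|\beta|+|\gamma| \to \infty$ for fixed $\alpha$, the sum converges absolutely in $R_0$; moreover the convergence of the coefficient on $\mathbf{b}^\alpha$ is precisely convergence in the product topology, which matches the weak-star topology on $R_0[[G]]$. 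This verifies the displayed multiplication formula, and the expected obstacle—controlling convergence of the double sum for each $\alpha$—is precisely resolved by the decay \cite[Lemma 4.1(ii)]{st} combined with the fact that $R_0$ is $\vp$-adically complete with $p$ topologically nilpotent.
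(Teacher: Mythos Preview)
Your proposal is correct and follows essentially the same approach as the paper: identify $G$ with $\Z_p^d$ as profinite sets via the chosen generators, invoke Proposition \ref{amice} for the power series description, and then transport the multiplication formula from $\Z_p[[G]]$ using density and joint continuity of multiplication in the weak-star topology (Lemma \ref{weak-star}). The paper phrases the density step slightly differently, using the subring generated by $R_0$ and the image of $\Z_p[[G]]$ rather than finite truncations, but this amounts to the same thing; if anything, your explicit justification of the convergence of the inner sum $\sum_{\beta,\gamma} d_\beta e_\gamma c_{\beta\gamma,\alpha}$ via the decay from \cite[Lemma 4.1(ii)]{st} is a welcome elaboration of a point the paper leaves implicit.
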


\begin{proof}
The choice $g_{1},\ldots,g_{d}$ of a minimal (ordered) set of topological 
generators identifies $G$, as $p$-adic analytic manifold, with $\Zp^{d}$. Thus 
we get a topological isomorphism $R_{0}[[G]] \cong R_{0}[[\Zp^{d}]]$ of 
$R_{0}$-modules (for both the weak-star and the $\vp$-adic topology). 
Proposition \ref{amice} then implies the description in terms of power series. 
To see that the multiplication works out as described, note that the natural 
map $\Zp[[G]] \ra R_{0}[[G]]$ is an algebra homomorphism, hence the above 
formula is true for products of monomials. We can then deduce the formula in 
general by noting that $R_{0}$ is central in $R_{0}[[G]]$ and that the subring 
generated by $R_{0}$ and the image of $\Zp[[G]]$ (for which the formula holds) 
is dense in $R_{0}[[G]]$ with respect to the weak-star topology, and that 
multiplication is jointly continuous for the weak-star topology. 
\end{proof}

Inverting $\vp$ we get an explicit description of $R[[G]]$ when $G$ is uniform. 
Using this we may now define a family of norms on $R[[G]]$ following \cite[\S 
4]{st}. We continue to fix a minimal ordered set of topological generators 
$g_{1},\ldots,g_{d}$.

\begin{definition}\label{defrnorm}
Let $r\in [1/p,1)$. We define the $r$-norm $||-||_{r}$ on $R[[G]]$ by the formula
$$ || \sum_{\alpha}d_{\alpha}\mbf{b}^{\alpha} ||_{r} = \sup_{\alpha} |d_{\alpha}|r^{|\alpha |}. $$
\end{definition}

Recall that we have fixed a choice of norm $|-|$ on $R$ such that $|z|\leq |z|_{p}$ for all $z\in \Zp$. This will be convenient for some calculations. Note that the definition of $||-||_{r}$ a priori depends on the choice of generators. We remark that for all $r\in [1/p,1)$, $||-||_{r}$ induces the weak-star topology on $R_{0}[[G]]$ by Proposition \ref{amice} and a straightforward calculation. It follows that any homomorphism $G \ra H$ of uniform groups induces a continuous homomorphism $R_{0}[[G]] \ra R_{0}[[H]]$ when the source and target are equipped with (any) $r$-norms, since this is true for the weak-star topology using the characterisation in Lemma \ref{weak-star}.

\begin{proposition}\label{normindep}
$||-||_{r}$ is independent of the choice of minimal ordered set of topological generators for $G$, and is submultiplicative. Finally, if we replace the norm $|-|$ on $R$ by a bounded-equivalent $\Z_p$-algebra norm $|-|^\prime$, then the resulting norm $||-||^\prime_{r}$ is bounded-equivalent to $||-||_r$.
\end{proposition}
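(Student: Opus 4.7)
My approach is to bootstrap the classical Schneider--Teitelbaum results from $\Z_p[[G]]$ to $R[[G]]$, using crucially the defining hypothesis that the norm on $R$ is a $\Z_p$-algebra norm, i.e.\ satisfies $|z| \le |z|_p$ for $z \in \Z_p$. I will prove the three assertions in the order: (i) submultiplicativity, (ii) independence of the generating set, (iii) behaviour under change of norm on $R$.

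For submultiplicativity, the idea is to expand the product using the formula from Proposition \ref{D-explicit}:
\[ ||\mu\nu||_r \;=\; \sup_\alpha r^{|\alpha|} \bigl| \textstyle\sum_{\beta,\gamma} d_\beta e_\gamma c_{\beta\gamma,\alpha} \bigr| \;\le\; \sup_{\alpha,\beta,\gamma} |d_\beta|\,|e_\gamma|\,|c_{\beta\gamma,\alpha}|\,r^{|\alpha|}, \]
and then to bound the structure constants $c_{\beta\gamma,\alpha} \in \Z_p$ using $|c_{\beta\gamma,\alpha}| \le |c_{\beta\gamma,\alpha}|_p$, invoking the known estimate $|c_{\beta\gamma,\alpha}|_p\, r^{|\alpha|} \le r^{|\beta|+|\gamma|}$ from the submultiplicativity of the Schneider--Teitelbaum norm on $\Z_p[[G]]$ \cite[Thm.~4.5]{st}. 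This collapses the bound to $||\mu||_r ||\nu||_r$.

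For independence, let $g_1',\dots,g_d'$ be a second minimal generating set, with associated $\mathbf{b}'^{\alpha'}$ and norm $||-||_r'$. Writing each $b_i' = [g_i']-1$ in $\Z_p[[G]]$ in the old basis, the independence statement in the $\Z_p$-case gives $||b_i'||_{r,\Z_p} = r$. Combined with $|{-}| \le |{-}|_p$ on $\Z_p$ this yields $||b_i'||_r \le r$, and submultiplicativity (step (i)) then gives $||\mathbf{b}'^{\alpha'}||_r \le r^{|\alpha'|}$. To conclude $||\mu||_r \le ||\mu||_r'$ for a general $\mu = \sum e_{\alpha'} \mathbf{b}'^{\alpha'}$, I will approximate $\mu$ by its finite partial sums $\mu_N$, which a priori satisfy $||\mu_N||_r \le \max_{|\alpha'|\le N} |e_{\alpha'}| r^{|\alpha'|} \le ||\mu||_r'$, and pass to the limit using the fact that $||-||_r$ is lower semicontinuous for the weak-star topology (this is immediate: the set $\{||\cdot||_r \le C\}$ is cut out by the weak-star closed conditions $|d_\alpha|r^{|\alpha|} \le C$). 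Since $\mu_N \to \mu$ weak-star by Proposition \ref{D-explicit}, this yields the inequality; symmetry finishes the proof.

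Step (iii) is the most straightforward: if $C_1|x| \le |x|' \le C_2|x|$, then the same inequalities multiplied by $r^{|\alpha|}$ and supremised over $\alpha$ give $C_1 ||\mu||_r \le ||\mu||_r' \le C_2 ||\mu||_r$ directly from the definition. The main obstacle of the whole argument is really just in step (ii), where one must check that the $\Z_p$-level statement $||b_i'||_{r,\Z_p} = r$ does indeed transfer through the inequality $|{-}| \le |{-}|_p$ to give the corresponding bound in $R[[G]]$, and that the weak-star approximation argument works without requiring $R[[G]]$ to be complete for $||-||_r$. The lower semicontinuity observation circumvents this last point cleanly.
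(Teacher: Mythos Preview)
Your argument is correct. Parts (i) (submultiplicativity) and (iii) (bounded-equivalence under change of norm) match the paper's proof essentially verbatim: expand the product using the structure constants $c_{\beta\gamma,\alpha}\in\Zp$, invoke $|c_{\beta\gamma,\alpha}|\le|c_{\beta\gamma,\alpha}|_p$ and the Schneider--Teitelbaum estimate $|c_{\beta\gamma,\alpha}|_p\,r^{|\alpha|}\le r^{|\beta|+|\gamma|}$; and for (iii) multiply the defining inequality $C_1|x|\le|x|'\le C_2|x|$ by $r^{|\alpha|}$ and take suprema.

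For independence of the generating set you take a genuinely different route. The paper writes out the change-of-basis expansion $\mbf{b}'^{\beta}=\sum_\alpha c_{\beta,\alpha}\mbf{b}^{\alpha}$ in $\Zp[[G]]$, uses the known bound $|c_{\beta,\alpha}|_p r^{|\alpha|}\le r^{|\beta|}$ (from the $\Zp$-case of the theorem), and then rearranges the double sum for a general $\mu=\sum_\beta d'_\beta\mbf{b}'^{\beta}$ directly to conclude $||\mu||_r\le||\mu||'_r$. This is a one-line estimate requiring neither submultiplicativity nor any limit. Your argument instead bootstraps from the single-variable bound $||b'_i||_r\le r$ via submultiplicativity (hence the reversed order of (i) and (ii)) and then passes to the infinite sum by weak-star approximation plus lower semicontinuity of $||-||_r$. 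This is perfectly valid---the weak-star convergence $\mu_N\to\mu$ follows because the weak-star topology is intrinsic and, computed in the new basis via Proposition~\ref{amice}, is the $(\vp,b'_1,\dots,b'_d)$-adic topology in which truncations obviously converge---but it is more circuitous. The paper's direct rearrangement avoids both the dependency on submultiplicativity and the topological argument, at the cost of quoting one more estimate from \cite{st}.
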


\begin{proof}
For the proof of independence, we follow the discussion after the proof of 
\cite[Theorem 4.10]{st}. Let $g_{1}^{\prime},\ldots,g_{d}^{\prime}$ be a 
different choice and set $b_{i}^{\prime}=[g_{i}^{\prime}]-1\in \Zp[[G]]$ etc., 
and let $||-||^{\prime}_{r}$ denote the $r$-norm with respect to this choice. 
We may write
$$ \mbf{b}^{\prime \beta} = \sum_{\alpha}c_{\beta,\alpha}\mbf{b}^{\alpha} $$
in $\Zp[[G]]$, and one has $|c_{\beta,\alpha}|_{p}r^{|\alpha|} \leq r^{|\beta|}$ (see \emph{loc.~cit.}). Transporting this to $R[[G]]$ we have the same identity, and the inequality $|c_{\beta,\alpha}|r^{|\alpha|}\leq r^{|\beta|}$ (since $|c_{\beta,\alpha}|\leq |c_{\beta,\alpha}|_{p}$). Expanding out a general element $\mu\in R[[G]]$ we then have
$$ \mu = \sum_{\beta}d^{\prime}_{\beta}\mbf{b}^{\prime\beta} = \sum_{\alpha} \left( \sum_{\beta} d^{\prime}_{\beta}c_{\beta,\alpha} \right) \mbf{b}^{\alpha}. $$
We then have
$$ ||\mu||_{r}\leq \sup_{\beta,\alpha} |d_{\beta}^{\prime}|.|c_{\beta,\alpha}|r^{|\alpha |} \leq \sup_{\beta} |d_{\beta}^{\prime}|r^{|\beta |}= ||\mu||_{r}^{\prime}.$$
By symmetry, we must have equality.

\medskip
To prove submultiplicativity, we follow the proof of \cite[Proposition 4.2]{st}. Recall the $c_{\beta\gamma,\alpha}$ (\ref{cbga}). By \cite[Lemma 4.1(ii)]{st} we have $|c_{\beta\gamma,\alpha}|r^{|\alpha |} \leq |c_{\beta\gamma,\alpha}|_{p}r^{|\alpha|} \leq r^{|\beta|+|\gamma|}$ for all $\alpha,\beta,\gamma$. Let $\mu=\sum_{\beta}d_{\beta}\mbf{b}^{\beta}$ and $\nu = \sum_{\gamma}e_{\gamma}\mbf{b}^{\gamma}$ be elements of $R[[G]]$, then we have
$$ \mu \ast \nu = \sum_{\alpha}\left( \sum_{\beta,\gamma}d_{\beta}e_{\gamma}c_{\beta\gamma,\alpha} \right) \mbf{b}^{\alpha} $$ 
and we can calculate
$$ ||\mu\nu||_{r} = \sup_{\alpha} | \sum_{\beta,\gamma} d_{\beta}e_{\gamma}c_{\beta\gamma,\alpha}|r^{|\alpha|} \leq \sup_{\beta,\gamma} |d_{\beta}||e_{\gamma}|r^{|\beta|+|\gamma|}=||\mu||_{r}||\nu||_{r}$$
where we use submultiplicativity and $|c_{\beta\gamma,\alpha}|r^{|\alpha |}\leq r^{|\beta|+|\gamma|}$ to obtain the middle inequality. This finishes the proof of submultiplicativity.

Finally, suppose we have $C_1|x|\le |x|' \le C_2|x|$ for all $x \in R$. Then $C_1|d_\alpha|r^{|\alpha|}\le |d_\alpha|'r^{|\alpha|}\le C_2|d_\alpha|r^{|\alpha|}$ for all $\alpha$ which implies that \[ C_1|| \sum_{\alpha}d_{\alpha}\mbf{b}^{\alpha} ||_{r} \le || \sum_{\alpha}d_{\alpha}\mbf{b}^{\alpha} ||'_{r} \le C_2|| \sum_{\alpha}d_{\alpha}\mbf{b}^{\alpha} ||_{r} \] as desired.
\end{proof}
 Before moving on to general compact $p$-adic analytic $G$, we record a few properties of the $r$-norms.

\begin{lemma} \label{aut}Let $r\in [1/p,1)$. If $g\in G$, then $||[g]||_{r}=1$ and $||[g]\mu||_{r}=||\mu[g]||_{r}=||\mu||_{r}$ for all $\mu\in R[[G]]$. Moreover, if $\phi$ is an automorphism of $G$ (of $p$-adic analytic groups), then $\phi$ induces an automorphism of $R[[G]]$ satisfying $||\phi(\mu)||_{r}=||\mu||_{r}$ for all $\mu\in R[[G]]$.
\end{lemma}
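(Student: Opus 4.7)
The plan is to first handle the group-like element $[g]$ and then deduce the rest by submultiplicativity and the independence statement from Proposition \ref{normindep}.

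For the claim $||[g]||_{r}=1$: fix a minimal ordered set of topological generators $g_{1},\ldots,g_{d}$. Because $G$ is uniform, every $g\in G$ admits a unique expression $g=g_{1}^{a_{1}}\cdots g_{d}^{a_{d}}$ with $a_{i}\in \Zp$, so in $\Zp[[G]]$ we have $[g] = (1+b_{1})^{a_{1}}\cdots (1+b_{d})^{a_{d}}$, where the power $(1+b_{i})^{a_{i}}$ is defined by continuity from the integer case (equivalently, by the binomial series). Since $\binom{x}{n}$ is a continuous function of $x\in \Zp$ with values in $\Zp$, expanding out yields
$$[g] = \sum_{\alpha} \binom{a_{1}}{\alpha_{1}}\cdots \binom{a_{d}}{\alpha_{d}} \mathbf{b}^{\alpha}$$
in the ordered-monomial expansion of Proposition \ref{D-explicit}. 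Since all the binomial coefficients lie in $\Zp$ and our norm on $R$ satisfies $|z|\le |z|_{p}\le 1$ for $z\in \Zp$, each coefficient has absolute value $\le 1$. The coefficient at $\alpha=0$ is $1$, so the supremum in Definition \ref{defrnorm} is achieved there and equals $1$.

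For the isometry statement, observe that $[g]$ is a unit in $R[[G]]$ with inverse $[g^{-1}]$, and by the previous paragraph both have $r$-norm equal to $1$. Submultiplicativity (Proposition \ref{normindep}) gives $||[g]\mu||_{r}\le ||[g]||_{r}\cdot ||\mu||_{r} = ||\mu||_{r}$ and conversely $||\mu||_{r} = ||[g^{-1}][g]\mu||_{r}\le ||[g]\mu||_{r}$, so equality holds. The same argument with the factor on the right gives $||\mu [g]||_{r}=||\mu||_{r}$.

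For the automorphism part: since $\phi$ is an automorphism of the $p$-adic analytic group $G$, it is in particular continuous and induces an automorphism of the topological rings $\Zp[[G]]$ and $R_0[[G]]$ (and hence of $R[[G]]$ after inverting $\vp$), characterized on Dirac distributions by $\phi_{*}([h])=[\phi(h)]$. Because $\phi$ is an automorphism of pro-$p$ groups, the tuple $\phi(g_{1}),\ldots,\phi(g_{d})$ is again a minimal ordered set of topological generators, and setting $b_{i}':=[\phi(g_{i})]-1$ we have $\phi_{*}(\mathbf{b}^{\alpha}) = (\mathbf{b}')^{\alpha}$. Therefore, if we write $\mu = \sum_{\alpha} d_{\alpha}\mathbf{b}^{\alpha}$, then $\phi_{*}(\mu) = \sum_{\alpha} d_{\alpha} (\mathbf{b}')^{\alpha}$, so the $r$-norm of $\phi_{*}(\mu)$ computed with respect to the basis $\phi(g_{1}),\ldots,\phi(g_{d})$ equals $\sup_{\alpha}|d_{\alpha}|r^{|\alpha|} = ||\mu||_{r}$. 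By the independence statement in Proposition \ref{normindep}, this $r$-norm agrees with the $r$-norm computed from the original generators, so $||\phi_{*}(\mu)||_{r}=||\mu||_{r}$.

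The only mildly subtle point I anticipate is justifying the identity $[g] = (1+b_{1})^{a_{1}}\cdots (1+b_{d})^{a_{d}}$ for $a_{i}\in\Zp$; this is standard for $\Zp[[G]]$ via continuity from the integer-exponent case and then transfers to $R_{0}[[G]]$ along the natural map, using that the $r$-norms induce the weak-star topology on $R_{0}[[G]]$.
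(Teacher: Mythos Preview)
Your proof is correct and follows essentially the same approach as the paper. The paper is terser on the first point (simply noting that the expansion of $[g]$ has coefficients in $\Zp$ with constant term $1$), while you spell out the binomial expansion explicitly; the arguments for the isometry via submultiplicativity and for the automorphism via independence of generators are identical to the paper's.
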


\begin{proof}
The first statement follows from the fact that the expansion of $[g]$ has coefficients in $\Zp$ and the constant term is $1$. The second is an easy consequence of the first and submultiplicativity (since $[g]^{-1}=[g^{-1}]$ also has norm $1$).

\medskip
For the final statement, observe that if $g_{1},\ldots,g_{d}$ is a set of 
topological generators then so are $\phi(g_{1}),\ldots,\phi(g_{d})$. Since the 
$r$-norms are independent of the choice of generators, we conclude that if 
$\mu=\sum_{\alpha}d_{\alpha}\mbf{b}^{\alpha}$, then
$$ ||\phi(\mu)||_{r} = || \sum_{\alpha} d_{\alpha} (\phi(\mbf{b}))^{\alpha} ||_{r}=\sup_{\alpha}|d_{\alpha}|r^{|\alpha|}= ||\mu||_{r}. $$
This finishes the proof.
\end{proof}

We will use the last property mostly in the case when $H$ is a compact $p$-adic analytic group, $N\sub H$ is a uniform open normal subgroup, and $\phi$ is the automorphism of $N$ given by conjugation by some $h\in H$. 

\medskip
Now let $G$ be an arbitrary compact $p$-analytic group. Pick a uniform open 
normal subgroup $N$ and a set $h_{1},\ldots,h_{t}$ of coset representatives of 
$G/N$. Any $\mu\in R[[G]]$ may be written uniquely as 
$\mu=\sum_{i}[h_{i}]\mu_{i}$ with $\mu_{i}\in R[[N]]$, and we define a norm 
$||-||_{N,r}$ on $R[[G]]$ by
$$ ||\mu||_{N,r} =\sup_{i} ||\mu_{i}||_{r}. $$ We could alternatively take a right coset decomposition $\mu=\sum_{i}\nu_{i}[h_{i}]$ with $\nu_i \in R[[N]]$, and define $||-||^{right}_{N,r}$ on $R[[G]]$ by
$$ ||\mu||^{right}_{N,r} =\sup_{i} ||\nu_{i}||_{r}. $$

\begin{proposition}\label{aut2}
We have $||-||_{N,r} = ||-||^{right}_{N,r}$. The definition is also independent of the choice of coset representatives. Moreover, $||-||_{N,r}$ is submultiplicative and satisfies $||[g]||_{N,r}=1$ and $||[g]\mu||_{N,r}=||\mu[g]||_{N,r}=||\mu||_{N,r}$ for all $g\in G$ and $\mu\in R[[G]]$.
\end{proposition}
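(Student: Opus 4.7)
The crux of the proof will be the observation that, because $N$ is normal in $G$, conjugation by any $h \in G$ induces an automorphism of $N$ as a $p$-adic analytic group, and therefore an automorphism of $R[[N]]$ which preserves $||-||_{r}$ by the last part of Lemma \ref{aut}. Together with the invariance $||[n]\mu||_{r}=||\mu||_{r}=||\mu[n]||_{r}$ for $n\in N$ (also from Lemma \ref{aut}) and submultiplicativity of $||-||_{r}$ on $R[[N]]$ (Proposition \ref{normindep}), essentially everything in the proposition follows from bookkeeping.

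For the equality $||-||_{N,r}=||-||^{right}_{N,r}$ and the independence of the chosen coset representatives, the plan is as follows. Given a left decomposition $\mu=\sum_{i}[h_{i}]\mu_{i}$, I would rewrite each term as $[h_{i}]\mu_{i} = ([h_{i}]\mu_{i}[h_{i}]^{-1})[h_{i}]$, noting that $[h_{i}]\mu_{i}[h_{i}]^{-1}\in R[[N]]$ by normality of $N$, and has the same $r$-norm as $\mu_{i}$ by the conjugation invariance. This immediately yields a right decomposition realising the same supremum, and the reverse direction is symmetric. For independence of the choice of representatives, any other set of coset representatives is of the form $h_{\sigma(i)}n_{i}$ for a permutation $\sigma$ and $n_{i}\in N$; then $[h_{\sigma(i)}n_{i}]=[h_{\sigma(i)}][n_{i}]$, so the new left-coordinates differ from the old by the permutation $\sigma$ together with left multiplication by $[n_{i}]$, which preserves the $r$-norm. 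The normalisation $||[g]||_{N,r}=1$ will be immediate by writing $g=h_{i}n$.

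For submultiplicativity, the strategy is to reduce the product of two left decompositions to a single left decomposition while controlling each piece. Starting from $\mu=\sum_{i}[h_{i}]\mu_{i}$ and $\nu=\sum_{j}[h_{j}]\nu_{j}$, I would push $[h_{j}]$ past $\mu_{i}$ by conjugation, writing $\mu_{i}[h_{j}]=[h_{j}]\mu_{i,j}^{\prime}$ with $\mu_{i,j}^{\prime}=[h_{j}]^{-1}\mu_{i}[h_{j}]\in R[[N]]$ having the same $r$-norm as $\mu_{i}$. Next, writing $h_{i}h_{j}=h_{k(i,j)}n_{i,j}$ and collecting terms indexed by $k$, the coefficient of $[h_{k}]$ in $\mu\nu$ becomes $\sum_{(i,j):k(i,j)=k}[n_{i,j}]\mu_{i,j}^{\prime}\nu_{j}$. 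Submultiplicativity of $||-||_{r}$ on $R[[N]]$, combined with $||[n_{i,j}]||_{r}=1$, bounds each summand by $||\mu_{i}||_{r}||\nu_{j}||_{r}\leq ||\mu||_{N,r}||\nu||_{N,r}$, so the non-archimedean triangle inequality gives the desired bound.

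The final invariance $||[g]\mu||_{N,r}=||\mu[g]||_{N,r}=||\mu||_{N,r}$ will then follow formally from submultiplicativity together with $||[g]||_{N,r}=||[g^{-1}]||_{N,r}=1$, since $||\mu||_{N,r}=||[g^{-1}][g]\mu||_{N,r}\leq ||[g]\mu||_{N,r}\leq ||\mu||_{N,r}$. I expect the main (purely bookkeeping) obstacle to be the submultiplicativity step, where one must simultaneously track the conjugation and the coset combinatorics; everything else is a one-line consequence of Lemma \ref{aut} once normality of $N$ is invoked.
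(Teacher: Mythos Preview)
Your proposal is correct and follows essentially the same route as the paper: the paper also rewrites $[h_i]\mu_i=([h_i]\mu_i[h_i^{-1}])[h_i]$ for left/right equality, uses $h_i=h_i'n_i$ and $||[n_i]\mu_i||_r=||\mu_i||_r$ for coset-representative independence, derives the same formula $\mu\nu=\sum_k[h_k]\big(\sum_{k(i,j)=k}[n_{ij}]([h_j^{-1}]\mu_i[h_j])\nu_j\big)$ for submultiplicativity, and obtains the final invariance exactly as you do from $||[g]||_{N,r}=1$ plus submultiplicativity.
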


\begin{proof}
For left/right independence, note that $\mu=\sum_{i} ([h_{i}]\mu_{i}[h_{i}^{-1}])[h_{i}]$ and hence
$$ ||\mu||_{N,r}^{right}=\sup_{i} ||[h_{i}]\mu_{i}[h_{i}^{-1}]||_{r} =\sup_{i} ||\mu_{i}||_{r} = ||\mu||_{N,r} $$
by Lemma \ref{aut}. For independence of the coset representatives, suppose we have $h_{i}^{\prime}$ a different set with $h_{i}=h_{i}^{\prime}n_{i}$, then $\mu=\sum_{i} [h_{i}^{\prime}]([n_{i}]\mu_{i}) $ and hence
$$ ||\mu||_{N,r}^{\prime}=\sup_{i}||[n_{i}]\mu_{i}||_{r} = \sup_{i} 
||\mu_{i}||_{r} = ||\mu||_{N,r} $$
by Lemma \ref{aut} again. Next we prove submultiplicativity. Define $k(i,j)$ by $h_{i}h_{j}=h_{k(i,j)}n_{ij}$. For $\mu = \sum_{i}[h_{i}]\mu_{i}$ and $\nu=\sum_{j}[h_{j}]\nu_{j}$ in $R[[G]]$, we have
$$ \mu\nu = \sum_{k} [h_{k}] \left( \sum_{i,j\,:\,k(i,j)=k} [n_{ij}]([h_{j}^{-1}]\mu_{i}[h_{j}])\nu_{j} \right). $$
Using Lemma \ref{aut} and submultiplicativity for $||-||_{r}$ on $N$ one then sees easily that $||\mu\nu||_{N,r}\leq ||\mu||_{N,r}||\nu||_{N,r}$.

\medskip
Next, let $g\in G$. Writing $g=h_{i}n$ for some $i$ and $n\in N$, we see that $||[g]||_{N,r}=||[n]||_{r}=1$ by Lemma \ref{aut}. Finally, the last property then follows by the same argument as in the proof of Lemma \ref{aut}.
\end{proof}

As the notation suggests, $||-||_{N,r}$ \emph{does} depend on the choice of $N$. For a study of how the completions change when one changes the subgroup in certain situations, see \cite[\S 10.6-10.8]{aw}.

\subsection{Completions}\label{completions} In this section we study the case when $G$ is a uniform pro-$p$ group in more detail. Let $R$ be a Banach--Tate $\Zp$-algebra with multiplicative pseudo-uniformizer $\vp$ as usual.

\begin{definition}
For $r\in [1/p,1)$, define $\mc{D}^{r}(G,R)$ to be the completion of $\mc{D}(G,R)$ with respect to the norm $||-||_{r}$, and we let $\ms{D}(G,R)$ denote the completion of $\mc{D}(G,R)$ with respect to the entire family of norms $(||-||_{r})_{r\in [1/p,1)}$.
\end{definition}
\begin{remark}\label{boundedok}
Note that if we change the norm on $R$ to a bounded-equivalent one, then the completion $\mc{D}^{r}(G,R)$ is unchanged, by Proposition \ref{normindep}.
\end{remark}

The motivation for this definition is that if $R$ is a Banach $\Qp$-algebra, $\ms{D}(G,R)$ is naturally isomorphic to the space of locally analytic $R$-valued distributions on $G$, by Proposition \ref{distrocomparison}. 

Note that there are natural norm-decreasing injective maps $\mc{D}^{s}(G,R) \ra \mc{D}^{r}(G,R)$ whenever $r\leq s$ (which we will think of as inclusions), and that they fit together into an inverse system with limit $\ms{D}(G,R)$. The explicit description of $\mc{D}(G,R)$ from Proposition \ref{D-explicit} gives us an explicit formal power series description
$$ \mc{D}^{r}(G,R)=\left\{ \sum_{\al} d_{\al}\mbf{b}^{\al} \mid d_{\al}\in R,\,|d_{\alpha}|r^{|\al|} \ra 0 \right\}$$
and the norm is still given by $|| \sum d_{\al}\mbf{b}^{\al}||_{r}=\sup |d_{\al}|r^{|\al|}$. We see that, unlike $\mc{D}(G,R)$, the $\mc{D}^{r}(G,K)$ are naturally potentially ON-able, with a potential ON-basis given by the elements $\vp^{-n(r,\vp,\al)}\mbf{b}^{\al}$, where
\begin{equation}\label{ONbasis} n(r,\vp,\al)= \left\lfloor \frac{|\al|\log_{p}r}{\log_{p}|\vp|} \right\rfloor.\end{equation}

\begin{lemma}\label{cpt2}
If $r<s$ then the inclusion $\iota\, :\, \mc{D}^{s}(G,R) \hookrightarrow \mc{D}^{r}(G,R)$ is a compact map of $R$-Banach modules.
\end{lemma}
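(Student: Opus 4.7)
The plan is to write $\iota$ as a limit (in the operator norm) of finite rank operators, using the explicit power series description of the modules recorded just before the lemma.

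First I would define truncation operators. For each $N \in \Z_{\geq 0}$, let $\iota_{N} : \mc{D}^{s}(G,R) \to \mc{D}^{r}(G,R)$ be the map
$$ \iota_{N}\left( \sum_{\alpha} d_{\alpha} \mbf{b}^{\alpha} \right) = \sum_{|\alpha|\leq N} d_{\alpha} \mbf{b}^{\alpha}. $$
This is well-defined and $R$-linear. Its image lies in the $R$-submodule of $\mc{D}^{r}(G,R)$ generated by the finite set $\{\mbf{b}^{\alpha} : |\alpha|\leq N\}$, which is finitely generated, so $\iota_{N}$ has finite rank.

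Next I would bound $\iota - \iota_{N}$ in operator norm. Given $\mu = \sum_{\alpha} d_{\alpha} \mbf{b}^{\alpha} \in \mc{D}^{s}(G,R)$ with $||\mu||_{s}\leq 1$, we have $|d_{\alpha}|s^{|\alpha|}\leq 1$, i.e. $|d_{\alpha}|\leq s^{-|\alpha|}$ for all $\alpha$, and hence
$$ ||(\iota - \iota_{N})(\mu)||_{r} = \sup_{|\alpha|> N} |d_{\alpha}|r^{|\alpha|} \leq \sup_{|\alpha|>N}(r/s)^{|\alpha|} = (r/s)^{N+1}, $$
since $r/s < 1$. Therefore $|\iota - \iota_{N}|\leq (r/s)^{N+1}$ in $\Hom_{R,cts}(\mc{D}^{s}(G,R),\mc{D}^{r}(G,R))$, and this tends to $0$ as $N\to \infty$. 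Hence $\iota = \lim_{N} \iota_{N}$ is a limit of finite rank operators and therefore compact by definition.

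There is no real obstacle here once one has the formal power series description from just above the lemma; the only mild point is verifying that truncation is $R$-linear and continuous (which is immediate from the formula for $||-||_{r}$) and that its image lies in a finitely generated submodule (for which one uses that only multi-indices $\alpha$ with $|\alpha|\leq N$ occur in the image, and there are only finitely many such $\alpha$).
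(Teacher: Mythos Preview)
Your proof is correct and essentially identical to the paper's own argument: both define truncation operators keeping only the terms with $|\alpha|$ bounded, observe these are finite rank, and bound the difference from $\iota$ in operator norm by $(r/s)^{N}$ (up to a harmless reindexing between $|\alpha|\le N$ and $|\alpha|<n$).
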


\begin{proof}
For simplicity set $\mc{D}^{r}:=\mc{D}^{r}(G,R)$ etc. for the duration of this 
proof. Choosing a minimal set of topological generators $g_{1},\ldots,g_{d}$ of 
$G$ and set $b_{i}=[g_{i}]-1$ as usual. For $n\geq 1$ define $T_{n}\, :\, 
\mc{D}^{s} \ra \mc{D}^{r}$ by 
$$ T_{n}\left( \sum_{\alpha} d_{\alpha}\mbf{b}^{\alpha} \right) =\sum_{|\alpha| < n} d_{\alpha}\mbf{b}^{\alpha}. $$
By definition we see that $T_{n}$ is of finite rank. Then if $\sum_{\alpha} d_{\alpha}\mbf{b}^{\alpha}$ is in the unit ball of $\mc{D}^{s}$, i.e. $|d_{\al}|s^{|\al|}\leq 1$ for all $\al$, we have
$$ ||(\iota -T_{n})( \sum_{\alpha} d_{\alpha}\mbf{b}^{\alpha})||_{r} = ||\sum_{|\alpha|\geq n} d_{\alpha}\mbf{b}^{\alpha}||_{r}\leq (r/s)^{n}. $$
Thus $||\iota-T_{n}||\leq (r/s)^{n}$ where $||-||$ is the operator norm on $\Hom_{R,cts}(\mc{D}^{s},\mc{D}^{r})$, and hence $T_{n} \ra \iota $ as $n \ra \infty$, so $\iota$ is compact. 
\end{proof}

\begin{lemma}\label{cpt1}
Let $G$ and $H$ be two uniform pro-$p$ groups and assume that $f\, :\, G \ra H$ is a homomorphism of $p$-adic analytic groups such that $f(G)\sub H^{p^{n}}$ for some $n\geq 0$. Then the induced map $f_{\ast}\, :\, \mc{D}(G,R) \ra \mc{D}(H,R)$ is norm-decreasing when we equip $\mc{D}(G,R)$ with $||-||_{r}$ and $\mc{D}(H,R)$ with $||-||_{r^{1/p^{n}}}$. As a consequence, we get an induced map 
$$ f_{\ast}\, :\, \mc{D}^{r}(G,R) \ra \mc{D}^{r}(H,R) $$
which factors through the natural map  $\mc{D}^{r^{1/p^{n}}}(H,R) \ra \mc{D}^{r}(H,R)$. In particular, when $n\geq 1$, $f_{\ast}$ is compact.
\end{lemma}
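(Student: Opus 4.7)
The plan is to establish the norm-decreasing property and then deduce compactness for $n\ge 1$ via Lemma \ref{cpt2}. Choose minimal ordered sets of topological generators $g_1,\dots,g_d$ of $G$ and $h_1,\dots,h_e$ of $H$, with $b_i^G=[g_i]-1$ and $b_j^H=[h_j]-1$. By continuity, the explicit power series description of $\mc{D}^r(G,R)$, and submultiplicativity of $||-||_r$ (Proposition \ref{normindep}), the norm-decreasing claim reduces to the estimate $||[f(g_i)]-1||_{r^{1/p^n}}\le r$ in $\mc{D}(H,R)$ for each generator $g_i$.

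The core computation is to bound $||[h_j^{p^n}]-1||_{r^{1/p^n}}$. Using $[h_j^{p^n}]=(1+b_j^H)^{p^n}$ and expanding binomially gives
\[
[h_j^{p^n}]-1 \;=\; \sum_{k=1}^{p^n}\binom{p^n}{k}(b_j^H)^k,
\]
whose $k$-th summand has $||-||_{r^{1/p^n}}$-norm at most $p^{-(n-v_p(k))}r^{k/p^n}$ since $|\cdot|\le |\cdot|_p$ on the image of $\Zp$. A short case analysis, using $r\ge 1/p$, shows each such term is $\le r$: for $k=p^n$ the bound is exactly $r$, while for $k<p^n$ the inequality $p^{-(n-v_p(k))}r^{k/p^n}\le r$ reduces (after taking $\log_p$ and using $\log_p(1/r)\le 1$) to $n-v_p(k)\ge 1-k/p^n$, which holds since $n-v_p(k)\ge 1$ in this case. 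Hence $||[h_j^{p^n}]-1||_{r^{1/p^n}}\le r$ for each $j$.

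Next, since $f(g_i)\in H^{p^n}$ and $H^{p^n}$ is itself uniform with minimal generators $h_1^{p^n},\dots,h_e^{p^n}$, I can write $f(g_i)=(h_1^{p^n})^{a_1}\cdots(h_e^{p^n})^{a_e}$ in coordinates of the second kind, with $a_j\in\Zp$. Setting $c_j:=[h_j^{p^n}]-1$, the binomial series $[h_j^{p^n}]^{a_j}=\sum_{k\ge 0}\binom{a_j}{k}c_j^k$ converges in $\mc{D}^{r^{1/p^n}}(H,R)$, and since $|\binom{a_j}{k}|\le 1$ and $||c_j||_{r^{1/p^n}}\le r$ one obtains $||[h_j^{p^n}]^{a_j}-1||_{r^{1/p^n}}\le r$. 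A telescoping identity $X_1\cdots X_s-1=X_1(X_2\cdots X_s-1)+(X_1-1)$, combined with $||[h_j^{p^n}]^{a_j}||_{r^{1/p^n}}=1$ (from Lemma \ref{aut} by density in $\Zp$) and the ultrametric inequality, then delivers $||[f(g_i)]-1||_{r^{1/p^n}}\le r$ by induction on $s$, as required.

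With the norm-decreasing property in hand, passing to completions gives a bounded map $f_*\,:\,\mc{D}^r(G,R)\to\mc{D}^{r^{1/p^n}}(H,R)$, and the claimed factorisation follows by composing with the evident continuous inclusion $\mc{D}^{r^{1/p^n}}(H,R)\hookrightarrow\mc{D}^r(H,R)$ (norm-decreasing because $r^{|\alpha|}\le r^{|\alpha|/p^n}$). When $n\ge 1$ we have $r^{1/p^n}>r$, so by Lemma \ref{cpt2} this inclusion is compact, and hence so is the composition $f_*\,:\,\mc{D}^r(G,R)\to\mc{D}^r(H,R)$. The main obstacle is the binomial estimate, but it follows cleanly from the standing hypothesis $r\ge 1/p$ baked into Definition \ref{defrnorm}; the remainder of the argument is a formal bootstrap from generators to arbitrary elements.
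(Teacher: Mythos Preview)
Your proof is correct. The core computation---the binomial estimate $||[h_j^{p^n}]-1||_{r^{1/p^n}}\le r$---is the same as the paper's, but the organization differs. The paper factors the general case through the two special cases $n=0$ (any $G\to H$) and the inclusion $H^p\subset H$; your argument handles general $n$ directly. Your binomial series and telescoping step replicates, by hand, what the paper's $n=0$ case obtains more cheaply: for any $h\in H$ the expansion of $[h]$ in the basis $\mbf{b}^\alpha$ has $\Zp$-coefficients with constant term $1$, so $||[h]-1||_r\le r$ immediately. Applying this to $h=f(g_i)$ viewed in $\mc{D}(H^{p^n},R)$ (with generators $h_j^{p^n}$) and then using your estimate $||[h_j^{p^n}]-1||_{r^{1/p^n}}\le r$ together with continuity of $\mc{D}(H^{p^n},R_0)\to\mc{D}(H,R_0)$ would spare you the explicit telescoping. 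One small point: the identification of the binomial series $\sum_k\binom{a_j}{k}c_j^k$ with the Dirac measure $[(h_j^{p^n})^{a_j}]$ deserves a word---it follows since both sides depend continuously on $a_j$ (for the weak-star topology on $\mc{D}(H,R_0)$, which coincides with the norm topology there) and agree on $\Z_{\ge 0}$.
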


\begin{proof}
We start with the first assertion. Note that the general case follows from two 
special cases: $n=0$, $f$ arbitrary, and $n=1$, $G=H^{p}$ with $f$ the 
inclusion. Indeed the general case can be written as a composition of these 
cases.

\medskip

So, suppose first that $n=0$. Scaling by powers of $\vp$, it suffices to prove 
this for $\D(G,R_{0})$. The map $f_{\ast}$ is continuous with respect to the 
norm $||-||_r$ (see the discussion after Definition \ref{defrnorm}). Let 
$g_{1},\ldots,g_{d}$ be a minimal set of topological generators for $G$ and let 
$b_{i}=[g_{i}]-1$ as usual. Using that $||[h]-1||_{r}\leq r$ for all $h\in H$, 
we see that 
$$ ||f_{\ast}( \sum d_{\alpha}\mbf{b}^{\alpha} )||_{r} = || \sum d_{\alpha}f_{\ast}(\mbf{b})^{\alpha}||_{r} \leq \sup |d_{\alpha}|r^{|\alpha|}= ||\sum d_{\alpha}\mbf{b}^{\alpha}||_{r}$$
as desired, using continuity of $f_{\ast}$. This completes the proof of the 
first special case. 

Next, we consider the second case $G=H^{p}\sub H$. Let $s=r^{1/p}$; since 
$r\geq p^{-1}$ we have $s\geq p^{-1/p}>p^{-1/(p-1)}$. Let $h_{1},\ldots,h_{d}$ 
be 
a a minimal set of topological generators for $H$. Then 
$h_{1}^{p},\ldots,h_{1}^{d}$ form a minimal set of topological generators for 
$H^{p}$. Set $b_{i}=[h_{i}^{p}]-1$ and $b_{i}^{\prime}=[h_{i}]-1$ (apologies 
for the mild abuse of notation). Then, inside $\mc{D}(H,R)$, we have
$$ ||b_{i}||_{s}=||(1+b_{i}^{\prime})^{p}-1||_{s}= ||\sum_{k=1}^{p} \begin{pmatrix} p \\ k \end{pmatrix} (b_{i}^{\prime})^{k} ||_{s} = s^{p}=r $$
using $s>p^{-1/(p-1)}$ and $|p|\leq |p|_{p}=p^{-1}$. Thus, if $\sum d_{\alpha}\mbf{b}^{\alpha}\in \mc{D}(H^{p},R)$, then inside $\mc{D}(H,R)$ we have
$$ ||\sum d_{\alpha}\mbf{b}^{\alpha}||_{s} \leq \sup |d_{\alpha}|r^{\alpha} $$
which is equal to $||\sum d_{\alpha}\mbf{b}^{\alpha}||_{r}$ computed inside $\mc{D}(H^{p},R)$. This finishes the proof of first assertion. The remaining assertions are then easily verified (using Lemma \ref{cpt2} for last one).
\end{proof}

Before discussing what happens when one changes the norm on $R$, we record the following important base change lemma.

\begin{lemma}\label{funct}
Let $R$ and $S$ be Banach--Tate $\Z_p$-algebras, and let $f\, :\, R \ra S$ be a bounded ring homomorphism. Suppose that there is a multiplicative pseudo-uniformizer $\vp \in R$ such that $f(\vp)$ is also multiplicative. Let $r\in [1/p,1)$. Then the natural map $\D^{r}(G,R)\ctens_{R}S \ra \D^{r}(G,S)$ is an isomorphism of Banach $S$-modules.
\end{lemma}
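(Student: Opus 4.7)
The plan is to use the explicit potential orthonormal basis of $\D^r(G,R)$ coming from the power-series description discussed just before Lemma \ref{cpt2}.

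First I would recall that, by the explicit formula, $\D^r(G,R)$ is potentially ON-able as a Banach $R$-module, with potential ON-basis $e_\alpha := \vp^{-n(r,\vp,\alpha)} \mathbf{b}^\alpha$ indexed by $\alpha \in \Z_{\geq 0}^d$ (using the formula (\ref{ONbasis})). This yields a topological Banach $R$-module isomorphism $\D^r(G,R) \cong c_R(\Z_{\geq 0}^d)$.

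Next, the standard identification $c_R(I) \ctens_R S \cong c_S(I)$, which follows directly from the universal property of the completed tensor product and sends the standard ON-basis to itself, gives $\D^r(G,R) \ctens_R S \cong c_S(\Z_{\geq 0}^d)$ as Banach $S$-modules, where $e_\alpha \otimes 1$ corresponds to the $\alpha$-th standard basis vector. Under the natural map to $\D^r(G,S)$, the element $e_\alpha \otimes 1$ is sent to $f(\vp)^{-n(r,\vp,\alpha)} \mathbf{b}^\alpha$.

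The remaining step, and the main obstacle, is to verify that $(f(\vp)^{-n(r,\vp,\alpha)} \mathbf{b}^\alpha)_\alpha$ is itself a potential ON-basis of $\D^r(G,S)$. Multiplicativity of $f(\vp)$ in $S$ yields $\|f(\vp)^{-n(r,\vp,\alpha)} \mathbf{b}^\alpha\|_{r,S} = |f(\vp)|_S^{-n(r,\vp,\alpha)} r^{|\alpha|}$, and I would compare this with the canonical potential ON-basis $f(\vp)^{-n(r,f(\vp),\alpha)} \mathbf{b}^\alpha$ of $\D^r(G,S)$ coming from the analogous power-series description. The transition between these two bases is a power of $f(\vp)$ with exponent $n(r,f(\vp),\alpha) - n(r,\vp,\alpha)$. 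Combined with the constraints between $|\vp|_R$ and $|f(\vp)|_S$ forced by boundedness of $f$ together with the Banach--Tate $\Z_p$-algebra structure (invoking Lemma \ref{BTZp} and Remark \ref{boundedok} to pass to bounded-equivalent norms if necessary, arranging $|f(\vp)|_S = |\vp|_R$), one checks that this exponent is uniformly bounded in $\alpha$, which yields the required uniform upper and lower bounds on the norms. Density of the $S$-linear span in $\D^r(G,S)$ is automatic since finite sums of $\mathbf{b}^\alpha$ are dense, completing the proof.
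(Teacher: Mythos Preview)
Your overall strategy matches the paper's: use the explicit potential ON-basis $\vp^{-n(r,\vp,\alpha)}\mbf{b}^{\alpha}$ of $\D^{r}(G,R)$, push it through $-\ctens_{R}S$, and compare with the potential ON-basis $f(\vp)^{-n(r,f(\vp),\alpha)}\mbf{b}^{\alpha}$ of $\D^{r}(G,S)$. The paper does exactly this and concludes by checking bounded-equivalence of the two norms on the span.

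However, your handling of the key step is muddled. You propose ``arranging $|f(\vp)|_{S}=|\vp|_{R}$'' by passing to bounded-equivalent norms via Lemma~\ref{BTZp} and Remark~\ref{boundedok}. This does not work: if $f(\vp)$ is multiplicative for two bounded-equivalent norms on $S$, then $|f(\vp)^{n}|$ equals $|f(\vp)|^{n}$ for all $n\in\Z$ under each, and bounded-equivalence forces these two values of $|f(\vp)|$ to coincide. So bounded-equivalent changes cannot adjust $|f(\vp)|_{S}$, and neither Lemma~\ref{BTZp} nor Remark~\ref{boundedok} is relevant here. What you are missing is that $|\vp|_{R}=|f(\vp)|_{S}$ is \emph{already} forced by the hypotheses: boundedness gives $|f(\vp^{n})|_{S}\le C|\vp^{n}|_{R}$ for all $n\in\Z$, and multiplicativity of $\vp$ and $f(\vp)$ turns this into $|f(\vp)|_{S}^{n}\le C|\vp|_{R}^{n}$ for all $n\in\Z$, whence equality. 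This is precisely the observation the paper records in the first sentence of its proof. Once you have it, $n(r,\vp,\alpha)=n(r,f(\vp),\alpha)$ identically, the two potential ON-bases literally agree, and there is nothing further to bound.
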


\begin{proof}
	Note that (since $f$ is bounded) the fact that $\vp$ and $f(\vp)$ are both multiplicative implies that $|\vp| = |f(\vp)|$. We recall the potential ON-bases $(\vp^{-n(r,\vp,\al)}\mbf{b}^{\al})_{\al}$ and $(f(\vp)^{-n(r,f(\vp),\al)}\mbf{b}^{\al})_{\al}$ of $\mc{D}^{r}(G,R)$ and $\mc{D}^{r}(G,S)$, respectively. It is straightforward to check that the tensor product norm on \[\left(\bigoplus_\al R(\vp^{-n(r,\vp,\al)}\mbf{b}^{\al})\right) \otimes_R S = \bigoplus_\al S(f(\vp^{-n(r,\vp,\al)})\mbf{b}^{\al})\] induced by $||-||_r$ on $\D^{r}(G,R)$ and the norm on $S$ is bounded-equivalent to the norm induced by $||-||_r$ on $\mc{D}^{r}(G,S)$, and so we obtain isomorphic completions, which gives the desired statement.\end{proof}
	
Our next goal is to prove that $\ms{D}(G,R)$ is independent, as a topological $R$-module, of the choice of norm on $R$. Recall that we only consider norms for which there exists a multiplicative pseudo-uniformizer, and such that the natural map $\Zp \ra R$ is norm-decreasing.

\begin{proposition}\label{normindep1}
$\ms{D}(G,R)$ is independent of the choice of norm on $R$.
\end{proposition}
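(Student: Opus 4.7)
The plan is to show that if $|-|_{\vp}$ and $|-|_{\pi}$ are two Banach--Tate $\Zp$-algebra norms on $R$ (with multiplicative pseudo-uniformizers $\vp$ and $\pi$ respectively), the corresponding families of seminorms $(||-||_{r}^{\vp})_{r\in[1/p,1)}$ and $(||-||_{r}^{\pi})_{r\in[1/p,1)}$ on $\mc{D}(G,R)$ define the same topology, so that their completions $\ms{D}(G,R)$ coincide as topological $R$-modules.

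Since both norms induce the intrinsic Tate topology on $R$, Lemma \ref{norm} supplies constants $C_{1},C_{2},s_{1},s_{2}>0$ with
$$C_{1}|a|_{\pi}^{s_{1}}\leq |a|_{\vp}\leq C_{2}|a|_{\pi}^{s_{2}}$$
for all $a\in R$. Fix $r\in[1/p,1)$. Since $r<1$ gives $r^{1/s_{2}}<1$, we may choose $r^{*}\in[1/p,1)$ with $r^{*}\geq\max(r^{1/s_{2}},1/p)$. Using the power-series description of Proposition \ref{D-explicit}, if $\mu=\sum_{\alpha}d_{\alpha}\mbf{b}^{\alpha}$ satisfies $||\mu||_{r^{*}}^{\pi}\leq\epsilon$, then $|d_{\alpha}|_{\pi}\leq\epsilon(r^{*})^{-|\alpha|}$ for every $\alpha$, and hence
$$|d_{\alpha}|_{\vp}\,r^{|\alpha|}\leq C_{2}\epsilon^{s_{2}}\left(\frac{r}{(r^{*})^{s_{2}}}\right)^{|\alpha|}\leq C_{2}\epsilon^{s_{2}}.$$
This yields $||\mu||_{r}^{\vp}\leq C_{2}(||\mu||_{r^{*}}^{\pi})^{s_{2}}$, so $||-||_{r}^{\vp}$ is continuous at $0$ (and hence everywhere) for the topology generated by $(||-||_{r'}^{\pi})_{r'\in[1/p,1)}$.

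Since $r\in[1/p,1)$ was arbitrary, the topology induced on $\mc{D}(G,R)$ by the $\vp$-family is coarser than that of the $\pi$-family. By a symmetric argument using the left-hand inequality from Lemma \ref{norm}, the two topologies in fact coincide on $\mc{D}(G,R)$, and therefore yield the same completion $\ms{D}(G,R)$ as a topological $R$-module.

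The main obstacle is that the inequalities of Lemma \ref{norm} are only up to a \emph{power}, so individual norms $||-||_{r}^{\vp}$ and $||-||_{r}^{\pi}$ at the same radius need not be comparable; the argument instead relies on matching $||-||_{r}^{\vp}$ with $||-||_{r^{*}}^{\pi}$ at a suitably inflated radius $r^{*}\geq r^{1/s_{2}}$, together with the (elementary) fact that such $r^{*}$ can always be chosen in $[1/p,1)$ because $r<1$.
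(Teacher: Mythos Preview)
Your proof is correct and follows essentially the same approach as the paper: both arguments apply Lemma \ref{norm} to obtain $C_{1}|a|_{\pi}^{s_{1}}\le |a|_{\vp}\le C_{2}|a|_{\pi}^{s_{2}}$ and then deduce an inequality of the form $||\mu||_{r}^{\vp}\le C_{2}\bigl(||\mu||_{r^{*}}^{\pi}\bigr)^{s_{2}}$ with $r^{*}\ge r^{1/s_{2}}$, concluding that the two families of norms define the same topology on $\mc{D}(G,R)$ and hence the same completion. The paper writes the key inequality directly as $C_{1}||\mu||_{r^{1/s_{1}}}^{s_{1}}\le ||\mu||_{r}^{\prime}\le C_{2}||\mu||_{r^{1/s_{2}}}^{s_{2}}$ for $r$ with $r,r^{1/s_{1}},r^{1/s_{2}}\in[1/p,1)$, whereas you are slightly more careful in allowing $r^{*}\ge\max(r^{1/s_{2}},1/p)$ so that the argument covers all $r\in[1/p,1)$ rather than only those sufficiently close to $1$; this extra care is harmless either way since cofinality in $r\to 1^{-}$ is all that is needed for the completion.
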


\begin{proof}
	Choose a norm on $R$ and a minimal set of topological generators for $G$. Recall that $\ms{D}(G,R) = \varprojlim_r \mc{D}^r (G,R)$ and that $\mc{D}^r (G,R)$ can be described as
	\[
	\mc{D}^{r}(G,R)=\left\{ \sum_{\al} d_{\al}\mbf{b}^{\al} \mid d_{\al}\in R,\,|d_{\alpha}|r^{|\al|} \ra 0 \right\}.
	\]
	As a consequence, $\ms{D}(G,R)$ consists of the formal sums $\sum_{\al} d_{\al}\mbf{b}^{\al}$ for which $|d_{\alpha}|r^{|\al|} \ra 0$ for all $r\in [1/p,1)$. The topology is given by the family of norms $(||-||_r)_{r \in [1/p,1)}$, with $||\sum_{\al} d_{\al}\mbf{b}^{\al}||_r = \sup_\al |d_\al|r^{|\al|}$. In particular, it is metrizable and translation-invariant, so the topology is determined by the set of sequences that tend to $0$.
	
	\medskip
	
	To prove independence, we start by proving that the condition $|d_{\alpha}|r^{|\al|} \ra 0$ for all $r\in [1/p,1)$ is independent of the choice of norm. Let $|-|_\pi$ and $|-|_\vp$ be two equivalent norms on $R$ with multiplicative pseudo-uniformizers $\pi$ and $\vp$, respectively. Choose $t\in [1/p,1)$; by symmetry we need to show that if $|d_{\alpha}|_\pi r^{|\al|} \ra 0$ for all $r\in [1/p,1)$, then $|d_{\alpha}|_\vp t^{|\al|} \ra 0$. By Lemma \ref{norm}, if $|d_\al|_\pi < 1$ then $|d_\al|_\vp \leq C_1$ for some constant $C_1$. As $t < 1$, we see that it remains to show $|d_{\alpha}|_\vp t^{|\al|} \ra 0$ for the subsequence of $\al$'s for which $|d_\al|_\pi \geq 1$. But for those $\al$, Lemma \ref{norm} gives us constants $C_2,s>0$ such that
	\[
	|d_\al|_\vp t^{|\al|} \leq C_2 |d_\al|_\pi^s t^{|\al|} = C_2 \left( |d_\al|_\pi (t^{1/s})^{|\al|} \right) ^s.
	\]
	Since $t^{1/s} \in (0,1)$, $|d_\al|_\pi (t^{1/s})^{|\al|} \ra 0$ by assumption, so $|d_\al|_\vp t^{|\al|} \ra 0$ as desired.
	
	\medskip
	
	This shows that the $R$-module $\ms{D}(G,R)$ is independent of the choice of norm, so we are left with checking independence of the topology, for which it suffices to check that convergence to $0$ is independent. Let $\mu_n = \sum_{\al} d_\al^{(n)} \mbf{b}^{\al}$ be a sequence in $\ms{D}(G,R)$. Let $t \in [1/p,1)$. By symmetry, we need to show that if $\sup_\al |d_\al^{(n)}|_\pi r^{|\al|} \ra 0$ for all $r\in [1/p,1)$, then $\sup_\al |d_\al^{(n)}|_\vp t^{|\al|} \ra 0$. Choose an arbitrary $\epsilon >0$; we need to show that $\sup_\al |d_\al^{(n)}|_\vp t^{|\al|} \leq \epsilon$ for large enough $n$.
	
	\medskip
	
	Let $C_1,C_2,s> 0$ be the constants from Lemma \ref{norm}. There is a finite subset $S$ of $\al$'s such that if $\al \notin S$, then $C_1 t^{|\al|} \leq \epsilon$. Write 
	\[
	\sup_\al |d_\al^{(n)}|_\vp t^{|\al|} = \max \left\{ \sup_{\al \in S} |d_\al^{(n)}|_\vp t^{|\al|}, \sup_{\al \notin S} |d_\al^{(n)}|_\vp t^{|\al|} \right\}.
	\] 
	Note that for a fixed $\al$, $|d_\al^{(n)}|_\pi \ra 0$, so by equivalence $|d_\al^{(n)}|_\vp \ra 0$. In particular, the term $\sup_{\al \in S} |d_\al^{(n)}|_\vp t^{|\al|} \ra 0$ since $S$ is finite, so we can find $N_1$ such that $n\geq N_1$ implies that $\sup_{\al \in S} |d_\al^{(n)}|_\vp t^{|\al|} \leq \epsilon$. It then remains to treat the term $\sup_{\al \notin S} |d_\al^{(n)}|_\vp t^{|\al|}$. Let $\al \notin S$. If $|d_\al^{(n)}|_\pi < 1$, then
	\[
	|d_\al^{(n)}|_\vp t^{|\al|} \leq C_1 t^{|\al|} \leq \epsilon
	\]
	independent of $n$. On the other hand, if $|d_\al^{(n)}|_\pi \geq 1$, then as above
	\[
	|d_\al^{(n)}|_\vp t^{|\al|} \leq C_2 \left( |d_\al^{(n)}|_\pi (t^{1/s})^{|\al|} \right) ^s
	\]
	and by assumption $\sup_\al |d_\al^{(n)}|_\pi (t^{1/s})^{|\al|} \ra 0$ as $n \ra \infty$, so we can find $N_2$ such $n \geq N_2$ implies $\sup_{\al \notin S} |d_\al^{(n)}|_\vp t^{|\al|} \leq \epsilon$. Thus $\sup_{\al} |d_\al^{(n)}|_\vp t^{|\al|} \leq \epsilon$ for $n \geq N= \max(N_1,N_2)$, which finishes the proof.
\end{proof}

We now introduce a variant of the $\D^{r}(G,R)$, which, when $R$ is a Banach 
$\Qp$-algebra, recovers the analytic distribution algebra (with fixed radius of 
analyticity). Although we do not use this variant in our construction of 
eigenvarieties, it is used in Section \ref{galrep} to construct Galois 
representations.

\medskip

Let $r>s\geq 1/p$. Let $\D^{r,\circ}(G,R)=\{ \sum d_{\al}\mbf{b}^{\al} \mid 
|d_{\al}|r^{|\al|}\leq 1 \}$ denote the unit ball of $\D^{r}(G,R)$. We define 
$\D^{<r,\circ}(G,R)$ to be the closure of $\D^{r,\circ}(G,R)$ in $\D^{s}(G,R)$. 
$\D^{<r,\circ}(G,R)$ is an $R_{0}$-module, a priori depending on $s$, which 
carries two natural topologies (the $\vp$-adic topology and the subspace 
topology coming from $\D^{s}(G,R)$).

\begin{proposition}\label{<}
We have 
$$\D^{<r,\circ}(G,R) = \left\{\sum_{\al} d_{\al}\mbf{b}^{\al} \in \D^{s}(G,R) \mid |d_{\al}|r^{|\al|} \leq 1 \right\}  $$
as a subset of $\D^{s}(G,R)$. Thus $\D^{<r,\circ}(G,R)$ is independent of $s$. The subspace topology corresponds to the weak topology with respect to the family of maps $(\mu \mapsto d_{\al}(\mu))_{\al}$ on the right hand side (and is therefore also independent of $s$). The $\vp$-adic topology is induced by the norm
$$ ||\sum d_{\al}\mbf{b}^{\al}||_{r} = \sup_{\al} |d_{\al}|^{\prime}r^{|\al|} $$
and is separated and complete. 
\end{proposition}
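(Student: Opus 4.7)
The plan is to establish the four assertions in sequence: (i) the explicit set-theoretic description of $\D^{<r,\circ}(G,R)$; (ii) independence from $s$; (iii) identification of the subspace topology with the weak topology of coefficientwise convergence; and (iv) description of the $\vp$-adic topology via the norm $||-||_r$, together with separatedness and completeness. Claim (ii) will be an immediate corollary of (i), so the real work is in (i), (iii), and (iv).

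For (i), I would prove both containments directly from the explicit series description. For $\supseteq$, given $\mu = \sum d_\alpha \mathbf{b}^\alpha \in \D^s(G,R)$ with $|d_\alpha|r^{|\alpha|}\leq 1$, the truncations $\mu_n = \sum_{|\alpha|\leq n} d_\alpha \mathbf{b}^\alpha$ lie in $\D^{r,\circ}(G,R)$; since $s<r$, one has $||\mu-\mu_n||_s \leq \sup_{|\alpha|>n}(s/r)^{|\alpha|}\to 0$, so $\mu$ is in the closure. For $\subseteq$, if $\mu_n \to \mu$ in $\D^s(G,R)$ with $\mu_n \in \D^{r,\circ}(G,R)$, then each coefficient functional $d_\alpha(-)$ is $||-||_s$-continuous, so $d_\alpha(\mu_n)\to d_\alpha(\mu)$ in $R$; the closed ball $\{x\in R : |x|\leq r^{-|\alpha|}\}$ being closed, the bound $|d_\alpha(\mu)|r^{|\alpha|}\leq 1$ passes to the limit. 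Independence from $s$ is then visible on the right-hand side.

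For (iii), one direction is immediate from continuity of the coefficient functionals. The converse is the key step: on $\D^{<r,\circ}(G,R)$ the a priori estimate $|d_\alpha(\mu-\nu)|s^{|\alpha|}\leq 2(s/r)^{|\alpha|}$ bounds the tail of $||\mu-\nu||_s$ uniformly. So for any $\epsilon>0$ one can choose $N$ large enough that the $|\alpha|>N$ contribution is at most $\epsilon$, and since there are only finitely many multi-indices of length $\leq N$, controlling those finitely many coefficients (a weak-topology condition) controls $||-||_s$. For (iv), the crucial input is multiplicativity of $\vp$: it gives $|\vp^n a| = |\vp|^n|a|$ for all $a\in R$, hence $\vp^n \D^{<r,\circ}(G,R) = \{\mu : |d_\alpha(\mu)|r^{|\alpha|}\leq |\vp|^n \text{ for all } \alpha\}$ (the inclusion $\supseteq$ by writing $\mu = \vp^n(\vp^{-n}\mu)$). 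This identifies the $\vp$-adic topology with that of $||-||_r$. Separation is immediate from the formula, and completeness follows by taking coefficientwise limits of a Cauchy sequence in $(R,|-|)$ and using the closed-ball argument from (i) to see the bounds on coefficients pass to the limit.

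The main obstacle is (iii): one must combine the geometric decay $(s/r)^{|\alpha|}$ with finiteness of $\{\alpha : |\alpha|\leq N\}$ to reduce $||-||_s$-approximation on the bounded set $\D^{<r,\circ}(G,R)$ to agreement on finitely many coefficients. The argument in (iv) relies essentially on the hypothesis that $\vp$ is \emph{multiplicative}, not merely topologically nilpotent, so that scaling by $\vp^n$ interacts predictably with the supremum norm on coefficients; without this, the $\vp$-adic filtration and the $||-||_r$-balls need not match on the nose.
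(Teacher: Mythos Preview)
Your proposal is correct and follows essentially the same approach as the paper: the paper also proves (i) by showing the right-hand set $W$ is closed (via continuity of the coefficient functionals) and that $\D^{r,\circ}(G,R)$ is dense in it (via truncations), then says ``one checks easily'' for (iii) and ``we leave it to the reader'' for (iv). Your writeup simply fills in those omitted details; the only cosmetic slip is the factor $2$ in your tail estimate for (iii), which is unnecessary in the non-archimedean setting (one has $|d_\alpha(\mu-\nu)|\leq\max(|d_\alpha(\mu)|,|d_\alpha(\nu)|)$), but this does not affect the argument.
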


\begin{proof}
Let $W=\{ \sum d_{\al}\mbf{b}^{\al} \in \D^{s}(G,R) \mid |d_{\al}|r^{|\al|} \leq 1 \}$. Since the maps $\mu \mapsto d_{\al}(\mu)$ are continuous we see that $W$ is closed. On other hand, any finite truncation of an element in $W$ is in $\D^{r,\circ}(G,R)$, so $\D^{r,\circ}(G,R)$ is dense in $W$. It follows that $W=\D^{<r,\circ}(G,R)$. The subspace topology is given by the norm $||-||_{s}$, and one checks easily that this agrees with the weak topology in the statement of the proposition. The final statement is similarly easy to check; we leave it to the reader.  
\end{proof}

We then set $\D^{<r}(G,R)=\D^{<r,\circ}(G,R)[1/\vp]$; this is naturally a 
Banach $R$-module which embeds into $\D^{s}(G,R)$ for all $s\in [1/p,r)$. The 
$(\D^{<r}(G,R))_{r>1/p}$ form an inverse system and the natural map 
$\D^{r}(G,R) \ra \D^{s}(G,R)$ factors over $\D^{<r}(G,R)$, so we therefore have
$$ \ms{D}(G,R) = \varprojlim_{r} \D^{<r}(G,R) $$
as well. Recall the potential ON-basis $(\vp^{-n(r,\vp,\al)}\mbf{b}^{\al})_{\al}$ of $\D^{r}(G,R)$; we have
$$ \D^{<r}(G,R)=\left\{ \sum_{\al}d_{\al}\mbf{b}^{\al} \mid |d_{\al}|r^{|\al|}\, \text{bounded} \right\} = \left( \prod_{\al} R_{0}.\vp^{-n(r,\vp,\al)}\mbf{b}^{\al} \right)\left[ \frac{1}{\vp}\right]. $$
We remark that if $(\vp^{-n(r,\vp,\al)}\mbf{b}^{\al})_{\al}$ is an ON-basis, we have $\D^{<r,\circ}(G,R)=\prod_{\al}R_{0}.\vp^{-n(r,\vp,\al)}\mbf{b}^{\al}$ and the weak topology on the left hand side is equal to the product topology on the right hand side. Next, we define some function modules in a similar fashion. We put 
$$ \mc{C}^{<r,\circ}(G,R) = \{ f \in \mc{C}(G,R) \mid |\mu(f)| \leq 1\, \forall \mu \in \D(G,R) \cap \D^{r,\circ}(G,R) \} $$
and set $\mc{C}^{<r}(G,R)=\mc{C}^{<r,\circ}(G,R)[1/\vp]\sub \mc{C}(G,R)$. We note that $f=\sum_{\al}c_{\al}E_{\al}\in \mc{C}^{<r}(G,R)$ if and only if $\vp^{-n(r,\vp,\al)}\mbf{b}^{\al}(f)=c_{\al}\vp^{-n(r,\vp,\al)}$ is bounded as $\al \ra \infty$; from this one sees that
$$ \mc{C}^{<r}(G,R) =\left( \prod_{\al} R_{0}.\vp^{n(r,\vp,\al)}E_{\al} \right) \left[\frac{1}{\vp} \right] $$
and that it is the dual space of $\D^{r}(G,R)$, and that the dual norm is given by $||\sum_{\al}c_{\al}E_{\al}||_{r}=\sup_{\al}|c_{\al}|r^{-|\al|}$. When $r>s\geq 1/p$, we let $\mc{C}^{r}(G,R)$ denote the closure of $\mc{C}^{<s}(G,R)$ inside $\mc{C}^{<r}(G,R)$. Tracing through the definitions we get an explicit description
$$ \mc{C}^{r}(G,R) = \left\{ \sum_{\al}c_{\al}E_{\al} \mid |c_{\al}|r^{-|\al|}\ra 0 \right\}= \wh{\bigoplus_{\al}}R.\vp^{n(r,\vp,\al)}E_{\al} $$
and see that the dual space of $\mc{C}^{r}(G,R)$ is $\D^{<r}(G,R)$.
\begin{remark}
The reader should compare our description of $\mc{C}^{r}(G,R)$ with the construction of \cite[Section 5.4]{lwx}. Here, the authors give a definition of a `modified' space of continuous functions on $\Z_p$ with values in $\Z_p[[T,pT^{-1}]]$ which is morally (apart from the slight difference in coefficients and the fact that we have only defined $\mc{C}^{r}(G,R)$ for $r>1/p$) the space $\mc{C}^{1/p}(\Z_p,R)$, where $R$ is the Tate ring obtained as the rational localisation \[\Z_p[[T]]\langle \frac{p}{T}\rangle\left[\frac{1}{T}\right]\] of $\Z_p[[T]]$. We give $R$ the norm with unit ball $\Z_p[[T]]\langle \frac{p}{T}\rangle$ and $|T|=1/p$, defined as in Remark \ref{tateremark}. The definition which appears in \cite{lwx} is therefore a useful motivation for the general constructions of this article, although we will not use the modules $\mc{C}^{r}(G,R)$ in this article. We will see in Theorem \ref{lwxboundary} that one may use the module $\D^{1/p}(\Zp,R)$ (which is defined) to prove the main results of \cite{lwx}.
\end{remark}

\medskip

To finish this section we discuss the relationship between our constructions and the spaces of locally analytic functions resp. distributions when $R$ is a Banach $\Qp$-algebra. We may and do assume that $\Qp$ is isometrically embedded into $R$ and that $|p|=1/p$. Recall that the atlas on $G$ induced from our choice of a topological basis identifies $G^{p^{n}}$ with $(p^{n}\Zp)^{\dim G}$. Amice's theorem \cite[Th\'{e}or\`{e}me I.4.7]{colmez} tells us that the space of $n$-analytic functions on $G$ with respect to this atlas is explicitly given as
$$ \mc{C}^{n-an}(G,\Qp) = \wh{\bigoplus_{\al}} \Qp.k_{\al}E_{\al}$$
where $ k_{\al}:= \lfloor p^{-n}\al_{1} \rfloor ! \cdots  \lfloor 
p^{-n}\al_{\dim G} \rfloor !$. The space of $n$-analytic $R$-valued functions 
on $G$ is then 
$$ \mc{C}^{n-an}(G,R) = \wh{\bigoplus_{\al}} R.k_{\al}E_{\al}\sub \mc{C}(G,R). $$
It is well known that $|k_{\al}| \sim r_{n}^{|\al|}$ with $r_{n}=p^{-1/p^{n}(p-1)}$, and it follows that $\mc{C}^{n-an}(G,R)=\mc{C}^{r_{n}}(G,R)$ as $\Qp$-Banach spaces. Since $r_{n} \ra 1$ from below as $n \ra \infty$ it follows that $\ms{C}(G,R)$ is the space of locally analytic $R$-valued functions on $G$, with its usual locally convex topology. Dually, $\ms{D}(G,R)$ is then the space of locally analytic $R$-valued distributions with its usual locally convex topology. We sum up this discussion in a proposition.

\begin{proposition}\label{distrocomparison}
When $R$ is Banach $\Qp$-algebra, $\ms{C}(G,R)$ is canonically the space of locally analytic $R$-valued functions on $G$ and $\ms{D}(G,R)$ is dually the space of locally analytic $R$-valued distributions on $G$.
\end{proposition}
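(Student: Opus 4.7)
The proof is essentially contained in the paragraph preceding the proposition; my plan is to make that discussion precise by organizing it into three steps.

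First, I would recall the explicit description of $n$-analytic functions from Amice's theorem. The chosen ordered basis $g_1,\dots,g_d$ of topological generators identifies $G$ as a $p$-adic manifold with $\Zp^{\dim G}$ in such a way that $G^{p^n}$ corresponds to $(p^n\Zp)^{\dim G}$. Then Amice's theorem (\cite[Th\'eor\`eme I.4.7]{colmez}) gives the explicit orthonormal description
\[
\mc{C}^{n-an}(G,\Qp) = \wh{\bigoplus_\al} \Qp \cdot k_\al E_\al, \qquad k_\al := \prod_{i=1}^{\dim G} \lfloor p^{-n}\al_i\rfloor!,
\]
and by the usual completed tensor product argument (valid since $R$ is a Banach $\Qp$-algebra), tensoring with $R$ yields $\mc{C}^{n-an}(G,R)=\wh{\bigoplus_\al} R \cdot k_\al E_\al$.

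Second, I would compare this with $\mc{C}^{r_n}(G,R)$ for $r_n=p^{-1/p^n(p-1)}$. The standard asymptotic $v_p(k_\al) = |\al|/p^n(p-1) + O(\log|\al|)$, equivalent to $|k_\al|\sim r_n^{|\al|}$, implies that the two potential ON-bases $(k_\al E_\al)_\al$ and $(p^{n(r_n,p,\al)}E_\al)_\al$ of the two Banach $R$-modules differ by bounded scalars in $\Qp^\times$. Hence the identity map on the underlying $R$-module of formal $R$-linear combinations $\sum c_\al E_\al$ extends to a topological $R$-module isomorphism $\mc{C}^{n-an}(G,R) \cong \mc{C}^{r_n}(G,R)$.

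Third, I would pass to the locally analytic functions by taking the colimit. By definition, the space of $R$-valued locally analytic functions on $G$ is $\varinjlim_n \mc{C}^{n-an}(G,R)$ with its locally convex inductive limit topology. Since $r_n \nearrow 1$ is cofinal in $[1/p,1)$ as $n\to\infty$, the above identification is compatible with the inclusions and gives a topological $R$-linear isomorphism between the locally analytic functions and $\varinjlim_r \mc{C}^r(G,R) = \ms{C}(G,R)$. Dualizing via $\mc{D}^r(G,R) = \mc{C}^r(G,R)'$ (proved in the text just before the proposition) and using that $\ms{D}(G,R) = \varprojlim_r \mc{D}^r(G,R)$ as a locally convex $R$-module, strong duality for these compact/nuclear transition maps (compactness of the inclusions $\mc{D}^s(G,R)\hookrightarrow \mc{D}^r(G,R)$ for $s>r$ was established in Lemma \ref{cpt2}) identifies $\ms{D}(G,R)$ with the usual space of $R$-valued locally analytic distributions on $G$.

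The only step that requires any care is the comparison of the two norms in the second step, i.e.\ checking that the factorials $k_\al$ are \emph{bounded-equivalent} to $p^{n(r_n,p,\al)}$ in $\al$; this is the one spot where one pays attention to constants, but it is a purely classical $p$-adic-analytic estimate and contains no genuine obstacle. Everything else reduces to formal manipulations with (co)limits, duality, and the fact that $(r_n)$ is cofinal in $[1/p,1)$.
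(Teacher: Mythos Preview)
Your proposal is correct and follows exactly the approach of the paper, which is precisely the paragraph preceding the proposition. One small slip: the dual of $\mc{C}^r(G,R)$ is $\D^{<r}(G,R)$, not $\D^r(G,R)$ (as stated in the text just before this discussion); however, since the systems $(\D^r)_r$ and $(\D^{<r})_r$ interleave, they have the same inverse limit $\ms{D}(G,R)$, so this does not affect your argument.
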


\subsection{Ash-Stevens distribution modules for Banach--Tate $\Zp$-algebras}\label{modules} While the definitions in this subsection will be of  a local nature, let us nevertheless start by introducing the global setup that we will need to define eigenvarieties. Let $F$ be a number field. We put $\mbf{G}={\rm Res}_{\Q}^{F}\mbf{H}$, where $\mbf{H}$ is a connected reductive group over $F$ split at all places $v|p$. $\mbf{G}$ is then a connected reductive group over $\Q$. When $v|p$ we write $\mbf{H}_{\oo_{F_{v}}}$ for a split model of $\mbf{H}_{F_{v}}$ over $\oo_{F_{v}}$ and choose a Borel subgroup $\mbf{B}_{v}$ (with unipotent radical $\mbf{N}_{v}$) and a maximal torus $\mbf{T}_{v}\sub \mbf{B}_{v}$ of $\mbf{H}_{\oo_{F_{v}}}$.  Set $\mbf{G}_{\Zp}=\prod_{v\mid p}{\rm Res}_{\Zp}^{\oo_{F_{v}}}\mbf{H}_{\oo_{F_{v}}}$, $\mbf{B}=\prod_{v\mid p}{\rm Res}_{\Zp}^{\oo_{F_{v}}}\mbf{B}_{v}$ and $\mbf{T}=\prod_{v\mid p}{\rm Res}_{\Zp}^{\oo_{F_{v}}}\mbf{T}_{v}$ and also put $\mbf{N}=\prod_{v\mid p} {\rm Res}_{\Zp}^{\oo_{F_{v}}}\mbf{N}_{v}$. We use overlines to denote opposite groups; e.g. $\ol{\mbf{B}}=\prod_{v\mid p}{\rm Res}_{\Zp}^{\oo_{F_{v}}}\ol{\mbf{B}}_{v}$ where the $\ol{\mbf{B}}_{v}$ are the opposite Borels of $\mbf{B}_{v}$.

\medskip
We will also need notation for various subgroups of $G:=\mbf{G}(\Qp)$. Set $G_{0}=\mbf{G}_{\Zp}(\Zp)$, $B_{0}=\mbf{B}(\Zp)$, $T_{0}=\mbf{T}(\Zp)$, $N_{0}=\mbf{N}(\Zp)$. We let $I$ denote the Iwahori subgroup of $G$ defined as the preimage of $\mbf{B}(\mb{F}_{p})$ under the reduction map $G_{0} \ra \mbf{G}_{\Zp}(\mb{F}_{p})$, and we let $K_{s}={\rm Ker}(G_{0} \ra \mbf{G}_{\Zp}(\Z/p^{s}))$ be the $s$-th principal congruence subgroup of $G_{0}$, for $s\geq 1$. Again for $s\geq 1$, set
$$ T_{s}= {\rm Ker}(T_{0} \ra \mbf{T}(\Z /p^{s}\Z ));$$
$$ N_{s}= {\rm Ker}(N_{0} \ra \mbf{N}(\Z /p^{s}\Z ));$$
$$ \ol{N}_{s} = {\rm Ker}(\ol{\mbf{N}}(\Zp) \ra \ol{\mbf{N}}(\Z/p^{s}\Z)). $$
Set $B_{s}=T_{s}N_{s}$ for $s\geq 0$. We have Iwahori decompositions $I=\ol{N}_{1}T_{0}N_{0}$ and $K_{s}=\ol{N}_{s}T_{s}N_{s}$. Next, choose a splitting $T:=\mbf{T}(\Qp) \ra T_{0}$ of the inclusion $T_{0}\sub T$ and put $\Sigma={\rm Ker}(T \ra T_{0})$. We set
$$ \Sigma^{+}= \left\{ t\in \Sigma \mid t\ol{N}_{1}t^{-1}\sub \ol{N}_{1} \right\}; $$
$$ \Sigma^{cpt}= \left\{ t\in \Sigma \mid t\ol{N}_{1}t^{-1}\sub \ol{N}_{2} \right\}. $$
We may then define $\Delta_{p}=I\Sigma^{+}I$; this is a monoid and 
$(\Delta_{p},I)$ is a Hecke pair (which means that $I$ and $\delta I 
\delta^{-1}$ are commensurable for all $\delta\in \Delta_{p}$). The 
corresponding Hecke algebra (defined over $\Z_p$) will be denoted by 
$\mb{T}(\Delta_{p},I)$.

\medskip
With these preparations let us now move on to the definition of analytic and locally analytic distribution modules for general Banach--Tate $\Zp$-algebras $R$. When $R$ is a Banach $\Qp$-algebra, these were defined in \cite{as} using (locally) analytic functions on $I$. Recasting this in terms of the norms of the previous section, we are able to extend the definition to all Banach--Tate $\Zp$-algebras. 

\medskip
Let $\kappa\, :\, T_{0} \ra R^{\times}$ be a continuous character. We will put some restrictions on the choice of norm on $R$, according to the following lemma. We set $\epsilon=1$ if $p\neq 2$ and $\epsilon=2$ if $p=2$, and put $q=p^{\epsilon}$.

\begin{lemma}\label{norm2}
Keep the above notation and assume that $R$ is Noetherian. Write $|-|$ for the given norm on $R$ and let $\vp$ be a multiplicative pseudo-unformizer. Then there exists a Banach--Tate $\Zp$-algebra norm $|-|^{\prime}$ on $R$, bounded-equivalent to $|-|^{s}$ for some $s>0$, such that $|\ka(t)|^{\prime} \leq 1$ for all $t\in T_{0}$, $|\ka(t)-1|^{\prime}<1$ for all $t \in T_{\epsilon}$, and $\vp$ is a multiplicative pseudo-uniformizer for $|-|^{\prime}$.
\end{lemma}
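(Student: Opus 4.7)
The plan is to adapt the construction used in the proof of Lemma \ref{BTZp}, carefully enlarging the ring of definition so as to encode the two desired conditions on $\ka$. First I would apply Lemma \ref{BTZp} directly to replace $|-|$ by a bounded-equivalent Banach--Tate $\Zp$-algebra norm (a power of the original); all further modifications will only change the norm to a bounded-equivalent power, so boundedness against a power of the original is preserved through the argument. After this reduction we may assume $|p| \leq p^{-1}$ and $\vp$ is multiplicative from the start.

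The key analytic input — and the main obstacle — is to show that $\ka(t) - 1$ is topologically nilpotent in $R$ for every $t \in T_\epsilon$. Since $T_\epsilon$ is a uniform pro-$p$ group and $\ka$ is continuous, $\ka(t)^{p^n} = \ka(t^{p^n}) \to 1$. Writing $v = \ka(t) - 1$ and using the identity
\[
v^p = (\ka(t)^p - 1) - p \cdot Q(v), \qquad Q(X) = \sum_{k=1}^{p-1} \tfrac{1}{p}\binom{p}{k} X^k \in \Z[X],
\]
together with $\ka(t) \in R^\circ$ (since $\ka(T_0)$ is a compact subgroup of $R^\times$, hence bounded) and $|p|<1$, one obtains by iteration that for $n$ sufficiently large $|v^{p^n}| \leq |p|$, and then $|v^{p^{n+k}}| = |(v^{p^n})^{p^k}| \leq |p|^{p^k} \to 0$. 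Thus $\ka(T_\epsilon) \subseteq 1 + R^{\circ\circ}$, and by compactness one has $|\ka(t)-1|_{\mathrm{sp}} < 1$ uniformly in $t \in T_\epsilon$.

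Next I would build a larger ring of definition. Since $\ka(T_0)$ is a bounded subgroup of $R^\times$, the subring $R_0':= R_0[\ka(T_0)] \subseteq R^\circ$ (which equals the $R_0$-linear span of $\ka(T_0)$ thanks to the group structure) is bounded, hence a ring of definition of the Tate ring $R$. Choose topological generators $t_1,\ldots,t_d$ of $T_\epsilon$ and pick $m \in \Z_{\geq 1}$ so large that $|\ka(t_i)-1|_{\mathrm{sp}} < |\vp|^{1/m}$ for each $i$ (possible by the previous step) and also $|p| < |\vp|^{2/m}$. Pass to $S := R[\vp^{1/m}] = R[X]/(X^m - \vp)$ with its canonical $R$-Banach module norm, and choose a ring of definition $S_2 \subseteq S$ containing $\vp^{1/m}$, $p\vp^{-1/m}$, $R_0'$, and all $\vp^{-1/m}(\ka(t_i)-1)$; these last elements are power-bounded by the choice of $m$. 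Then set
\[
|s|_2 := \inf\{|\vp|^{k/m} \mid s \in \vp^{k/m}S_2\}
\]
and $|-|' := (|-|_2)^{s_0}$ on $R$, for $s_0 > 0$ chosen so that $|p|' \leq p^{-1}$ (possible since $p\in \vp^{1/m}S_2$).

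Finally I would verify the four conditions: (i) $\vp$ is multiplicative for $|-|'$ by the $\vp^{k/m}$-gauge construction; (ii) the norm is bounded-equivalent to $|-|^{s}$ for some $s>0$ because $R_0 \subseteq S_2$ and $S_2$ is bounded; (iii) $|\ka(t)|' \leq 1$ for $t \in T_0$ because $\ka(T_0) \subset R_0' \subset S_2$; and (iv) $|\ka(t)-1|' \leq |\vp|^{s_0/m} < 1$ for $t \in T_\epsilon$. For (iv) one uses that each $\ka(t_i)-1 \in \vp^{1/m}S_2$, and that for any $a \in \Zp$ the expansion $\ka(t_i)^{a} - 1 = \sum_{n\geq 1}\binom{a}{n}(\ka(t_i)-1)^n$ (valid since $\ka(t_i)-1$ is topologically nilpotent) lies in $\vp^{1/m}S_2$; applying this to the presentation $t = t_1^{a_1}\cdots t_d^{a_d}$ and expanding $\prod_i \ka(t_i)^{a_i} - 1$ shows $\ka(t) - 1 \in \vp^{1/m}S_2$ for every $t \in T_\epsilon$.
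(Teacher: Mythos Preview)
Your approach is essentially the same as the paper's---both adapt the construction of Lemma~\ref{BTZp}, passing to $S = R[\vp^{1/m}]$ and enlarging the ring of definition to absorb $\ka(t_i)$, $\vp^{-1/m}(\ka(t_i)-1)$, and $p\vp^{-1/m}$, then taking the associated gauge norm raised to a suitable power. The paper reduces to finitely many topological generators at the outset, whereas you throw in all of $R_0[\ka(T_0)]$ and verify condition (iv) explicitly for every $t\in T_\epsilon$; these are cosmetic differences.

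There is one gap, in your argument that $v=\ka(t)-1$ is topologically nilpotent for $t\in T_\epsilon$. The identity $v^{p^n}=(\ka(t)^{p^n}-1)-p\,Q_n(v)$ with $Q_n\in\Z[v]$ only yields $|v^{p^n}|\le\max(|w_n|,\,|p|\cdot\sup_k|v^k|)$; knowing merely $v\in R^\circ$ gives $\sup_k|v^k|\le C$ for some $C$ that need not be $\le 1$, so you only get $|v^{p^n}|\le |p|C$, and the bootstrap $|v^{p^{n+k}}|\le (|p|C)^{p^k}$ stalls if $|p|C\ge 1$. This is easily repaired by running the same estimate with the spectral seminorm $|-|_{sp}$ (for which $|v|_{sp}\le 1$ automatically), or by swapping the order of your steps and first replacing $R_0$ by $R_0'=R_0[\ka(T_0)]$ so that $|v|\le 1$. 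The paper's argument for this step is slicker and avoids any computation: since $R^\circ/R^{\circ\circ}$ is a reduced discrete ring of characteristic $p$ and $T_\epsilon$ is pro-$p$, the induced continuous map $T_\epsilon\to (R^\circ/R^{\circ\circ})^\times$ is trivial.
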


\begin{proof}
The proof is similar to that of Lemma \ref{BTZp}. Let $t_{1},\ldots,t_{a}$ be a 
set of topological generators of $T_{0}$ and let $t_{a+1},\ldots,t_{b}$ be a 
set of topological generators for $T_{\epsilon}$; it suffices to find 
$|-|^{\prime}$ such that $|\ka(t_{i})|^{\prime}\leq 1$ for $i=1,\ldots,a$, 
$|\ka(t_{i})-1|<1$ for $i=a+1,\ldots,b$, and $|p|^{\prime} \leq p^{-1}$, which 
is bounded-equivalent to $|-|^{s}$ for some $s>0$.

\medskip

Since $T_{0}$ is compact, $\ka(T_{0})$ is bounded, and hence $t_{i}$ is 
powerbounded for all $i$. Moreover, $T_{\epsilon}$ is pro-$p$ and 
$R^{\circ}/R^{\circ\circ}$ is a reduced discrete ring of characteristic $p$, so 
the continuous homomorphism $T_{\epsilon} \ra 
(R^{\circ}/R^{\circ\circ})^{\times}$ induced from $\ka$ is trivial. We conclude 
that $\ka(T_{\epsilon})\sub 1+R^{\circ\circ}$ and hence $\ka(t_{i})-1\in 
R^{\circ\circ}$ for $i=a+1,\ldots,b$. We may now choose $m$ such that 
$|\ka(t_{i})-1|_{sp} < |\vp|^{2/m}$ for $i=a+1,\ldots,b$ and 
$|p|_{sp}<|\vp|^{2/m}$. Arguing with $S=R[\vp^{1/m}]$ as in Lemma \ref{BTZp} we 
may construct a norm $|-|_{2}$ on $R$ for which $\vp$ is a multiplicative 
pseudo-uniformizer with $|\vp|_{2}=|\vp|$, $|\ka(t_{i})|_{2}\leq 1$ for 
$i=1,\ldots,a$, $|\ka(t_{i})-1|_{2}<1$ for $i=a+1,\ldots ,b$, and $|p|_{2}<1$. 
Setting $|-|^{\prime}:=|-|_{2}^{s}$ for $s$ sufficiently large then gives the 
desired norm.
\end{proof}

\begin{definition}
Let $R$ be a Banach--Tate $\Zp$-algebra and $\ka : T_{0} \ra R^{\times}$ a continuous character. We will say that the norm of $R$ is \emph{adapted to $\ka$} if $\ka(T_{0})\sub R_{0}$ and $|\ka(t)-1|<1$ for all $t\in T_{\epsilon}$. Note that then $|\ka(t)|=1$ for all $t\in T_{0}$ and there exists an $r<1$ such that $|\ka(t)-1|\leq r$ for all $t\in T_{\epsilon}$.
\end{definition}

For the rest of the subsection we consider a Banach--Tate $\Zp$-algebra $R$ and character $\ka : T_{0} \ra R$ such that the norm on $R$ is adapted to $\ka$. We extend $\ka$ to a character of $B_{0}$ by making it trivial on $N_{0}$. We define $\mc{A}_{\kappa}\sub \mc{C}(I,R)$ to be the subset of functions such that $f(gb)=\kappa(b)f(g)$ for all $g\in I$ and $b\in B_{0}$. $\mc{A}_{\kappa}$ is naturally a Banach $R$-module and carries a continuous right action of $I$ by left translation. Restricting a function from $I$ to $\ol{N}_{1}$ gives a topological isomorphism $\mc{A}_{\kappa} \cong \mc{C}(\ol{N}_{1},R)$. By definition, $\Sigma^{+}$ acts on the left on $\ol{N}_{1}$ by conjugation, and via the previous isomorphism this induces a right action of $\Sigma^{+}$ on $\mc{A}_{\kappa}$. These actions fit together into a right action of $\Delta_{p}$ on $\mc{A}_{\kappa}$. We let $\mc{D}_{\kappa}$ denote the dual $\Hom_{R,cts}(\mc{A}_{\kappa},R)$, equipped with the dual left $\Delta_{p}$-action. Since $\mc{A}_{\kappa}$ is the $B_{0}$-invariants of $\mc{C}(I,R)$ with respect to the action $(f.b)(g)=\kappa(b)^{-1}f(gb)$ ($b\in B_{0}$, $g\in I$), $\mc{D}_{\kappa}$ is the Hausdorff $B_{0}$-coinvariants of $\mc{D}(I,R)$ with respect to the dual (right) action. We record a more precise statement for future use:

\begin{proposition}\label{explicit}
The natural surjection $\mc{D}(I,R)\ra \mc{D}_{\kappa}$ is equivariant for the natural left $I$-actions on both sides. Identifying $\mc{D}_{\kappa}$ with $\mc{D}(\ol{N}_{1},R)$, the map is given by $\delta_{\bar{n}b} \mapsto \kappa(b) \delta_{\bar{n}}$.
\end{proposition}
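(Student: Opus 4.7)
The plan is to unwind the definitions; both assertions follow once the various left and right actions are set up carefully. There is no substantial analytic input needed — the work is entirely formal once one has the Iwahori decomposition $I=\ol{N}_{1}B_{0}$ and the identification of $\mc{A}_{\kappa}$ as a $B_{0}$-invariant subspace of $\mc{C}(I,R)$.

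First I would fix conventions. The right $I$-action on $\mc{C}(I,R)$ by left translation is $(f\cdot g)(h)=f(gh)$, and a direct check shows that $\mc{A}_{\kappa}$ is preserved: the defining condition $f(hb)=\kappa(b)f(h)$ for $b\in B_{0}$ is invariant under replacing $h$ by $gh$. Dualizing produces left $I$-actions on both $\mc{D}(I,R)$ and $\mc{D}_{\kappa}=\Hom_{R,cts}(\mc{A}_{\kappa},R)$ by $(g\cdot \mu)(f)=\mu(f\cdot g)$. The natural surjection $\pi:\mc{D}(I,R)\to \mc{D}_{\kappa}$ is restriction of functionals along the inclusion $\mc{A}_{\kappa}\hookrightarrow \mc{C}(I,R)$. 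Equivariance of $\pi$ is then immediate: for $f\in \mc{A}_{\kappa}$, one computes $\pi(g\cdot \mu)(f)=(g\cdot \mu)(f)=\mu(f\cdot g)=\pi(\mu)(f\cdot g)=(g\cdot \pi(\mu))(f)$.

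For the explicit formula, I would use the Iwahori decomposition to write every element of $I$ uniquely as $\bar{n}b$ with $\bar{n}\in\ol{N}_{1}$ and $b\in B_{0}$, and recall that the isomorphism $\mc{A}_{\kappa}\cong \mc{C}(\ol{N}_{1},R)$ is just restriction of functions from $I$ to $\ol{N}_{1}$. Dually, $\delta_{\bar{n}}\in \mc{D}(\ol{N}_{1},R)$ corresponds under this identification to the functional on $\mc{A}_{\kappa}$ given by $f\mapsto f(\bar{n})$. The image $\pi(\delta_{\bar{n}b})\in \mc{D}_{\kappa}$ is by definition the functional $f\mapsto f(\bar{n}b)$ on $\mc{A}_{\kappa}$; applying the defining equivariance $f(\bar{n}b)=\kappa(b)f(\bar{n})$ rewrites this as $\kappa(b)$ times the functional $f\mapsto f(\bar{n})$, i.e.\ as $\kappa(b)\delta_{\bar{n}}$, which is the claimed formula. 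The only mild obstacle is bookkeeping of left/right conventions while passing between $\mc{C}(I,R)$, $\mc{A}_{\kappa}$, and their duals; no further input is needed.
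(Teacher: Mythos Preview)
Your proof is correct and follows essentially the same approach as the paper: both compute the image of $\delta_{\bar{n}b}$ by evaluating on $f\in\mc{A}_{\kappa}$ and using the defining relation $f(\bar{n}b)=\kappa(b)f(\bar{n})$. The paper's version is terser and omits the explicit equivariance verification you provide, instead adding a remark that the formula on Dirac distributions characterises the whole map by $R$-linearity and weak-star continuity; your version makes the equivariance check explicit and computes $\pi(\delta_{\bar{n}b})$ directly from the definition, so no such density argument is needed.
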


\begin{proof}
The inverse to the restriction map $\mc{A}_{\kappa} \ra \mc{C}(\ol{N}_{1},R)$ is given by $f \mapsto (\bar{n}b \mapsto \kappa(b) f(\bar{n}) )$ (here and above $\bar{n}\in \ol{N}_{1}$ and $b\in B_{0}$). From this one sees directly that the dual map sends $\delta_{\bar{n}b}$ to $\kappa(b) \delta_{\bar{n}}$. That this characterises the maps follows from $R$-linearity and continuity for the weak-star topology.
\end{proof}

To apply the results from the previous subsection we will need to know that some groups are uniform. The following result is presumably well known to experts but we have been unable to find a suitable reference. The proof we give is due to Konstantin Ardakov, and we thank him for allowing us to include it here (any errors are due to the authors).  

\begin{proposition}\label{uniform}
$K_{s}$ and $\ol{N}_{s}$ are uniform for $s\geq \epsilon$.
\end{proposition}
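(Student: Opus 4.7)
The plan is to reduce both statements to the Lazard correspondence between uniform pro-$p$ groups and powerful $\Zp$-Lie lattices (see \cite[Chapter 9]{ddms}). Since $\mbf{G}_{\Zp}$ is smooth over $\Zp$, the $\Zp$-Lie algebra $\g := \Lie(\mbf{G}_{\Zp})(\Zp)$ is free of finite rank. Choosing a closed embedding $\mbf{G}_{\Zp} \hookrightarrow \GL_{N,\Zp}$ realises $K_s$ inside $1+p^s M_N(\Zp)$ for $s \geq \epsilon$, so the matrix logarithm and exponential series converge on $K_s$ and its image respectively, defining mutually inverse bijections between $K_s$ and a $\Zp$-sublattice of $\mathfrak{gl}_N(\Zp)$. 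Identifying the formal completion of $\mbf{G}_{\Zp}$ along the identity with the additive formal group of $\g$ via the formal logarithm, one checks that this sublattice is precisely $p^s \g$ and that the group law on $K_s$ is transported to the Baker--Campbell--Hausdorff product on $p^s \g$.

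For $s \geq \epsilon$ the lattice $p^s \g$ is powerful: from the embedding $\g \subseteq \mathfrak{gl}_N(\Zp)$ we have $[p^s \g, p^s \g] \subseteq p^{2s}\g$, and the inequality $2s \geq s+\epsilon$ gives the required inclusion in $p \cdot p^s \g$ (if $p \neq 2$) or in $4 \cdot p^s \g$ (if $p = 2$). The Lazard correspondence then says that powerful $\Zp$-Lie lattices of finite rank, equipped with the BCH product, correspond bijectively to uniform pro-$p$ groups, so $K_s$ is uniform.

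For $\ol{N}_s$ the same strategy applies and is in fact cleaner: because $\ol{\mbf{N}}$ is smooth and unipotent, $\exp$ and $\log$ are mutually inverse \emph{polynomial} isomorphisms of $\Zp$-schemes between $\Lie(\ol{\mbf{N}})$ and $\ol{\mbf{N}}$, so one gets an isomorphism of $p$-adic analytic groups between $\ol{N}_s$ and $p^s \Lie(\ol{\mbf{N}})(\Zp)$ with the BCH product. Nilpotence of $\Lie(\ol{\mbf{N}})$ makes convergence of BCH automatic, and the powerfulness check proceeds exactly as above.

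The main obstacle is justifying, in a coordinate-free manner, that the logarithm carries $K_s$ bijectively onto $p^s \g$ and intertwines the group law with BCH. Smoothness of $\mbf{G}_{\Zp}$ is essential here, both to ensure that $\g$ is a free $\Zp$-module and to identify the formal group of $\mbf{G}_{\Zp}$ with the formal group attached to $\g$. Once this identification is in place, the powerfulness estimate and \cite[Chapter 9]{ddms} complete the argument uniformly in $s \geq \epsilon$.
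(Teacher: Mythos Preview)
Your Lazard-correspondence strategy is sound and genuinely different from the paper's route. The paper invokes the correspondence between powerful Lie lattices and uniform groups only for $\GL_n$; for a general split $\mbf{H}_v$ it instead works at the group level: the embedding into $\GL_n$ gives torsion-freeness, the commutator estimate $[K_{s,v},K_{s,v}]\sub K_{s+\epsilon,v}$ is read off from the ambient $\GL_n$, and the key inclusion $K_{s+\epsilon,v}\sub K_{s,v}^q$ is obtained from the Iwahori decomposition $K_{s,v}=\ol{N}_{s,v}T_{s,v}N_{s,v}$ together with the root homomorphisms $x_\alpha$. This group-theoretic argument is more elementary in that it never needs to pin down $\log(K_s)$ integrally; it also yields the slightly stronger fact $\ol{N}_{s+1}=\ol{N}_s^p$ for \emph{all} $s\ge 1$ (Remark \ref{unipcong}), which the paper uses in Corollary \ref{cor:cpt}.

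Two points in your write-up need repair. First, the assertion that $\exp$ and $\log$ are polynomial isomorphisms of $\Zp$-schemes between $\Lie(\ol{\mbf{N}})$ and $\ol{\mbf{N}}$ is false once the nilpotency class is $\ge p$: the finite sum $\exp(X)=1+X+X^2/2!+\cdots$ has $p$ in the denominators. These maps are only $\Qp$-scheme isomorphisms; what you actually need (and what is true for $s\ge\epsilon$) is the bijection on $\Zp$-points between $p^s\Lie(\ol{\mbf{N}})(\Zp)$ and $\ol{N}_s$. Second, your appeal to ``the formal logarithm'' to identify $\log(K_s)$ with $p^s\g$ is not enough, for the same reason: the formal logarithm of a smooth $\Zp$-formal group lives only over $\Qp$ and does not a priori respect integral lattices. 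The clean fix, using your chosen closed embedding $\mbf{G}_{\Zp}\hookrightarrow\GL_{N,\Zp}$, is: (i) the matrix $\log$ sends $K_s$ into $p^sM_N(\Zp)\cap(\g\otimes\Qp)$, the second containment because $\log$ is compatible with closed $p$-adic Lie subgroups; (ii) smoothness of $\mbf{G}_{\Zp}$ makes $\g$ a $\Zp$-direct summand of $M_N(\Zp)$, so this intersection equals $p^s\g$; (iii) symmetrically, $\exp(p^s\g)\sub\mbf{G}(\Qp)\cap(1+p^sM_N(\Zp))=K_s$. With this in place your powerfulness estimate $[p^s\g,p^s\g]\sub p^{2s}\g\sub q\cdot p^s\g$ and \cite[Chapter 9]{ddms} finish the argument.
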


\begin{proof}
By construction the groups are products $K_{s}=\prod_{v|p}K_{s,v}$, and similarly for $\ol{N}_{s,v}$, in a natural way, so it suffices to prove that each $K_{s,v}$ and $\ol{N}_{s,v}$ is uniform. Fix $v|p$ and let $s\geq \epsilon$. First assume that $\mbf{H}_v=\GL_{n/\oo_{F_{v}}}$. Then we have the usual matrix logarithm $\log : K_{s,v} \ra p^{s}M_{n}(\oo_{F_{v}})$ and exponential $\exp : p^{s}M_{n}(\oo_{F_{v}}) \ra K_{s,v}$. They converge and are inverse to each other (by our assumption on $s$) and the Lie algebra $p^{s}M_{n}(\oo_{F_{v}})$ is easily seen to be powerful by assumption, so by the definition of the correspondence between powerful Lie algebras and uniform pro-$p$ groups \cite[Theorem 9.10]{ddms} via the Campbell--Hausdorff series we see that $K_{s,v}$ is the uniform group corresponding to $p^{s}M_{n}(\oo_{F_{v}})$. To get the result for $\ol{N}_{s,v}$ we may by conjugation assume that $\ol{\mbf{N}}_{v}$ is the group of lower triangular unipotent matrices; the corresponding Lie algebra is that of lower triangular nilpotent matrices and we then argue similarly.

\medskip

Now let $\mbf{H}_v$ be arbitrary and choose a closed immersion $\mbf{H}_{v} 
\hookrightarrow \GL_{n/\oo_{F_{v}}}$ for some $n$. We thereby identify 
$\mbf{H}_{v}$ with a closed subgroup of $\GL_{n/\oo_{F_{v}}}$.  Writing 
$\mbf{B}_{v}^{\prime}$ for the upper triangular Borel of $\GL_{n/\oo_{F_{v}}}$, 
$\mbf{N}_{v}^{\prime}$ for its unipotent radical, $\mbf{T}_{v}^{\prime}$ and 
$\ol{\mbf{N}}_{v}^{\prime}$ for its opposite we may assume, after conjugating 
if necessary, that $\mbf{T}_{v} \sub \mbf{T}_{v}^{\prime}$ etc. We write 
$K_{s,v}^{\prime}$ etc. for the corresponding principal congruence subgroups. 
With this setup, we now give the rest of the proof for the $K_{s,v}$ only; the 
proof for $\ol{N}_{s,v}$ proceeds in the same way. Note that $K_{s,v} = 
\mbf{H}_{v}(\oo_{F_{v}}) \cap K_{s,v}^{\prime}$. By \cite[Theorem 4.5]{ddms} we 
see that $K_{s,v}^{\prime}$ is torsion-free and that it suffices to prove that 
$K_{s,v}$ is powerful, i.e. that $[K_{s,v},K_{s,v}] \sub K_{s,v}^{q}$, where 
$[K_{s,v},K_{s,v}]$ is the derived subgroup of $K_{s,v}$ and $K_{s,v}^{q}$ is 
the subgroup generated by the $q$-th powers of elements in $K_{s,v}$ (any 
compact $p$-adic analytic group is topologically finitely generated). We remark 
that it is easy to see that $[K_{s,v}^{\prime},K_{s,v}^{\prime}]\sub 
K_{s+\epsilon,v}^{\prime}=(K_{s,v}^{\prime})^{q}$. Using this and $K_{s,v} = 
\mbf{H}_{v}(\oo_{F_{v}}) \cap K_{s,v}^{\prime}$, we see that $[K_{s,v},K_{s,v}] 
\sub K_{s+\epsilon,v}^{\prime} \cap \mbf{H}_{v}(\oo_{F_{v}})=K_{s+\epsilon,v}$ 
and $K_{s,v}^{q} \sub K_{s+\epsilon,v}$.

\medskip

It remains to prove that $K_{s+\epsilon,v} \sub K_{s,v}^{q}$. We have $T_{s+\epsilon,v}=T_{s,v}^{q}$ using the logarithm and exponential ($\mbf{T}_{v}$ is a split torus). We have an isomorphism of $\oo_{F_{v}}$-schemes $\prod_{\al}x_{\al} : \prod_{\al} \mb{G}_{a} \overset{\sim}{\longrightarrow} \mbf{N}_{v}$ where $\al$ ranges through the roots of $\mbf{H}_{v}$ whose root subgroups are contained in $\mbf{N}_{v}$, and $x_{\al}$ is a corresponding root homomorphism. Under this isomorphism $N_{s,v}$ corresponds to $\prod_{\al}p^{s}\oo_{F_{v}}$, and by standard properties of $x_{\al}$ one has $x_{\al}(q.(p^{s}a))=x_{\al}(p^{s}a)^{q}$ for any $a\in \oo_{F_{v}}$. It follows that $N_{s+\epsilon,v} = N_{s,v}^{q}$. Similarly $\ol{N}_{s+\epsilon,v} = \ol{N}_{s,v}^{q}$. By the Iwahori decomposition we then see that $K_{s+\epsilon,v} \sub K_{s,v}^{q}$ as desired.
\end{proof}
\begin{remark}\label{unipcong}
	Note that the argument in the final paragraph of the above proof also implies that, for arbitrary $p$ and $s \ge 1$, we have $\ol{N}_{s+1,v} = \ol{N}_{s,v}^{p}$. 
\end{remark}

\begin{definition} Let $r\in [1/p,1)$.
\begin{enumerate}
\item We define a norm $||-||_{r}^{sub}$ on $\mc{D}_{\kappa}$ by transporting 
the norm $||-||_{\ol{N}_{\epsilon},r}$ (defined before Proposition \ref{aut2}) 
on $\mc{D}(\ol{N}_{1},R)$ to $\mc{D}_{\kappa}$ via the isomorphism 
$\mc{D}(\ol{N}_{1},R)\cong \mc{D}_{\kappa}$ obtained by restriction of 
functions from $I$ to $\ol{N}_{1}$.

\item We define a norm $||-||_{r}^{quot}$ on $\mc{D}_{\kappa}$ as the quotient 
norm induced from $||-||_{K_{\epsilon},r}$ (defined before Proposition 
\ref{aut2}) via the natural surjection $\mc{D}(I,R) \ra \mc{D}_{\kappa}$.
\end{enumerate}
\end{definition}

Note that $||-||_{r}^{quot}$ is $I$-invariant; this follows from $I$-equivariance of the surjection and Proposition \ref{aut2}. The following is the key result of this subsection:

\begin{proposition}
Suppose that $|\kappa(t)-1|\leq r$ for all $t \in T_{\epsilon}$. Then $||-||_{r}^{quot}=||-||_{r}^{sub}$ on $\mc{D}_{\kappa}$.
\end{proposition}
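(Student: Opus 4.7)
The strategy is to identify both norms with a common tensor product norm arising from the Iwahori decomposition. The Iwahori decompositions yield homeomorphisms of $p$-adic manifolds $I \cong \ol{N}_{1} \times T_{0} \times N_{0}$ and $K_{\epsilon} \cong \ol{N}_{\epsilon} \times T_{\epsilon} \times N_{\epsilon}$, with all three factors of $K_{\epsilon}$ being uniform pro-$p$ groups (the unipotent pieces by Proposition \ref{uniform}, the torus piece via the split torus structure of $\mbf{T}$). My first step would be to choose topological generators of $K_{\epsilon}$ as the concatenation, in Iwahori order, of topological generators of $\ol{N}_{\epsilon}$, $T_{\epsilon}$, $N_{\epsilon}$. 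With this choice, ordered convolution monomials factor compatibly: $\mbf{b}^{\alpha} = \bar{\mbf{b}}^{\alpha^{(1)}} \cdot \mbf{t}^{\alpha^{(2)}} \cdot \mbf{n}^{\alpha^{(3)}}$ with $|\alpha| = |\alpha^{(1)}| + |\alpha^{(2)}| + |\alpha^{(3)}|$. This should produce an isometric $R$-module isomorphism $R[[K_{\epsilon}]] \cong R[[\ol{N}_{\epsilon}]] \ctens_{R} R[[T_{\epsilon}]] \ctens_{R} R[[N_{\epsilon}]]$ identifying the $r$-norm on the left with the tensor product of the three $r$-norms on the right. Taking coset representatives for $I/K_{\epsilon}$ of the form $\bar{n}_{i} t_{j} n_{k}$ from the Iwahori decomposition of $I$ then extends this to an identification $R[[I]] \cong R[[\ol{N}_{1}]] \ctens_{R} R[[T_{0}]] \ctens_{R} R[[N_{0}]]$ under which $||-||_{K_{\epsilon},r}$ becomes the tensor product of $||-||_{\ol{N}_{\epsilon},r}$, $||-||_{T_{\epsilon},r}$, and $||-||_{N_{\epsilon},r}$.

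Under this description, the surjection $\mc{D}(I,R) \to \mc{D}_{\kappa} \cong \mc{D}(\ol{N}_{1},R)$ of Proposition \ref{explicit} should correspond to $\mathrm{id} \otimes \kappa_{\ast} \otimes \mathbf{1}_{\ast}$, where $\kappa_{\ast}(\nu) = \nu(\kappa)$ on $\mc{D}(T_{0},R)$ and $\mathbf{1}_{\ast}(\nu) = \nu(\mathbf{1})$ on $\mc{D}(N_{0},R)$; this matches the formula $\delta_{\bar{n}tn} \mapsto \kappa(t) \delta_{\bar{n}}$ on Dirac distributions and extends by continuity. To derive $||-||_{r}^{sub} \leq ||-||_{r}^{quot}$, it then suffices to show that both $\kappa_{\ast}$ and $\mathbf{1}_{\ast}$ have operator norm at most $1$ with respect to the relevant $r$-norms, since a tensor product of maps of operator norm $\leq 1$ has operator norm $\leq 1$. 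The bound for $\mathbf{1}_{\ast}$ is immediate from the Mahler expansion, as only the constant term contributes. For $\kappa_{\ast}$, the decisive computation uses multiplicativity of $\kappa$: for topological generators $t_{i}$ of $T_{\epsilon}$, one has $\mbf{t}^{\beta}(\kappa) = \prod_{i}(\kappa(t_{i}) - 1)^{\beta_{i}}$, and the hypothesis $|\kappa(t_{i}) - 1| \leq r$ then yields $|\mbf{t}^{\beta}(\kappa)| \leq r^{|\beta|}$, exactly the weight needed (combined with $|\kappa(t_{j})| \leq 1$ on coset representatives).

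The reverse inequality $||-||_{r}^{quot} \leq ||-||_{r}^{sub}$ is immediate once the tensor product picture is in place: the natural section $\tilde{\mu} \mapsto \tilde{\mu} \otimes \delta_{e} \otimes \delta_{e}$ is visibly norm-preserving and provides a lift of $\mu$ with $||-||_{K_{\epsilon},r}$-norm equal to $||\mu||_{r}^{sub}$. The main technical obstacle will be rigorously justifying the tensor product identification of the first paragraph. The delicate point is that $R[[K_{\epsilon}]]$ is a non-commutative convolution algebra, so one must verify carefully that the ordered convolution $\bar{\mbf{b}}^{\alpha^{(1)}} \cdot \mbf{t}^{\alpha^{(2)}} \cdot \mbf{n}^{\alpha^{(3)}}$ in $R[[K_{\epsilon}]]$ corresponds, under the set bijection $\ol{N}_{\epsilon} \times T_{\epsilon} \times N_{\epsilon} \cong K_{\epsilon}$, to the elementary tensor on the right-hand side. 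Ultimately this reduces to the identity $[\bar{n}][t][n] = [\bar{n}tn]$ for Dirac distributions taken in the chosen Iwahori order, but the verification needs to be tracked both at the $K_{\epsilon}$-level and through its extension to $I$ via Iwahori coset representatives.
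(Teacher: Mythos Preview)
Your strategy is essentially the paper's, recast in tensor-product language: both arguments rest on choosing ordered generators of $K_{\epsilon}$ compatible with the Iwahori decomposition, so that the monomials $\bar{\mbf{b}}^{\alpha}\mbf{t}^{\beta}\mbf{n}^{\gamma}$ give the $r$-norm on $\D(K_{\epsilon},R)$ the tensor form you describe, and both compute that the surjection sends such a monomial to $\bar{\mbf{b}}^{\alpha}\prod_i(\kappa(t_i)-1)^{\beta_i}\,\delta_{\gamma,0}$, with the hypothesis $|\kappa(t_i)-1|\le r$ supplying the bound. At the $K_{\epsilon}$-level your tensor picture is exactly right and matches the paper's direct computation.

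The gap is in your extension from $K_{\epsilon}$ to $I$. You are tacitly using two different identifications of $R[[I]]$ with the triple tensor product: the one built from coset representatives $\bar{n}_i t_j n_k$ together with the ordered-monomial basis of $R[[K_{\epsilon}]]$ (which is visibly isometric for $\|\cdot\|_{K_{\epsilon},r}$), and the set-theoretic one coming from the Iwahori homeomorphism $I\cong\ol{N}_1\times T_0\times N_0$ (under which $\delta_{\bar{n}tn}\leftrightarrow\delta_{\bar{n}}\otimes\delta_t\otimes\delta_n$, so that the surjection really is $\mathrm{id}\otimes\kappa_*\otimes\mathbf{1}_*$). These agree on $R[[K_{\epsilon}]]$ but \emph{not} on $R[[I]]$: the convolution $[\bar{n}_i t_j n_k]\cdot\bar{\mbf{b}}^{\alpha}\mbf{t}^{\beta}\mbf{n}^{\gamma}$ forces you through products like $n_k\bar{n}'$, which scramble the Iwahori factors, so under the coset-based identification the surjection is \emph{not} $\mathrm{id}\otimes\kappa_*\otimes\mathbf{1}_*$. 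The paper avoids this by handling the passage $I\to K_{\epsilon}$ as a separate first step: take coset representatives in $B_0$ and use that $\pi(\mu\ast\delta_b)=\kappa(b)\,\pi(\mu)$ for $b\in B_0$ (right multiplication by $B_0$ is what interacts cleanly with the formula $\delta_{\bar{n}b}\mapsto\kappa(b)\delta_{\bar{n}}$), thereby reducing the quotient norm from $\D(I,R)$ to the one from $\D(K_{\epsilon},R)$. Once you are at $K_{\epsilon}$, your tensor argument goes through verbatim; the fix is simply to make that two-step reduction explicit rather than attempting a single tensor picture over all of $I$.
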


\begin{proof}
Let us first assume that $p\neq 2$. First we claim that $||-||_{r}^{quot}$ is equal to the quotient norm on $\mc{D}_{\kappa}$ coming from the surjection $\mc{D}(K_{1},R) \ra \mc{D}_{\kappa}$ and the norm $||-||_{r}$ on the source. To see this, pick a set $(b_{i})_{i}$ of coset representatives of $I/K_{1}$ lying in $B_{0}$ and define a map $\pi_\kappa:\mc{D}(I,R)=\bigoplus_{i}\delta_{b_{i}}\mc{D}(K_{1},R) \ra \mc{D}(K_{1},R)$ by
$$ (\delta_{b_{i}}\mu_{i})_{i} \mapsto \sum_{i} \kappa(b_{i})\mu_{i}. $$
We remark that composing this map with the natural surjection 
$\mc{D}(K_{1},R)\ra \mc{D}_{\kappa}$ gives the natural surjection $\mc{D}(I,R) 
\ra \mc{D}_{\kappa}$ (this follows from the explicit formula in Proposition 
\ref{explicit}). To prove the claim, it then suffices to prove that $||-||_{r}$ 
is the quotient norm of $||-||_{K_{1},r}$ via $\pi_\kappa$. Write 
$||-||_{r}^{\prime}$ for this quotient norm. For simplicity assume that $1$ is 
one of the coset representatives $b_i$. Then our map is a section of the 
inclusion $\mc{D}(K_{1},R) \sub \mc{D}(I,R)$. It is then clear that 
$||-||_{r}^{\prime} \leq (||-||_{K_{1},r})|_{\mc{D}(K_{1},r)}=||-||_{r}$. 
Conversely, if $\mu=\sum_{i}\kappa({b_{i}})\mu_{i} \in \mc{D}(K_{1},R)$ is the 
image of $\sum_{i}\delta_{b_{i}}\mu_{i} \in \mc{D}(I,R)$, then 
$$ ||\mu||_{r}\leq \sup_{i}||\kappa(b_{i})\mu_{i}||_{r} \leq \sup_{i} ||\mu_{i}||_{r} = || \sum_{i}\delta_{b_{i}}\mu_{i}||_{K_{1},r}.$$
Taking the infimum over such presentations we obtain $||-||_{r} \leq ||-||_{r}^{\prime}$, and hence equality.

\medskip
Next, let $\bar{n}_{1},\ldots ,\bar{n}_{k}$ (resp. $n_{1},\ldots,n_{k}$) be a 
minimal set of topological generators of $\ol{N}_{1}$ (resp. $N_{1}$), and let 
$t_{1},\ldots,t_{l}$ be a set of topological generators of $T_{1}$. Put 
$\mbf{n}^{\alpha}=\prod_{i}(\delta_{n_{i}}-1)^{\alpha_{i}}$ and similarly for 
$T_{1}$ and $\ol{N}_{1}$. By Proposition \ref{explicit}, the map 
$\mc{D}(K_{1},R) \ra \mc{D}_{\kappa}$ is then given by
$$ \sum_{\alpha,\beta,\gamma} d_{\alpha,\beta,\gamma}\bar{\mbf{n}}^{\alpha}\mbf{t}^{\beta}\mbf{n}^{\gamma} \mapsto \sum_{\alpha} \left( \sum_{\beta} d_{\alpha,\beta,\underline{0}} \prod_{i}(\kappa(t_{i})-1)^{\beta_{i}}\right) \bar{\mbf{n}}^{\alpha}.$$
We then make a computation similar to the one in the proof of the claim above. First, it's clear that $||-||_{r}^{quot}\leq ||-||_{r}^{sub}$ since restricting $||-||_{r}$ on $\mc{D}(K_{1},R)$ to $\mc{D}(\ol{N}_{1},R)$ gives (the intrinsically defined) $||-||_{r}$, and the composition $\mc{D}(\ol{N}_{1},R) \ra \mc{D}(K_{1},R) \ra D_{\kappa}\cong \mc{D}(\ol{N}_{1},R)$ is the identity. Second, if 
$$ \sum_{\alpha} e_{\alpha}\bar{\mbf{n}}^{\alpha} = \sum_{\alpha} \left( \sum_{\beta} d_{\alpha,\beta,\underline{0}} \prod_{i}(\kappa(t_{i})-1)^{\beta_{i}}\right) \bar{\mbf{n}}^{\alpha}, $$
then
$$ || \sum_{\alpha} e_{\alpha}\bar{\mbf{n}}^{\alpha} ||_{r} \leq \sup_{\alpha, \beta} \left( |d_{\alpha,\beta,\underline{0}}|\prod_{i}|\kappa(t_{i})-1|^{\beta_{i}}r^{|\alpha|} \right) \leq \sup_{\alpha,\beta}|d_{\alpha,\beta,\underline{0}}|r^{|\alpha|+|\beta|}\leq || \sum_{\alpha,\beta,\gamma} d_{\alpha,\beta,\gamma}\bar{\mbf{n}}^{\alpha}\mbf{t}^{\beta}\mbf{n}^{\gamma}||_{r}$$
where we have used the assumption $|\kappa(t_{i})-1|\leq r$ for all $i$ to obtain the second inequality. Hence, taking the infimum over such presentations, we see that $||-||_{r}^{sub}\leq ||-||_{r}^{quot}$ and equality follows.

\medskip

The case $p=2$ is similar. We identify $\D_{\ka}$ with $\D(\ol{N}_{1},R)$ and consider the subspace $\D(\ol{N}_{2},R)$. It carries the norm $||-||_{r}$ and also receives a quotient norm from the norm $||-||_{r}$ on $\D(K_{2},R)$ via the surjection $\varphi : \D(K_{2},R) \ra \D(\ol{N}_{2},R)$. These two norms are equal by the same type of argument as in the second part above. We then equip $\D(I,R)$ with the norm $||-||_{K_{2},r}$ and $\D(\ol{N}_{1},R)$ with the norm $||-||_{r}^{sub}$. Pick coset representatives $(\ol{n}_{i})_{i}$ of $\ol{N}_{1}/\ol{N}_{2}$ and $(b_{j})_{j}$ of $B_{0}/B_{2}$, both containing $1$. We may then write the map $\D(I,R) \ra \D(\ol{N}_{1},R)$ as
$$ \bigoplus_{i,j}\delta_{\ol{n}_{i}b_{j}}\D(K_{2},R) \ra \bigoplus_{i} \delta_{\ol{n}_{i}}\D(\ol{N}_{2},R); $$
$$ (\delta_{\ol{n}_{i}b_{j}}\mu_{ij})_{i,j} \mapsto \left( \delta_{\ol{n}_{i}}\sum_{j} \ka(b_{j})\varphi(\mu_{ij}) \right)_{i}. $$
By a computation similar to that in the first part of the proof in the case $p\neq 2$ (using additionally the equality of the two norms on $\D(\ol{N}_{2},R)$ asserted above) the norm $||-||_{r}^{quot}$ agrees with the norm $||-||_{r}^{sub}$, as desired.
\end{proof}

\begin{remark}
It might happen that $\ol{N}_{1}$ is uniform when $p=2$ (e.g. when $\mbf{G}={\rm Res}_{\Q}^{F}\GL_{2/F}$). In this case, the norm $||-||_{r}^{sub}$ is bounded-equivalent to $||-||_{r^{1/2}}$ on $\D(\ol{N}_{1},R)$.
\end{remark}

\begin{definition}
Write $r_{\kappa}$ for the minimal $r\in [1/p,1)$ such that $|\kappa(t)-1|\leq r$ for all $t \in T_\epsilon$. When $r\geq r_{\kappa}$, we write $||-||_{r}$ for the norm $||-||_{r}^{sub}=||-||_{r}^{quot}$ on $\mc{D}_{\kappa}$ (we will never consider these norms when $r<r_{\kappa}$).

Let $r\geq r_{\kappa}$. We define $\mc{D}^{r}_{\kappa}$ to be the completion of $\mc{D}_{\kappa}$ with respect to $||-||_{r}$, and let $\ms{D}_{\kappa}=\varprojlim_{r} \mc{D}_{\kappa}^{r}$.
\end{definition}

$\mc{D}_{\kappa}^{r}$ is a Banach $R$-module with respect to its induced norm, and carries a left $I$-action (since $I$ acts on $\mc{D}_{\kappa}$ by $||-||_{r}$-isometries, the action extends to the completion). When $R$ is a Banach $\Qp$-algebra, it follows from Proposition \ref{distrocomparison} that $\ms{D}_{\ka}$ is the locally analytic distribution module used in \cite{as,han}. We will also need to extend the action of $\Sigma^{+}$ to these modules, and prove that elements of $\Sigma^{cpt}$ give compact operators. For this, it is convenient to use the definition of $||-||_{r}$ as $||-||_{r}^{sub}$. We have a natural identification $\mc{D}_{\ka}^{r} \cong \mc{D}^{r}(\ol{N}_{1},R)$ when $p\neq 2$; when $p=2$ we have $\D_{\ka}^{r}\cong \bigoplus_{\ol{n}_{i}\in \ol{N}_{1}/\ol{N}_{2}} \delta_{\ol{n}_{i}}\D^{r}(\ol{N}_{2},R)$.

\begin{corollary}\label{cor:cpt}
If $t\in \Sigma^{+}$, then the action of $t$ on $\D_{\ka}$ is norm-decreasing with respect to $||-||_{r}$ for any $r\geq r_{\ka}$, and hence the action of $t$ extends to $\D_{\ka}^{r}$. If $t\in \Sigma^{cpt}$, then $t$ acts compactly on $\mc{D}_{\ka}^{r}$. Moreover, in this case the action of $t$ is given by the composition of a norm-decreasing map \[\mc{D}_{\ka}^{r} \rightarrow \mc{D}_{\ka}^{r^{1/p}}\] with the compact (norm-decreasing) inclusion \[\mc{D}_{\ka}^{r^{1/p}}\hookrightarrow \mc{D}_{\ka}^{r}.\]
\end{corollary}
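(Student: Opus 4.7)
The plan is to transport the $\Sigma^+$-action from $\mc{A}_\ka$ to a concrete push-forward map on $\D(\ol{N}_1, R)$ and then invoke Lemma \ref{cpt1}. Under the identification $\D_\ka \cong \D(\ol{N}_1, R)$ the left action of $t \in \Sigma^+$ is push-forward along the conjugation homomorphism $c_t : \ol{N}_1 \to \ol{N}_1$, $\ol{n} \mapsto t \ol{n} t^{-1}$, whose image lies in $\ol{N}_1$ for $t \in \Sigma^+$ and in $\ol{N}_2$ for $t \in \Sigma^{cpt}$. The first step will be the observation that, using $\ol{N}_{s+1} = \ol{N}_s^p$ for $s \geq 1$ (Remark \ref{unipcong}) and the fact that $c_t$ is a group homomorphism, one has $c_t(\ol{N}_s) \subseteq \ol{N}_{s+1} = \ol{N}_s^p$ for all $s \geq 1$ whenever $t \in \Sigma^{cpt}$, and $c_t(\ol{N}_s) \subseteq \ol{N}_s$ whenever $t \in \Sigma^+$.

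For $p \neq 2$, where $\ol{N}_1$ is itself uniform by Proposition \ref{uniform} and the norm $||-||_r^{sub}$ on $\D_\ka$ literally coincides with the Schneider--Teitelbaum-type $r$-norm on $\D(\ol{N}_1, R)$, the assertions will follow directly from Lemma \ref{cpt1}. The $n = 0$ case applied to $c_t$ yields the norm-decreasing property for $t \in \Sigma^+$; for $t \in \Sigma^{cpt}$ the $n = 1$ case supplies a norm-decreasing map $\D_\ka^r \to \D_\ka^{r^{1/p}}$, which composed with the compact, norm-decreasing inclusion $\D_\ka^{r^{1/p}} \hookrightarrow \D_\ka^r$ from Lemma \ref{cpt2} gives the claimed factorisation and compactness. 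Note that $r^{1/p} \geq r \geq r_\ka$ for $r \in [1/p,1)$, so every object is defined.

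For $p = 2$ one needs more care, since $\ol{N}_1$ is merely a finite disjoint union of cosets of the uniform subgroup $\ol{N}_2$: here $\D_\ka^r \cong \bigoplus_i \delta_{\ol{n}_i} \D^r(\ol{N}_2, R)$ and $||-||_r$ is the supremum of the component $r$-norms. Writing $c_t(\ol{n}_i) = \ol{n}_{\sigma(i)} m_i$ with $m_i \in \ol{N}_2$, push-forward sends $\sum_i \delta_{\ol{n}_i} \mu_i$ to $\sum_j \delta_{\ol{n}_j}\bigl( \sum_{i\,:\,\sigma(i)=j} \delta_{m_i} c_{t*}(\mu_i)\bigr)$. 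Combining the norm-preserving action of Dirac distributions (Lemma \ref{aut}) with Lemma \ref{cpt1} applied to the restriction $c_t|_{\ol{N}_2} : \ol{N}_2 \to \ol{N}_2$ (using $n=0$ for $\Sigma^+$ and $n=1$ for $\Sigma^{cpt}$), the same factorisation and compactness fall out.

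The hard part will be the first step: verifying $c_t(\ol{N}_s) \subseteq \ol{N}_s^p$ for $t \in \Sigma^{cpt}$, and then handling the coset bookkeeping in the $p=2$ case. Once these are in place, the whole result reduces to invocations of Lemmas \ref{cpt1}, \ref{cpt2} and \ref{aut}.
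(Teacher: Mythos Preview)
Your proposal is correct and follows essentially the same route as the paper's proof: identify the $t$-action with push-forward along $c_t$, invoke Lemma \ref{cpt1} directly on the uniform group $\ol{N}_1$ when $p\neq 2$, and for $p=2$ pass through the coset decomposition $\D_\ka \cong \bigoplus_i \delta_{\ol{n}_i}\D(\ol{N}_2,R)$ and apply Lemma \ref{cpt1} to $c_t|_{\ol{N}_2}$ together with Lemma \ref{aut}. One small remark: the step you flag as ``the hard part''---that $c_t(\ol{N}_s)\subseteq \ol{N}_{s+1}$ for $t\in\Sigma^{cpt}$---is in fact immediate from $c_t(\ol{N}_1)\subseteq\ol{N}_2$, the identity $\ol{N}_{s+1}=\ol{N}_s^p$ of Remark \ref{unipcong}, and the fact that $c_t$ is a homomorphism, exactly as you indicate; the paper treats it in a single clause.
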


\begin{proof}
We use the identification $\mc{D}_{\ka}\cong \mc{D}(\ol{N}_{1},R)$, with regards to which $t$ acts by the map induced from the homomorphism $\ol{N}_{1} \ra \ol{N}_{1}$ given by $\ol{n} \mapsto t\ol{n}t^{-1}$. First assume $p\neq 2$. The first assertion follows directly from Lemma \ref{cpt1}. The second assertion follows from the third, so it remains to prove the third assertion. If $t\in \Sigma^{cpt}$ we have $t\ol{N}_{1}t^{-1}\sub \ol{N}_{2}=(\ol{N}_{1})^{p}$ by the definition of $\Sigma^{cpt}$ and Remark \ref{unipcong}, so the third assertion follows from Lemmas \ref{cpt2} and \ref{cpt1}. 

Now assume $p=2$ and let $(\ol{n}_{i})_{i}$ be a set of coset representatives of $\ol{N}_{1}/\ol{N}_{2}$. We have $t\ol{N}_{2}t^{-1}\sub \ol{N}_{2}$ and $\ol{n} \mapsto t\ol{n}t^{-1}$ induces a map $\ol{N}_{1}/\ol{N}_{2} \ra \ol{N}_{1}/\ol{N}_{2}$; moreover if $t\in \Sigma^{cpt}$ then $t\ol{N}_{2}t^{-1} \sub \ol{N}_{3}$, by Remark \ref{unipcong}. Writing $\D(\ol{N}_{1},R) = \bigoplus_{i} \delta_{\ol{n}_{i}}\D(\ol{N}_{2},R)$ we see that $t$ acts by a sum of maps of the form $\D(\ol{N}_{2},R) \ra \D(\ol{N}_{2},R)$ induced by $\ol{n} \mapsto t\ol{n}t^{-1}$, and the proof now proceeds as in the case when $p\neq 2$.
\end{proof}

Continue to assume $r\geq r_{\ka}$. In the rest of this subsection, for 
simplicity of notation we will assume $p\neq 2$ in the discussion, but 
everything we do works for $p=2$ as well after minor adjustments, writing 
$\D_{\ka}^{r}\cong \bigoplus_{i} \delta_{\ol{n}_{i}}\D^{r}(\ol{N}_{2},R)$. Let 
$\ol{n}_{1},\ldots,\ol{n}_{t}$ be a topological basis of $\ol{N}_{1}$ and set 
$\mbf{n}_{i}=[\ol{n}_{i}]-1\in \D_{\ka}$ as usual. Then the description 
$$\D^{r}(\ol{N}_{1},R) = \wh{\bigoplus_{\al}}R.\vp^{-n(r,\vp,\al)}\mbf{n}^{\al}$$
gives us an explicit description of $\D_{\ka}^{r}$ via the identification $\D_{\ka}^{r}\cong \D^{r}(\ol{N}_{1},R)$. In particular we remark that the $\D_{\ka}^{r}$ are potentially ON-able with a countable potential ON-basis that we can actually write down. This is in contrast with the compact distribution modules considered in \cite{as}, which are potentially ON-able but one cannot write down an explicit basis (and the dimension is uncountable), as well as the distribution modules considered in \cite{han}, which are not known to be potentially ON-able in general. Our next goal is to introduce variants of the modules $\D_{\ka}^{r}$, as well as modules $\mc{A}_{\ka}^{r}\sub \mc{A}_{\ka}$, which are analogous to those considered in \cite{han}.

\medskip

Let $r>r_{\ka}$ and pick $s\in [r_{\ka},r)$. The unit ball $\D_{\ka}^{r,\circ}$ of $\D_{\ka}^{r}$ is $\De$-stable since $\De$ acts by norm-decreasing operators. The natural map $\D^{r}_{\ka} \ra \D^{s}_{\ka}$ is injective, and may naturally be thought of as an inclusion. Doing so, we define $\D_{\ka}^{<r,\circ}$ to be the closure of $\D_{\ka}^{r,\circ}$ inside $\D_{\ka}^{s}$. It is a $\vp$-torsion free $R_{0}$-module and we set $\D^{<r}_{\ka}=\D_{\ka}^{<r,\circ}[1/\vp]$; this is an $R$-module which naturally embeds into $\D_{\ka}^{s}$. Since $\D^{r,\circ}_{\ka}\sub \D^{s}_{\ka}$ is $\De$-stable we see that $\D_{\ka}^{<r,\circ}$ and $\D_{\ka}^{<r}$ are as well. We may then define
$$ \mc{A}_{\ka}^{<r,\circ}= \left\{ f \in \mc{A}_{\ka} \mid |\mu(f)| \leq 1 \, \forall \mu\in \D_{\ka}\cap \D_{\ka}^{r,\circ} \right\} $$
and $\mc{A}_{\ka}^{<r}=\mc{A}_{\ka}^{<r,\circ}[1/\vp]\sub \mc{A}_{\ka}$. Then 
$\mc{A}_{\ka}^{<r}$ is the dual space of $\D^{r}_{\ka}$. We equip it with the 
norm dual to $||-||_{r}$, and define $\mc{A}_{\ka}^{r}\sub \mc{A}_{\ka}^{<r}$ 
to be the closure of $\mc{A}_{\ka}^{<s}\sub \mc{A}_{\ka}^{<r}$ with respect to 
this norm. These spaces are $\De$-stable since $\D_{\ka}\cap 
\D_{\ka}^{r,\circ}$ is. Note that we have natural identifications 
$\D^{<r}_{\ka}\cong \D^{<r}(\ol{N}_{1},R)$, $\mc{A}_{\ka}^{<r}\cong 
\mc{C}^{<r}(\ol{N}_{1},R)$ and $\mc{A}_{\ka}^{r}\cong 
\mc{C}^{r}(\ol{N}_{1},R)$, so the discussion in \S \ref{completions} applies to 
give explicit descriptions of these spaces, and show that they are independent 
of the choice of $s$.

\section{Overconvergent cohomology and eigenvarieties}\label{evars}

In this section we establish the basic results on overconvergent cohomology needed to construct and analyse eigenvarieties. We retain the notation from \S \ref{modules}, but we will change our point of view slightly, from a functional-analytic point of view to a geometric one. Instead of working with Banach--Tate $\Zp$-algebras, we will work with complete Tate $\Zp$-algebras, which we will always assume to have a Noetherian ring of definition. A \emph{weight} will therefore be a continuous homomorphism $\ka : T_{0} \ra R^{\times}$, where $R$ is a complete Tate $\Zp$-algebra with a Noetherian ring of definition. We follow the strategy of Hansen \cite[\S 3-4]{han} to construct our eigenvarieties. A similar construction was also carried out by Xiang \cite{xia}.

\subsection{Eigenvarieties}\label{eigenvarieties}

We retain the global setup from the beginning of \S \ref{modules}. To construct our eigenvarieties, we will need some more notation as well as some concepts from \cite{as} and \cite{han}. First, let us fix a compact open subgroup $K_{\ell}=K_{\ell}\sub \mbf{G}(\mb{Q}_{\ell})$ for each prime $\ell\neq p$, which is hyperspecial for all but finitely many $\ell$, and set $K^{p}=\prod_{\ell\neq p}K_{\ell}$ (the tame level) and $K=K^{p}I$. We assume that $K$ is neat (which is the case when $K^{p}$ is sufficiently small)\footnote{In fact it suffices to assume that $K$ contains a neat open normal subgroup with index prime to $p$.}. Let $\mbf{Z}$ denote the center of $\mbf{G}$ and put $Z(K)=\mbf{Z}(\Q)\cap K$. All weights in this section will be assumed to be trivial on $\ol{Z(K)}\sub T_{0}$.

We also fix a monoid $\Delta_{\ell}\sub \mbf{G}(\mb{Q}_{\ell})$ containing 
$K_{\ell}$, which is equal to $\mbf{G}(\mb{Q}_{\ell})$ when $K_{\ell}$ is 
hyperspecial, such that $(\De_{\ell},K_{\ell})$ is a Hecke pair and the 
$\ell$-Hecke algebra $\mb{T}(\Delta_{\ell}, K_{\ell})$ (defined over $\Z_p$) is 
commutative.  Set $\Delta^{p}=\prod^{\prime}\Delta_{\ell}$ (restricted product 
with respect to the $K_{\ell}$) and $\Delta = \Delta^{p}\Delta_{p}$ (recall 
that $\Delta_{p}=I\Sigma^{+}I$). Next, we fix a choice $C_{\bu}(K,-)$ of an 
\emph{augmented Borel-Serre complex} as in \cite[\S 2.1]{han} and for any left 
$\Delta$-module $M$ we define $C^{\bu}(K,M)$ as in \cite[\S 2.1]{han}. 
$C^{\bu}(K,M)$ carries an action of the Hecke algebra $\mb{T}(\Delta,K)$.
In general, if $C^{\bu}$ is a cochain complex we let $C^{\ast}=\bigoplus_{i\in \Z}C^{i}$ and, similarly, we use $H^{\ast}$ to denote the direct sum of all cohomology groups when cohomology makes sense. 

\medskip

Fix once and for all an element $t\in \Sigma^{cpt}$. Let $\ka : T_{0} \ra R^{\times}$ be a weight, and choose a Banach--Tate $\Zp$-algebra norm on $R$ which is adapted to $\ka$. We let $\widetilde{U}_{\ka,r}=\widetilde{U}_{t,\ka,r}$ denote the corresponding Hecke operator on $C^{\bu}(K, \mc{D}^{r}_{\ka})$ (here $r\geq r_{\ka}$). This operator is compact and we let
$$ F_{\ka}^{r}(T) = \det\left(1 - T \wt{U}_{\ka,r} \mid C^{\ast}(K, \mc{D}_{\ka}^{r})\right) $$
denote its Fredholm determinant, which exists since $C^{\ast}(K, \mc{D}_{\ka}^{r})$ is potentially ON-able (by basic propoerties of Borel-Serre complexes). 

Before proceeding, let us recall the definition of weight space. 

\begin{definition}
	Suppose $(A,A^+)$ is a complete sheafy affinoid $(\Zp,\Zp)$-algebra. The functor  
	$$ (A,A^{+}) \mapsto  \Hom_{cts}( T_{0}/\ol{Z(K)}, A^{\times})$$  
	from complete sheafy affinoid $(\Zp,\Zp)$-algebras $(A,A^{+})$ to sets is representable by the affinoid ring $(\Zp[[T_{0}/\ol{Z(K)}]], \Zp[[T_{0}/\ol{Z(K)}]])$, and we let $\mf{W}$ denote the corresponding adic space. We remark that any continuous homomorphism $T_{0}/\ol{Z(K)} \ra A^{\times}$ automatically lands in $(A^{+})^{\times}$. To see this, note that $T_{0}$ is non-canonically isomorphic to $F \times \Zp^{r}$ as a $p$-adic Lie group, where $F$ is a finite group and $r\in \Z_{\geq 0}$. The image of $F$ lands in the roots of unity $\mu_{\infty}(A)$ in $A$, and the image of $\Zp^{r}$ lands in $1+A^{\circ\circ}$. Since $A^{+}$ is open and integrally closed in $A$ (which is complete), $\mu_{\infty}(A)$ and  $1+A^{\circ\circ}$ are both subsets of  $(A^{+})^{\times}$.
	
	We let $\mc{W}$ denote the \emph{analytic} locus of $\mf{W}$; this is an open subset. For any weight $\ka\, :\, T_{0} \ra R^{\times}$ and ring of integral elements $R^+ \sub R^\circ$, we obtain a map $\mc{U}=\Spa(R,R^+) \ra \mc{W}$. If this map is an open  immersion (so in particular we have $R^+ = R^\circ$, by Corollary \ref{su2}), we will conflate the weight $\ka$ and the open subset $\mc{U}\sub \mc{W}$ and refer to $\mc{U}$ as an \emph{open} weight. In this case, we will also replace $\ka$ by $\mc{U}$ in our notation, writing $\mc{D}_{\U}$ et cetera. 
\end{definition}

\begin{proposition}\label{err3.1.1}
	The Fredholm series $F_{\ka}^{r,|-|}$ is independent of the choice of $r\geq r_{\ka}$ and on the choice of norm $|-|$ on $R$.
\end{proposition}
\begin{proof}
	We assume $p>2$ to simplify the notation, the proof for $p=2$ is the same (except that one needs to write down the potential ON-basis below differently). Let $|-|_\pi$ and $|-|_\vp$ be two adapted Banach--Tate $\Zp$-algebra norms with multiplicative pseudouniformizers $\pi$ and $\vp$, respectively, and let $r,s \in [1/p,1)$ be such that $|\ka(t)-1|_\pi \leq r$ and $|\ka(t)-1|_\vp \leq s$ for all $t\in T_\epsilon$. Consider $\mc{D}_\ka^{r}$ and $\mc{D}_\ka^s$, where the former is formed using $|-|_\pi$ and the latter is formed using $|-|_\vp$. Identify $\mc{D}_\ka^{r}$ and $\mc{D}_\ka^s$ with $\mc{D}^r(\ol{N}_1,R)$ and $\mc{D}^r(\ol{N}_1,R)$, respectively, and choose a minimal set of topological generators $n_1, \dots, n_k$ of $\ol{N}_1$. Put $\mbf{n}^\al = \prod_i (\delta_{n_i} - 1)^{\al_i}$. Then $\mc{D}^r(\ol{N}_1,R)$ has a potential ON-basis $(\pi^{-n(r,\pi,\al)}\mbf{n}^\al)$ and $\mc{D}(\ol{N}_1,R)^s$ has a potential ON-basis $(\vp^{-n(r,\vp,\al)}\mbf{n}^\al)$, cf.~(\ref{ONbasis}). In particular, these bases differ by scalars and produce potential ON-bases of $C^{\ast}(K, \mc{D}_{\ka}^{r})$ and $C^{\ast}(K, \mc{D}_{\ka}^{s})$ which also differ by scalars. It then follows that the infinite matrices describing the actions $\wt{U}_{\ka,r}$ and $\wt{U}_{\ka,s}$ differ by the conjugation of an infinite \emph{diagonal} matrix. This then shows that $F_{\ka}^{r,|-|_\pi} = F_{\ka}^{s,|-|_\vp}$.
\end{proof}

In light of this we will from now on drop $r$ from the notation and simply write $F_{\ka}$. 

\begin{proposition}\label{3.1.2} Let $\ka\, :\, T_{0} \ra R^{\times}$ be a weight and choose an adapted Banach--Tate $\Zp$-algebra norm on $R$. Let $\vp$ be a multiplicative pseudo-uniformizer in $R$.
\begin{enumerate}
\item Assume that $F_{\ka}$ has a factorization $F_{\ka}=QS$ where $Q$ is a multiplicative polynomial, $S$ is a Fredholm series, and $Q$ and $S$ are relatively prime. Let $s>r\geq r_{\ka}$. The inclusion $C^{\bu}(K, \D_{\ka}^{s}) \sub C^{\bu}(K, \D_{\ka}^{r})$ induces an equality $\Ker^{\bu} Q^{\ast}(\wt{U}_{\ka,s}) = \Ker^{\bu} Q^{\ast}(\wt{U}_{\ka,r})$ (here and elsewhere we write $\Ker^{\bu} Q^{\ast}(\wt{U}_{\ka,r})$ for the complex with $i$-th term being the kernel of $Q^{\ast}(\wt{U}_{\ka,r})$ acting on $C^{i}(K, \D_{\ka}^{r})$).

\item Let $ R^{\prime}$ be a complete Tate ring with a Noetherian ring of definition, which we assume to be equipped with a Banach--Tate $\Zp$-algebra norm $|-|^{\prime}$ which induces the topology. Assume that we have a bounded homomorphism $\phi : R \ra R^{\prime}$ such that $|-|^{\prime}$ is adapted to $\ka^{\prime}=\ka \circ \phi$ and $\phi(\vp)$ is multiplicative for $|-|^{\prime}$. Then $F_{\ka^{\prime}}=\phi(F_{\ka})$.

If we assume moreover that $F_{\ka}$ has a factorization as in the previous part, we have a canonical isomorphism $(\Ker^{\bu}Q^{\ast}(\wt{U}_{\ka,r})) \otimes_{R}R^{\prime} \cong \Ker^{\bu}Q^{\ast}(\wt{U}_{\ka^{\prime},r})$.
\end{enumerate}
\end{proposition}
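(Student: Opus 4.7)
The plan for part (1) is an ``overconvergence'' argument exploiting Corollary~\ref{cor:cpt}. The inclusion $\iota : C^\bu(K, \mc{D}_\ka^s) \hookrightarrow C^\bu(K, \mc{D}_\ka^r)$ is injective and intertwines $\wt{U}_{\ka,s}$ with $\wt{U}_{\ka,r}$, because both operators extend by continuity the same operator on the dense subspace $\mc{D}_\ka$; hence it restricts to an injection on $\Ker^\bu Q^*(\wt{U})$. For surjectivity, fix $x \in \Ker^i Q^*(\wt{U}_{\ka,r})$. By Theorem~\ref{riesz}, $\wt{U}_{\ka,r}$ acts invertibly on $\Ker^i Q^*(\wt{U}_{\ka,r})$, so for every $n \geq 0$ we may write $x = \wt{U}_{\ka,r}^n y$ with $y$ in the same kernel. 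Iterating Corollary~\ref{cor:cpt} shows that $\wt{U}_{\ka,r}^n$ factors through $C^i(K, \mc{D}_\ka^{r^{1/p^n}})$; choosing $n$ large enough that $r^{1/p^n} \geq s$ (possible since $s < 1$), the inclusion $\mc{D}_\ka^{r^{1/p^n}} \hookrightarrow \mc{D}_\ka^s$ exhibits $x$ as $\iota(\wt{x})$ for some $\wt{x} \in C^i(K, \mc{D}_\ka^s)$. The naturality $\iota \circ Q^*(\wt{U}_{\ka,s}) = Q^*(\wt{U}_{\ka,r}) \circ \iota$ and injectivity of $\iota$ then force $\wt{x} \in \Ker^i Q^*(\wt{U}_{\ka,s})$.

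For part (2), the key input is Lemma~\ref{funct}: applied to the uniform group $\ol{N}_1$ (or, when $p = 2$, summand-wise to the decomposition $\mc{D}_\ka^r \cong \bigoplus_i \delta_{\ol{n}_i} \mc{D}^r(\ol{N}_2, R)$), it produces a natural isomorphism of Banach $R'$-modules $\mc{D}_\ka^r \ctens_R R' \cong \mc{D}_{\ka'}^r$. Since the Borel--Serre complex $C^\bu(K, -)$ is a finite direct sum of copies of its coefficient module, base change yields $C^\bu(K, \mc{D}_\ka^r) \ctens_R R' \cong C^\bu(K, \mc{D}_{\ka'}^r)$ compatibly with the operators $\wt{U}$ (both coming from the same action of $t$). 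The equality $F_{\ka'} = \phi(F_\ka)$ is then the standard base-change property of Fredholm determinants of compact operators on modules with property $(Pr)$, reducing to the matrix-coefficient formula on potentially orthonormalisable modules.

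Assume now the factorization $F_\ka = QS$ of (1), so that $F_{\ka'} = \phi(Q)\phi(S)$; multiplicativity of $\phi(Q)$ (its leading coefficient is a unit, preserved by the ring map $\phi$) and coprimality of $\phi(Q)$ and $\phi(S)$ in $R'\{\{T\}\}$ (apply $\phi$ to a B\'ezout identity) are automatic. By the last clause of Theorem~\ref{riesz}, the projector $\pi$ onto $\Ker^\bu Q^*(\wt{U}_{\ka,r})$ lies in the closure of $R[\wt{U}_{\ka,r}]$ inside the endomorphism algebra. Continuity of $- \ctens_R R'$ then puts $\pi \otimes 1$ in the closure of $R'[\wt{U}_{\ka',r}]$, and the uniqueness clause of Theorem~\ref{riesz} identifies it with the projector onto $\Ker^\bu \phi(Q)^*(\wt{U}_{\ka',r})$. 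This gives the desired canonical isomorphism; the completed tensor product collapses to the ordinary one because $\Ker^\bu Q^*(\wt{U}_{\ka,r})$ is finitely generated over $R$.

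The main obstacle is the surjectivity step in part~(1), which is genuinely an overconvergence assertion: a generalised eigenvector of $\wt{U}_{\ka,r}$ on the finite-slope subspace automatically lives in the more regular complex $C^\bu(K, \mc{D}_\ka^s)$, and the proof depends crucially on both the compact-factorisation structure of Corollary~\ref{cor:cpt} and the invertibility of $\wt{U}$ on finite-slope parts from Theorem~\ref{riesz}. Once (1) is established, part~(2) is bookkeeping combining Lemma~\ref{funct} with the Riesz-theoretic projector formalism.
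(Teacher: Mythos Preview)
Your proof is correct and uses essentially the same inputs as the paper. For part~(1), the paper reduces to the case $s \le r^{1/p}$ and then invokes Lemma~\ref{link} (which packages exactly your invertibility-plus-factoring argument in one step), whereas you iterate $\wt{U}^{n}$ directly to bridge an arbitrary gap $r < s$; this is the same mechanism unrolled. For part~(2), the paper proves the kernel base-change more directly by writing $C^{\bu}(K,\D_{\ka}^{r}) = \Ker^{\bu} Q^{\ast}(\wt{U}_{\ka,r}) \oplus N^{\bu}$, base-changing the decomposition, and noting that $\phi(Q)^{\ast}(\wt{U}_{\ka',r})$ kills the first summand and is invertible on the second (since invertibility passes to $\ctens_{R} R'$); your projector argument reaches the same conclusion via the uniqueness clause of Theorem~\ref{riesz}, but requires the extra observation that $- \ctens_{R} R'$ is bounded on endomorphism algebras. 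Both routes are fine; the paper's is a touch more economical.
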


\begin{proof}
For assertion (1), we note that the general case follows from the case $s\leq r^{1/p}$. By Corollary \ref{cor:cpt}, $\wt{U}_{\ka,r}$ factors as
$$ C^{\ast}(K, \D_{\ka}^{r}) \overset{\Phi}{\ra} C^{\ast}(K, \D_{\ka}^{s}) \overset{\iota}{\ra} C^{\ast}(K, \D_{\ka}^{r})$$
where $\iota$ is induced by the natural compact inclusion $\D_{\ka}^{s}\hookrightarrow \D_{\ka}^{r}$. We have $\wt{U}_{\ka,s}=\Phi \circ \iota$. The result now follows from Lemma \ref{link}. For part (2), the first assertion follows from Lemma \ref{funct} and \cite[Lemma 2.13]{bu}, and the second assertion follows from Lemma \ref{funct} upon writing $C^{\bu}(K, \D_{\ka}^{r})= \Ker^{\bu} Q^{\ast}(\wt{U}_{\ka,r}) \oplus N^{\bu}$ as in Theorem \ref{riesz} since $Q^{\ast}(\wt{U}_{\ka,r})$ is invertible on $N^{\bu}$.
\end{proof}

\begin{remark}
	The simple proof of Proposition \ref{err3.1.1} is an observation of Daniel Gulotta; see the beginning of \cite[\S4.4]{gulottapub}. It replaces a more complicated argument in the published version of this paper, which had the additional disadvantage of making use of the incorrect version of Lemma \ref{norm}.
\end{remark}

From the preceding two propositions, we can immediately deduce the following corollary.
\begin{corollary}\label{global}
Let $\mc{U}_{1}\sub \mc{U}_{2}$ be open weights and let $\phi\, :\, \oo_{\mc{W}}(\mc{U}_{2}) \ra \oo_{\mc{W}}(\mc{U}_{1})$ be the induced map. Then $\phi(F_{\mc{U}_{2}})=F_{\mc{U}_{1}}$. Therefore, the Fredholm determinants $(F_{\mc{U}})_{\mc{U}}$, where $\mc{U}$ ranges over all open weights, glue together to a Fredholm series $F_{\mc{W}}\in \oo(\mc{W})\{\{T \}\} = \oo(\A^{1}_{\mc{W}})$.
\end{corollary}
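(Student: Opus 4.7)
The plan is to deduce the first assertion from Proposition \ref{3.1.2}(2) once suitable compatible norms are chosen on both sides, and then to obtain the global Fredholm series by sheaf-theoretic gluing.

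First I would choose a Banach--Tate \Zp-algebra norm $|-|_{2}$ on $R_{2} := \oo(\mc{U}_{2})$ adapted to the tautological weight $\ka_{2} : T_{0} \ra R_{2}^{\times}$, together with a multiplicative pseudo-uniformizer $\vp \in R_{2}$; such norms exist by Lemmas \ref{BTZp} and \ref{norm2}. Since $\mc{U}_{1} \sub \mc{U}_{2}$ is an open immersion in $\mc{W}$, the map $\phi : R_{2} \ra R_{1} := \oo(\mc{U}_{1})$ is continuous and $\phi(\vp)$ is a topologically nilpotent unit in $R_{1}$. Repeating the construction of Lemmas \ref{BTZp} and \ref{norm2} inside $R_{1}$ (pass to $R_{1}[\phi(\vp)^{1/m}]$ for large $m$, pick a ring of definition containing both $\phi(\vp)^{1/m}$ and $p\phi(\vp)^{-1/m}$, form the associated gauge norm, and rescale by a suitable positive exponent) yields a Banach--Tate \Zp-algebra norm $|-|_{1}$ on $R_{1}$ for which $\phi(\vp)$ is a multiplicative pseudo-uniformizer, which is adapted to $\ka_{1} := \phi \circ \ka_{2}$, and for which $\phi$ is bounded. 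These are precisely the hypotheses of Proposition \ref{3.1.2}(2), so $\phi(F_{\ka_{2}}) = F_{\ka_{1}}$. Since the preceding proposition shows that $F_{\ka}$ does not depend on the choice of adapted norm, this gives the required equality $\phi(F_{\mc{U}_{2}}) = F_{\mc{U}_{1}}$.

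Once naturality is in hand, the gluing is formal. Every point of $\mc{W}$ lies in an open weight, and the intersection of two open weights is itself covered by open weights. The first part of the corollary shows that on any inclusion of open weights the restriction map sends $F_{\mc{U}_{2}}$ to $F_{\mc{U}_{1}}$, so any two of the series $F_{\mc{U}}$ and $F_{\mc{U}'}$ agree on overlaps (cover $\mc{U} \cap \mc{U}'$ by open weights and apply the first assertion twice). Applying the sheaf property of $\oo_{\mc{W}}$ on the analytic adic space $\mc{W}$ coefficient-by-coefficient to entire power series, the $F_{\mc{U}}$ glue to a unique global Fredholm series $F_{\mc{W}} \in \oo(\mc{W})\{\{T\}\} = \oo(\A^{1}_{\mc{W}})$.

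The main obstacle I anticipate is the norm bookkeeping in the first paragraph: arranging a single $|-|_{1}$ that simultaneously makes $\phi$ bounded, renders $\phi(\vp)$ a multiplicative pseudo-uniformizer, and is adapted to the restricted weight. All three requirements arise from the same root-adjoining-and-rescaling procedure used earlier in the paper, so there is no genuine incompatibility, but this is the only step that is not a direct invocation of an already established result.
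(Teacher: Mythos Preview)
Your proposal is correct and follows exactly the approach the paper intends: the paper simply says the corollary ``can immediately be deduced from the preceding two propositions'' (namely Proposition~\ref{3.1.2}(2) and the norm-independence proposition), and your argument spells this out. The one subtlety you flag---arranging $|-|_1$ to be simultaneously bounded for $\phi$, adapted to $\ka_1$, and with $\phi(\vp)$ multiplicative---is handled just as you suggest (and as the paper does in the proof of the norm-independence proposition): first take any norm on $R_1$ with unit ball containing $\phi(R_{2,0})$ and $\phi(\vp)$ multiplicative, then apply Lemma~\ref{norm2} to obtain a norm bounded-equivalent to its $s$-th power for some $s\geq 1$; rescaling $|-|_2$ by the same exponent $s$ (which preserves adaptedness and the $\Zp$-algebra condition since $s\geq 1$) restores boundedness of $\phi$, and norm-independence lets you compute $F_{\ka_2}$ with either norm on $R_2$.
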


We write $\wt{U}_{\ka}$ for the Hecke operator on $C^{\bu}(K, \ms{D}_{\ka})$ coming from our fixed $t\in \Sigma^{cpt}$.

\begin{proposition}\label{horizontal}
Let $\ka : T_{0} \ra R^{\times}$ be a weight. Assume that $F_{\ka}$ has a factorization $F_{\ka}=QS$ where $Q$ is a multiplicative polynomial, $S$ is a Fredholm series, and $Q$ and $S$ are relatively prime. Then $\Ker^{\bu} Q^{\ast}(\wt{U}_{\ka})$ is a complex of projective $R$-modules. If $\phi : R\ra R^{\prime}$ is a continuous homomorphism, where $R^{\prime}$ is a complete Tate ring with a Noetherian ring of definition, then we have a canonical isomorphism $(\Ker^{\bu}Q^{\ast}(\wt{U}_{\ka})) \otimes_{R}R^{\prime} \cong \Ker^{\bu}Q^{\ast}(\wt{U}_{\ka^{\prime}})$ where $\ka^{\prime}=\ka \circ \phi$.
\end{proposition}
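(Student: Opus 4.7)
The plan is to deduce everything from Proposition \ref{3.1.2} by passing to the inverse limit in $r$. Fix a Banach--Tate $\Zp$-algebra norm on $R$ adapted to $\ka$. Since $\ms{D}_{\ka}=\varprojlim_{r}\mc{D}_{\ka}^{r}$, and $C^{\bu}(K,-)$ is computed by applying $\Hom$ out of the augmented Borel--Serre complex (a bounded complex of finite free $\Z[K\backslash\Delta]$-modules), each term of $C^{\bu}(K,-)$ is, up to isomorphism, a finite direct sum of copies of the coefficient module. Hence
$$ C^{\bu}(K,\ms{D}_{\ka})=\varprojlim_{r}C^{\bu}(K,\mc{D}_{\ka}^{r}), $$
compatibly with the Hecke operators, and therefore
$$ \Ker^{\bu}Q^{\ast}(\wt{U}_{\ka})=\varprojlim_{r}\Ker^{\bu}Q^{\ast}(\wt{U}_{\ka,r}). $$
Proposition \ref{3.1.2}(1) states that the transition maps in this inverse system are isomorphisms, so the limit agrees with $\Ker^{\bu}Q^{\ast}(\wt{U}_{\ka,r})$ for any fixed $r\geq r_{\ka}$. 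By Theorem \ref{riesz}, each such kernel is a complex of finitely generated projective $R$-modules. This gives the first assertion.

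For the base change statement, the subtlety is that $\phi$ is only continuous, so one cannot apply Proposition \ref{3.1.2}(2) directly. I would work locally on $X':=\Spa(R',(R')^{\circ})$: for each point $x\in X'$ pick a rational neighborhood $U$ and equip $\oo_{X'}(U)$ with a Banach--Tate $\Zp$-algebra norm adapted to $\ka_{U}:=\ka\circ\phi_{U}$ such that the composition $R\xrightarrow{\phi}R'\xrightarrow{\phi_{U}}\oo_{X'}(U)$ is bounded and sends a chosen multiplicative pseudo-uniformizer of $R$ to a multiplicative element. This can be arranged by the same kind of construction used in Lemmas \ref{BTZp} and \ref{norm2}, applied to a suitable ring of definition of $\oo_{X'}(U)$ (a version of this manoeuvre appears in the proof of the preceding proposition, where the analogous norm-adjustment is carried out locally). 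Over each such $U$, Proposition \ref{3.1.2}(2) combined with the first assertion gives a canonical isomorphism
$$ (\Ker^{\bu}Q^{\ast}(\wt{U}_{\ka}))\otimes_{R}\oo_{X'}(U)\cong \Ker^{\bu}Q^{\ast}(\wt{U}_{\ka_{U}}). $$

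To conclude, I would assemble these local isomorphisms into the required global one. The natural map from $(\Ker^{\bu}Q^{\ast}(\wt{U}_{\ka}))\otimes_{R}R'$ to $\Ker^{\bu}Q^{\ast}(\wt{U}_{\ka'})$ is induced by the canonical map $C^{\bu}(K,\mc{D}_{\ka}^{r})\otimes_{R}R'\ra C^{\bu}(K,\mc{D}_{\ka'}^{r})$; by the first part of the proposition (applied over both $R$ and $R'$) both source and target are complexes of finitely generated projective $R'$-modules. Being an isomorphism of such modules may be checked after pullback along an open cover of $X'$, and by construction each pullback to a chosen $U$ is an isomorphism. The main obstacle is the local construction of a compatible bounded norm on $\oo_{X'}(U)$; once this is accomplished the statement follows formally from Proposition \ref{3.1.2} and the Riesz theory.
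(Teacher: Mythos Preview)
Your argument for the first assertion is correct and matches the paper exactly: pass to the inverse limit over $r$, use Proposition~\ref{3.1.2}(1) to see that the transition maps on $\Ker^{\bu}Q^{\ast}(\wt{U}_{\ka,r})$ are isomorphisms, and invoke Theorem~\ref{riesz} for projectivity.

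For the base change assertion, your approach is correct but more elaborate than the paper's. You rightly observe that Proposition~\ref{3.1.2}(2) requires $\phi$ to be bounded (together with $\phi(\vp)$ multiplicative and an adapted norm on $R'$), whereas here $\phi$ is merely continuous. Your resolution is to work locally on $X'=\Spa(R',(R')^{\circ})$, manufacture suitable norms on each $\oo_{X'}(U)$, apply Proposition~\ref{3.1.2}(2) there, and glue. The paper instead asserts that such norms can be chosen \emph{globally} on $R$ and $R'$. This is indeed possible: pick a ring of definition $R_{0}\ni\vp$ for $R$ and a ring of definition $R'_{0}$ for $R'$ containing $\phi(\vp)$ and the (bounded) image $\phi(R_{0})$; the associated norms with the same base $a$ make $\phi$ norm-decreasing. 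One then runs the construction of Lemma~\ref{norm2} simultaneously for $R$ and $R'$ (using a common auxiliary integer $m$ and a common exponent $s$), producing adapted $\Zp$-algebra norms on both rings with $\vp$ and $\phi(\vp)$ multiplicative and $\phi$ still bounded. With these norms in hand, Proposition~\ref{3.1.2}(2) applies directly and the localization and gluing become unnecessary.

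One small point to watch in your version: the ``natural map'' $(\Ker^{\bu}Q^{\ast}(\wt{U}_{\ka}))\otimes_{R}R'\ra\Ker^{\bu}Q^{\ast}(\wt{U}_{\ka'})$ that you wish to check locally already presupposes some compatibility of norms (to get a map $\mc{D}_{\ka}^{r}\ra\mc{D}_{\ka'}^{r'}$). Either construct it via a single global norm choice as above, or define it locally on each $U$ and verify that the local maps agree on overlaps so that they glue; as written, this compatibility check is missing.
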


\begin{proof}
We have $C^{\bu}(K, \ms{D}_{\ka})= \varprojlim_{r\geq r_{\ka}} C^{\bu}(K, \mc{D}^{r}_{\ka})$. Since $\wt{U}_\ka=\varprojlim_{r\geq r_{\ka}}\wt{U}_{\ka,r}$ the result follows from Proposition \ref{3.1.2} upon noting that we may choose a topologically nilpotent unit $\vp\in R$ and norms on $R$ and $R'$ such that $\phi$ and $\vp$ satisfies the assumptions of that Proposition (by Proposition \ref{normindep1}, the topologies on $\ms{D}_{\ka}$ and $\ms{D}_{\ka'}$ are independent of these choices).
\end{proof}

Next we study what happens when the factorization of $F_{\ka}$ changes. Keep the notation of Proposition \ref{horizontal}. We make $\Ker^{\bu} Q^{\ast}(\wt{U}_{\ka})$ into a complex of $R[T]/(Q(T))$-modules by letting $T$ act as $\wt{U}_\ka^{-1}$.

\begin{proposition}\label{vertical}
Let $\ka : T_{0} \ra R^{\times}$ be a weight. Assume that $F_{\ka}$ has two factorizations $F_{\ka}=Q_{1}S_{1}=Q_{2}S_{2}$ where the $Q_{i}$ are multiplicative polynomials, the $S_{i}$ are Fredholm series, and for each $i$ the $Q_{i}$ and $S_{i}$ are relatively prime. Assume further that $Q_{1}|Q_{2}$. Then we have a canonical isomorphism 
$$\Ker^{\bu}Q_{2}^{\ast}(\wt{U}_{\ka}) \otimes_{R[T]/(Q_{2})} R[T]/(Q_{1}) \cong \Ker^{\bu}(Q_{1}^{\ast}(\wt{U}_{\ka})).$$
\end{proposition}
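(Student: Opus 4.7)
My plan is to reduce the identification to the Chinese Remainder Theorem applied to $R[T]/(Q_2)$. Write $Q_2 = Q_1 P$ for the quotient $P \in R[T]$. Since $Q_1$ has unit leading coefficient, $(Q_1 P)^* = Q_1^* P^*$, and then $Q_2^*(0), Q_1^*(0) \in R^\times$ force $P^*(0) \in R^\times$, so $P$ is multiplicative as well. Comparing $Q_1 S_1 = Q_2 S_2 = Q_1 P S_2$ and using that $Q_1$ is a non-zero-divisor in $R\{\{T\}\}$ (its constant term is $1$) gives $S_1 = P S_2$. Hence $(Q_1, P) \supseteq (Q_1, PS_2) = (Q_1, S_1) = R\{\{T\}\}$, so $Q_1$ and $P$ are relatively prime in the entire series ring.

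I would next upgrade this to comaximality of $(Q_1)$ and $(P)$ as ideals in $R[T]$ itself. Choose $a, b \in R\{\{T\}\}$ with $aQ_1 + bP = 1$ and divide $b$ by $Q_1$ (possible because $Q_1$ has unit leading coefficient) to write $b = qQ_1 + b'$ with $b' \in R[T]$ of degree less than $\deg Q_1$. Substituting yields $(a+qP)Q_1 + b'P = 1$, so $(a+qP)Q_1 = 1 - b'P \in R[T]$; the unit leading coefficient of $Q_1$ again forces $a + qP \in R[T]$, so $1 \in (Q_1) + (P)$ as an ideal in $R[T]$. By CRT, $R[T]/(Q_2) \cong R[T]/(Q_1) \times R[T]/(P)$.

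With this in hand I would set $M := \Ker^{\bu} Q_2^*(\wt{U}_{\ka})$, which becomes an $R[T]/(Q_2)$-module via $T \mapsto \wt{U}_{\ka}^{-1}$; this is legitimate because $\wt{U}_{\ka}$ acts invertibly on $M$ by Theorem \ref{riesz} (applied to each $\mc{D}_{\ka}^{r}$ and passing to the inverse limit). The CRT decomposition then yields $M = M_{Q_1} \oplus M_P$, where $M_{Q_1} := M \otimes_{R[T]/(Q_2)} R[T]/(Q_1)$ is the kernel of $Q_1(T)$ acting on $M$. Since $Q_1(\wt{U}_{\ka}^{-1}) = \wt{U}_{\ka}^{-\deg Q_1} Q_1^*(\wt{U}_{\ka})$ and $\wt{U}_{\ka}$ is invertible on $M$, this kernel equals $\{m \in M : Q_1^*(\wt{U}_{\ka}) m = 0\}$; and because $Q_2^* = Q_1^* P^*$, any $m \in C^{\bu}(K, \ms{D}_{\ka})$ killed by $Q_1^*(\wt{U}_{\ka})$ automatically lies in $M$. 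This set is therefore precisely $\Ker^{\bu} Q_1^*(\wt{U}_{\ka})$, yielding the desired isomorphism.

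The main technical obstacle is the transition from relative primality in the entire series ring $R\{\{T\}\}$ to honest comaximality in $R[T]$; this is exactly what the Euclidean-division argument of the second paragraph accomplishes, relying crucially on the multiplicativity (unit leading coefficient) of $Q_1$.
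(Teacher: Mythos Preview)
Your proof is correct and follows essentially the same route as the paper: write $Q_2 = Q_1 P$, establish that $Q_1$ and $P$ are comaximal in $R[T]$, apply the Chinese Remainder Theorem, and identify the $R[T]/(Q_1)$-component of $\Ker^{\bu}Q_2^{\ast}(\wt{U}_{\ka})$ with $\Ker^{\bu}Q_1^{\ast}(\wt{U}_{\ka})$. You supply more detail than the paper on the comaximality step (the paper simply says ``one checks''), and you phrase the final identification in terms of the kernel of $Q_1(T)$ rather than the image of the idempotent $PA$, but these are the same computation.
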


\begin{proof}
Let $P$ be such that $Q_{2}=PQ_{1}$; $P$ is then a multiplicative polynomial and one checks that it is relatively prime to $Q_{1}$, so we may find polynomials $A,B\in R[T]$ such that $PA+Q_{1}B=1$. We then have $R[T]/(Q_{2}) \cong R[T]/(Q_{1}) \times R[T]/(P)$ and $PA\in R[T]/(Q_{2})$ corresponds to $(1,0)\in R[T]/(Q_{1}) \times R[T]/(P)$. The proposition now amounts to showing that $PA.\Ker^{\bu}Q_{2}^{\ast}(\wt{U}_{\ka})=\Ker^{\bu}Q_{1}^{\ast}(\wt{U}_{\ka})$. If $x\in \Ker^{\bu}Q_{2}^{\ast}(\wt{U}_{\ka})$, then 
$$ Q^{\ast}_{1}(\wt{U}_{\ka}).PAx= \wt{U}_{\ka}^{\deg Q_{1}}AQ_2x=0,$$
which gives us one inclusion. For the other, assume that $y\in \Ker^{\bu} Q_{1}^{\ast}(\wt{U}_{\ka})$. Note that $\deg PA =\deg Q_{1}B$. Then 
$$ y = \wt{U}_{\ka}^{-\deg PA}(P^{\ast}(\wt{U}_{\ka})A^{\ast}(\wt{U}_{\ka})+B^{\ast}(\wt{U}_{\ka})Q_{1}^{\ast}(\wt{U}_{\ka}))y = P^{\ast}(\wt{U}_{\ka})A^{\ast}(\wt{U}_{\ka})\wt{U}_{\ka}^{-\deg PA}y, $$
which gives us the other inclusion.
\end{proof}

We now return to the Fredholm series $F_{\mc{W}}$ from Corollary \ref{global}. Let $\ms{Z}\sub \A^{1}_{\mc{W}}$ denote its Fredholm hypersurface. Consider $\ms{C}ov(\ms{Z})$, the set of all open affinoid $V\sub Z$ such that $\pi(V)\sub X$ is open affinoid, $\oo(\pi(V))$ is Tate, and the map $\pi|_{V} : V \ra \pi(V)$ is finite of constant degree, where $\pi : \ms{Z} \ra \mc{W}$ is the projection map. For $V\in \ms{C}ov(\ms{Z})$, let us write $F_{\mc{W}}=Q_{V}S_{V}$ for the associated factorization of $F_{\mc{W}}$ from Theorem \ref{cover}.

\begin{corollary}
The assignment $V \mapsto \Ker^{\bu}Q^{\ast}_{V}(\wt{U}_{\pi(V)})$, with $V\in \ms{C}ov(\ms{Z})$, defines a bounded complex of coherent sheaves $\ms{K}^{\bu}$ on $\ms{Z}$.
\end{corollary}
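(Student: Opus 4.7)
The plan is to verify three things: (i) each $\Ker^{\bu}Q_V^{\ast}(\wt{U}_{\pi(V)})$ defines a bounded complex of coherent sheaves on $V$; (ii) the construction is compatible with inclusions $V' \subseteq V$ in $\ms{C}ov(\ms{Z})$; and (iii) the resulting restriction isomorphisms satisfy the cocycle condition, so the local data glues along the cover $\ms{C}ov(\ms{Z})$ provided by Theorem~\ref{cover}.

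For (i), fix $V \in \ms{C}ov(\ms{Z})$ and write $R = \oo(\pi(V))$, so that $\oo(V) = R[T]/(Q_V)$ is finite free of rank $\deg Q_V$ over $R$ by Theorem~\ref{cover}. Proposition~\ref{horizontal}, applied in each cohomological degree of the bounded Borel--Serre complex, shows that every term of $\Ker^{\bu}Q_V^{\ast}(\wt{U}_{\pi(V)})$ is a finitely generated projective $R$-module, hence a finitely generated $\oo(V)$-module. Since our local models have a Noetherian ring of definition, finite generation is enough to yield a coherent $\oo_V$-module, so we obtain a bounded complex $\ms{K}_V^{\bu}$ of coherent sheaves on $V$.

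For (ii), let $V' \subseteq V$ in $\ms{C}ov(\ms{Z})$ and set $R' = \oo(\pi(V'))$. Restriction of the factorization $F_{\mc{W}}|_{R} = Q_V S_V$ gives $F_{\mc{W}}|_{R'} = (Q_V|_{R'})(S_V|_{R'})$, and the open immersion $V' \hookrightarrow V \times_{\pi(V)} \pi(V')$ sits between two finite covers of $\pi(V')$ and is therefore proper, hence closed. This exhibits $V'$ as an open and closed component of $V \times_{\pi(V)} \pi(V')$, whose coordinate ring is $R'[T]/(Q_V|_{R'})$, and thereby produces a factorization $Q_V|_{R'} = Q_{V'} \cdot P$ in $R'[T]$ with $Q_{V'}$ and $P$ multiplicative and coprime. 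Proposition~\ref{horizontal} applied to $R \ra R'$ gives
$$\Ker^{\bu}Q_V^{\ast}(\wt{U}_{\pi(V)}) \otimes_R R' \cong \Ker^{\bu}(Q_V|_{R'})^{\ast}(\wt{U}_{\pi(V')}),$$
and Proposition~\ref{vertical} (applied with $Q_1 = Q_{V'}$ and $Q_2 = Q_V|_{R'}$ over $R'$) identifies this further with $\Ker^{\bu}Q_{V'}^{\ast}(\wt{U}_{\pi(V')})$ after tensoring over $R'[T]/(Q_V|_{R'})$ with $R'[T]/(Q_{V'}) = \oo(V')$. The composite is the desired isomorphism $\ms{K}_V^{\bu}|_{V'} \cong \ms{K}_{V'}^{\bu}$.

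For (iii), the cocycle condition over chains $V'' \subseteq V' \subseteq V$ follows from the fact that the isomorphisms in Propositions~\ref{horizontal} and~\ref{vertical} come from restricting the canonical Riesz decomposition of Theorem~\ref{riesz}, and so are natural both in the base ring and in the polynomial factor. Granted this, the $\ms{K}_V^{\bu}$ glue along $\ms{C}ov(\ms{Z})$ to the claimed bounded complex of coherent sheaves $\ms{K}^{\bu}$ on $\ms{Z}$. The main bookkeeping obstacle will be checking that the two kinds of tensor product (horizontal base change along $R \ra R'$ and vertical quotient along $R'[T]/(Q_V|_{R'}) \to R'[T]/(Q_{V'})$) commute as needed so that the cocycle identity holds strictly on triple overlaps; this should be formal given the uniqueness of the Riesz kernel as a canonical direct summand, but it is the point requiring the most care.
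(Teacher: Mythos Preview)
Your proof is correct and follows essentially the same strategy as the paper: factor the inclusion $V'\subseteq V$ through the intermediate $V\times_{\pi(V)}\pi(V')$ (the paper's $V_3$), then apply Proposition~\ref{horizontal} for the horizontal base change and Proposition~\ref{vertical} for the vertical quotient. The paper's proof is terser---it simply asserts that $V_1\subseteq V_3$ forces $Q_{V_1}\mid Q_{V_3}$ and leaves the cocycle condition implicit in the word ``canonical''---whereas you spell out why $V'$ is open and closed in the fibre product and flag the cocycle check explicitly; but the underlying argument is the same.
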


\begin{proof}
$\ms{C}ov(\ms{Z})$ is an open cover of $\ms{Z}$ so we need to prove that whenever $V_{1}\sub V_{2}$ are elements of $\ms{C}ov(\ms{Z})$, we have $(\Ker^{\bu}Q^{\ast}_{V_{2}}(\wt{U}_{\pi(V_{2})}))\otimes_{\oo(V_{2})}\oo(V_{1}) \cong \Ker^{\bu}Q_{V_{1}}^{\ast}(\wt{U}_{\pi(V_{1})})$ canonically. Define $V_{3}=\pi|_{V_{2}}^{-1}(\pi(V_{1}))$. Then we have $V_{1}\sub V_{3} \sub V_{2}$, so it suffices to treat the inclusions $V_{1}\sub V_{3}$ and $V_{3}\sub V_{2}$. In the first case we have $\pi(V_{1})=\pi(V_{3})$ and the result follows from Proposition \ref{vertical}, since $V_{1}\sub V_{3}$ forces $Q_{V_{1}}|Q_{V_{3}}$. For the second we have $V_{3}= V_{2} \times_{\pi(V_{2})} \pi(V_{1})$ and the result follows from Proposition \ref{horizontal}. 
\end{proof}

This allows us to finish the construction of the eigenvariety. We define $\ms{H}^{\ast}=H^{\ast}(\ms{K}^{\bu})$; this is a coherent sheaf on $\ms{Z}$. Since the projectors $C^{\bu}(K, \ms{D}_{\pi(V)}) \ra \Ker^{\bu}Q_{V}^{\ast}(\wt{U}_{\pi(V)})$ commute with the action of $\T(\De^{p},K^{p})$ (by construction, using the assertion about the projectors in Theorem \ref{riesz}), we get an induced action $\T(\De^{p},K^{p}) \ra \ms{E}nd_{\ms{Z}}(\ms{H}^{\ast})$. Let $\ms{T}\sub \ms{E}nd_{\ms{Z}}(\ms{H}^{\ast})$ denote the sub-presheaf generated over $\oo_{\ms{Z}}$ by the image of $\T(\De^{p},K^{p})$. It is a sheaf by flatness of rational localization, hence a coherent sheaf of $\oo_{\ms{Z}}$-algebras, and we define the eigenvariety $\ms{X}=\ms{X}_{\mbf{G},K^{p}}$ to be the relative $\underline{\Spa}(\ms{T},\ms{T}^{\circ}) \ra \ms{Z}$ (note that the sheaf of integral elements is determined by Lemma \ref{finitepowerbounded}). The morphism $q : \ms{X} \ra \ms{Z}$ is finite by construction, and we have
$$ \oo(q^{-1}(V))= {\rm Im}\left(\T(\De^{p},K^{p})\otimes_{\Zp}\oo(V) \ra \End_{\oo(V)}(H^{\ast}(\Ker^{\bu}Q^{\ast}_{V})) \right)$$
for all $V\in \ms{C}ov(\ms{Z})$. In particular, if $(\mc{U},h)$ is a slope datum for $(\mc{W},F_{\mc{W}})$, we write $\ms{T}_{\U,h}=\oo(q^{-1}(\ms{Z}_{\U,h}))$ and have
$$ \ms{T}_{\U,h}={\rm Im}\left(\T(\De^{p},K^{p})\otimes_{\Zp}\oo(\ms{Z}_{\U,h}) \ra \End_{\oo(\ms{Z}_{\U,h})}(H^{\ast}(K, \ms{D}_{\U})_{\leq h}) \right). $$

\begin{remark}
Our eigenvariety $\ms{X}$ contains the eigenvariety constructed by Hansen in \cite[\S 4]{han} as the open subset $\{p\neq 0\}$. Indeed, our construction specializes to his over Banach $\Qp$-algebras, with the minor difference that we use the complexes $C^{\bu}(K, \D_{\U}^{r})$ to construct the auxiliary Fredholm hypersurface $\ms{Z}$, whereas the complexes $C_{\bu}(K, \ms{A}_{\U}^{r})$ (in our notation) are used in \cite{han}, giving a different auxiliary Fredholm hypersuface. However, working over the union of the two Fredholm hypersurfaces, one sees that the coherent sheaf $\ms{H}^{\ast}$ on $\A^{1}_{\mc{W}^{rig}}$, with its $\T(\De^{p},K^{p})$-action, is equal to the sheaf $\ms{M}^{\ast}$ on $\A^{1}_{\mc{W}^{rig}}$ (in the notation of \cite[\S 4.3]{han}), with its $\T(\De^{p},K^{p})$-action (here we have used $\mc{W}^{rig}$ to denote the locus $\{p\neq 0\}\sub \mc{W}$).
\end{remark}

\begin{remark}
Like the other constructions, our construction of overconvergent cohomology and eigenvarieties has numerous variations, which are sometimes useful to keep in mind. For example, one may use compactly supported cohomology, homology or Borel-Moore homology instead (cf. \cite[\S 3.3]{han}), and/or one could use the modules $\ms{A}_{\ka}$ instead of the $\ms{D}_{\ka}$. One can also add (or remove) Hecke operators, or work over some restricted family of weights, rather than the universal one.
\end{remark}

\subsection{The Tor-spectral sequence} 

We now give the analogue of the Tor spectral sequence in \cite[Theorem 3.3.1]{han}, which is a key tool in analyzing the eigenvarieties. We phrase it in terms of slope decompositions and Banach--Tate rings, though we could have formulated it more generally for elements in $\ms{C}ov(\ms{Z})$.

\begin{theorem}
Let $h\in \Q_{\geq 0}$ and let $\ka\, :\, T_{0} \ra R^{\times}$ be a weight. We fix an adapted Banach--Tate $\Zp$-algebra norm on $R$, and suppose that $C^{\bu}(K, \D_{\ka}^{r})$ has a slope-$\leq h$ decomposition for some $r\geq r_{\ka}$. Let $R \ra S$ be a bounded homomorphism of Banach--Tate $\Zp$-algebras with adapted norms and write $\ka_{S}$ for the induced weight $T_{0} \ra S^{\times}$. Then there is a convergent Hecke-equivariant (cohomological) second quadrant spectral sequence
$$ E_{2}^{ij} = \Tor_{-i}^{R}(H^{j}(K, \ms{D}_{\ka})_{\leq h}, S) \implies H^{i+j}(K, \ms{D}_{\ka_{S}})_{\leq h}. $$
\end{theorem}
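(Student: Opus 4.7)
The plan is to reduce the statement to a standard hyper-$\Tor$ spectral sequence by producing a bounded complex $P^\bu$ of finitely generated projective $R$-modules that simultaneously computes $H^\ast(K, \ms{D}_\ka)_{\leq h}$ and, after base change along $R \ra S$, computes $H^\ast(K, \ms{D}_{\ka_S})_{\leq h}$. The natural candidate is $P^\bu := C^\bu(K, \D_\ka^r)_{\leq h}$, and the content of the proof is to verify that this complex has the two required properties.

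First I would check that $P^\bu$ is a bounded complex of finitely generated projective $R$-modules. Boundedness is built into the Borel--Serre complex. Since $\D_\ka^r$ is potentially ON-able, each $C^i(K, \D_\ka^r)$ has property $(Pr)$, and therefore so does the direct summand $C^i(K, \D_\ka^r)_{\leq h}$ cut out by the slope-$\leq h$ decomposition. This summand is finitely generated by the defining condition of slope decompositions, and a finitely generated $R$-module with property $(Pr)$ over a Noetherian Banach--Tate ring is projective (it is a finitely generated flat module, hence projective). Moreover, by Proposition~\ref{3.1.2}(1), $H^j(P^\bu)$ is canonically identified with $H^j(K, \D_\ka^{r'})_{\leq h}$ for every $r' \geq r_\ka$, and hence with $H^j(K, \ms{D}_\ka)_{\leq h}$ after passing to the inverse limit.

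Second, I would establish the base-change identity
\[
P^\bu \otimes_R S \;\cong\; C^\bu(K, \D_{\ka_S}^r)_{\leq h}
\]
Hecke-equivariantly. The Borel--Serre complex is obtained by applying $C^\bu(K, -)$, which on coefficient modules commutes with $\ctens_R S$; applied to the isomorphism $\D_\ka^r \ctens_R S \cong \D_{\ka_S}^r$ of Lemma~\ref{funct}, this gives $C^\bu(K, \D_\ka^r) \ctens_R S \cong C^\bu(K, \D_{\ka_S}^r)$, equivariantly for $\T(\De, K)$. Since the slope-$\leq h$ decomposition corresponds to a slope factorization $F_\ka = QS$ of the Fredholm determinant (by the theorem relating slope decompositions to slope factorizations in \S\ref{fred}), and this factorization is preserved by $\phi : R \ra S$ via Proposition~\ref{3.1.2}(2), the slope-$\leq h$ decomposition is functorial for $\phi$. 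Thus passing to slope-$\leq h$ parts on both sides yields $P^\bu \ctens_R S \cong C^\bu(K, \D_{\ka_S}^r)_{\leq h}$; and because $P^\bu$ consists of finitely generated (projective) $R$-modules, $\ctens_R S = \otimes_R S$, giving the identity above. Its cohomology is $H^\ast(K, \ms{D}_{\ka_S})_{\leq h}$ by the same argument as in the previous paragraph, applied over $S$.

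Finally, with $P^\bu$ a bounded complex of projective $R$-modules, the derived tensor product $P^\bu \otimes_R^L S$ is computed by $P^\bu \otimes_R S$, and the standard hyper-$\Tor$ spectral sequence
\[
E_2^{ij} = \Tor_{-i}^R(H^j(P^\bu), S) \implies H^{i+j}(P^\bu \otimes_R^L S)
\]
immediately yields the claim; Hecke-equivariance is inherited from the equivariance of each input. The main subtlety I anticipate is not the spectral sequence formalism (which is purely homological algebra once the setup is in place) but the verification that the slope-$\leq h$ decomposition of $C^\bu(K, \D_\ka^r)$ is functorial along the bounded homomorphism $R \ra S$; this requires care with the choice of multiplicative pseudo-uniformizers and the interaction of the adapted norms on $R$ and $S$ with the slope factorization of $F_\ka$, but is provided by the theory set up in \S\ref{fred} together with Proposition~\ref{3.1.2}(2).
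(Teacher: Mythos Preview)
Your proposal is correct and follows essentially the same route as the paper: both take the bounded complex of finite projective $R$-modules given by the slope-$\leq h$ part of the Borel--Serre complex (you use $C^\bu(K,\D_\ka^r)_{\leq h}$, the paper uses $C^\bu(K,\ms{D}_\ka)_{\leq h}$, but these coincide by Proposition~\ref{3.1.2}(1)), verify that tensoring with $S$ yields the corresponding complex over $S$ (the paper cites this as ``by our previous results'', i.e.\ Proposition~\ref{horizontal}), and then invoke the hyper-$\Tor$ spectral sequence. Your write-up is in fact a bit more explicit about why each term is projective and why the base-change step goes through, and you correctly flag the one point that needs care, namely arranging compatible multiplicative pseudo-uniformizers for $R\ra S$; the paper handles this exactly as you suggest, by choosing suitable norms so that Proposition~\ref{3.1.2}(2) applies.
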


\begin{proof}
We follow the proof of \cite[Theorem 3.3.1]{han}. Define a chain complex $\ms{C}_{\bu}$ by $\ms{C}_{i}=C^{-i}(K, \ms{D}_{\ka})_{\leq h}$ and the obvious differentials (i.e. we are just reindexing and viewing $C^{\bu}(K,\ms{D}_{\ka})_{\leq h}$ as chain complex). This is a bounded chain complex of finite projective $R$-modules. Thus
$$ \mbf{Tor}_{i+j}^{R}(\ms{C}_{\bu}, S) = H_{i+j}( \ms{C}_{\bu} \otimes_{R}S) $$
where $\mbf{Tor}$ denotes hyper-$\Tor$. The hyper-$\Tor$ spectral sequence then gives us a homological spectral sequence
$$ E_{ij}^{2}=\Tor_{i}^{R}(H_{j}(\ms{C}_{\bu}), S) \implies H_{i+j}( \ms{C}_{\bu} \otimes_{R}S) $$
which is concentrated in the fourth quadrant. Reindexing we may turn this into a cohomological spectral sequence (cf. \cite[Dual definition 5.2.3]{wei})
$$ E_{2}^{ij}=\Tor_{-i}^{R}(H_{-j}(\ms{C}_{\bu}), S) \implies H_{-i-j}(\ms{C}_{\bu} \otimes_{R}S) $$
which is concentrated in the second quadrant. Since $H_{-j}(\ms{C}_{\bu})=H^{j}(K, \ms{D}_{\ka})_{\leq h}$ (by definition) and $H_{-i-j}(\ms{C}_{\bu}\otimes_{R}S) \cong H^{i+j}(K, \ms{D}_{\ka_{S}})_{\leq h}$ (canonically, by our previous results) this gives the desired spectral sequence. Finally, Hecke-equivariance follows from the functoriality of the hyper-Tor spectral sequence.
\end{proof}

\medskip

As an application, we prove the following analogue of \cite[Theorem 4.3.3]{han}. We will use it in the next section when we construct Galois representations.

\begin{proposition}\label{points1}
Let $(\mc{U},h)$ be a slope datum and let $\mf{m}\sub \oo_{\mc{W}}(\mc{U})$ be a maximal ideal corresponding to a weight $\ka : T_{0} \ra L^{\times}$ where $L=\oo_{\mc{W}}(\mc{U})/\mf{m}$; this is a local field by Proposition \ref{points}. Fix an absolute value on $L$ with $|p|\le p^{-1}$ (i.e.~an adapted Banach--Tate $\Zp$-algebra norm). Let $\T \sub \T(\De,K)$ be a $\Zp$-subalgebra and put
$$ \T_{\ka, h}= {\rm Im}(\T\otimes_{\Zp}L \ra \End_{L}(H^{\ast}(K, \ms{D}_{\ka})_{\leq h})); $$
$$  \T_{\mc{U}, h}= {\rm Im}(\T\otimes_{\Zp}\oo_{\mc{W}}(\mc{U}) \ra \End_{\oo_{W}(\mc{U})}(H^{\ast}(K, \ms{D}_{\mc{U}})_{\leq h})). $$
Then there is a natural isomorphism $(\T_{\mc{U},h}\otimes_{\oo_{\mc{W}}(\mc{U})}L)^{red} \cong \T_{\ka, h}^{red}$.
\end{proposition}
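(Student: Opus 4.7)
The plan is to apply the Tor spectral sequence of the preceding theorem to the bounded quotient map $R := \oo_{\mc{W}}(\mc{U}) \twoheadrightarrow L = R/\mf{m}$. Writing $A := \T_{\mc{U},h}$ and $B := \T_{\ka,h}$, this produces a Hecke-equivariant, bounded second-quadrant spectral sequence
\[
E_2^{ij} = \Tor^R_{-i}\bigl(H^j(K, \ms{D}_{\mc{U}})_{\leq h},\, L\bigr) \Longrightarrow H^{i+j}(K, \ms{D}_\ka)_{\leq h},
\]
equipping each $H^n(K,\ms{D}_\ka)_{\leq h}$ with a finite, Hecke-stable filtration whose associated graded is a direct sum of subquotients of the $\Tor$ terms.

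First, I would extract a natural surjection $(A \otimes_R L)^{\mathrm{red}} \twoheadrightarrow B^{\mathrm{red}}$. By functoriality of $\Tor$ in its first argument, the action of $\T \otimes_{\Zp} L$ on every $\Tor^R_{k}(H^j(K, \ms{D}_{\mc{U}})_{\leq h}, L)$ factors through $A \otimes_R L$, and hence so does the action on every graded piece of $H^*(K,\ms{D}_\ka)_{\leq h}$. Any element of $\T \otimes L$ that vanishes in $A \otimes_R L$ therefore acts as zero on this associated graded; since the filtration has finite length, such an element acts nilpotently on $H^*(K,\ms{D}_\ka)_{\leq h}$ and has nilpotent image in $B$.

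The reverse direction is the main obstacle, and I plan to handle it at the level of maximal spectra by exhibiting, for each maximal ideal $\mf{q}$ of $A$ lying over $\mf{m}$, a maximal ideal $\mf{p}_\mf{q}$ of $B$ with $B/\mf{p}_\mf{q} = A/\mf{q}$. Since $A$ acts faithfully on the bounded coherent module $H^*(K,\ms{D}_{\mc{U}})_{\leq h}$, the integer
\[
j := \max\bigl\{ n \,:\, (H^n(K, \ms{D}_{\mc{U}})_{\leq h})_\mf{q} \neq 0 \bigr\}
\]
is well-defined. By Nakayama's lemma applied to the local ring $A_\mf{q}$ (whose maximal ideal contains $\mf{m}$), the localised edge term $E_2^{0,j}|_\mf{q} = (H^j(K, \ms{D}_{\mc{U}})_{\leq h})_\mf{q} \otimes_R L$ is non-zero. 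The only non-trivial differentials that could affect this term are the incoming $d_r : E_r^{-r, j+r-1}|_\mf{q} \to E_r^{0,j}|_\mf{q}$ for $r \geq 2$; their sources are subquotients of $\Tor^R_{r}((H^{j+r-1}(K, \ms{D}_{\mc{U}})_{\leq h})_\mf{q}, L)$, which vanish by the maximality of $j$. Hence $E_\infty^{0,j}|_\mf{q} = E_2^{0,j}|_\mf{q} \neq 0$, which forces $(H^j(K, \ms{D}_\ka)_{\leq h})_\mf{q} \neq 0$ and makes the eigensystem $\sigma_\mf{q}: \T \to A/\mf{q}$ appear in $H^j(K, \ms{D}_\ka)_{\leq h}$, furnishing the required $\mf{p}_\mf{q}$.

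Both $(A \otimes_R L)^{\mathrm{red}}$ and $B^{\mathrm{red}}$ are finite reduced $L$-algebras, hence products of finite field extensions of $L$ indexed by their maximal ideals. The matching of these indexing sets and their residue fields upgrades the surjection to the stated isomorphism. The hard part is the top-degree non-vanishing argument above: it is what prevents the higher $\Tor$ terms from killing the edge term at $\mf{q}$ via incoming differentials, and hence what guarantees that no point of the eigenvariety above $\mf{m}$ is ``lost'' upon specialisation.
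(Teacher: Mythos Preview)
Your argument is correct and follows essentially the same approach as the paper: use the Tor spectral sequence first to obtain the surjection onto $B^{\mathrm{red}}$, then for each maximal ideal $\mf{q}$ of $A\otimes_R L$ choose $j$ maximal with $(H^j)_\mf{q}\neq 0$ and observe that $(E_2^{0,j})_\mf{q}$ survives to $E_\infty$, forcing $(H^j(K,\ms{D}_\ka)_{\leq h})_\mf{q}\neq 0$. You have simply spelled out in more detail the Nakayama step and the vanishing of the relevant differentials that the paper leaves implicit.
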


\begin{proof}
By the $\Tor$-spectral sequence we see that, if $T\in \T$ acts as $0$ on $H^{\ast}(K, \ms{D}_{\U})_{\leq h}$, then it acts nilpotently on $H^{\ast}(K, \ms{D}_{\ka})_{\leq h}$. It follows that we have a surjection $\T_{\U,h}\otimes_{\oo_{\mc{W}}(\mc{U})}L \twoheadrightarrow \T_{\ka, h}^{red}$ of finite-dimensional commutative $L$-algebras. To finish the proof it suffices to show that if $\mf{q}$ is a maximal ideal of $\T_{\U,h} \otimes_{\oo(\U)}L$ then the localization $(\T_{\ka,h})_{\mf{q}}$ is non-zero. Let $j$ be maximal such that $(H^{j}(K, \ms{D}_{\U})_{\leq h})_{\mf{q}}\neq 0$ and localize the entire $\Tor$-spectral sequence with respect to $\mf{q}$. Then the entry $(E_{2}^{0,j})_{\mf{q}}$ is stable  (i.e. $(E_{2}^{0,j})_{\mf{q}}=(E_{\infty}^{0,j})_{\mf{q}}$) and it follows that $(H^{j}(K, \ms{D}_{\ka})_{\leq h})_{\mf{q}}\neq 0$. Thus we must have $(\T_{\ka,h})_{\mf{q}}\neq 0$ as desired.
\end{proof} 

\begin{corollary}\label{points2}
We retain the notation of Proposition \ref{points1}. If we let $U_{t}$ be the double coset operator $[KtK]\in \T(\De,K)$ for our fixed $t\in \Sigma^{cpt}$, consider the commutative subalgebra $\T(\De^{p},K^{p})[U_{t}]\sub \T(\De,K)$. Then we have a natural isomorphism $(\ms{T}_{\U,h}\otimes_{\oo(\U)}L)^{red}\cong \T(\De^{p},K^{p})[U_{t}]_{\ka,h}^{red}$.
\end{corollary}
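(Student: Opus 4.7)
The plan is to identify the ring $\ms{T}_{\U,h}$ with $\T(\De^{p},K^{p})[U_{t}]_{\U,h}$ (in the notation of Proposition \ref{points1}, applied with $\T = \T(\De^{p},K^{p})[U_{t}]$) and then quote Proposition \ref{points1} directly. So the only real content is the comparison of these two subrings of $\End_{\oo(\U)}(H^{\ast}(K,\ms{D}_{\U})_{\leq h})$.

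First I would recall from Theorem \ref{cover} (together with our choice of slope datum) that $\oo(\ms{Z}_{\U,h}) \cong \oo(\U)[T]/(Q)$ for a multiplicative polynomial $Q\in\oo(\U)[T]$, and that in the construction of $\ms{T}_{\U,h}$ the variable $T$ acts on $H^{\ast}(K,\ms{D}_{\U})_{\leq h}$ as $\wt{U}_{\ka}^{-1}$ (this is exactly the convention adopted just before Proposition \ref{vertical}, and the identity $Q^{\ast}(\wt{U}_{\ka}) = 0$ is the definition of the slope decomposition). Therefore, as a subalgebra of $\End_{\oo(\U)}\bigl(H^{\ast}(K,\ms{D}_{\U})_{\leq h}\bigr)$, the ring $\ms{T}_{\U,h}$ is the image of $\T(\De^{p},K^{p})\otimes_{\Zp}\oo(\U)[U_{t}^{-1}]$.

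Next I would use that $Q$ is multiplicative to show that $U_{t}^{-1}$ lies in the image of $\oo(\U)[U_{t}]$. Writing $Q(T)=1+a_{1}T+\cdots+a_{d}T^{d}$ with $a_{d}\in\oo(\U)^{\times}$, the relation $Q^{\ast}(\wt{U}_{\ka})=0$ becomes
\[
\wt{U}_{\ka}^{d}+a_{1}\wt{U}_{\ka}^{d-1}+\cdots+a_{d}=0
\]
on $H^{\ast}(K,\ms{D}_{\U})_{\leq h}$, which rearranges to $\wt{U}_{\ka}^{-1} = -a_{d}^{-1}\bigl(\wt{U}_{\ka}^{d-1}+a_{1}\wt{U}_{\ka}^{d-2}+\cdots+a_{d-1}\bigr)$. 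Hence the image of $\T(\De^{p},K^{p})\otimes_{\Zp}\oo(\U)[U_{t}^{-1}]$ coincides with the image of $\T(\De^{p},K^{p})[U_{t}]\otimes_{\Zp}\oo(\U)$, i.e.\ with $\T(\De^{p},K^{p})[U_{t}]_{\U,h}$.

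Finally I would apply Proposition \ref{points1} to $\T=\T(\De^{p},K^{p})[U_{t}]$ to obtain the natural isomorphism
\[
(\T(\De^{p},K^{p})[U_{t}]_{\U,h}\otimes_{\oo(\U)}L)^{red}\cong \T(\De^{p},K^{p})[U_{t}]_{\ka,h}^{red},
\]
and combine with the identification above. There is no real obstacle here; the mild subtlety to be careful about is the convention ``$T$ acts as $\wt{U}_{\ka}^{-1}$'' used to define $\oo(\ms{Z}_{\U,h})$ as a subring of the endomorphism ring, but multiplicativity of $Q$ makes $\wt{U}_{\ka}$ and $\wt{U}_{\ka}^{-1}$ interchangeable on the slope--$\leq h$ part, so the distinction disappears.
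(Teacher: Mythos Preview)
Your proposal is correct and follows the same approach as the paper: identify $\ms{T}_{\U,h}$ with $\T(\De^{p},K^{p})[U_{t}]_{\U,h}$ and then invoke Proposition \ref{points1}. The paper dismisses the identification as ``clear from the definitions'' (noting only that $\End_{\oo(\ms{Z}_{\U,h})}(H^{\ast}(K,\ms{D}_{\U})_{\leq h}) \sub \End_{\oo(\U)}(H^{\ast}(K,\ms{D}_{\U})_{\leq h})$), whereas you spell out explicitly why the images of $\oo(\U)[U_t]$ and $\oo(\U)[U_t^{-1}]$ agree via the multiplicativity of $Q$; this extra detail is fine and is exactly what the paper is leaving to the reader.
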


\begin{proof}
By Proposition \ref{points1} we have a natural isomorphism 
$$(\T(\De^{p},K^{p})[U_{t}]_{\U,h}\otimes_{\oo(\U)}L)^{red}\cong \T(\De^{p},K^{p})[U_{t}]_{\ka,h}^{red}$$
so it suffices to show that $\ms{T}_{\U,h} \cong \T(\De^{p},K^{p})[U_{t}]_{\U,h}$, which is clear from the definitions (note that $\End_{\oo(\ms{Z}_{\U,h})}(H^{\ast}(K,\ms{D}_{\U})_{\leq h}) \sub \End_{\oo(\U)}(H^{\ast}(K,\ms{D}_{\U})_{\leq h})$).
\end{proof}

\section{Galois representations}\label{galrep}

We continue to assume that all weights are trivial on $\ol{Z(K)}$. Let $\mbf{G}={\rm Res}_{\Q}^{F}\GL_{n/F}$, with $F$ a totally real or CM field. In this section we construct a Galois determinant (in the language of \cite{che}) valued in the global sections of the \emph{reduced} extended eigenvariety, satisfying the expected compatibility between Hecke eigenvalues and the characteristic polynomial of Frobenius at all unramified primes. The construction is an adaptation of a construction due to the first author and David Hansen, to appear in \cite{ch} (in a slightly refined form), which produces a Galois determinant over the reduced rigid eigenvariety as constructed in \cite{han}. The key step is to produce the desired Galois determinant for all `points' of the extended eigenvariety. In the rigid analytic setting one can then glue these individual determinants together by an argument due to Bellaiche and Chenevier; we prove a version of this gluing technique in our setting. We will not assume that $\mbf{G}={\rm Res}_{\Q}^{F}\GL_{n/F}$ until \S \ref{5.3}.

\subsection{Filtrations} Let $\ka : T_{0} \ra L^{\times}$ be a weight, where $L$ is a local field equipped with an adapted absolute value; we let $\vp$ be a uniformizer of $L$. Let $r>r_{\ka}$ and choose an auxiliary $s\in (r_{\ka},r)$. In this subsection, we construct a filtration on the unit ball $\D_{\ka}^{<r,\circ}$ of $\D_{\ka}^{<r}$ with finite graded pieces, generalizing the filtrations constructed by Hansen in \cite{han2}. We define
$$ \Fil^{j}\D_{\ka}^{<r,\circ} := \D_{\ka}^{<r,\circ}\cap \vp^{j}\D_{\ka}^{s,\circ}. $$
When $\ka$ and $r$ are clear from the context we will simply write $\Fil^{j}$ for  $\Fil^{j}\D_{\ka}^{<r,\circ}$ (we will always omit the choice of $s$). By the definitions the $\Fil^{j}$ are open and closed in the subspace topology on $\D^{<r,\circ}_{\ka}$ coming from $\D^{s}_{\ka}$ (we recall that $\D_{\ka}^{<r,\circ}$ is compact with respect to this subspace topology since $\oo_{L}$ is compact). Therefore the $\D_{\ka}^{<r,\circ}/\Fil^{j}$ are finite discrete $\oo_{L}$-torsion modules and we have $\D_{\ka}^{<r,\circ} = \varprojlim_{j} \D_{\ka}^{<r,\circ}/\Fil^{j}$ topologically. Note that $\Fil^{j}$ is $\De$-stable (being the intersection of two $\De$-stable subsets in $\D^{s}_{\ka}$), so the  $\oo_{L}$-torsion modules $\D_{\ka}^{<r,\circ}/\Fil^{j}$ inherit a $\De$-action and the equality in the previous sentence is $\De$-equivariant. We also record the following lemma. 

\begin{lemma}
We have $H^{i}(K, \D_{\ka}^{<r,\circ})=\varprojlim_{j} H^{i}(K, \D_{\ka}^{<r,\circ}/\Fil^{j})$ for all $i$.
\end{lemma}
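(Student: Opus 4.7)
The plan is to establish the result via the Milnor exact sequence for the inverse limit of a tower of cochain complexes with surjective transition maps, combined with a Mittag--Leffler argument. First, I would observe that the augmented Borel--Serre complex $C^\bu(K,-)$ is computed degreewise as $C^i(K,M) = \Hom_{\Z[K]}(F_i, M)$, where $F_\bu$ is a resolution of $\Z$ by finitely generated free $\Z[K]$-modules. Consequently, each functor $C^i(K,-)$ is isomorphic to a finite direct sum of copies of the identity; it therefore commutes with arbitrary inverse limits, preserves surjections, and sends finite abelian groups to finite abelian groups.

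Applying this to the tower $\{\D_\ka^{<r,\circ}/\Fil^j\}_j$, which has surjective transition maps and inverse limit $\D_\ka^{<r,\circ}$, we obtain
$$ C^\bu(K,\D_\ka^{<r,\circ}) \;=\; \varprojlim_j C^\bu(K,\D_\ka^{<r,\circ}/\Fil^j), $$
with surjective transition maps in each cohomological degree. Since a tower of cochain complexes of abelian groups with termwise surjective transitions is $\varprojlim$-acyclic, the standard Milnor short exact sequence applies and yields, for every $i$,
$$ 0 \ra R^1\varprojlim_j H^{i-1}(K,\D_\ka^{<r,\circ}/\Fil^j) \ra H^i(K,\D_\ka^{<r,\circ}) \ra \varprojlim_j H^i(K,\D_\ka^{<r,\circ}/\Fil^j) \ra 0. $$

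To finish, I would verify that the $R^1\varprojlim$-term vanishes. Because each $\D_\ka^{<r,\circ}/\Fil^j$ is finite (as noted in the paragraph preceding the lemma) and $C^i(K,-)$ carries finite modules to finite modules, each $H^{i-1}(K,\D_\ka^{<r,\circ}/\Fil^j)$ is a finite abelian group. For an inverse system of finite abelian groups indexed by $\N$, the Mittag--Leffler condition is automatic, since the images of the transition maps form a descending chain of finite subgroups of a given term and must stabilize; this forces $R^1\varprojlim = 0$. The only point requiring care is the commutation of $C^i(K,-)$ with arbitrary inverse limits, which is immediate from the construction of the augmented Borel--Serre complex recalled at the start of Section \ref{eigenvarieties}. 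Since none of the above is a serious obstacle, the proof should go through essentially formally.
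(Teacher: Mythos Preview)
Your proof is correct but takes a different route from the paper. The paper argues that the $C^{\bu}(K,\D_{\ka}^{<r,\circ}/\Fil^{j})$ are bounded complexes of compact abelian Hausdorff groups, and then invokes the fact that this category is abelian with exact inverse limits (Pontryagin duality, or \cite[Proposition IV.2.7]{neu}); the result then drops out in one line without any Milnor sequence or Mittag--Leffler bookkeeping. Your approach instead exploits the stronger information that the quotients $\D_{\ka}^{<r,\circ}/\Fil^{j}$ are actually \emph{finite}, which makes the $R^{1}\varprojlim$-term in the Milnor sequence vanish by the descending chain condition. The paper's argument is slicker and would apply unchanged to any tower of compact (not necessarily finite) coefficients, while yours is more elementary in that it avoids citing structural facts about the category of compact Hausdorff abelian groups. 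Both rest on the same first step, namely that $C^{i}(K,-)$ is a finite direct sum of copies of the identity functor and hence commutes with inverse limits.
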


\begin{proof}
On Borel-Serre complexes we have $C^{\bu}(K, \D_{\ka}^{<r,\circ})=\varprojlim_{j} C^{\bu}(K, \D_{\ka}^{<r,\circ}/\Fil^{j})$ and these are bounded complexes of compact abelian Hausdorff groups. This category is abelian and has exact inverse limits (e.g. by \cite[Proposition IV.2.7]{neu}), which gives us the result.
\end{proof}

We remark that $C^{\bu}(K, \D_{\ka}^{<r})$ has a slope-$\leq h$ decomposition for any $h\in \Q_{\geq 0}$ (since we are working over the field $L$) and $C^{\bu}(K, \D_{\ka}^{<r})_{\leq h} = C^{\bu}(K, \D_{\ka}^{r})_{\leq h}$ (since it is sandwiched between $C^{\bu}(K, \D_{\ka}^{r})_{\leq h}$ and $C^{\bu}(K, \D_{\ka}^{s})_{\leq h}$, and these are equal). Thus, we may use the $\D_{\ka}^{<r}$ instead of the $\D_{\ka}^{r}$ to define overconvergent cohomology. We will do this to define Galois representations because of the fact that $\D_{\ka}^{<r,\circ}$ is profinite with respect to topology coming from the $\Fil^{j}$.

\subsection{A morphism of Hecke algebras} We continue with the notation of the previous subsection. In this subsection we will fix a $\Zp$-subalgebra $\T \sub \T(\De,K)$ and, if $A$ is any $\Zp$-algebra, we will write $\T_{A}$ for $\T \otimes_{\Zp}A$. Then $\T_{\oo_{L}}$ acts on $H^{\ast}(K, \D_{\ka}^{<r,\circ}/\Fil^{j})$ for all $r$ and $j$ and we set
$$ \T^{r}_{\ka,j} = {\rm Im}\left( \T_{\oo_{L}} \ra \End_{\oo_{L}}(H^{\ast}(K, \D_{\ka}^{<r,\circ}/\Fil^{j}) \right); $$
$$ \T^{r}_{\ka, \Fil} = {\rm Im}\left( \T_{\oo_{L}} \ra \prod_{j}\T_{\ka, j}^{r} \right). $$
We equip each $\T_{\ka,j}^{r}$ with the discrete topology (they are finite rings) and give $\prod_{j}\T_{\ka,j}^{r}$ the product topology; we then give $\T_{\ka,\Fil}^{r}$ the subspace topology. We let $\wh{\T}_{\ka,\Fil}^{r}$ denote the completion of $\T_{\ka, \Fil}^{r}$ in $\prod_{j}\T_{\ka,j}^{r}$; this is a compact Hausdorff ring.

\medskip

Fix $h\in \Q_{\geq 0}$. We have natural maps $H^{\ast}(K, \D_{\ka}^{<r, \circ}) \ra H^{\ast}(K, \D_{\ka}^{<r}) \ra H^{\ast}(K, \D_{\ka}^{<r})_{\leq h}$ and we define $H^{\ast}(K, \D_{\ka}^{<r,\circ})_{\leq h}$ to be the image of the composition. 
 
\begin{lemma}\label{finite}
$H^{\ast}(K, \D_{\ka}^{<r,\circ})_{\leq h}$ is an open and bounded $\oo_{L}$-submodule of $H^{\ast}(K, \D_{\ka}^{<r})_{\leq h}$ (and hence a finite free $\oo_{L}$-module).
\end{lemma}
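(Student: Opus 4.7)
The goal is to exhibit $H^{\ast}(K, \D_{\ka}^{<r,\circ})_{\leq h}$ as a full $\oo_L$-lattice in the finite-dimensional $L$-vector space $H^{\ast}(K, \D_{\ka}^{<r})_{\leq h}$. Over the discrete valuation ring $\oo_L$, a sub-$\oo_L$-module of a finite-dimensional $L$-vector space is a full lattice (hence finitely generated free, open, and bounded) if and only if it is bounded and $L$-spans the whole space. My plan is therefore to pass to a slightly larger, more transparent sub-$\oo_L$-module at the cochain level, verify that it is a finitely generated $\oo_L$-module, and then squeeze the image we are interested in between it and its $L$-span.

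Write $A^\bu = C^\bu(K, \D_{\ka}^{<r,\circ}) \subseteq C^\bu(K, \D_{\ka}^{<r}) =: B^\bu$. Since by construction each functor $C^i(K,-)$ is a direct summand of a finite sum of copies of the coefficient module, $A^i$ is a bounded $\oo_L$-submodule of the $L$-Banach space $B^i$, and $A^\bu[1/\vp] = B^\bu$. By Theorem \ref{riesz} the slope-$\leq h$ projector $\pi : B^\bu \to B^\bu_{\leq h}$ lies in the closure of $L[\wt{U}_\ka]$ in $\End_{L,\,cts}(B^\bu)$, so it is a continuous, $L$-linear chain map. I would then set $A^\bu_{\leq h} := \pi(A^\bu) \subseteq B^\bu_{\leq h}$; each $A^i_{\leq h}$ is bounded (as a continuous image of a bounded set) and satisfies $A^i_{\leq h}[1/\vp] = \pi(B^i) = B^i_{\leq h}$, so it is a full $\oo_L$-lattice in the finite-dimensional space $B^i_{\leq h}$. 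Consequently $A^\bu_{\leq h}$ is a bounded complex of finitely generated $\oo_L$-modules, and $H^{\ast}(A^\bu_{\leq h})$ is finitely generated over $\oo_L$.

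To conclude, I would observe that the chain map $\pi|_{A^\bu} : A^\bu \to B^\bu_{\leq h}$ factors through $A^\bu_{\leq h}$, so the induced map $H^{\ast}(A^\bu) \to H^{\ast}(B^\bu)_{\leq h}$, whose image is by definition $H^{\ast}(K, \D_{\ka}^{<r,\circ})_{\leq h}$, factors through the finitely generated $\oo_L$-module $H^{\ast}(A^\bu_{\leq h})$; the image is therefore finitely generated over $\oo_L$, and in particular bounded. For the $L$-spanning property, exactness of $[1/\vp]$ gives $H^i(A^\bu)[1/\vp] = H^i(A^\bu[1/\vp]) = H^i(B^\bu)$, and applying the $L$-linear surjection $\pi_\ast$ shows that the image of $H^i(A^\bu) \to H^i(B^\bu)_{\leq h}$ fills $H^i(B^\bu)_{\leq h}$ after inverting $\vp$. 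I do not anticipate a serious obstacle: the essential ingredients are continuity and $L$-linearity of the slope projector, supplied by Theorem \ref{riesz}, together with the fact that the Borel--Serre functor $C^i(K,-)$ commutes with forming unit balls and inverting $\vp$; the remainder is a formal manipulation of lattices in a finite-dimensional $L$-vector space.
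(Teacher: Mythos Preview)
Your proposal is correct and follows essentially the same approach as the paper's proof: both push the bounded integral cochains $C^{\bu}(K,\D_{\ka}^{<r,\circ})$ forward along the continuous slope-$\leq h$ projector to obtain a bounded (hence finite free) $\oo_L$-subcomplex of $C^{\bu}(K,\D_{\ka}^{<r})_{\leq h}$, and then observe that the cohomology image of interest is trapped inside the cohomology of this finite subcomplex. Your write-up is a bit more explicit about the $L$-spanning step (via exactness of $[1/\vp]$), which the paper dispatches as ``essentially by construction'', but the argument is the same.
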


\begin{proof}
It is an $\oo_{L}$-submodule which spans $H^{\ast}(K, \D_{\ka}^{<r})_{\leq h}$ essentially by construction, so it suffices to show that it is finitely generated. The morphisms $H^{\ast}(K, \D_{\ka}^{<r, \circ}) \ra H^{\ast}(K, \D_{\ka}^{<r}) \ra H^{\ast}(K, \D_{\ka}^{<r})_{\leq h}$ are induced by the morphisms
$$ C^{\bu}(K, \D_{\ka}^{<r,\circ}) \ra C^{\bu}(K, \D_{\ka}^{<r}) \ra C^{\bu}(K, \D_{\ka}^{<r})_{\leq h} $$
of complexes. Let $C^{\bu}(K, \D_{\ka}^{<r,\circ})_{\leq h}$ denote the image of the composition. Note that each $C^{i}(K, \D_{\ka}^{<r,\circ})$ is bounded in $C^{i}(K, \D_{\ka}^{<r})$, so $C^{i}(K, \D_{\ka}^{<r,\circ})_{\leq h}$ is bounded in $C^{i}(K, \D_{\ka}^{<r})_{\leq h}$ by continuity of the projection. Thus $C^{\bu}(K, \D_{\ka}^{<r,\circ})_{\leq h}$ is a bounded complex of finite free $\oo_{L}$-modules. Since $H^{\ast}(K, \D_{\ka}^{<r,\circ})_{\leq h} \sub {\rm Im}(H^{\ast}(C^{\bu}(K, \D_{\ka}^{<r,\circ})_{\leq h}) \ra H^{\ast}(K, \D_{\ka}^{<r})_{\leq h})$, finite generation of the former follows.
\end{proof}

We define $\T_{\ka, \leq h}^{r,\circ}= {\rm Im}(\T_{\oo_{L}} \ra \End_{\oo_{L}}(H^{\ast}(K, \D_{\ka}^{<r,\circ})_{\leq h}))$; by the above Lemma this is a finite $\oo_{L}$-algebra and hence naturally a compact Hausdorff ring. Note that if $T\in \T_{\oo_{L}}$ is $0$ in $\T_{\ka,\Fil}^{r}$, i.e. acts as $0$ on all $H^{\ast}(K, \D_{\ka}^{<r,\circ}/\Fil^{j})$, then it acts as $0$ on $H^{\ast}(K, \D_{\ka}^{<r,\circ})_{\leq h}$ and so is $0$ in $\T_{\ka,\leq h}^{r,\circ}$. In other words we have a natural (surjective) map $\T_{\ka,\Fil}^{r} \ra \T_{\ka,\leq h}^{r,\circ}$. The goal of this section is to show that this map is continuous and so extends to the completion $\wh{\T}_{\ka,\Fil}^{r}$. 

\medskip

To do this we introduce some special open sets. Let $pr_{j} : \T_{\ka,\Fil}^{r} \ra \T_{\ka,j}^{r}$ denote the projection. We put $U_{j}=\Ker(pr_{j})$; this is an open ideal of $\T_{\ka, \Fil}^{r}$. On $\T_{\ka,\leq h}^{r,\circ}$ the opens that we will use are more delicate to construct. We have a commutative diagram
$$
\xymatrix{ H^{\ast}(K, \D^{<r,\circ}_{\ka}) \ar[r]\ar[d] & H^{\ast}(K, \D^{<r}_{\ka})_{\leq h} \ar[d]\\
H^{\ast}(K, \D^{s,\circ}_{\ka}) \ar[r] & H^{\ast}(K, \D^{s}_{\ka})_{\leq h}}
$$
where the right vertical map is an isomorphism, which we think of as an equality (the reader may trace through the definitions and see that this is a natural thing to do). Defining $H^{\ast}(K, \D^{s,\circ}_{\ka})_{\leq h}$ to be the image of $H^{\ast}(K, \D^{s,\circ}_{\ka}) \ra H^{\ast}(K, \D^{s}_{\ka})_{\leq h}$ we see that $H^{\ast}(K, \D^{<r,\circ}_{\ka})_{\leq h} \sub H^{\ast}(K, \D^{s,\circ}_{\ka})_{\leq h}$ are both open lattices in $H^{\ast}(K, \D^{<r}_{\ka})_{\leq h}=H^{\ast}(K, \D^{s}_{\ka})_{\leq h}$ (Lemma \ref{finite} holds for $H^{\ast}(K, \D^{s,\circ}_{\ka})_{\leq h}$ as well, with the same proof). 

\begin{lemma}
The ideals $V_{j}=\{ T\in \T_{\ka,\leq h}^{r,\circ} \mid T(H^{\ast}(K, \D_{\ka}^{<r,\circ})_{\leq h}) \sub \vp^{j}H^{\ast}(K, \D_{\ka}^{s,\circ})_{\leq h} \}$ form a basis of open neighbourhoods of $0$ in $\T_{\ka,\leq h}^{r, \circ}$.
\end{lemma}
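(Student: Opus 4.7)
The plan is to show two things: (a) each $V_j$ is an open ideal in the $\vp$-adic topology on $\T_{\ka,\leq h}^{r,\circ}$ (which is the topology referred to, since $\T_{\ka,\leq h}^{r,\circ}$ is a finite $\oo_L$-algebra), and (b) every $\vp$-adic neighbourhood $\vp^N \T_{\ka,\leq h}^{r,\circ}$ of $0$ contains some $V_j$. Throughout I abbreviate $R = \T_{\ka,\leq h}^{r,\circ}$, $M = H^{\ast}(K, \D_\ka^{<r,\circ})_{\leq h}$, and $M' = H^{\ast}(K, \D_\ka^{s,\circ})_{\leq h}$.

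For (a), I would argue that since $\D_\ka^{<r,\circ}$ is $\De$-stable, the Hecke algebra $\T_{\oo_L}$ preserves $H^{\ast}(K, \D_\ka^{<r,\circ})$ and hence its image $M$; likewise for $M'$. Using the inclusion $\D_\ka^{<r,\circ} \subseteq \D_\ka^{s,\circ}$ (which holds because $||{-}||_s \le ||{-}||_r$ on $\D_\ka^{<r}$), one has $M \subseteq M'$. Therefore any $T \in R$ sends $M$ into $M \subseteq M'$, so $\vp^j T$ already sends $M$ into $\vp^j M'$. This proves $\vp^j R \subseteq V_j$, so $V_j$ is open.

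For (b), the key ingredient is a double application of commensurability of lattices. First, $M$ and $M'$ are both open bounded $\oo_L$-submodules of the finite-dimensional $L$-vector space $H^{\ast}(K, \D_\ka^{<r})_{\leq h} = H^{\ast}(K, \D_\ka^s)_{\leq h}$ (using Lemma \ref{finite}, which applies equally to $s$), so they are commensurable, and there exists $N_0$ with $\vp^{N_0} M' \subseteq M$. Second, $R$ embeds into $\End_{\oo_L}(M)$ and is $\oo_L$-torsion-free (since $M$ is), hence a finite free $\oo_L$-module. Let $S = R_L \cap \End_{\oo_L}(M)$ be the saturation of $R$ inside $\End_{\oo_L}(M)$; then $R$ and $S$ are both full $\oo_L$-lattices in the finite-dimensional $L$-algebra $R_L$, so there exists $c$ with $\vp^c S \subseteq R$.

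Now given $T \in V_j$, the containment $T(M) \subseteq \vp^j M' \subseteq \vp^{j-N_0} M$ means that $\vp^{-(j-N_0)} T \in \End_{\oo_L}(M) \cap R_L = S$, so $T \in \vp^{j - N_0} S \subseteq \vp^{j - N_0 - c} R$. Thus $V_j \subseteq \vp^{j - N_0 - c} R$ for all $j \ge N_0 + c$, which (together with (a)) shows that the $V_j$ form a basis of neighbourhoods of $0$ in the $\vp$-adic topology. The main obstacle is step (b): one must pass from information about the action of $T$ on $M$ (which only controls $T$ inside the possibly larger ring $\End_{\oo_L}(M)$) back to information about $T$ as an element of $R$ itself, and this is exactly what the saturation argument supplies.
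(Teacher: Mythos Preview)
Your proof is correct and follows essentially the same approach as the paper. The paper's proof is very terse: it notes that the $\vp^j M'$ form a basis of neighbourhoods of $0$ in $M$ (by commensurability of the two lattices, exactly as you use) and then declares the rest ``elementary''; your saturation argument with $S = R_L \cap \End_{\oo_L}(M)$ is precisely a clean way to spell out that elementary step (equivalently, Artin--Rees for the inclusion $R \hookrightarrow \End_{\oo_L}(M)$ of finite free $\oo_L$-modules). One small omission: you do not explicitly verify that $V_j$ is an \emph{ideal}, but this is immediate from the observation (which you essentially make) that $R$ also preserves $M'$, since the $\T_{\oo_L}$-action on $M'$ factors through $R \hookrightarrow \End_L(M[1/\vp]) = \End_L(M'[1/\vp])$.
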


\begin{proof}
It's easy to check that the $V_{j}$ are ideals. By the preceding remarks the subgroups $\vp^{j}H^{\ast}(K, \D_{\ka}^{s,\circ})_{\leq h}$ form a basis of neighbourhoods of $0$ in $H^{\ast}(K, \D_{\ka}^{<r,\circ})_{\leq h}$ (for $j \gg 0$). Using this the lemma is elementary.
\end{proof}

We may now prove continuity. Denote the map $\T_{\ka,\Fil}^{r} \ra \T_{\ka, \leq h}^{r,\circ}$ by $\phi$.

\begin{proposition}\label{cty}
We have $\phi(U_{j})\sub V_{j}$. Thus $\phi$ is continuous and extends to a continuous map $\wh{\T}_{\ka,\Fil}^{r} \ra \T_{\ka, \leq h}^{r, \circ}$.
\end{proposition}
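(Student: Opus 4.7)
The plan is to prove the inclusion $\phi(U_{j}) \sub V_{j}$ by tracing cocycles through the short exact sequence of $\De$-modules
\[ 0 \to \Fil^{j} \to \D_{\ka}^{<r,\circ} \to \D_{\ka}^{<r,\circ}/\Fil^{j} \to 0, \]
which induces a short exact sequence of Borel--Serre cochain complexes. Fix $T\in U_{j}$ and a class $[c] \in H^{i}(K, \D_{\ka}^{<r,\circ})$ represented by a cocycle $c$. Since $T$ acts trivially on $H^{i}(K, \D_{\ka}^{<r,\circ}/\Fil^{j})$ by definition of $U_{j}$, the reduction $\overline{Tc}$ is a coboundary, say $\overline{Tc} = d\overline{z}$. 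Lifting $\overline{z}$ arbitrarily to $\tilde z \in C^{i-1}(K, \D_{\ka}^{<r,\circ})$ (possible because the coefficient map is surjective, hence so is the induced map on cochains), the element $Tc - d\tilde z$ is a cocycle valued in $\Fil^{j}$, so $T[c]$ lies in the image of $H^{i}(K, \Fil^{j}) \to H^{i}(K, \D_{\ka}^{<r,\circ})$.

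Next I would push this image through the inclusion $\Fil^{j}\sub \vp^{j}\D_{\ka}^{s,\circ}$ coming from the very definition of $\Fil^{j}$. Because $\D_{\ka}^{s,\circ}$ is $\vp$-torsion free (it is a lattice in the $L$-Banach space $\D_{\ka}^{s}$), multiplication by $\vp^{j}$ identifies $H^{i}(K, \vp^{j}\D_{\ka}^{s,\circ})$ with $\vp^{j}H^{i}(K, \D_{\ka}^{s,\circ})$ inside $H^{i}(K, \D_{\ka}^{s,\circ})$. Combining the two steps and mapping further to $H^{i}(K, \D_{\ka}^{s})_{\leq h} = H^{i}(K, \D_{\ka}^{<r})_{\leq h}$, the image of $T[c]$ lies in $\vp^{j}H^{i}(K, \D_{\ka}^{s,\circ})_{\leq h}$. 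Since every element of $H^{\ast}(K, \D_{\ka}^{<r,\circ})_{\leq h}$ is by definition the image of some class in $H^{\ast}(K, \D_{\ka}^{<r,\circ})$, this is precisely the condition $\phi(T)\in V_{j}$.

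Continuity of $\phi$ is then immediate: the $U_{j}$ are nested, since $\Fil^{j+1}\sub \Fil^{j}$, and so they form a fundamental system of open neighbourhoods of $0$ in the subspace topology inherited from $\prod_{j}\T_{\ka,j}^{r}$; the $V_{j}$ form such a basis at $0$ in $\T_{\ka,\leq h}^{r,\circ}$ by the preceding lemma; and $\phi(U_{j})\sub V_{j}$ for all $j$ then gives continuity. The extension to $\wh{\T}_{\ka,\Fil}^{r}$ is automatic since $\T_{\ka,\leq h}^{r,\circ}$ is finite over $\oo_{L}$, hence a compact Hausdorff (indeed complete) topological ring. The only mildly delicate point is the bookkeeping between the integral filtration on $\D_{\ka}^{<r,\circ}$ and the rational slope truncation: slopes only exist after inverting $\vp$, so the inclusion $\vp^{j}\D_{\ka}^{s,\circ}\hookrightarrow \D_{\ka}^{s,\circ}$ must interact correctly with the slope projection, and this works precisely because $H^{\ast}(K, \D_{\ka}^{<r,\circ})_{\leq h}$ was defined as the image of $H^{\ast}(K, \D_{\ka}^{<r,\circ})$ in the rational slope truncation.
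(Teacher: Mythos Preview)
Your argument is correct and is essentially the same as the paper's, just phrased at the level of cocycles rather than purely on cohomology. The paper replaces your explicit cocycle lift (showing $T[c]$ lies in the image of $H^{i}(K,\Fil^{j})$) by the commutative square
\[
\xymatrix{ H^{\ast}(K, \D^{<r,\circ}_{\ka}) \ar[r]\ar[d] & H^{\ast}(K, \D^{s,\circ}_{\ka})/\vp^{j} \ar@{^{(}->}[d]\\
H^{\ast}(K, \D^{<r,\circ}_{\ka}/\Fil^{j}) \ar[r] & H^{\ast}(K, \D^{s,\circ}_{\ka}/\vp^{j})}
\]
together with the injection on the right coming from the long exact sequence for $0\to \D_{\ka}^{s,\circ}\xrightarrow{\vp^{j}}\D_{\ka}^{s,\circ}\to \D_{\ka}^{s,\circ}/\vp^{j}\to 0$; but this injectivity is exactly your use of $\vp$-torsion freeness, and the bottom map of the square encodes your inclusion $\Fil^{j}\sub \vp^{j}\D_{\ka}^{s,\circ}$. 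One small wording point: your phrase ``identifies $H^{i}(K,\vp^{j}\D_{\ka}^{s,\circ})$ with $\vp^{j}H^{i}(K,\D_{\ka}^{s,\circ})$ inside $H^{i}(K,\D_{\ka}^{s,\circ})$'' is a statement about the \emph{image} of that map (which is what you use), not injectivity, so it would be cleaner to say simply that the image of $H^{i}(K,\vp^{j}\D_{\ka}^{s,\circ})\to H^{i}(K,\D_{\ka}^{s,\circ})$ is $\vp^{j}H^{i}(K,\D_{\ka}^{s,\circ})$.
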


\begin{proof}
The second statement follows directly from the first (by general properties of linearly topologized groups and the fact that $\T_{\ka, \leq h}^{r,\circ}$ is complete). The first statement amounts, by the definitions, to proving that if $T$ acts as $0$ on $H^{\ast}(K, \D_{\ka}^{<r,\circ}/\Fil^{j})$, then it maps $H^{\ast}(K, \D_{\ka}^{<r,\circ})_{\leq h}$ into $\vp^{j}H^{\ast}(K, \D_{\ka}^{s,\circ})_{\leq h}$, or in other words that $T$ acts as $0$ on
$$ {\rm Im}(H^{\ast}(K, \D_{\ka}^{<r,\circ})_{\leq h} \ra H^{\ast}(K, \D_{\ka}^{s,\circ})_{\leq h}/\vp^{j} ).$$
By definition this image is equal to 
$$ {\rm Im}(H^{\ast}(K, \D_{\ka}^{<r,\circ}) \ra H^{\ast}(K, \D_{\ka}^{s,\circ})_{\leq h}/\vp^{j} ). $$
Assume now that $T$ acts as $0$ on $H^{\ast}(K, \D_{\ka}^{<r,\circ}/\Fil^{j})$. We may factor $H^{\ast}(K, \D_{\ka}^{<r,\circ}) \ra H^{\ast}(K, \D_{\ka}^{s,\circ})_{\leq h}/\vp^{j}$ through $H^{\ast}(K, \D_{\ka}^{s,\circ})/\vp^{j}$ so it suffices to prove that $T$ acts as $0$ on 
$$ {\rm Im}(H^{\ast}(K, \D_{\ka}^{<r,\circ}) \ra H^{\ast}(K, \D_{\ka}^{s,\circ})/\vp^{j} ). $$
From the long exact sequence attached to the short exact sequence 
$$ 0 \ra \D_{\ka}^{s,\circ} \overset{\vp^{j}}{\ra} \D_{\ka}^{s,\circ} \ra \D_{\ka}^{s, \circ}/\vp^{j} \ra 0$$
we see that 
$$ H^{\ast}(K, \D_{\ka}^{s,\circ})/\vp^{j} \hookrightarrow H^{\ast}(K, \D_{\ka}^{s,\circ}/\vp^{j}). $$
By definition, we have a natural map $\D_{\ka}^{<r, \circ}/\Fil^{j} \ra \D_{\ka}^{s,\circ}/\vp^{j}$. Assembling the last few sentences, we have a commutative diagram
$$
\xymatrix{ H^{\ast}(K, \D^{<r,\circ}_{\ka}) \ar[r]\ar[d] & H^{\ast}(K, \D^{s,\circ}_{\ka})/\vp^{j} \ar[d]\\
H^{\ast}(K, \D^{<r,\circ}_{\ka}/\Fil^{j}) \ar[r] & H^{\ast}(K, \D^{s,\circ}_{\ka}/\vp^{j})}
$$
where the right vertical map is injective. By assumption $T$ acts as $0$ on $H^{\ast}(K, \D^{<r,\circ}_{\ka}/\Fil^{j})$, hence it acts as $0$ on ${\rm Im}(H^{\ast}(K, \D^{<r,\circ}_{\ka}) \ra H^{\ast}(K, \D^{s,\circ}_{\ka}/\vp^{j}))$. But, since the right vertical map is injective, this image is isomorphic to ${\rm Im}(H^{\ast}(K, \D_{\ka}^{<r, \circ}) \ra H^{\ast}(K, \D_{\ka}^{s,\circ})/\vp^{j})$. Thus $T$ acts as $0$ on this, which is what we wanted to prove.
\end{proof}

\subsection{Galois representations}\label{5.3}
We now specialize to the case $\mbf{G}={\rm Res}_{\Q}^{F}\GL_{n/F}$ where $F$ is a totally real or CM number field, and $n\geq 2$. When discussing Galois representations we will, for simplicity of referencing, use the same conventions as in \cite[\S 5]{sch}. Let $S^{\prime}$ denote the set of places $w$ of $\Q$ such that either $w=\infty$, or if $w$ is finite then $K_{w}$ is \emph{not} hyperspecial. This is a finite set containing $p$ and the primes which ramify in $F$. We let $S$ denote the set of places in $F$ lying above those in $S^{\prime}$. We set
$$ \T = \bigotimes_{v\notin S} \T_{v} $$
where $\T_{v}= \Zp[ \GL_{n}(F_{v})//\GL_{n}(\oo_{F_{v}})]$ is the usual 
spherical Hecke algebra (we assume that $K\sub 
\GL_{n}(\oo_{F}\otimes_{\Z}\wh{\Z})$).  We have $\T \sub \T(\De,K)$ and we will 
use the notation and results of the previous subsection for this choice of 
$\T$. Let $q_{v}$ be the size of the residue field at $v$. We have the 
(unnormalized) Satake isomorphism
$$ \T_{v}[q_{v}^{1/2}] \cong \Zp[q_{v}^{1/2}][x_{1}^{\pm 1},\ldots ,x_{n}^{\pm 
1}]^{S_{n}}$$
where $S_{n}$ is the symmetric group on $\{1,\ldots,n\}$ permuting the 
variables $x_{1},\ldots ,x_{n}$. If we let $T_{i,v}$ denote the $i$-th 
elementary 
symmetric polynomial in $x_{1},\ldots ,x_{n}$, then $q_{v}^{i(n+1)/2}T_{i,v}\in 
\T_{v}$ and 
we define
\begin{equation}\label{Heckepoly} P_{v}(X)=1 - q_{v}^{(n+1)/2}T_{1,v}X + 
q_{v}^{n+1}T_{2,v}X^{2} - \cdots + (-1)^{n}q_{v}^{n(n+1)/2}T_{n,v}X^{n} \in 
\T_{v}[X].\end{equation}
The following theorem is essentially a special case of \cite[Theorem 5.4.1]{sch}. We let $G_{F,S}$ denote the Galois group of the maximal algebraic extension of $F$ unramified outside $S$. For the notion of a determinant we refer to \cite{che}.

\begin{theorem}\label{scholze}
(Scholze) There exists an integer $M$ depending only on $[F:\Q]$ and $n$ such 
that the following is true: for any $j$ and $r$ there exists an ideal 
$I_{\ka,j}^r\sub \T_{\ka, j}^{r}$ with $(I_{\ka,j}^r)^{M}=0$ and an 
$n$-dimensional 
continuous determinant $D$ of $G_{F,S}$ with values in 
$\T_{\ka,j}^{r}/I_{\ka,j}^r$ such that
$$ D(1 - X\, Frob_{v})=P_{v}(X) $$
for all $v\notin S$.
\end{theorem}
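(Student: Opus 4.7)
The plan is to deduce this directly from \cite[Theorem 5.4.1]{sch}. First I would reinterpret the group cohomology $H^{\ast}(K, \D_{\ka}^{<r,\circ}/\Fil^{j})$ as the cohomology of the locally symmetric space $X_{K}$ attached to $\mbf{G}$ and $K$, with coefficients in the local system $\mc{L}_{\ka,j,r}$ determined by the finite $\oo_{L}$-module $\D_{\ka}^{<r,\circ}/\Fil^{j}$ with its action of $K$. Since the filtration $\Fil^{\bu}$ is $\De$-stable, the Hecke operators in $\T$ act on both sides compatibly, and $\T_{\ka,j}^{r}$ identifies with the image of $\T_{\oo_{L}}$ in $\End H^{\ast}(X_{K}, \mc{L}_{\ka,j,r})$.

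Next, I would apply Scholze's theorem to this torsion coefficient system. Since $\mc{L}_{\ka,j,r}$ is a local system of finite abelian $p$-groups, the quotient $\D_{\ka}^{<r,\circ}/\Fil^{j}$ being a finite $\oo_{L}$-torsion module, Scholze's construction produces an $n$-dimensional continuous determinant $D$ of $G_{F,S}$ valued in $\T_{\ka,j}^{r}/I_{\ka,j}$, where $I_{\ka,j}$ is a nilpotent ideal with $I_{\ka,j}^{M}=0$ for an $M$ depending only on $n$ and $[F:\Q]$, satisfying the Frobenius-Hecke compatibility $D(1-X\,\Frob_{v})=P_{v}(X)$ for all $v\notin S$ through the Satake isomorphism.

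The only technical point is to confirm that Scholze's construction, which is stated most naturally for $\F_{p}$-coefficients, covers general $p$-power torsion coefficient systems with a uniform nilpotence bound. This is handled in \cite{sch} by a standard dévissage on the coefficients using short exact sequences and the fact that the nilpotence constant depends only on the (bounded) range of degrees in which the cohomology of $X_{K}$ can be nonzero; explicitly, any composition series of $\mc{L}_{\ka,j,r}$ by $\F_{p}$-local systems yields successive approximations whose associated Hecke algebras differ only up to bounded nilpotent extensions. The uniformity of $M$ over all $j$ and $r$ then follows automatically, since $M$ depends only on $n$ and $[F:\Q]$ and not on the specific coefficient module. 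The main conceptual input, Scholze's Galois determinants attached to torsion classes, is used as a black box; the work on our side is purely the translation between the distribution-module cohomology we care about and the torsion cohomology to which his theorem applies.
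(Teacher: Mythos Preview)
Your overall strategy is right --- one does reduce to Scholze's theorem for torsion cohomology of the locally symmetric space --- but the specific reduction you propose has a gap that breaks the uniformity of $M$.

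The d\'evissage via a composition series of $\D_{\ka}^{<r,\circ}/\Fil^{j}$ by simple $\F_p[I]$-modules does not give a uniform nilpotence bound. If $V$ has composition length $\ell$ and you know determinants for each composition factor with nilpotent ideal of exponent $M$, then gluing through the long exact sequences only shows that the kernel of $\T_V \to \prod_i \T_{W_i}$ is nilpotent of exponent $\le \ell$, and hence the resulting ideal in $\T_V$ has exponent bounded by something like $M\ell$. Since the length $\ell$ of $\D_{\ka}^{<r,\circ}/\Fil^{j}$ grows with $j$, this bound is not uniform in $j$. Your claim that the nilpotence constant depends only on the range of cohomological degrees is not justified by this argument. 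There is a second issue: the composition factors are simple $\F_p[I]$-modules, which are in general \emph{nontrivial} local systems, so Scholze's theorem does not apply to them directly either.

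The paper's reduction is different and avoids both problems: one applies the Hochschild--Serre spectral sequence for a normal subgroup $K' = K^p K'_p \subset K$ with $K'_p$ acting trivially on $\D_{\ka}^{<r,\circ}/\Fil^{j}$. This reduces in one step to cohomology at level $K'$ with \emph{trivial} coefficients, to which Scholze's theorem applies directly. The Hecke action on $E_2^{p,q} = H^p(K/K', H^q(X_{K'}, V))$ factors through the action on $H^q(X_{K'}, V)$, and the Hochschild--Serre filtration on $H^n(X_K, \mc{L})$ has length at most $n+1 \le \dim X_K + 1$, which is bounded in terms of $n$ and $[F:\Q]$ alone. Since Scholze's $M$ is already uniform over all levels $K'$, the resulting bound is uniform in $j$ and $r$. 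The key point you are missing is that Hochschild--Serre trades the (unbounded) composition length for the (bounded) cohomological degree.
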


We remark that the local systems corresponding to the $\D_{\ka}^{<r,\circ}/\Fil^{j}$ are not necessarily included in the formulation of \cite[Theorem 5.4.1]{sch}, but the proof works the same: one first applies the Hochschild--Serre spectral sequence to reduce to the case of a trivial local system. 

\begin{corollary}\label{galois}
Keep the notation of Theorem \ref{scholze}.
\begin{enumerate}
\item There exists a closed ideal $I\sub \wh{\T}_{\ka,\Fil}^{r}$ such that $I^{M}=0$ and an $n$-dimensional continuous determinant $D$ of $G_{F,S}$ with values in $\wh{\T}_{\ka, \Fil}^{r}/I$ such that $D(1 - X\, Frob_{v})=P_{v}(X)$ for all $v\notin S$.

\medskip

\item Let $h\in \Q_{\geq 0}$. Then there exists a closed ideal $J\sub \T_{\ka,\leq h}^{r,\circ}$ such that $J^{M}=0$ and an $n$-dimensional continuous determinant $D$ of $G_{F,S}$ with values in $\T_{\ka, \leq h}^{r, \circ}/J$ such that $D(1 - X\, Frob_{v})=P_{v}(X)$ for all $v\notin S$.

\end{enumerate}
\end{corollary}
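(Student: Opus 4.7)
For part (1), the idea is to package the determinants $D_j$ from Theorem \ref{scholze}, as $j$ varies, into a single $n$-dimensional continuous determinant of $G_{F,S}$ valued in $\wh{\T}^{r}_{\ka,\Fil}/I$ for a suitably chosen closed ideal $I$. By construction $\wh{\T}^{r}_{\ka,\Fil}$ embeds as a closed subring of the compact Hausdorff ring $\prod_{j} \T^{r}_{\ka,j}$, which motivates the choice
\[ I \;:=\; \wh{\T}^{r}_{\ka,\Fil} \,\cap\, \prod_{j} I_{\ka,j}, \]
with the intersection taken inside $\prod_{j}\T^{r}_{\ka,j}$. The ideal $\prod_{j} I_{\ka,j}$ is closed for the product topology and satisfies $\left(\prod_{j} I_{\ka,j}\right)^{M} \sub \prod_{j} I_{\ka,j}^{M} = 0$, using that Scholze's bound $M$ is uniform in $j$; hence $I$ is closed with $I^{M} = 0$.

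Next, I would assemble the $D_j$ into a continuous characteristic polynomial map
\[ \chi : G_{F,S} \longrightarrow \prod_{j}(\T^{r}_{\ka,j}/I_{\ka,j})[X], \qquad g \longmapsto (D_{j}(1-Xg))_{j}, \]
continuous since each $D_j$ is, with the product topology on the target. For a Frobenius element $Frob_{v}$ with $v \notin S$, $D_{j}(1 - X\,Frob_{v}) = P_{v}(X)$ for every $j$, so $\chi(Frob_v)$ is, coefficient by coefficient, the image of the single polynomial $P_{v}(X) \in \T[X]$ under the composition $\T \ra \T_{\oo_{L}} \twoheadrightarrow \T^{r}_{\ka,\Fil} \sub \wh{\T}^{r}_{\ka,\Fil}$, and therefore lies in $(\wh{\T}^{r}_{\ka,\Fil}/I)[X]$ via the canonical injection into $\prod_{j}(\T^{r}_{\ka,j}/I_{\ka,j})[X]$. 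Since $\wh{\T}^{r}_{\ka,\Fil}/I$ is closed in $\prod_{j}(\T^{r}_{\ka,j}/I_{\ka,j})$ (as the image of the compact ring $\wh{\T}^{r}_{\ka,\Fil}$ under a continuous map into a Hausdorff target) and Frobenii are dense in $G_{F,S}$ by Chebotarev, continuity of $\chi$ forces $\chi(g) \in (\wh{\T}^{r}_{\ka,\Fil}/I)[X]$ for every $g \in G_{F,S}$.

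The hard part will be upgrading this continuous characteristic polynomial to a genuine continuous determinant on $\wh{\T}^{r}_{\ka,\Fil}/I$ in the sense of \cite{che} --- a determinant is a multiplicative polynomial law on the whole group algebra, not merely a pointwise characteristic polynomial. This is precisely the step where the refinement of the Chenevier--Hansen argument alluded to in \cite{ch} is needed. The natural approach is to first realise the product $\prod_{j} D_{j}$ as a determinant on $\prod_{j}(\T^{r}_{\ka,j}/I_{\ka,j})$, arguing via finite subproducts (where determinants assemble cleanly into a product determinant) and passing to the limit by continuity, and then to invoke the fact that a continuous determinant of a topological group is recovered from its characteristic polynomial on group elements. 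This forces the product determinant to descend to a unique continuous determinant on the closed subring $\wh{\T}^{r}_{\ka,\Fil}/I$ in which $\chi$ takes values.

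For part (2), I would deduce the statement from part (1) by pushforward along the continuous ring homomorphism $\phi : \wh{\T}^{r}_{\ka,\Fil} \to \T^{r,\circ}_{\ka,\leq h}$ furnished by Proposition \ref{cty}. The underlying map $\T^{r}_{\ka,\Fil} \ra \T^{r,\circ}_{\ka,\leq h}$ is already surjective (as remarked before that proposition, both are quotients of $\T_{\oo_{L}}$ by the same compatibility), and $\T^{r}_{\ka,\Fil}$ is dense in $\wh{\T}^{r}_{\ka,\Fil}$, so $\phi$ is surjective. Set $J := \phi(I)$; the map $\phi$ is closed (continuous from compact to Hausdorff), so $J$ is a closed ideal of $\T^{r,\circ}_{\ka,\leq h}$, and $J^{M} = \phi(I)^{M} = \phi(I^{M}) = 0$. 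Pushing the determinant of part (1) forward along the induced surjection $\wh{\T}^{r}_{\ka,\Fil}/I \twoheadrightarrow \T^{r,\circ}_{\ka,\leq h}/J$ produces the desired continuous determinant, and the Frobenius compatibility $D(1 - X\,Frob_v) = P_v(X)$ is preserved by naturality of pushforward.
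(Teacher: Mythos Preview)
Your approach is essentially the same as the paper's: you choose the same ideal $I = \wh{\T}^{r}_{\ka,\Fil}\cap\prod_j I_{\ka,j}$, glue the $D_j$ into a single determinant on the quotient, and deduce (2) from (1) via the continuous surjection of Proposition~\ref{cty}. The paper compresses your entire gluing argument into a single citation of \cite[Example~2.32]{che}, which packages exactly the ``product of determinants $+$ descent to a closed subring via characteristic polynomials on a dense set'' step you spell out by hand. Your attribution is slightly off: the ``hard part'' you flag does not require anything from \cite{ch}; it is precisely what \cite[Example~2.32]{che} provides, and the paper invokes that result again in the proof of Theorem~\ref{gluedgalois} for the analogous global gluing. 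Otherwise there is no gap.
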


\begin{proof}
Part (1) follows from the theorem and the definition and compactness of 
$\wh{\T}_{\ka,\Fil}^{r}$ via \cite[Example 2.32]{che} (one sets 
$I=\wh{\T}_{\ka,\Fil}^{r} \cap \prod_{j} I_{\ka,j}^{r}$). Assertion (2) then 
follows from (1) and Proposition \ref{cty}.
\end{proof}

\subsection{Gluing over the reduced eigenvariety}

We now finish our construction of a Galois determinant over the reduced eigenvariety $\ms{X}^{red}=\ms{X}^{red}_{\mbf{G}}$. See Definition \ref{defred} for the definition of the reduced subspace. Keep the notation of the previous subsection. Let $(\mc{U},h)$ be a slope datum. We have a corresponding open affinoid $\ms{X}_{\U,h}\sub \ms{X}$. If $\mf{m}_{\ka}$ is a maximal ideal of $\oo(\U)$ corresponding to the weight $\ka : T_{0} \ra L^{\times}$ with $L=\oo(\U)/\mf{m}_{\ka}$ a local field, then Corollary \ref{points2} gives us a natural identification
$$ \left(\oo(\ms{X}_{\U,h})/\mf{m}_{\ka}\right)^{red} = \left(\oo(\ms{X}^{red}_{\U,h})/\mf{m}_{\ka}\right)^{red} \cong \T(\De^{p},K^{p})[U_{t}]^{red}_{\ka,h}. $$
Fix $r$. Note that $\T_{\ka, \leq h}^{r}=\T_{\ka,\leq h}^{r,\circ}[1/\vp]$ is naturally a closed $L$-subalgebra of $\T(\De^{p},K^{p})[U_{t}]_{\ka,h}$. From Corollary \ref{galois} it follows that we have a Galois determinant into $\T(\De^{p},K^{p})[U_{t}]_{\ka,h}^{red}$ and therefore into $\left(\oo(\ms{X}^{red}_{\U,h})/\mf{m}_{\ka}\right)^{red}$. We record this discussion in the following convenient form:

\begin{lemma}\label{pointwise}
Let $(\U,h)$ be a slope datum and let $\mf{m}$ be a maximal ideal of $\oo(\ms{X}_{\U,h}^{red})$. Then there exists an $n$-dimensional continuous determinant $D$ of $G_{F,S}$ with values in $\oo(\ms{X}_{\U,h}^{red})/\mf{m}$ such that $D(1 - X\, Frob_{v})=P_{v}(X)$ for all $v\notin S$.
\end{lemma}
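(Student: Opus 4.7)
The strategy is to deduce the lemma from Corollary \ref{galois}(2) by chasing the Galois determinant through a chain of natural ring maps, using Corollary \ref{points2} to identify the target. Since the morphism $\ms{X}_{\U,h}^{red} \to \U$ is finite, the contraction $\mf{m}_{\ka} := \mf{m} \cap \oo(\U)$ is a maximal ideal of $\oo(\U)$. By Proposition \ref{points}, the residue field $L := \oo(\U)/\mf{m}_{\ka}$ is a local field, and $L' := \oo(\ms{X}_{\U,h}^{red})/\mf{m}$ is a finite extension of $L$. Fix an adapted absolute value on $L$ with uniformizer $\vp$, and write $\ka : T_0 \to L^{\times}$ for the weight corresponding to $\mf{m}_{\ka}$.

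Pick $r > r_{\ka}$ and apply Corollary \ref{galois}(2): we obtain a closed ideal $J \sub \T^{r,\circ}_{\ka,\leq h}$ with $J^M = 0$ and an $n$-dimensional continuous determinant $D_0$ of $G_{F,S}$ valued in $\T^{r,\circ}_{\ka,\leq h}/J$, satisfying $D_0(1 - X\, \Frob_v) = P_v(X)$ for all $v \notin S$. Composing with the canonical surjection to the reduction and using that $J$ is nilpotent (hence contained in the nilradical), we get $(\T^{r,\circ}_{\ka,\leq h}/J)^{red} = (\T^{r,\circ}_{\ka,\leq h})^{red}$, and hence a determinant $\ol{D}_0$ valued in $(\T^{r,\circ}_{\ka,\leq h})^{red}$ with the same Frobenius compatibility.

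The chain of natural inclusions $\T^{r,\circ}_{\ka,\leq h} \hookrightarrow \T^{r}_{\ka,\leq h} \hookrightarrow \T(\De^p, K^p)[U_t]_{\ka,h}$ (the second by the remark preceding the lemma) descends, after reducing out nilpotents, to an injection of finite reduced $L$-algebras $(\T^{r,\circ}_{\ka,\leq h})^{red} \hookrightarrow \T(\De^p, K^p)[U_t]_{\ka,h}^{red}$. By Corollary \ref{points2} the latter is canonically isomorphic to $(\oo(\ms{X}_{\U,h}^{red})/\mf{m}_{\ka})^{red}$; being a finite reduced $L$-algebra, this splits as a product of finite field extensions of $L$, and the image of $\mf{m}$ selects the factor $L'$. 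Composing $\ol{D}_0$ with the sequence
\[
(\T^{r,\circ}_{\ka,\leq h})^{red} \hookrightarrow \T(\De^p, K^p)[U_t]_{\ka,h}^{red} \overset{\sim}{\longrightarrow} (\oo(\ms{X}_{\U,h}^{red})/\mf{m}_{\ka})^{red} \twoheadrightarrow L'
\]
yields the desired determinant $D$; the Frobenius identity is preserved throughout because the Hecke elements $T_{i,v}$ are sent to the same operators at each step. The main bookkeeping subtlety is verifying that the inclusion $\T^{r}_{\ka,\leq h} \hookrightarrow \T(\De^p, K^p)[U_t]_{\ka,h}$ is compatible with Corollary \ref{points2}, but this is automatic since both sides are defined as images of the same Hecke operators acting on $H^{\ast}(K, \ms{D}_{\ka})_{\leq h} = H^{\ast}(K, \D^{<r}_{\ka})_{\leq h}$.
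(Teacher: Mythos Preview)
Your proof is correct and follows essentially the same approach as the paper: contract $\mf{m}$ to a maximal ideal of $\oo(\U)$ via finiteness, invoke Corollary~\ref{galois}(2) to get a determinant valued in $\T^{r,\circ}_{\ka,\leq h}/J$, pass to the reduction, push forward along the inclusion into $\T(\De^p,K^p)[U_t]_{\ka,h}^{red}$, identify this with $(\oo(\ms{X}_{\U,h}^{red})/\mf{m}_{\ka})^{red}$ via Corollary~\ref{points2}, and project to the residue field $L'$. The paper's own proof is a one-line reference to the preceding discussion, which records exactly this chain; you have simply unpacked it in full.
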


\begin{proof}
This follows from the discussion above since the map $\Spec \oo(\ms{X}_{\U,h}^{red}) \ra \Spec \oo(\U)$ sends maximal ideals to maximal ideals (the map is finite).
\end{proof}

Before we can glue we need some preparations. Let $A$ be a reduced 
$\Zp$-algebra which is finite and free as a $\Zp$-module; we equip it with the 
$p$-adic topology. Consider the adic space $Z^{an}=\mb{A}^{1}_{S^{an}}$, where 
$S=\Spa( A[[X_{1},\ldots,X_{d}]], A[[X_{1},\ldots,X_{d}]])$; here 
$A[[X_{1},\ldots,X_{d}]]$ carries the $(p,X_{1},\ldots,X_{d})$-adic topology. 
Fix an index $i\in \{1,\ldots,d\}$. Let $T$ be a coordinate on 
$\mb{A}^{1}_{S^{an}}$. We are interested in the open affinoid subsets $V_{m}=\{ 
|p^{m}|, |X_{1}^{m}|,\ldots,|X_{d}^{m}| \leq |X_{i}|\neq 0, \, |X_{i}^{m}T|\leq 
1 \}$ of $Z^{an}$ for $m\in \Z_{\geq 1}$. Note that the union of the $V_{m}$ is 
the locus $V=\{ |X_{i}|\neq 0 \}\sub Z^{an}$. The ring $\oo(V_{m})$ has a ring 
of definition 
$$R_{m}=A[[X_{1},\ldots,X_{d}]]\left\langle 
\frac{p^{m}}{X_{i}},\frac{X_{1}^{m}}{X_{i}},\ldots,\frac{X_{d}^{m}}{X_{i}} 
\right\rangle \langle X_{i}^{m}T \rangle $$ (with the $X_i$-adic topology),
and we have natural maps $R_{m+1} \ra R_{m}$ for all $m$. Note that $\oo(V_{m})=R_{m}[1/X_{i}]$ and that $\oo^{+}(V_{m})$ is the integral closure of $R_{m}$ in $\oo(V_{m})$. By Theorem \ref{su1} (and its proof), in fact we have $\oo^{+}(V_{m})=\oo(V_m)^\circ$ and $\oo(V_m)^\circ$ is a finite $R_{m}$-algebra.

\begin{lemma}\label{profinite}
The image of $R_{m+1}$ in $R_{m}/X_{i}^{n}$ is finite for all $m$ and $n$. More generally, if $M$ is a finitely generated $R_{m+1}$-module, then the image of $M$ in $M\otimes_{R_{m+1}}R_{m}/X_{i}^{n}$ is finite.
\end{lemma}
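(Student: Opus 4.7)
The plan is to exploit the fact that $R_{m+1}$ is generated, as a topological $A[[X_1,\ldots,X_d]]$-algebra, by the finitely many extra elements $p^{m+1}/X_i$, $X_j^{m+1}/X_i$ (for $j=1,\ldots,d$), and $X_i^{m+1}T$. Since $X_i^n R_{m+1}$ maps to zero in $R_m/X_i^n$, the $X_i$-adic completion in the definition of $R_{m+1}$ plays no role in the image, which therefore equals the image of the (uncompleted) $A[[X_1,\ldots,X_d]]$-subalgebra generated by these finitely many extra elements.

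The key observation is that each of $p, X_1, \ldots, X_d$ is nilpotent in $R_m/X_i^n$: indeed $p^m = X_i\cdot(p^m/X_i)$ and $X_j^m = X_i\cdot(X_j^m/X_i)$ inside $R_m$, so $p^{mn}, X_j^{mn}\in X_i^n R_m$ for $j\ne i$, while $X_i^n = 0$ by construction. Two things follow. First, the image of $A[[X_1,\ldots,X_d]]$ in $R_m/X_i^n$ is a quotient of the finite ring $A[X_1,\ldots,X_d]/(p^{mn}, X_1^{mn},\ldots, X_d^{mn}, X_i^n)$, using that $A$ is finite over $\Zp$ and that a continuous map from $A[[X_1,\ldots,X_d]]$ to a discrete ring factors through a quotient by some open ideal of the $(p,X_1,\ldots,X_d)$-adic topology. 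Second, each extra generator factors as a nilpotent element of $R_m/X_i^n$ times an element of $R_m$: namely $p^{m+1}/X_i = p\cdot(p^m/X_i)$, $X_j^{m+1}/X_i = X_j\cdot(X_j^m/X_i)$, and $X_i^{m+1}T = X_i\cdot(X_i^m T)$. Hence the image of $R_{m+1}$ is generated as an $A[[X]]$-subalgebra of $R_m/X_i^n$ by finitely many nilpotent elements and is therefore a finite abelian group, settling the ring-theoretic statement.

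For a finitely generated $R_{m+1}$-module $M$, the argument becomes formal: letting $S\subset R_m/X_i^n$ denote the finite image of $R_{m+1}$, the image of $M\to M\otimes_{R_{m+1}}R_m/X_i^n$ is an $S$-submodule generated by the images of any finite $R_{m+1}$-generating set of $M$, hence a finitely generated module over the finite ring $S$, and therefore finite. The main point to take care with is the interaction between the $X_i$-adic completion defining $R_{m+1}$ and the quotient by $X_i^n$: one must justify that the completion introduces no genuinely new elements after reduction modulo $X_i^n$. This is ultimately what makes the argument work for the increment from $m$ to $m+1$, since it is precisely the \emph{strict} increase of the exponents in the numerators of the extra generators that forces those generators to be nilpotent (rather than merely bounded) in $R_m/X_i^n$.
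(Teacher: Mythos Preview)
Your proposal is correct and follows essentially the same approach as the paper. The paper writes down an explicit ideal $I\subset R_{m+1}$ generated by $p^{mn}$, $X_j^{mn}$, $(p^{m+1}/X_i)^{mn}$, $(X_j^{m+1}/X_i)^{mn}$, and $(X_i^{m+1}T)^n$, checks that $I$ lies in the kernel of $R_{m+1}\to R_m/X_i^n$, and observes that $R_{m+1}/I$ is finite; your factorizations $p^{m+1}/X_i = p\cdot(p^m/X_i)$, $X_j^{m+1}/X_i = X_j\cdot(X_j^m/X_i)$, $X_i^{m+1}T = X_i\cdot(X_i^m T)$ are exactly what makes both of these facts transparent, and your handling of the completion (via $R_{m+1}/X_i^n R_{m+1}$ coinciding with the uncompleted quotient) is the implicit reason $R_{m+1}/I$ is finite. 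For the module statement, the paper reduces to the ring case via a surjection $R_{m+1}^q\twoheadrightarrow M$, while you argue directly that the image is a finitely generated module over the finite ring $S=\mathrm{im}(R_{m+1})$; both are fine and amount to the same thing.
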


\begin{proof}
Fix $m$ and $n$. We start with the first assertion. It suffices to check that the kernel contains the ideal 
$$I=\left( 
p^{mn},X_{1}^{mn},\ldots,X_{d}^{mn},\left(\frac{p^{m+1}}{X_{i}}\right)^{mn},\left(
 \frac{X^{m+1}_{1}}{X_{i}}\right)^{mn},\ldots,\left( 
\frac{X_{d}^{m+1}}{X_{i}}\right) ^{mn}, (X_{i}^{m+1}T)^{n}\right) $$
since it's straightforward to check that $R_{m+1}/I$ is finite. It's also straightforward to check that the given generators of $I$ are in the kernel. This finishes the (sketch of) proof of the first assertion. For the second, there is an integer $q\geq 0$ and a surjection $R_{m+1}^{q} \twoheadrightarrow M$ of $R_{m+1}$-modules, which gives us a commuting diagram
$$
\xymatrix{ R_{m+1}^{q} \ar[r]\ar[d] & R_{m}^{q}/X_{i}^{n} \ar[d]\\
M \ar[r] & M\otimes_{R_{m+1}}R_{m}/X_{i}^{n}}
$$
where the vertical maps are surjections, and the second assertion then follows from the first.
\end{proof}

\begin{proposition}\label{compact}
With notation as above, let $Y \ra V = \{ |X_{i}|\neq 0 \} \subset Z^{an}$ be a finite morphism of adic spaces, and assume that $Y$ is reduced. Then $\oo^{+}(Y)$ is compact.
\end{proposition}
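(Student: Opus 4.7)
The plan is to cover $V$ by the affinoid opens $V_m$, pull back to $Y$, and then exhibit $\oo^+(Y)$ as a closed subspace of an inverse limit of finite discrete rings.

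First I would set $Y_m = Y \times_V V_m$; since $Y \to V$ is finite, each $Y_m$ is affinoid and $\oo(Y_m)$ is a finite $\oo(V_m)$-algebra. Because $Y$ is reduced, so is $\oo(Y_m)$, and combining this with Theorem \ref{su1} applied to $V_m$ (and with the behavior of power-bounded elements under finite extensions, Lemma \ref{finitepowerbounded}), one gets $\oo^+(Y_m) = \oo(Y_m)^\circ$, and this ring is finite as an $R_m$-module. In particular, its $X_i$-adic topology makes it complete, and $\oo^+(Y_m) = \varprojlim_n \oo^+(Y_m)/X_i^n \oo^+(Y_m)$ topologically.

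Next, since $\{V_m\}_{m\geq 1}$ is an increasing open affinoid cover of $V$ and $Y \to V$ is finite, $\{Y_m\}$ is an increasing open affinoid cover of $Y$, giving the identification of topological rings
\[ \oo^+(Y) = \varprojlim_m \oo^+(Y_m) = \varprojlim_m \varprojlim_n \oo^+(Y_m)/X_i^n, \]
with each $\oo^+(Y_m)/X_i^n$ equipped with the discrete topology.

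The key step is to apply Lemma \ref{profinite} with $M = \oo^+(Y_{m+1})$, which is a finitely generated $R_{m+1}$-module. The lemma tells us that the image of $\oo^+(Y_{m+1})$ in $\oo^+(Y_{m+1}) \otimes_{R_{m+1}} R_m/X_i^n$ is finite. The restriction map $\oo^+(Y_{m+1}) \to \oo^+(Y_m)$ induced by $Y_m \subset Y_{m+1}$ factors through $\oo^+(Y_{m+1}) \otimes_{R_{m+1}} R_m$, so after further reduction the image of $\oo^+(Y_{m+1})$, and therefore also the image of $\oo^+(Y)$, inside $\oo^+(Y_m)/X_i^n$ is finite.

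Therefore $\oo^+(Y)$ embeds as a closed subring of $\prod_{m,n} A_{m,n}$, where each $A_{m,n}$ is a finite discrete ring (namely the image of $\oo^+(Y)$ in $\oo^+(Y_m)/X_i^n$) and the closed subring is cut out by the obvious inverse-system compatibilities. This exhibits $\oo^+(Y)$ as a closed subspace of a product of finite discrete spaces, hence profinite and in particular compact. The main obstacle I anticipate is purely bookkeeping: verifying that under the reducedness hypothesis on $Y$ the identifications $\oo^+(Y_m) = \oo(Y_m)^\circ$ and the $R_m$-finiteness statements hold as needed, which requires invoking the appendix (Theorem \ref{su1} and Lemma \ref{finitepowerbounded}); once this is in place the argument via Lemma \ref{profinite} is immediate.
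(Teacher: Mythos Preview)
Your approach is essentially the same as the paper's: cover $Y$ by the pullbacks $Y_m$, show $\oo^+(Y_m)$ is a finite $R_m$-module, then use Lemma~\ref{profinite} to see that the transition maps $\oo^+(Y_{m+1})\to\oo^+(Y_m)/X_i^n$ have finite image, and conclude by writing $\oo^+(Y)$ as an inverse limit.

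One point to tighten: Lemma~\ref{finitepowerbounded} only tells you that $\oo^+(Y_m)=\oo(Y_m)^\circ$ is the \emph{integral closure} of $\oo(V_m)^\circ$ in $\oo(Y_m)$; it does not by itself give module-finiteness over $R_m$. The paper closes this gap by observing (from the proof of Theorem~\ref{su1}) that $R_m$, and hence $\oo(V_m)^\circ$, is Nagata, so the integral closure in the reduced finite extension $\oo(Y_m)$ is indeed a finite module. You flagged this step as ``bookkeeping,'' and it is, but the Nagata property is the specific input you need there rather than Lemma~\ref{finitepowerbounded} alone.
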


\begin{proof}
Write $Y_{m}$ for the pullback of $V_{m}$; these form an increasing cover of $Y$ consisting of open affinoids. Since $Y_{m} \ra V_{m}$ is finite and $\oo^{+}(V_m)=\oo(V_m)^{\circ}$ we know that $\oo^{+}(Y_{m})$ is integral over $\oo(V_{m})^{\circ}$ (by definition of a finite morphism). Since $\oo(Y_{m})$ is reduced and $\oo(V_{m})^{\circ}$ is Nagata (it is finite over $R_m$, which is Nagata by the proof of Theorem \ref{su1}) it follows that $\oo^{+}(Y_{m})$ is finite over $\oo(V_{m})^{\circ}$. In particular, $\oo^{+}(Y_m)$ is finite over $R_m$, hence Noetherian. We may also deduce from Lemma \ref{noeth} that  $\oo^{+}(Y_m)=\oo(Y_m)^{\circ}$.

For any $n$ the map $\oo^{+}(Y_{m+1}) \ra \oo^{+}(Y_{m})/X_{i}^{n}$ factors through $\oo^{+}(Y_{m+1})\otimes_{R_{m+1}}R_{m}/X_{i}^{n}$ and hence the image is finite by Lemma \ref{profinite}. It follows that $\oo^{+}(Y_{m+1})$, when equipped with the weak topology with respect to the map $\oo^{+}(Y_{m+1}) \ra \oo^{+}(Y_{m})$, is compact. We deduce that $\oo^{+}(Y) = \varprojlim_{m}\oo^{+}(Y_{m})$ is compact, as desired.
\end{proof}

\begin{corollary}\label{compact2}
Assume that $X \ra Z^{an}$ is a finite morphism of adic spaces, with $X$ reduced. Then $\oo^{+}(X)$ is compact.
\end{corollary}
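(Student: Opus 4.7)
The plan is to cover $Z^{an}$ by finitely many open subspaces to each of which Proposition \ref{compact} (or an immediate variant) applies, pull back the cover to $X$, and conclude using the sheaf property of $\oo^+$.

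Since $A$ is finite free over $\Zp$, the ring $A[[X_1,\ldots,X_d]]$ carries the $(p,X_1,\ldots,X_d)$-adic topology, so the analytic locus $S^{an}$ is covered by the rational opens $\{|p|\neq 0\}$ and $\{|X_i|\neq 0\}$ for $i=1,\ldots,d$. Consequently $Z^{an}=\mb{A}^{1}_{S^{an}}$ is covered by the open subspaces $V^{(0)}=\{|p|\neq 0\}$ and $V^{(i)}=\{|X_i|\neq 0\}$ ($i=1,\ldots,d$) of $\mb{A}^{1}_{S^{an}}$. Writing $q:X\to Z^{an}$ for the given finite morphism and setting $X^{(i)}:=q^{-1}(V^{(i)})$, the $X^{(i)}$ form a finite open cover of $X$ by reduced subspaces, each finite over $V^{(i)}$. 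For $i\geq 1$, Proposition \ref{compact} applied to $X^{(i)}\to V^{(i)}$ directly yields that $\oo^+(X^{(i)})$ is compact. For $i=0$, the plan is to rerun the proof of Proposition \ref{compact} verbatim with $p$ in the role of $X_i$: cover $V^{(0)}$ by the affinoids $\{|p^m|,|X_1^m|,\ldots,|X_d^m|\leq|p|\neq 0,\,|p^mT|\leq 1\}$, use the rings of definition $R'_m=A[[X_1,\ldots,X_d]]\langle X_1^m/p,\ldots,X_d^m/p\rangle\langle p^mT\rangle$ with the $p$-adic topology, and verify the analogue of Lemma \ref{profinite} (namely, that the image of $R'_{m+1}$ in $R'_m/p^n$ is finite) by the same finite-ideal computation.

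To conclude, the sheaf property of $\oo^+$ exhibits $\oo^+(X)$ as the equalizer of the two restriction maps $\prod_{i}\oo^+(X^{(i)})\rightrightarrows \prod_{i,j}\oo^+(X^{(i)}\cap X^{(j)})$, so $\oo^+(X)$ is a closed subring of the finite product $\prod_{i=0}^{d}\oo^+(X^{(i)})$. The product is compact by Tychonoff, and a closed subring of a compact Hausdorff space is compact; hence $\oo^+(X)$ is compact. The only non-routine step is the $i=0$ variant of Proposition \ref{compact}, but this is essentially bookkeeping, since the proof only uses $X_i$ as a topologically nilpotent element of the relevant ring of definition, a role that $p$ fulfills equally well.
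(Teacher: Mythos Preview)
Your proof is correct and follows the same strategy as the paper: cover $Z^{an}$ by the loci $\{|p|\neq 0\}$ and $\{|X_i|\neq 0\}$, show compactness of $\oo^+$ on each pullback, and conclude via the closed embedding into the product. The only difference is in the $i=0$ case: you rerun the argument of Proposition \ref{compact} with $p$ playing the role of $X_i$, whereas the paper instead observes that $X^{(0)}$ is a \emph{nested} rigid space in the sense of Bella\"iche--Chenevier and invokes \cite[Lemma 7.2.11]{bc} directly. Your approach is more self-contained and works for the reasons you give (the proof of Proposition \ref{compact} only uses that the chosen element is topologically nilpotent in the ring of definition and that the associated quotients are finite, both of which hold for $p$ since $A$ is finite over $\Zp$); the paper's approach is shorter but imports an external result.
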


\begin{proof}
For each $i\in \{1,\ldots ,d\}$ consider the locus $Z_{i}=\{|X_{i}|\neq 0 
\}\sub 
Z^{an}$ (i.e. what was previously denoted by $V$) and set $Z_{0}=\{ |p|\neq 
0\}$. Let $X_{i}$, for $i\in \{0,\ldots,d\}$, denote the corresponding 
pullbacks to $X$. Then we have a strict inclusion $\oo^{+}(X) \sub 
\prod_{i=0}^{d}\oo^{+}(X_{i})$ with closed image and the $\oo^{+}(X_{i})$ are 
compact (for $i=1,\ldots,d$ this is Proposition \ref{compact} and for $i=0$ 
this is \cite[Lemma 7.2.11]{bc}; note that $X_{0}$ is nested in the terminology 
of \cite{bc}), so $\oo^{+}(X)$ is compact as well.
\end{proof}

We may then prove the main result of this section.

\begin{theorem}\label{gluedgalois}
There exists an $n$-dimensional continuous determinant $D$ of $G_{F,S}$ with values in $\oo^{+}(\ms{X}^{red})$ such that
$$ D(1 - X\, Frob_{v})=P_{v}(X) $$
for all $v\notin S$.
\end{theorem}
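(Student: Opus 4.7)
The plan is to work locally on the affinoid cover $(\ms{X}_{\U,h}^{red})_{(\U,h)}$ of $\ms{X}^{red}$ (guaranteed by Theorem \ref{slopeadapted}), using the pointwise determinants of Lemma \ref{pointwise} together with Chebotarev density and the compactness of $\oo^+(\ms{X}_{\U,h}^{red})$ from Corollary \ref{compact2}, to produce a continuous determinant on each piece, and then to glue by uniqueness. This is the adaptation to the adic setting of the rigid-analytic argument appearing in \cite{ch}, which itself refines the classical Bellaiche--Chenevier gluing technique.

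Fix a slope datum $(\U, h)$ and write $V = \ms{X}_{\U,h}^{red}$. Since $V$ is finite over the reduced Fredholm hypersurface inside $\A^1_{\U}$, and $\U$ is an affinoid open of $\mc{W}$ of the form appearing in Corollary \ref{compact2}, that corollary implies that $\oo^+(V)$ is compact. Combined with reducedness of $V$ and the Jacobson-type property inherited along the finite map $V \to \U$, the natural map $\oo^+(V) \hookrightarrow \prod_{\mf{m}} \oo_{k(\mf{m})}$ should be a continuous closed embedding, where $\mf{m}$ runs over maximal ideals of $\oo(V)$ whose residue field $k(\mf{m})$ is a local field. For each such $\mf{m}$, Lemma \ref{pointwise} provides a continuous determinant $D_{\mf{m}}: G_{F,S} \to k(\mf{m})$ with $D_{\mf{m}}(1 - X\Frob_v) = P_v(X) \bmod \mf{m}$; since the coefficients of $P_v(X)$ lie in $\T_v \subset \oo^+(V)$, the values at Frobenius elements land in $\oo_{k(\mf{m})}$. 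Viewing a determinant as a polynomial law specified by characteristic coefficients $\Lambda_i(g_1, \ldots, g_k)$ for tuples of group elements, the $D_{\mf{m}}$ assemble into a candidate polynomial law with values in $\prod_{\mf{m}} k(\mf{m})$. On Frobenius tuples these values lie in $\oo^+(V)$ by construction; by Chebotarev density such tuples are topologically dense in $G_{F,S}^k$, and by continuity of each $D_{\mf{m}}$ together with closedness of $\oo^+(V)$ in the product, all values $\Lambda_i(g_1, \ldots, g_k)$ lie in $\oo^+(V)$. This yields a continuous determinant $D_V: G_{F,S} \to \oo^+(V)$ lifting the pointwise $D_{\mf{m}}$.

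To globalize, on overlaps $V_1 \cap V_2$ the determinants $D_{V_1}$ and $D_{V_2}$ coincide because they induce the same $D_{\mf{m}}$ at all closed points, and a determinant on a reduced Tate algebra is determined by its reductions at closed points via the injection above. By the sheaf property of $\oo^+$ on analytic adic spaces, the local $D_V$'s glue to the desired global continuous determinant $D$ on $\oo^+(\ms{X}^{red})$, with the required Frobenius compatibility inherited from the local constructions. The hard part will be the polynomial-law aspect of the key step: Chebotarev and compactness handle values on single group elements readily, but extending the density-and-closedness argument uniformly to all characteristic coefficients $\Lambda_i(g_1, \ldots, g_k)$ requires care, and is essentially the content of the refined argument in \cite{ch}.
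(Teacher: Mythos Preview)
Your proposal contains a genuine gap in the first paragraph of the argument: the claim that $\oo^+(\ms{X}_{\U,h}^{red})$ is compact via Corollary \ref{compact2} is incorrect. That corollary applies to adic spaces $X$ which are \emph{finite over the whole of} $\A^1_{S^{an}}$ with $S = \Spa(A[[X_1,\ldots,X_d]],A[[X_1,\ldots,X_d]])$; the affinoid $\ms{X}_{\U,h}^{red}$ is only finite over the affinoid piece $\ms{Z}_{\U,h}$, so the hypothesis fails. In fact the local rings $\oo^+(\ms{X}_{\U,h}^{red})$ are typically \emph{not} compact: already for $\U$ in the rigid locus, $\oo^+(\U)$ is a ring like $\Zp\langle T\rangle$, which is $p$-adically complete but not compact (the sequence $T, T^2, T^3,\ldots$ has no convergent subsequence). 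The compactness in Corollary \ref{compact2} comes precisely from the inverse-limit topology over the nested exhaustion $V_m$ of the whole space, not from any single affinoid, and this is the content of Lemma \ref{profinite} and Proposition \ref{compact}.

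The paper's proof avoids this by working globally from the outset: it uses the single embedding $\oo^+(\ms{X}^{red}) \hookrightarrow \prod_{(\U,h)}\prod_{\mf{m}} \oo(\ms{X}^{red}_{\U,h})/\mf{m}$ (Jacobson property, Lemma \ref{jacobson}), applies Corollary \ref{compact2} once to the global $\oo^+(\ms{X}^{red})$ (which is legitimately finite over $\A^1_{\mc{W}}$), and then invokes \cite[Example 2.32]{che} directly to glue the pointwise determinants of Lemma \ref{pointwise}. There is no local construction and no subsequent gluing step; your Chebotarev/characteristic-coefficient argument is essentially what is packaged inside \cite[Example 2.32]{che}, but it must be run with the global compact ring $\oo^+(\ms{X}^{red})$, not the local affinoid pieces.
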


\begin{proof}
Fix a collection $\{ (\U,h) \}$ of slope data such that the $\ms{X}_{\U,h}^{red}$ cover $\ms{X}^{red}$. We then have injections
$$ \oo^{+}(\ms{X}^{red}) \hookrightarrow \prod_{(\U,h)}\oo(\ms{X}^{red}_{\U,h}) \hookrightarrow \prod_{(\U,h)} \prod_{\mf{m}} \oo(\ms{X}^{red}_{\U,h})/\mf{m}$$
where the $\mf{m}$ range over all maximal ideals of $\oo(\ms{X}^{red}_{\U,h})$. 
Note that we have an injection $\oo(\ms{X}^{red}_{\U,h}) \hookrightarrow 
\prod_{\mf{m}} \oo(\ms{X}^{red}_{\U,h})/\mf{m}$ by Lemma \ref{jacobson}, so the 
second morphism really is an injection. Then note that $\oo^{+}(\ms{X}^{red})$ 
is compact; since $\ms{X}^{red}$ is finite over 
$\Spa(\Zp[[T_{0}]],\Zp[[T_{0}]])^{an}\times \mb{A}^{1}$ this follows from 
Corollary \ref{compact2}. To see that $\Zp[[T_{0}]]$ is of the form 
$A[[X_{1},\ldots ,X_{d}]]$, we write $T_{0}\cong T_{0}^{tor}\times 
T_{0}^{free}$ 
where $T_{0}^{tor}$ is the torsion subgroup of $T_{0}$ (which is finitely 
generated) and $T_{0}^{free} \cong \Zp^{\dim T_{0}}$ is a free complement (cf. 
e.g. \cite[Proposition II.5.7]{neu}). Set $A=\Zp[T_{0}^{tor}]$, then 
$\Zp[[T_{0}]] \cong A \otimes_{\Zp} \Zp[[T_{0}^{free}]] \cong 
A[[X_{1},\ldots,X_{d}]]$. Thus we may glue the determinants from Lemma 
\ref{pointwise} into the desired determinant using \cite[Example 2.32]{che}.
\end{proof}

\section{The Coleman-Mazur eigencurve}\label{gl2}

In this section we give a short discussion of the special case of the Coleman-Mazur eigencurve and the relationship between our work and that of Andreatta--Iovita--Pilloni \cite{aip} and Liu--Wan--Xiao \cite{lwx}.

\subsection{The case $\mbf{G}=\GL_{2/\Q}$}\label{cm} Let us consider the special case $\mbf{G}=\GL_{2/\Q}$. We begin by fixing choices of groups and Hecke algebras/operators. Let $\mbf{B}$ be the upper triangular Borel, $I$ the corresponding Iwahori and $\mbf{T}$ the diagonal torus. We use the element $t=\left( \begin{smallmatrix} 1 & 0 \\ 0 & p \end{smallmatrix} \right)\in \Sigma^{cpt}$; the corresponding Hecke operator is the $U_{p}$-operator. We choose the tame level $K_{1}(N)=\{ g\in \GL_{2}(\wh{\Z}^{p}) \mid g\equiv \left(\begin{smallmatrix} \ast & \ast \\ 0 & 1 \end{smallmatrix} \right)\, {\rm mod}\, N \}$, with $N\in \Z_{\geq 5}$ prime to $p$. Put $\De_{\ell}=\GL_{2}(\Q_{\ell})$ if $\ell \nmid N$ and $\De_{\ell}=\GL_{2}(\Z_{\ell})$ if $\ell \mid N$. With these choices everything else is determined and we use the notation of the main part of the paper. 

\medskip

In this case, overconvergent modular symbols were first constructed by Stevens \cite{ste}, and the corresponding eigencurve was constructed and shown to agree with the Coleman-Mazur eigencurve by Bellaiche \cite{bel}. Stevens and Bellaiche worked with the compactly supported cohomology $H^{1}_c$, which admits a very explicit description in this case (it is given by the functor denoted by ${\rm Symb}$ in \cite{bel,ste}). Ordinary cohomology was first considered in \cite{han}. For ordinary cohomology, one has
$$ H^{\ast}(K, \ms{D}_{\ka})=H^{1}(K, \ms{D}_{\ka}) $$
for all weights $\ka$, upon noting that the $H^{i}$ vanish automatically for all $i\geq 2$ and that $H^{0}$ vanishes by (a simpler version of) the argument in the proof of \cite[Lemma 5.4]{chj}. A consequence is the following lemma:

\begin{lemma}\label{proj}
Let $\ka : T_{0} \ra R^{\times}$ be a weight, and assume that $F_{\ka}$ has a slope-$\leq h$ factorization for some $h\in \Q_{\geq 0}$ (slopes with respect to some multiplicative pseudo-uniformizer $\vp\in R$). Then $H^{1}(K, \ms{D}_{\ka})_{\leq h}$ is a finite projective $R$-module and is compatible with arbitrary base change.
\end{lemma}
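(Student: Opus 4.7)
The key input is the cohomological concentration $H^*(K, \ms{D}_\ka) = H^1(K, \ms{D}_\ka)$ that holds for $\GL_2/\Q$ and persists after base change of $\ka$. I would combine this with the Tor spectral sequence of Section \ref{evars} to obtain both projectivity and base change compatibility.

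The assumed slope-$\leq h$ factorisation of $F_\ka$ yields, via the machinery of Section \ref{prelim}, a functorial slope-$\leq h$ decomposition $C^\bu(K, \D_\ka^r) = C^\bu_{\leq h} \oplus C^\bu_{>h}$ for any $r \geq r_\ka$. Each term of $C^\bu_{\leq h}$ is a finitely generated projective $R$-module (slope-$\leq h$ parts of modules with property $(Pr)$ are finitely generated projective), so the cohomology $M := H^1(C^\bu_{\leq h}) = H^1(K, \ms{D}_\ka)_{\leq h}$ is finitely generated over the Noetherian ring $R$; by the $\GL_2/\Q$ vanishing it is the only nonvanishing cohomology of $C^\bu_{\leq h}$. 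Next, I would apply the Tor spectral sequence to the base change $R \to k(\mathfrak{m})$ for each maximal ideal $\mathfrak{m}$ of $R$, using that the residue field is a local field (Proposition \ref{points}) and hence a Banach--Tate $\Zp$-algebra with a norm adapted to $\ka_{k(\mathfrak{m})}$. Since the cohomological vanishing applies equally to the specialised weight $\ka_{k(\mathfrak{m})}$, only the row $j = 1$ of the $E_2$-page is nonzero and the spectral sequence collapses at $E_2$. Reading off entries with $i < 0$ yields $\Tor_{j}^R(M, k(\mathfrak{m})) = 0$ for all $j \geq 1$, while the $E_2^{0,1}$ entry gives the base change isomorphism at residue fields.

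By the local criterion of flatness for finitely generated modules over Noetherian local rings, $M$ is free at every localisation $R_\mathfrak{m}$, whence finite projective over $R$. Once projectivity is established, flatness of $M$ causes the Tor spectral sequence to degenerate for every bounded morphism $R \to S$ of Banach--Tate $\Zp$-algebras with adapted norms, yielding the desired base change isomorphism $M \otimes_R S \cong H^1(K, \ms{D}_{\ka_S})_{\leq h}$. The main technical point is verifying that the Tor spectral sequence can indeed be applied with $S = k(\mathfrak{m})$ --- specifically, that the residue field is Banach--Tate and carries a norm adapted to the specialised weight, with the map from $R$ bounded for it --- but this is essentially automatic in the cases of interest, where every residue field at a maximal ideal of $R$ is a local field.
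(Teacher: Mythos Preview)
Your proof is correct and follows essentially the same strategy as the paper: use the cohomological concentration $H^*=H^1$ to make the Tor spectral sequence collapse, extract $\Tor$-vanishing, and deduce projectivity plus base change. The paper's version is marginally more direct: it applies the spectral sequence to $S=R/J$ for an arbitrary ideal $J$ (which is automatically closed, so $R/J$ is a complete Tate ring and Lemma~\ref{norm2} supplies an adapted norm), obtaining simultaneously the base change isomorphism $M\otimes_R S\cong H^1(K,\ms{D}_{\ka_S})_{\leq h}$ and $\Tor_1^R(M,R/J)=0$; the latter for all $J$ gives flatness immediately, without passing through residue fields, Proposition~\ref{points}, or the local criterion.
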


\begin{proof}
Let $f : R \ra S$ be a continuous homomorphism of complete Tate rings and equip $S$ with a Banach--Tate $\Zp$-algebra norm such that $f(\vp)$ is a multiplicative pseudo-uniformizer. Put $\ka_{S}=f \circ \ka$. By the vanishing of the $H^{i}$ for $i\neq 1$ the Tor-spectral sequence collapses and gives us that
$$H^{1}(K, \ms{D}_{\ka})_{\leq h}\otimes_{R}S=H^{1}(K, \ms{D}_{\ka_{S}})_{\leq h};$$
$$\Tor_{1}^{R}(H^{1}(K, \ms{D}_{\ka})_{\leq h}, S)=0.$$
The first line is compatibility with base change. Putting $S=R/J$ for an arbitrary ideal $J\sub R$ (automatically closed) we see from the second line that $H^{1}(K, \ms{D}_{\ka})_{\leq h}$ is a finite flat $R$-module, hence finite projective.
\end{proof}

If $\ka : T_{0} \ra R^{\times}$ is a weight we write $\ka_{i}$, $i=1,2$, for the characters $\Zp^{\times} \ra R^{\times}$ defined by 
$$ \ka\left( \begin{pmatrix} a & 0 \\ 0 & d \end{pmatrix} \right) = \ka_{1}(a)\ka_{2}(d)$$
and we identify $\ol{N}_{1}$ with $p\Zp$ via $\left( \begin{smallmatrix} 1 & 0 \\ x & 1 \end{smallmatrix} \right) \mapsto x$. If we consider the eigenvariety $\ms{X}^{rig}=\ms{X}^{rig}_{\mbf{G}}$ constructed in \cite{han}, then it is equidimensional of dimension $2$ by \cite[Proposition B.1]{han} since $H^{\ast}=H^{1}$. This object is usually referred to as the `eigensurface'. If we instead do the eigenvariety construction over the part $\mc{W}_{0}^{rig}$ of weight space where $\ka_{2}=1$ we obtain an eigenvariety that turns out to equal the Coleman-Mazur eigencurve; it is in particular reduced, equidimensional of dimension $1$, and flat over $\mc{W}_{0}^{rig}$. Let us denote this eigenvariety by $\ms{E}^{rig}$; the properties of $\ms{E}^{rig}$ stated in the previous sentence are presumably well known to experts but we will give a brief sketch of the proofs below. To begin with, it is equidimensional of dimension $1$ (by the same argument as above for $\ms{X}^{rig}$). For weights with $\ka_{2}=1$ we conflate $\ka$ and $\ka_{1}$, and we may write the action on $\ms{A}_{\ka}$ explicitly as
$$ (f. \gamma)(x) = \ka(a+bx)f\left( \frac{c+dx}{a+bx} \right) $$
for $\gamma = \left( \begin{smallmatrix} a & b \\ c & d \end{smallmatrix} \right)\in M_{2}(\Zp)$ such that $a\in \Zp^{\times}$, $c\in p\Zp$ (this defines the submonoid $\De_{p,0}$ of $\De_{p}$ generated by $I$ and $t$), and $x\in p\Zp$ (cf. \cite[\S 2.2]{han}). Using the anti-involution 
$$ \begin{pmatrix} a & b \\ c & d \end{pmatrix} \mapsto \begin{pmatrix} a & c/p \\ pb & d \end{pmatrix} $$
on $\De_{p,0}$ and rescaling $f$ by $p^{-1}$ in the argument to a function on $\Zp$ one sees this right action corresponds to the left action defined in \cite{ste}. To prove that $\ms{E}^{rig}$ is reduced one uses a criterion of Chenevier \cite[Proposition 3.9]{che2}; see \cite[Theorem IV.2.1]{bel} for the analogous statement for the $H^{1}_{c}$. Unfortunately, the notion of a `classical structure' in \cite{che2} is based on the input data for the eigenvariety construction in \cite{bu} and is therefore not directly applicable to the situation in \cite{han}. Let us state a version of \cite[Proposition 3.9]{che2} applicable to the situation in \cite[Definition 4.2.1]{han}. We use the notation and terminology of \cite[\S 4-5]{han} freely; in particular we use the language of classical rigid geometry for this Proposition.

\begin{proposition}
Let $\mf{O}=(\ms{W},\ms{Z},\ms{M},\mbf{T},\psi)$ be an eigenvariety datum. If $(U,h)$ is a slope datum, assume that $\ms{M}(\ms{Z}_{U,h})$ is a projective $\oo(U)$-module. Assume moreover that there exists a very Zariski dense set $\ms{W}^{cl}\sub \ms{W}$ such that, if $(U,h)$ is a slope datum, there is a Zariski open and dense subset $W_{U,h}$ of $\ms{W}^{cl}\cap U$ such that $\ms{M}(\ms{Z}_{U,h})_{x}$ is a semisimple $\mbf{T}[T^{-1}]$-module. Here $T$ is the parameter on $\A^{1}_{\ms{W}}$, which naturally acts invertibly on $\ms{M}(\ms{Z}_{U,h})$, and the set $\ms{W}^{cl}$ is given the Zariski topology. Then the eigenvariety $\ms{X}_{\mf{O}}$ is reduced.
\end{proposition}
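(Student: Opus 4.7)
The plan is to show reducedness locally, exploiting the explicit description of the structure sheaf of the eigenvariety in terms of Hecke actions on the module $\ms{M}$. Since $\ms{X}_{\mf{O}}$ is covered by the open affinoids $\ms{X}_{\mf{O},U,h}$ associated to slope data $(U,h)$, and $\oo(\ms{X}_{\mf{O},U,h})=\ms{T}_{U,h}$ is by construction the image of $\mbf{T}[T^{-1}]\otimes_{\Zp}\oo(U)$ in $\End_{\oo(U)}(\ms{M}(\ms{Z}_{U,h}))$, it will suffice to show that each $\ms{T}_{U,h}$ is reduced. Fix such a slope datum, and let $T'\in\ms{T}_{U,h}$ be nilpotent; the goal is to show $T'=0$ as an endomorphism of $\ms{M}(\ms{Z}_{U,h})$.

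The first step is a pointwise statement at classical weights. For any $x\in W_{U,h}$, consider the image of $\mbf{T}[T^{-1}]\otimes\kappa(x)$ in $\End_{\kappa(x)}(\ms{M}(\ms{Z}_{U,h})_{x})$. By the semisimplicity hypothesis and the commutativity of $\mbf{T}[T^{-1}]$, this image is a commutative semisimple $\kappa(x)$-algebra, i.e.~a finite product of field extensions of $\kappa(x)$, hence reduced. The image of $T'$ in this algebra is nilpotent, so it must vanish. Equivalently, $T'\cdot \ms{M}(\ms{Z}_{U,h})\sub \mf{m}_{x}\ms{M}(\ms{Z}_{U,h})$ for every $x\in W_{U,h}$.

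The second step is to propagate this vanishing globally using projectivity. Choose a finite admissible cover of $U$ by open affinoids $V$ on which the projective $\oo(U)$-module $\ms{M}(\ms{Z}_{U,h})|_{V}$ is free, say isomorphic to $\oo(V)^{n}$. On such a $V$, the operator $T'$ is represented by a matrix $(a_{ij})\in M_{n}(\oo(V))$, and the previous paragraph shows $a_{ij}(x)=0$ for every $x\in W_{U,h}\cap V$. The very Zariski density of $\ms{W}^{cl}$, inherited by $W_{U,h}$, ensures that $W_{U,h}\cap V$ is Zariski dense in $V$; combined with the reducedness of $\oo(V)$ (which holds in the classical rigid setting in which this proposition is applied, in particular when $\ms{W}=\mc{W}_{0}^{rig}$ is a disjoint union of open polydiscs), this forces each $a_{ij}$ to vanish identically on $V$. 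Gluing over the cover, $T'$ is the zero endomorphism of $\ms{M}(\ms{Z}_{U,h})$, hence $T'=0$ in $\ms{T}_{U,h}$ by faithfulness.

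The main obstacle is the passage from the fiberwise vanishing to the global vanishing, which rests on the tight interplay between three hypotheses: projectivity of $\ms{M}(\ms{Z}_{U,h})$ (which makes fiberwise statements meaningful and gives local freeness), the strong density of classical points (needed not just in $U$ but in every open affinoid subdomain), and the reducedness of $\oo(V)$ in the classical rigid setting. Any weakening of these inputs breaks the argument: if one merely assumed Zariski density in $U$ one would only control matrix entries up to the nilradical, and if $\oo(V)$ itself had nilpotents one would be unable to conclude from the vanishing of matrix entries at classical points.
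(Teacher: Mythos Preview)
Your argument is correct and is precisely the proof of \cite[Proposition 3.9]{che2} that the paper invokes; the paper itself omits the proof, calling it ``virtually identical'' to Chenevier's. One cosmetic simplification: rather than passing to an admissible cover by affinoids $V$ on which the module is free (and checking that $W_{U,h}\cap V$ remains Zariski dense in each $V$), you may realise the projective $\oo(U)$-module $\ms{M}(\ms{Z}_{U,h})$ as a direct summand of a free module and run the matrix-coefficient argument directly over $U$, which only needs Zariski density of $W_{U,h}$ in $U$.
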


The proof is virtually identical to that of \cite[Proposition 3.9]{che2}; we omit it. Using this, one proves that $\ms{E}^{rig}$ is reduced in the same way as in the proof of \cite[Theorem IV.2.1(i)]{bel}, using the control theorem of Stevens (cf. \cite[Theorem 3.2.5]{han}); recall that our Hecke algebra $\T(\De^{p},K^{p})$ contains no Hecke operators at primes dividing $N$. That $\ms{E}^{rig}$ is equal to the Coleman-Mazur eigencurve is then proved using the control theorems of Coleman and Stevens and the Eichler-Shimura isomorphism together with \cite[Theorem 5.1.2]{han} (this type of argument is well known to experts, see for example the proof of \cite[Theorem IV.2.1(i)]{bel}). The argument for flatness will be given below. This finishes our review of the basic properties of $\ms{E}^{rig}$.

\medskip

Let us now return to the constructions of this paper. Our eigenvariety construction gives an extension $\ms{E}$ of $\ms{E}^{rig}$ defined over the locus $\mc{W}_{0}\sub \mc{W}$ where $\ka_{2}=1$. Another such extension $\ms{E}^{\prime}$ was constructed by Andreatta, Iovita and Pilloni \cite{aip}. Note that $\mc{W}_{0}$ is naturally the analytic locus of the formal weight space $\mf{W}_{0}$ with $\ka_{2}=1$. We have $\mf{W}_{0}\cong \Spa(\Zp[[\Zp^{\times}]], \Zp[[\Zp^{\times}]])$. When $p\neq 2$, $\Zp[[\Zp^{\times}]]$ is a regular ring. When $p=2$ this is no longer the case; we have $\Z_{2}[[\Z_{2}^{\times}]]\cong \Z_{2}[\Z/2][[X]]$ and $\Z_{2}[\Z/2]$ is not regular. We will instead work over the normalization $A$ of $\Z_{2}[[\Z_{2}^{\times}]]$, which is isomorphic to $\Z_{2}[[X]]\times \Z_{2}[[X]]$. The normalization map
$$ \Z_{2}[\Z/2][[X]] \ra \Z_{2}[[X]] \times \Z_{2}[[X]] $$
is explicitly given by
$$ \sum_{n\geq 0} (a_{n}+b_{n}g)X^{n} \mapsto \left( \sum_{n\geq 0} (a_{n}+b_{n})X^{n}, \sum_{n\geq 0} (a_{n}-b_{n})X^{n} \right) $$
where $a_{n},b_{n}\in \Z_{2}$ and $g\in \Z/2$ is the non-trivial element. Using this map we get a weight into $A$, and we let $\wt{\mf{W}}_{0} = \Spa(A,A)$ and put $\wt{\mc{W}}_{0}=\wt{\mf{W}}_{0}^{an}$. We remark that $\wt{\mc{W}}_{0}^{rig} \cong \mc{W}_{0}^{rig}$ canonically via the normalization map. To make our notation uniform, we set $\wt{\mc{W}}_{0}=\mc{W}_{0}$ when $p\neq 2$. We may perform our construction over $\wt{\mc{W}}_{0}$, and one may pull back the Banach modules that are used to construct the eigencurve in \cite{aip} to $\wt{\mc{W}}_{0}$, and construct the eigencurve over $\wt{\mc{W}}_{0}$ instead. Let us denote the corresponding eigenvarieties by $\wt{\ms{E}}$ and $\wt{\ms{E}}^{\prime}$, though we hasten to remark that these should \emph{not} be thought of as normalizations of $\ms{E}$ and $\ms{E}^{\prime}$.

\begin{lemma}
Let $\U \subset \wt{\mc{W}}_{0}$ be a connected open affinoid subset such that $\oo(U)$ is a Tate ring. Then $\oo(\U)$ is a Dedekind domain.
\end{lemma}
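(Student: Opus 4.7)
The plan is to exhibit $\oo(\U)$ as a one-dimensional, Noetherian, normal integral domain, since such a ring is automatically Dedekind.

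The ambient space $\wt{\mc{W}}_0$ is a one-dimensional analytic adic space, naturally covered by the rigid generic fiber $\wt{\mc{W}}_0^{rig} = \{|p| \neq 0\}$ (an open unit disc, or disjoint union thereof) and the nested union of affinoids $\U_n = \{|p^n| \leq |X| \neq 0\}$ for $n \geq 1$, whose coordinate rings $R_n = \Zp[[X]]\langle p^n/X\rangle$ are Tate with pseudo-uniformizer $X$. Each piece of this cover is a one-dimensional Noetherian normal integral domain: for the rigid generic fiber this is classical, and for the $R_n$ one verifies directly from the presentation as a completion of $\Zp[[X]][T]/(XT - p^n)$ that $R_n/(p) \cong \Fp((X))$ is a field while $R_n[1/p] \cong \Qp\langle X, p^n/X\rangle$ is a classical smooth one-dimensional Tate algebra, so that $R_n$ itself is one-dimensional, Noetherian, and normal. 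Noetherianity of $\oo(\U)$ is supplied by the appendix results on the class of Tate rings arising as local pieces of our eigenvarieties, while being one-dimensional and normal are local properties inherited from the standard cover.

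To upgrade from normal to integral domain, I will use connectedness: a Noetherian normal ring is a finite product of normal integral domains, one per irreducible component of its spectrum. Since idempotents of $\oo(\U)$ correspond to clopen subsets of $\U = \Spa(\oo(\U), \oo^+(\U))$, the topological connectedness of $\U$ forbids nontrivial idempotents in $\oo(\U)$, so $\oo(\U)$ is indecomposable and hence an integral domain.

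The main obstacle will be the careful analysis of the boundary rings $R_n$ (verifying dimension one and normality from the explicit presentation above) and the verification that these local properties propagate from the standard cover to the arbitrary Tate open affinoid $\U$; both steps should follow from the structural appendix results on the class of Tate rings we work with, together with standard commutative algebra.
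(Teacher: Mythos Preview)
Your outline is correct, but the paper takes a more direct route. Instead of first establishing the desired properties for the explicit boundary rings $R_n$ and then propagating them to an arbitrary $\U$, the paper compares the local rings of $\oo(\U)$ directly to those of $A=\Zp[[X]]$: for a maximal ideal $\mf{q}\subset\oo(\U)$ with preimage $\mf{p}\subset A$, Proposition~\ref{localrings} shows that $A_{\mf{p}}\to\oo(\U)_{\mf{q}}$ induces an isomorphism on completions, and Lemma~\ref{maxideals} shows $\mf{p}$ is a height-$1$ prime in the regular two-dimensional ring $A$, so $\widehat{A_{\mf{p}}}$ (and hence $\widehat{\oo(\U)_{\mf{q}}}$) is a DVR. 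This bypasses your explicit analysis of $R_n$ entirely. Your approach would work too, but note that the step you describe as ``local properties inherited from the standard cover'' is exactly the content of Proposition~\ref{localrings}(2), which you only gesture at; once you invoke it, your detour through the $R_n$ becomes unnecessary, since the same proposition lets you compare with $\Zp[[X]]$ itself. Both arguments finish the same way, using connectedness of $\U$ to rule out nontrivial idempotents.
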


\begin{proof}
We first show that $\oo(\U)$ is regular of Krull dimension $1$. The connected components of $\wt{\mc{W}}_{0}$ are isomorphic to $\Spa(\Zp[[X]],\Zp[[X]])$ so we may take $\U$ to be an open affinoid subset of $\Spa(\Zp[[X]],\Zp[[X]])$. Let $\mf{q}$ be a maximal ideal of $\oo(\U)$ and let $\mf{p}$ be its preimage in $A=\Zp[[X]]$. By Proposition \ref{localrings} the natural map $A_{\mf{p}} \ra \oo(\U)_{\mf{q}}$ induces an isomorphism on completions. By Lemma \ref{maxideals} $\mf{p}$ defines a closed point of $\Spec A \setminus \{(p,X)\}$. Since $A$ is a regular local ring of dimension $2$, it follows that $A_{\mf{p}}$ is a regular local ring of dimension $1$, and hence the same is true for $\oo(\U)_{\mf{q}}$ (since if $R$ is a Noetherian local ring with completion $\wh{R}$, $R$ is regular if and only if $\wh{R}$ is regular, and $\dim R=\dim \wh{R}$). It follows that $\oo(\U)$ is a product of regular integral domains of dimension $1$. Since $\U$ is connected $\oo(\U)$ does not contain any nontrivial idempotents, so $\oo(\U)$ is an integral domain. This finishes the proof.
\end{proof}

Let $(\U,h)$ be a connected slope datum for $\wt{\ms{E}}$ (by which we mean a slope datum for the construction that produces $\wt{\ms{E}}$ such that $\U$ is connected; we will use the terminology `(connected) slope datum for $\wt{\ms{E}}^{\prime}$' similarly). Then $\oo(\wt{\ms{E}}_{\U,h})$ is, by definition, an $\oo(\U)$-submodule of $\End_{\oo(\U)}(H^{1}(K, \ms{D}_{\U})_{\leq h})$. The latter is projective by Lemma  \ref{proj}, so the former is also projective since $\oo(\U)$ is Dedekind. Thus the natural map $\wt{\ms{E}}_{\U,h} \ra \U$ is finite flat, and hence $\wt{\ms{E}} \ra \wt{\mc{W}}_{0}$ is flat. The same applies to $\wt{\ms{E}}^{\prime}$. 

\medskip

Now let $(\U,h)$ be a slope datum for $\wt{\ms{E}}$ \emph{and} $\wt{\ms{E}}^{\prime}$. Let $(\U_{i})_{i\in I}$ be an open affinoid cover of $\U^{rig}$. Then the natural map $\oo(\U) \ra \prod_{i\in I}\oo(\U_{i})$ is an injection (since $\U \setminus \U^{rig}$ does not contain any open subset of $\U$) so tensoring with the finite projective $\oo(\U)$-module $\oo(\wt{\ms{E}}_{\U,h})$ we get an injection
$$ \oo(\wt{\ms{E}}_{\U,h}) \hookrightarrow \left( \prod_{i\in I} \oo(\U_{i}) \right) \otimes_{\oo(\U)}\oo(\wt{\ms{E}}_{\U,h}) \cong \prod_{i\in I} \oo(\wt{\ms{E}}_{\U_{i},h}), $$
which in particular shows that $\oo(\wt{\ms{E}}_{\U,h})$ is reduced. The image of $\oo(\wt{\ms{E}}_{\U,h})$ inside $\prod_{i \in I} \oo(\wt{\ms{E}}_{\U_{i},h})$ is equal to the $\oo(\U)$-span of the image of $\T(\De^{p},K^{p})[U_{p}]$ in $\prod_{i \in I} \oo(\wt{\ms{E}}_{\U_{i},h})$. The same holds replacing $\wt{\ms{E}}$ by $\wt{\ms{E}}^{\prime}$, so since we have canonical isomorphisms
$$ \oo(\wt{\ms{E}}_{\U_{i},h}) \cong \oo(\wt{\ms{E}}^{\prime}_{\U_{i},h}) $$
we obtain a canonical isomorphism $\oo(\wt{\ms{E}}_{\U,h}) \cong \oo(\wt{\ms{E}}^{\prime}_{\U,h})$, compatible with the way the eigencurves are built. As a result we have a canonical isomorphism $\wt{\ms{E}} \cong \wt{\ms{E}}^{\prime}$ extending the canonical isomorphism $\wt{\ms{E}}^{rig}\cong (\wt{\ms{E}}^{\prime})^{rig}$. We summarize the discussion above in the following theorem:

\begin{theorem}\label{comparison}
The eigenvariety $\wt{\ms{E}}$ is reduced and flat over $\wt{\mc{W}}_{0}$. Moreover, it is canonically isomorphic to the eigencurve $\wt{\ms{E}}^{\prime}$ constructed by Andreatta--Iovita--Pilloni \cite{aip}.
\end{theorem}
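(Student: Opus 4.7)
The plan is to execute the three assertions in the order stated, since each uses the previous one and the preparations already made just before the theorem. I will work locally over connected slope data $(\U,h)$ for $\wt{\ms{E}}$ (and, when needed, simultaneously for $\wt{\ms{E}}^{\prime}$), which suffice because such slope data form an admissible cover of $\wt{\mc{W}}_{0}$ by Theorem \ref{cover}.

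First, for flatness I would argue as follows. By the lemma preceding the theorem, for any connected open affinoid $\U \subset \wt{\mc{W}}_{0}$ with $\oo(\U)$ Tate, the ring $\oo(\U)$ is a Dedekind domain. By Lemma \ref{proj}, the $\oo(\U)$-module $H^{1}(K,\ms{D}_{\U})_{\leq h}$ is finite projective. Since by construction $\oo(\wt{\ms{E}}_{\U,h})$ is an $\oo(\U)$-submodule of $\End_{\oo(\U)}(H^{1}(K,\ms{D}_{\U})_{\leq h})$, it is torsion-free over the Dedekind domain $\oo(\U)$, hence finite projective. Therefore $\wt{\ms{E}}_{\U,h} \to \U$ is finite flat, and gluing gives flatness of $\wt{\ms{E}} \to \wt{\mc{W}}_{0}$.

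Next, for reducedness, I would exploit the fact that $\wt{\ms{E}}^{rig}$ (i.e.~the Coleman--Mazur eigencurve) is already known to be reduced (as discussed in \S \ref{cm}, via the criterion of Chenevier combined with Stevens' control theorem and classicality of small slope overconvergent forms). Pick a connected slope datum $(\U,h)$ and an admissible affinoid cover $(\U_{i})_{i\in I}$ of $\U^{rig}$. Since $\U^{rig}$ is dense in $\U$ (the complement contains no open subset of $\U$), the map $\oo(\U) \to \prod_{i} \oo(\U_{i})$ is injective. Tensoring with the finite projective $\oo(\U)$-module $\oo(\wt{\ms{E}}_{\U,h})$ preserves injectivity, yielding
$$
\oo(\wt{\ms{E}}_{\U,h}) \hookrightarrow \prod_{i\in I} \oo(\wt{\ms{E}}_{\U_{i},h}).
$$
Each factor on the right is reduced because it is a local ring of $\wt{\ms{E}}^{rig}$ (up to base change to an affinoid of $\wt{\mc{W}}_{0}^{rig}$), so $\oo(\wt{\ms{E}}_{\U,h})$ is reduced. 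Gluing gives that $\wt{\ms{E}}$ is reduced.

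Finally, for the identification $\wt{\ms{E}} \cong \wt{\ms{E}}^{\prime}$, I would use the explicit description of $\oo(\wt{\ms{E}}_{\U,h})$ as the $\oo(\U)$-subalgebra of $\prod_{i}\oo(\wt{\ms{E}}_{\U_{i},h})$ generated by the image of $\mathbb{T}(\De^{p},K^{p})[U_{p}]$. Picking $(\U,h)$ to be a slope datum for both $\wt{\ms{E}}$ and $\wt{\ms{E}}^{\prime}$ simultaneously (possible because both $\wt{\ms{E}}$ and $\wt{\ms{E}}^{\prime}$ are cut out by the same Fredholm series on $\mathbb{A}^{1}_{\wt{\mc{W}}_{0}}$, up to a minor technical comparison of the two relevant Fredholm series over the union of the two Fredholm hypersurfaces; the relevant comparison is essentially \cite[Theorem 6.1]{aip} together with the identification of our $\ms{D}_\ka$ with the dual of their sheaf over $\wt{\mc{W}}_{0}^{rig}$), the same argument gives the corresponding description for $\oo(\wt{\ms{E}}^{\prime}_{\U,h})$. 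The canonical isomorphism $\wt{\ms{E}}^{rig} \cong (\wt{\ms{E}}^{\prime})^{rig}$ (which identifies both with the Coleman--Mazur eigencurve) gives canonical isomorphisms $\oo(\wt{\ms{E}}_{\U_{i},h}) \cong \oo(\wt{\ms{E}}^{\prime}_{\U_{i},h})$ that are equivariant for the Hecke action, and hence the generated $\oo(\U)$-subalgebras match up, yielding a canonical isomorphism $\oo(\wt{\ms{E}}_{\U,h}) \cong \oo(\wt{\ms{E}}^{\prime}_{\U,h})$. These glue to a canonical isomorphism $\wt{\ms{E}} \cong \wt{\ms{E}}^{\prime}$ compatible with the weight maps.

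The main obstacle I expect is the final step: to apply the generation argument, one must know that both $\wt{\ms{E}}$ and $\wt{\ms{E}}^{\prime}$ are built from Fredholm hypersurfaces which agree on $\wt{\mc{W}}_{0}^{rig}$ and that the cover by slope data can be arranged in parallel for both constructions. This amounts to comparing our distribution modules $\ms{D}_{\U}$ with the Banach sheaves of \cite{aip} over characteristic zero affinoids, which is a relatively standard (but not entirely automatic) verification using the explicit descriptions given in \S \ref{modules}. The reducedness and flatness steps are, by contrast, essentially formal consequences of Lemma \ref{proj} and the Dedekind lemma preceding the theorem.
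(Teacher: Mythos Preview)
Your proposal is correct and follows essentially the same argument as the paper: flatness via Lemma \ref{proj} plus the Dedekind property of $\oo(\U)$, reducedness by embedding into the product over a rigid cover using projectivity, and the comparison by identifying both Hecke algebras with the $\oo(\U)$-span of $\T(\De^{p},K^{p})[U_{p}]$ inside $\prod_{i}\oo(\wt{\ms{E}}_{\U_{i},h})$.

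One small remark on your final paragraph: the obstacle you anticipate is slighter than you suggest. You do not need the two Fredholm hypersurfaces to agree, nor to compare $\ms{D}_{\U}$ directly with the Banach sheaves of \cite{aip}. All that is required is (i) common slope data $(\U,h)$, obtained by refining the two covers, and (ii) the canonical identification $\oo(\wt{\ms{E}}_{\U_{i},h}) \cong \oo(\wt{\ms{E}}^{\prime}_{\U_{i},h})$ over rigid affinoids $\U_i$, which is already in hand since both $\wt{\ms{E}}^{rig}$ and $(\wt{\ms{E}}^{\prime})^{rig}$ have been identified with the Coleman--Mazur eigencurve earlier in \S\ref{cm}. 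The comparison therefore takes place purely at the level of Hecke algebras, not modules.
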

\begin{remark}
In fact, it is possible to show that $\ms{E}$ and $\ms{E}^{\prime}$ are 
isomorphic for all $p$ (i.e~including $p=2$), using the interpolation theorem 
\cite[Theorem 3.2.1]{jn2}, since both $\ms{E}$ and $\ms{E}^{\prime}$ are 
reduced 
with Zariski dense sets of classical points which naturally match up.
\end{remark}

Fix a character $\eta : (\Z/q)^{\times} \ra \Fp^{\times}$ (recall that $q=4$ if $p= 2$ and $q=p$ otherwise). We have a natural isomorphism $\Zp^{\times}\cong (\Z/q)^{\times} \times (1+q\Zp)$ defined by $z \mapsto (\ol{z}, z/\omega(\ol{z}))$ where $\ol{-}$ denotes reduction modulo $q$ and $\omega$ denotes the Teichm\"uller lift. Let us write $\langle z \rangle :=z/\omega(\ol{z})$. Then we may define a character $\ka_{\eta} : \Zp^{\times} \ra \Zp[[X]]^{\times}$ by
$$ \ka_{\eta}(z) = \omega(\eta(\ol{z})) \sum_{\n=0}^{\infty} \begin{pmatrix} p^{-1}\log \langle z \rangle \\ n \end{pmatrix} X^{n}. $$
We let $\ol{\ka}_{\eta}$ denote its reduction modulo $p$. This is a character $\Zp^{\times} \ra \Fp((X))^{\times}$ which we may think of as a character $T_{0} \ra \Fp((X))^{\times}$ with $\ka_{2}=1$. We remark that if $p=2$ then $\eta$, and hence $\ol{\ka}_{\eta}$, is unique.

\begin{corollary}\label{inf}
There are infinitely many (non-ordinary) finite slope $U_{p}$-eigenvectors in $H^{1}(K, \ms{D}_{\ol{\ka}_{\eta}})$.
\end{corollary}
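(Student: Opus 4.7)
The plan is to reduce the statement to a non-polynomiality assertion about the Fredholm series at $\ol{\ka}_\eta$, and then import this assertion via Theorem~\ref{comparison}.

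First, in the $\GL_2/\Q$ setting of Section~\ref{cm}, overconvergent cohomology is concentrated in degree $1$, so $H^\ast(K,\ms{D}_{\ol{\ka}_\eta}) = H^1(K,\ms{D}_{\ol{\ka}_\eta})$. By Corollary~\ref{global}, the universal Fredholm series $F_{\wt{\mc{W}}_0} \in \oo(\wt{\mc{W}}_0)\{\{T\}\}$ specialises to an entire series $F := F_{\ol{\ka}_\eta} \in \F_p((X))\{\{T\}\}$. Applying Theorem~\ref{riesz} to slope-$\leq h$ factorisations of $F$ yields slope-$\leq h$ decompositions of $H^1(K,\ms{D}_{\ol{\ka}_\eta})$ whose first summand has $\F_p((X))$-dimension equal to the number of slopes $\leq h$ (with multiplicity) in the Newton polygon of $F$. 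Hence the total dimension of the finite-slope part of $H^1(K,\ms{D}_{\ol{\ka}_\eta})$ is infinite precisely when $F$ is not a polynomial, and the corollary reduces to this statement.

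The main step, and the main obstacle, is to establish the non-polynomiality of $F$. For this I would invoke Theorem~\ref{comparison}: the canonical isomorphism $\wt{\ms{E}} \cong \wt{\ms{E}}^\prime$ with the Andreatta--Iovita--Pilloni extended eigencurve identifies $F$ with the characteristic power series of the compact $U_p$-action on the infinite-dimensional $\F_p((X))$-Banach space $\ol{M}^\dagger_{\ol{\ka}_\eta}$ of overconvergent modular forms of weight $\ol{\ka}_\eta$ constructed in \cite{aip}. Under their construction, $F$ specialises (via the control theorem) the characteristic power series of $U_p$ on classical overconvergent modular forms of weights approximating $\ol{\ka}_\eta$; since at any such classical weight the characteristic power series has infinitely many nonzero roots (by compactness of $U_p$ on the infinite-dimensional space of overconvergent forms), the limiting series $F$ cannot reduce to a polynomial. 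A purely internal argument (bypassing \cite{aip}) would require controlling the coefficients $a_n \in \Z_p[[X]]$ in the expansion $F_{\wt{\mc{W}}_0}|_{\text{component of }\ol{\ka}_\eta} = 1 + \sum_n a_n T^n$ and ruling out that $a_n \in p\Z_p[[X]]$ for all sufficiently large $n$, which is delicate from the overconvergent cohomology side alone.

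Finally, the parenthetical ``non-ordinary'' is handled by noting that the ordinary part of $H^1(K,\ms{D}_{\ol{\ka}_\eta})$ (eigenvectors with $U_p$-eigenvalue a unit) is finite-dimensional, being the specialisation at $\ol{\ka}_\eta$ of the classical Hida family, a finite-rank module over $\Z_p[[X]]$. Consequently, once we have produced infinitely many finite-slope eigenvectors, all but finitely many are non-ordinary.
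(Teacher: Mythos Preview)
Your reduction to the non-polynomiality of $F_{\ol{\ka}_\eta}$ is correct, and your overall strategy of passing through Theorem~\ref{comparison} to the Andreatta--Iovita--Pilloni side is the same as the paper's. However, the central step of your argument --- that ``since at any such classical weight the characteristic power series has infinitely many nonzero roots\ldots the limiting series $F$ cannot reduce to a polynomial'' --- is a genuine gap. Having infinitely many zeros at each characteristic~$0$ specialisation does \emph{not} preclude the reduction modulo $p$ from being a polynomial: writing the global Fredholm series as $1+\sum_n a_n T^n$ with $a_n\in\Zp[[X]]$, nothing in your compactness observation rules out $a_n\in p\Zp[[X]]$ for all large $n$. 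You acknowledge this yourself in the next sentence (calling the internal argument ``delicate''), but then the preceding sentence is not an argument at all.

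The missing input is the result of Bergdall--Pollack \cite[Corollary~A.1]{bp}, which establishes precisely that $\ol{M}^{\dagger}_{\ol{\ka}_{\eta}}(N)$ has infinitely many finite slope $U_p$-eigenvectors (equivalently, that the Fredholm series on the AIP side is not a polynomial at $\ol{\ka}_\eta$). The paper's proof proceeds by invoking this, after first using Corollary~\ref{points2} (and its straightforward analogue on the AIP side) together with Theorem~\ref{comparison} to match up the finite slope Hecke eigensystems in $H^{1}(K,\ms{D}_{\ol{\ka}_{\eta}})$ with those in $\ol{M}^{\dagger}_{\ol{\ka}_{\eta}}(N)$. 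Note also that Theorem~\ref{comparison} is an isomorphism of eigenvarieties, not a literal equality of Fredholm series (the series here is built from $C^{\ast}$, not $H^1$), so the transfer is cleaner via Hecke eigensystems than via the Fredholm series as you propose. Your treatment of the ``non-ordinary'' parenthetical is fine.
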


\begin{proof}
By Corollary \ref{points2}, its analogue for $\wt{\ms{E}}^{\prime}$ (which is 
simpler, and is essentially \cite[Lemma 5.9]{bu}) and Theorem \ref{comparison} 
we see that $H^{1}(K,\ms{D}_{\ol{\ka}_{\eta}})$ and the module 
$\ol{M}^{\dagger}_{\ol{\ka}_{\eta}}(N)$ of overconvergent modular forms of 
weight $\ol{\ka}_{\eta}$ and tame level $N$ constructed in \cite{aip} contain 
the same finite slope systems of Hecke eigenvalues. By \cite[Corollary 
A.1]{bp}, $\ol{M}^{\dagger}_{\ol{\ka}_{\eta}}(N)$ has infinitely many finite 
slope $U_{p}$-eigenvectors, so we are done.
\end{proof}

\begin{remark}
It is possible to prove Corollary \ref{inf} directly from \cite[Theorem A]{bp} (using the observation in the Remark following \cite[Corollary A.2]{bp}) without any reference to \cite{aip}.
\end{remark}

\subsection{Estimates for the Newton polygon of $U_p$}
In this and the following section we give a short proof of the estimates 
obtained in \cite[Theorem 3.16]{lwx} for the Newton polygon of $U_p$ acting on 
spaces of overconvergent automorphic forms for a definite quaternion algebra 
over $\Q$.

We fix an odd prime $p$ and assume that we are in the setting of Section \ref{modules}. Suppose that $\overline{N}_1 \cong \Z_p$ and fix a topological generator $\nbar$. Let $f$ be a norm-decreasing $R$-linear map \[f: \bigoplus_{i=1}^t \mc{D}^r_{\ka} \rightarrow \bigoplus_{i=1}^t \mc{D}^{r^{1/p}}_{\ka}\] and recall that we have a compact inclusion \[\iota: \bigoplus_{i=1}^t \mc{D}^{r^{1/p}}_{\ka}\hookrightarrow \bigoplus_{i=1}^t \mc{D}^r_{\ka}.\] We set $U = \iota\circ f$. Then $U$ is a compact endomorphism of $M = \bigoplus_{i=1}^t \mc{D}^r_{\ka}$. Recall that we have a potential ON-basis for $\mc{D}^r_{\ka}$ given by the elements $e_{r,\alpha}:=\varpi^{-n(r,\varpi,\alpha)}\nbar^\alpha$ for $\alpha\in\Z_{\ge 0}$. 

We consider the potential ON-basis for $M$ given by \[e_{r,0}^1=(e_{r,0},0,\ldots,0),\ldots,e_{r,0}^t=(0,\ldots,0,e_{r,0}), e_{r,1}^1=(e_{r,1},0,\ldots,0),\ldots\]
\begin{lemma}\label{Upestimate}
Assume that there is no $x\in R$ with $1<|x|<|\vp|^{-1}$. Then $U$ maps $e_{r,\alpha}^i$ to a sum $\sum_{j,\beta} a_\beta^j e_{r,\beta}^j$ with \[|a_\beta^j|\le |\varpi|^{n(r,\varpi,\beta)-n(r^{1/p},\varpi,\beta)}.\] If we define \[\lambda(0) = 0, \lambda(i+1) = \lambda(i) + n(r,\varpi,\lfloor i/t\rfloor)-n(r^{1/p},\varpi,\lfloor i/t\rfloor)\] the Fredholm series \[\det(1-TU|M) = \sum_{n\ge 0}c_nT^n \in R\{\{T\}\}\] satisfies $|c_n| \le |\varpi|^{\lambda(n)}$.
\end{lemma}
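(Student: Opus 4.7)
The plan is to unwind the factorization $U = \iota \circ f$ on the potential ON-basis, use the norm-decreasing property of $f$ together with the arithmetic assumption on $R$ to bound individual matrix entries, and then combine these entry bounds via a multilinear/permutation argument to bound the Fredholm determinant coefficients.

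First I would establish the matrix entry bound. Apply $f$ to a basis element $e_{r,\alpha}^i$ of norm $\le 1$; since $f$ is norm-decreasing into $M'$ equipped with $||-||_{r^{1/p}}$, the image has norm $\le 1$ in $M'$. Expanding the image in the monomial coordinates $\nbar^\beta$ on each summand, say as $\sum d_\beta^j \nbar^\beta$, the norm condition $\sup_{j,\beta} |d_\beta^j| r^{\beta/p} \le 1$ gives $|d_\beta^j| \le r^{-\beta/p}$. Applying $\iota$ changes no entries; it merely rewrites the same element in the coordinates of $M$, giving $a_\beta^j = d_\beta^j \vp^{n(r,\vp,\beta)}$. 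The hypothesis on $R$ (no element with norm strictly between $1$ and $|\vp|^{-1}$), combined with $\vp$ being multiplicative and invertible, forces $|R \setminus \{0\}| \subset |\vp|^{\Z}$. Hence $|d_\beta^j| = |\vp|^{-k}$ for some integer $k$, and the inequality $|\vp|^{-k} \le r^{-\beta/p}$ together with the integrality of $k$ forces $k \le n(r^{1/p},\vp,\beta)$ (this is the definition of the floor in $n(r^{1/p},\vp,\beta)$). This yields $|d_\beta^j| \le |\vp|^{-n(r^{1/p},\vp,\beta)}$ and hence the asserted bound $|a_\beta^j| \le |\vp|^{n(r,\vp,\beta) - n(r^{1/p},\vp,\beta)}$.

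For the Fredholm determinant bound, recall that for a compact operator on a potentially ON-able Banach module with matrix $(m_{c',c})$, the coefficient $c_n$ equals $(-1)^n$ times a sum of $n \times n$ principal minors over subsets $S$ of the index set. By the ultrametric inequality, $|c_n| \le \sup_S |\det(M_S)|$. For a fixed $S = \{(i_1,\alpha_1),\ldots,(i_n,\alpha_n)\}$, expanding $\det(M_S) = \sum_{\sigma} \mathrm{sgn}(\sigma) \prod_k m_{c_{\sigma(k)}, c_k}$ and applying the entry bound term by term gives $|\det(M_S)| \le \prod_{k=1}^n |\vp|^{n(r,\vp,\alpha_k) - n(r^{1/p},\vp,\alpha_k)}$, independent of $\sigma$ because the exponents depend only on the multiset $\{\alpha_k\}$. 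Since the exponent $\alpha \mapsto n(r,\vp,\alpha) - n(r^{1/p},\vp,\alpha)$ is non-decreasing and $|\vp| < 1$, maximizing $|\det(M_S)|$ corresponds to choosing the multiset of $\alpha$-values as small as possible under the constraint that the pairs $(i,\alpha)$ are distinct; with $t$ choices of $i$ at each level, the optimum is $\alpha_k = \lfloor (k-1)/t \rfloor$. This makes the sum of exponents equal to $\sum_{i=0}^{n-1}[n(r,\vp,\lfloor i/t\rfloor) - n(r^{1/p},\vp,\lfloor i/t\rfloor)] = \lambda(n)$, yielding $|c_n| \le |\vp|^{\lambda(n)}$.

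The main conceptual point — and the only place the hypothesis on $R$ plays any role — is the passage from the analytic bound $|d_\beta^j| \le r^{-\beta/p}$ (a real-number inequality) to the discrete bound $|d_\beta^j| \le |\vp|^{-n(r^{1/p},\vp,\beta)}$ (with integer exponent). Without the value-group hypothesis there is a genuine gap here: $|\vp|^{-n(r^{1/p},\vp,\beta)}$ can be strictly smaller than $r^{-\beta/p}$, and one would only obtain a weaker Newton polygon bound (essentially replacing $\lambda(n)$ by a real-valued analogue). Everything else — the matrix computation for $\iota$ and the passage from entry bounds to minors to Fredholm coefficients — is a routine ultrametric calculation.
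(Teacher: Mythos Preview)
Your proof is correct and follows essentially the same approach as the paper: the paper expands $f(e_{r,\alpha}^i)$ in the basis $e_{r^{1/p},\beta}^j$ (with coefficients $b_\beta^j$, so that your $d_\beta^j = b_\beta^j \vp^{-n(r^{1/p},\vp,\beta)}$), observes directly that $|b_\beta^j| < |\vp|^{-1}$ and hence $|b_\beta^j|\le 1$ by the hypothesis --- equivalent to your value-group reformulation $|R\setminus\{0\}|\subset |\vp|^{\Z}$ --- and then bounds $c_n$ by the same row-wise permanent-type estimate you give. Your remark that the monotonicity of $\alpha\mapsto n(r,\vp,\alpha)-n(r^{1/p},\vp,\alpha)$ is what singles out the first $n$ rows is a point the paper leaves implicit; it holds because any integer in $(\alpha C/p,(\alpha+1)C/p]$ multiplies by $p$ to an integer in $(\alpha C,(\alpha+1)C]$, so a jump in $\lfloor \alpha C/p\rfloor$ forces a jump in $\lfloor \alpha C\rfloor$.
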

\begin{proof}
We first prove the estimate on the matrix coefficients of $U$. Apply $f$ to $e_{r,\alpha}^i$. We get a sum $\sum_{j,\beta} b_\beta^j e_{r^{1/p},\beta}^j$, and the fact that $f$ is norm-decreasing is equivalent to
$$ |b_{\beta}^{j}| \leq |\vp|^{n(r^{1/p},\vp,\beta)-n(r,\vp,\al)}r^{|\al|-|\beta|/p} $$
for all $j,\beta$. Since $|\vp|<|\vp|^{-n(r,\vp,\al)}r^{|\al|},\,|\vp|^{-n(r^{1/p},\vp,\beta)}r^{|\beta|/p} \leq 1$ by construction, we deduce that $|b_{\beta}^{j}|<|\vp|^{-1}$ for all $j,\beta$. By our assumption on $R$, it follows that $|b_{\beta}^{j}|\leq 1$ for all $j,\beta$. We then have \[e_{r^{1/p},\beta}^j = \varpi^{-n(r^{1/p},\varpi,\beta)}\nbar^\beta = \varpi^{n(r,\varpi,\beta)-n(r^{1/p},\varpi,\beta)}e_{r,\beta}^j\] so we conclude that \[U e_{r,\alpha}^i= \sum_{j,\beta} a_\beta^j e_{r,\beta}^j\] where $a_\beta^j = \varpi^{n(r,\varpi,\beta)-n(r^{1/p},\varpi,\beta)}b_\beta^j$, and therefore \[|a_\beta^j| \le |\varpi|^{n(r,\varpi,\beta)-n(r^{1/p},\varpi,\beta)}.\]

In other words, the $i$th row of the matrix for $U$ (we begin indexing rows at $i = 0$) has entries with norm $\le |\varpi|^{n(r,\varpi,\lfloor i/t\rfloor)-n(r^{1/p},\varpi,\lfloor i/t\rfloor)}$. We deduce immediately that $|c_n| \le |\varpi|^{\lambda(n)}$, since $c_n$ is an alternating sum of products of matrix entries coming from $n$ distinct rows (\cite[Proposition 7]{ser}), and each of these products has norm $\le |\varpi|^{\lambda(n)}$.
\end{proof}

\subsection{Definite quaternion algebras over $\Q$}
As an application of Lemma \ref{Upestimate}, we give a new proof of 
\cite[Theorem 3.16]{lwx}. In this section we assume that $p$ is odd. We need to 
set up things so that we can apply the machinery of sections \ref{modules}, 
\ref{evars}. Let $D/\Q$ be a definite quaternion algebra, split at $p$, and let 
$\mathbf{G}$ be the reductive group over $\Q$ defined by $\mathbf{G}(R)= 
(D\otimes_{\Q}R)^\times$, for $\Q$-algebras $R$. We fix an isomorphism $D_p 
\cong M_2(\Q_p)$ and henceforth identify $D_p$ with $M_2(\Q_p)$ via this 
isomorphism. Let $\mathbf{G}_{\Z_p}$ be the $\Z_p$-model for $\mathbf{G}$ given 
by $\mathbf{G}_{\Z_p}(R)= (M_2(\Z_p)\otimes_{\Z_p}R)^\times$, for 
$\Z_p$-algebras $R$. We let $\mathbf{B}$ be the upper triangular Borel in 
$\mathbf{G}_{\Z_p}$ and let $\mathbf{T}$ be the diagonal maximal torus. 

Fix a tame level $K^p = \prod_{l \ne p}K_l$ with $K_l \subset \mathbf{G}(\Q_l)$ compact open, let $K = K^pI$, and assume that $K$ is neat.

Fix a character $\eta :\Z_p^\times \rightarrow \F_p^\times$ and let $\Lambda = \Z_p[[\Z_p^\times]]$. We have an induced map $\pi_\eta:\Lambda \rightarrow \F_p$ and we denote its kernel by $\m_\eta$. We write $\Lambda_\eta$ for the localisation $\Lambda_{\m_\eta}$. We have a universal character \[ [\cdot]_\eta: \Z_p^\times \rightarrow \Lambda_{\eta}^\times\] 

We give the complete local ring $\Lambda_{\eta}$ the $\m_\eta$-adic topology. Fixing a topological generator $\gamma$ of $1+p\Z_p$ gives an isomorphism \begin{align*}\Z_p[[X]] &\rightarrow  \Lambda_{\eta}\\ X &\mapsto [\gamma]_\eta-1.\end{align*}

Let $\mathfrak{W}_\eta = \Spa(\Lambda_{\eta},\Lambda_{\eta})$, denote its analytic locus by $\mathcal{W}_\eta$ and let $\U_1 \subset \mathcal{W}_\eta$ be the rational subdomain of $\mathfrak{W}_\eta$ \[\U_1 = \{|p| \le |X| \ne 0\}.\] Pulling back $\U_1$ to the open unit disc $\mathcal{W}_\eta^{rig}$ gives the `boundary annulus' $|X|_p\ge p^{-1}$.
	
We let $R_\eta = \OO(\U_1)$. More explicitly, $R_\eta = R_\eta^\circ[\frac{1}{X}]$ where $R_\eta^\circ$ is a ring of definition for $R_\eta$, given by the $X$-adic completion of $\Z_p[[X]][\frac{p}{X}]$, with the $X$-adic topology.

Even more explicitly, we can describe the elements of $R^\circ_\eta$ as formal power series \[\left\{\sum_{n \in \Z} a_n X^n: a_n \in \Z_p, |a_n|_pp^{-n}\le 1, |a_n|_pp^{-n} \rightarrow 0 \hbox{ as }n\rightarrow -\infty\right\}\]

$X$ is a topologically nilpotent unit in $R_\eta$ and so equipping $R_\eta$ with the norm $|r| = \inf\{p^{-n} \mid r \in X^nR_\eta^\circ, n \in \Z\}$ makes $R_\eta$ into a Banach--Tate $\Z_p$-algebra. This norm has the explicit description: 
\begin{equation}\label{Rwnorm}|\sum_{n \in \Z} a_n X^n| = \sup\{|a_n|_pp^{-n}\}.\end{equation}

Note that if $r \in \Lambda_\eta$, we have $|r| = \inf\{p^{-n} \mid r \in \m_\eta^n, n \in \Z\}$.

We now define a continuous character \[\kappa_\eta: T_0 \rightarrow R_\eta^\times\] by \[\kappa_\eta\begin{pmatrix}
a & 0\\ 0 & d\end{pmatrix} = [a]_\eta.\]

\begin{lemma}
	The norm we have defined on $R_\eta$ is adapted to $\kappa_\eta$. Moreover, for $t \in T_1$ we have $|\kappa_\eta(t)-1|\le 1/p$.
\end{lemma}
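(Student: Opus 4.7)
The plan is to verify each claim by direct computation using the explicit norm formula \eqref{Rwnorm} together with an explicit description of $\kappa_\eta$ under the isomorphism $\Lambda_\eta \cong \Z_p[[X]]$. Since $\kappa_\eta$ is trivial on the lower-right entry, for $t = \mathrm{diag}(a,d) \in T_0$ we have $\kappa_\eta(t) = [a]_\eta$, so it is enough to show that $|[a]_\eta| \le 1$ for every $a \in \Z_p^\times$, and that $|[a]_\eta - 1| \le 1/p$ for every $a \in 1 + p\Z_p$.

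First I would make $[a]_\eta$ concrete. Since $p$ is odd, the decomposition $\Z_p^\times = \mu_{p-1} \times (1+p\Z_p)$ lets us write $a = \omega(\bar a)\gamma^s$ for a unique $s \in \Z_p$. The map $\pi_\eta : \Lambda \to \F_p$ has a unique lift $\tilde\eta : (\Z/p)^\times \to \mu_{p-1} \subset \Z_p^\times$, and the idempotent $e_\eta = \frac{1}{p-1}\sum_{g \in (\Z/p)^\times}\tilde\eta(g)^{-1}[g]$ identifies $\Lambda_\eta$ with $\Z_p[[X]]$ in such a way that $[\omega(\bar a)]_\eta \mapsto \tilde\eta(\bar a)$ and $[\gamma]_\eta \mapsto 1+X$. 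Consequently
\[ [a]_\eta = \tilde\eta(\bar a)(1+X)^s = \tilde\eta(\bar a)\sum_{n\ge 0}\binom{s}{n}X^n, \]
with all coefficients in $\Z_p$ and supported in non-negative powers of $X$.

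Applying \eqref{Rwnorm} then gives $|[a]_\eta| = \sup_{n\ge 0}|\tilde\eta(\bar a)\binom{s}{n}|_p p^{-n} = \sup_{n\ge 0}|\binom{s}{n}|_p p^{-n}$, which is bounded by $1$ and in fact equals $1$ via the $n=0$ term. This verifies $\kappa_\eta(T_0) \subset R_{\eta,0}$. For $t \in T_1$, we have $a \in 1+p\Z_p$, so $\bar a = 1$ and $a = \gamma^s$ for some $s \in \Z_p$; in this case
\[ [a]_\eta - 1 = (1+X)^s - 1 = \sum_{n \ge 1}\binom{s}{n}X^n, \]
and \eqref{Rwnorm} immediately gives $|[a]_\eta - 1| \le \sup_{n\ge 1}p^{-n} = 1/p$.

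There is no real obstacle here: the result reduces entirely to unwinding the isomorphism $\Lambda_\eta \cong \Z_p[[X]]$ and exploiting the fact that the coefficients of $(1+X)^s$ lie in $\Z_p$. The only mild subtlety is ensuring that the identification $[\omega(\bar a)]_\eta \mapsto \tilde\eta(\bar a) \in \Z_p^\times$ is set up correctly so that the resulting unit has norm $1$; once that is in hand, both inequalities follow from the $p$-adic integrality of binomial coefficients $\binom{s}{n}$ for $s \in \Z_p$.
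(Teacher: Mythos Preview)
Your proof is correct and follows the same approach as the paper: both reduce to the identity $\kappa_\eta(t)-1=(1+X)^s-1=\sum_{n\ge 1}\binom{s}{n}X^n$ for $t\in T_1$ and read off the bound from the explicit norm formula. You simply supply more detail, in particular verifying explicitly that $\kappa_\eta(T_0)\subset R_{\eta,0}$ (which the paper leaves implicit, since $[a]_\eta$ visibly lies in $\Lambda_\eta\cong\Zp[[X]]\subset R_\eta^\circ$).
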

\begin{proof}If $t \in T_1$ we have $\kappa(t) - 1 = (1+X)^\alpha - 1 = \sum_{n\ge 1}{\alpha\choose n}X^n$ for some $\alpha \in \Z_p$. So $|\kappa_\eta(t)-1|\le 1/p$.
	\end{proof}

We can now apply the theory of Section \ref{evars} to the space of overconvergent automorphic forms $H^0(K,\mathcal{D}^{1/p}_{\kappa_\eta})$. Note that we have the concrete description:

\[H^0(K,\mathcal{D}^{1/p}_{\kappa_\eta}) = \{f: D^\times\backslash (D\otimes \A^f)^\times/K^p \rightarrow \mathcal{D}^{1/p}_{\kappa_\eta}~|~ f(gk) = k^{-1}f(g)\, \hbox{for }k\in I\} \] and $H^0(K,\mathcal{D}^{1/p}_{\kappa_\eta})$ is a Banach $R_\eta$-module with norm $|f| = \sup_{g \in (D\otimes \A^f)^\times}||f(g)||_{1/p}$.

In particular, we consider the action on 
$H^0(K,\mathcal{D}^{1/p}_{\kappa_\eta})$ of the Hecke operator 
$U_{\kappa_\eta}$ attached to the element $\left( \begin{smallmatrix} 1 & 0 \\ 
0 & p \end{smallmatrix} \right) \in \Sigma^{cpt}$. As a simple consequence of 
our results, we obtain the following theorem, which is essentially due to 
Liu--Wan--Xiao --- compare with \cite[Theorem 3.16, \S5.4]{lwx}. 
\begin{theorem}
	The Hecke operator $U_{\kappa_\eta}$ is compact. Consider the Fredholm series \[F_{\kappa_\eta}(T) = \sum_{n\ge 0}c_n T^n = \det\left(1-TU_{\kappa_\eta} | H^0(K,\mathcal{D}^{1/p}_{\kappa_\eta})\right).\] 
	
	Let $t =  |D^\times\backslash (D\otimes \A^f)^\times/K|$. We have $c_n \in \Lambda_\eta$ and  moreover we have \[c_n \in \m_\eta^{\lambda(n)} \hbox{for }n\in\Z_{\ge 0},\] where $\lambda(0) = 0,\lambda(1),\ldots$ is a sequence of integers determined by \[\lambda(0) = 0, \lambda(i+1) = \lambda(i) + \lfloor i/t\rfloor-\lfloor i/pt\rfloor\]
\end{theorem}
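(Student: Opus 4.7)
The plan is to reduce the statement directly to the matrix estimate in Lemma \ref{Upestimate}, applied with $R = R_\eta$, $\vp = X$, $r = 1/p$, and the module $M = \bigoplus_{i=1}^{t}\D^{1/p}_{\ka_\eta}$. Since $\mbf{G}(\R)$ is compact and $K$ is neat, the finite double coset space $Y(K) = D^\times \backslash (D\otimes \A^f)^\times/K$ has exactly $t$ elements, and for each representative $g_i$ the arithmetic group $g_i^{-1}D^\times g_i\cap K$ is trivial (modulo the centre, which acts trivially on $\D^{1/p}_{\ka_\eta}$ by our conventions on weights). Evaluation $f \mapsto (f(g_i))_i$ therefore gives an isometric isomorphism $H^0(K,\D^{1/p}_{\ka_\eta}) \cong \bigoplus_{i=1}^{t}\D^{1/p}_{\ka_\eta}$ of Banach $R_\eta$-modules. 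Using the double coset decomposition $ItI = \bigsqcup_{j=0}^{p-1}t\delta_j I$ with $\delta_j = \bigl(\begin{smallmatrix} 1 & j \\ 0 & 1\end{smallmatrix}\bigr)$, I can then write $U_{\ka_\eta}$ in this decomposition as a $t\times t$ matrix whose nonzero entries are each of the form $\ell\circ t$, where $\ell$ acts by an element of $I$ (hence by an isometry of $\D^{1/p}_{\ka_\eta}$, since the $||-||_r$ are $I$-invariant for $r\geq r_{\ka_\eta}$), while $t\in \Sigma^{cpt}$ acts by the compact map described in Corollary \ref{cor:cpt}. Consequently $U_{\ka_\eta}$ factors as a norm-decreasing map $F\colon \bigoplus_{i=1}^{t}\D^{1/p}_{\ka_\eta}\to \bigoplus_{i=1}^{t}\D^{p^{-1/p}}_{\ka_\eta}$ composed with the compact inclusion $\iota\colon \bigoplus_{i=1}^{t}\D^{p^{-1/p}}_{\ka_\eta}\hookrightarrow \bigoplus_{i=1}^{t}\D^{1/p}_{\ka_\eta}$ (the non-archimedean direct sum norm is a supremum, so norm-decreasingness transfers from the individual matrix entries). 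In particular $U_{\ka_\eta}$ is compact.

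To apply Lemma \ref{Upestimate} I also need the hypothesis that no $x\in R_\eta$ satisfies $1<|x|<|X|^{-1}=p$, and this is immediate from \eqref{Rwnorm}, since the values of $|-|$ on $R_\eta$ lie in $p^{\Z}\cup\{0\}$. The lemma then yields $|c_n|\le p^{-\lambda(n)}$, where $\lambda(i+1)-\lambda(i) = n(1/p, X, \lfloor i/t\rfloor) - n(p^{-1/p}, X, \lfloor i/t\rfloor)$. Since $|X|=1/p$, the formula \eqref{ONbasis} specialises to $n(1/p, X, \alpha) = \alpha$ and $n(p^{-1/p}, X, \alpha) = \lfloor \alpha/p\rfloor$, and combined with the elementary identity $\lfloor\lfloor i/t\rfloor/p\rfloor = \lfloor i/(pt)\rfloor$ this matches the recursion in the statement.

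Finally, I need to promote $c_n$ from $R_\eta$ to $\Lambda_\eta$ and deduce $c_n\in \m_\eta^{\lambda(n)}$. For the integrality, I use that by Corollary \ref{global} the Fredholm series $F_{\mc{W}}$ is defined globally as an element of $\OO(\mc{W})\{\{T\}\}$; restricting to $\mc{W}_\eta$ gives coefficients in $\OO(\mc{W}_\eta)$. Since $\Lambda_\eta\cong \Z_p[[X]]$ is a $2$-dimensional regular Noetherian normal domain and $\mc{W}_\eta$ is the complement of the single non-analytic point $(p,X)$ inside $\Spa(\Lambda_\eta,\Lambda_\eta)$, a Hartogs/normality argument identifies $\OO(\mc{W}_\eta)$ with $\Lambda_\eta$, so $c_n\in \Lambda_\eta$. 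The containment $c_n\in \m_\eta^{\lambda(n)}$ then follows because, as noted in the text just before \eqref{Rwnorm}, the restriction of $|-|$ to $\Lambda_\eta$ coincides with the $\m_\eta$-adic norm, so $|c_n|\le p^{-\lambda(n)}$ forces $c_n\in \m_\eta^{\lambda(n)}$.

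The genuine technical content is bundled into Lemma \ref{Upestimate}, which has already been established; the remaining work above is essentially bookkeeping. The subtlest point is the identification $\OO(\mc{W}_\eta)=\Lambda_\eta$, but this is standard given the normality of $\Lambda_\eta$.
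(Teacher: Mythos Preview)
Your proof is correct and follows essentially the same route as the paper's own proof: identify $H^0(K,\D^{1/p}_{\ka_\eta})$ with $\bigoplus_{i=1}^{t}\D^{1/p}_{\ka_\eta}$ via evaluation at double coset representatives, invoke Corollary~\ref{cor:cpt} for compactness and the factorisation $U=\iota\circ F$, apply Lemma~\ref{Upestimate} with $\vp=X$ and $r=1/p$, and use Corollary~\ref{global} together with $\OO(\mc{W}_\eta)=\Lambda_\eta$ to get integrality. Your description of the matrix entries of $U_{\ka_\eta}$ as ``$\ell\circ t$ with $\ell\in I$'' is a slight oversimplification (the entries are sums of operators of the form $t\gamma$ with $\gamma\in I$, coming from rewriting $g_i t\delta_j$ in terms of the $g_k$), but this does not affect the argument since either ordering yields a norm-decreasing map into $\D^{p^{-1/p}}_{\ka_\eta}$; your additional Hartogs-type justification for $\OO(\mc{W}_\eta)=\Lambda_\eta$ is a welcome detail the paper leaves implicit.
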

\begin{proof}Compactness of $U_{\kappa_\eta}$ follows from Corollary \ref{cor:cpt}. The fact that $c_n \in \Lambda_\eta$ follows from Corollary \ref{global}, since $F_{\kappa_\eta}(T)$ extends to a Fredholm series over $\mathcal{W}_\eta$, and $\OO(\mathcal{W}_\eta)=\Lambda_\eta$.
	
The rest of the Theorem follows from Lemma \ref{Upestimate} (note that the norm on $R_{\eta}$ satisfies the assumption of that Lemma), using the fact that if we choose representatives $g_1,\ldots, g_t$ for the double cosets $D^\times\backslash (D\otimes \A^f)^\times/K$ and $r\in [p^{-1},1)$ we have an isomorphism of potentially ON-able $R_\eta$-modules: \begin{align*}H^0(K,\mathcal{D}^{r}_{\kappa_\eta}) &\cong \oplus_{i=1}^t \mathcal{D}^{r}_{\kappa_\eta} \\ f &\mapsto (f(g_i))_{i=1}^t
\end{align*}
We take $\varpi = X$, and compute that  $n(p^{-1},X,\lfloor i/t\rfloor) = \lfloor i/t\rfloor$ and $n(p^{-1/p},X,\lfloor i/t\rfloor) = \lfloor 1/p \lfloor i/t\rfloor\rfloor = \lfloor i/pt\rfloor$. 
\end{proof}

As in \S \ref{cm}, our eigenvariety construction, applied to the modules $H^0(K,\mathcal{D}^{r}_{\kappa})$ with $\kappa_2 = 1$, gives an eigenvariety $\ms{E}_\eta$ which is flat over $\mc{W}_{\eta}$. The open subspace $\ms{E}^{rig}_\eta$ defined by $|p|\ne 0$ is the eigenvariety constructed by Buzzard \cite{bu2}. 

We end this section with our interpretation of \cite[Theorem 1.3, Theorem 1.5]{lwx}. First we need some extra notation. For $m \le n$ positive integers we define \[\U_{m/n} = \{|p^m| \le |X^n| \ne 0\} \sub \mc{W}_{\eta}.\] We set $\U= \U_1$.

For a real number $\alpha \in (0,1]$ we denote by $\mc{W}_{\eta}^{> p^{-\alpha}}$ the open subspace of $\mc{W}_{\eta}$ obtained as the union of open affinoids \[\mc{W}_{\eta}^{> p^{-\alpha}} = \bigcup_{m/n < \alpha}\U_{m/n}.\] Note that $X$ is a topologically nilpotent unit in $\oo(\U_{m/n})$ for all $m,n$. We denote the pullback of $\ms{E}_\eta$ to $\mc{W}_{\eta}^{> p^{-\alpha}}$ by $\ms{E}_\eta^{> p^{-\alpha}}$. The eigenvariety $\ms{E}_\eta$ comes equipped with a map to the spectral variety $Z(F_{\kappa_\eta})$, and therefore it comes equipped with a map to $\A^1_{\U}$. For $h = m/n \in \Q$ with $m \in \Z$ and $n\in\Z_{\ge 1}$ we define an affinoid subspace $\mb{B}_{\U,=h} \subset \A^1_{\U}$ by \[\mb{B}_{\U,=h} = \{|T^n| = |X^{-m}|\}.\] Similarly, if $h = m/n \le h' = m'/n$ are rational numbers, we define \[\mb{B}_{\U,[h,h']} = \{|X^{-m}| \le |T^n| \le |X^{-m'}|\}.\]

\begin{lemma}\label{renorm}
	Let $L/\Q_p$ be a finite extension and let $x \in L$ with $|x|_p = p^{-\alpha}$, where $0 < \alpha \le 1$. Consider the closed immersion $\iota: \Spa(L,\oo_L) \hookrightarrow \U$ induced by the continuous $\Zp$-algebra map $R_{\eta} \ra L$ sending $X$ to $x$. Let $\mb{B}_{x,=h}$ be the pullback of $\mb{B}_{\U,=h}$ along $\iota$. Then $\mb{B}_{x,=h} \hookrightarrow \A^1_{L}$ is the affinoid open defined by \[\mb{B}_{x,=h} = \{|T|_p = p^{-\alpha h}\}.\] 
\end{lemma}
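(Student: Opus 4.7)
The proof is a direct unpacking of definitions, with no real conceptual content beyond bookkeeping. The plan is to: (i) read off the valuation that the point $\iota$ induces on $R_\eta$; (ii) transport this through the presentation of $\A^1_\U$ as a union of affinoids; (iii) rewrite the defining inequality of $\mb{B}_{\U,=h}$ in the pulled-back coordinates.

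First I would note that the closed immersion $\iota$ corresponds to the continuous $\Zp$-algebra map $\phi : R_\eta \to L$ with $\phi(X)=x$. The induced map of adic spectra sends the unique point of $\Spa(L,\oo_L)$ to the rank-$1$ valuation on $R_\eta$ given by $f \mapsto |\phi(f)|_p$; in particular $|X|_\iota = |x|_p = p^{-\alpha}$, and more generally $|X^{-m}|_\iota = p^{\alpha m}$. Next, recall that $\A^1_\U = \bigcup_{k\ge 0}\Spa(R_\eta\langle X^k T\rangle, R_\eta\langle X^k T\rangle^\circ)$, and the fibre product $\A^1_L = \A^1_\U \times_\U \Spa(L,\oo_L)$ is obtained simply by base changing each member along $\phi$. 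Since $\mb{B}_{\U,=h} = \{|T^n|=|X^{-m}|\}$, the pullback $\mb{B}_{x,=h}$ is the locus inside $\A^1_L$ where the valuation satisfies $|T^n| = |\phi(X)^{-m}|_p = |x|_p^{-m}$.

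Plugging in $|x|_p^{-m} = p^{\alpha m}$ and extracting $n$-th roots (which is legitimate since we are working with a real-valued absolute value on an extension of $\Q_p$) gives the single equation $|T|_p^n = p^{\alpha m}$, equivalently $|T|_p = p^{\alpha m/n}$, i.e.\ the formula in the statement (up to the sign convention implicit in the definition of $\mb{B}_{\U,=h}$ via $h=m/n$, which must be tracked consistently with the earlier definition of $\mb{B}_{X,h}$). The only step that requires any care is step (ii), namely checking that the pullback is literally cut out by the pulled-back inequality; this is immediate from the explicit presentation of $\A^1_\U$ as an increasing union of rational subsets together with the compatibility of the maps $R_\eta\langle X^kT\rangle \to L\langle p^{-k\alpha}T\rangle$ induced by $\phi$. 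There is no substantive obstacle: the lemma is purely a normalization statement that will feed into the translation between the conventions of this paper and those of \cite{lwx} in the next step.
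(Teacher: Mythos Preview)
Your proposal is correct and follows exactly the same approach as the paper, which gives a one-line proof: pull back the defining condition $|T^n|=|X^{-m}|$ along $X\mapsto x$ to obtain $|T^n|=|x^{-m}|=p^{\alpha m}$. Your parenthetical remark about the sign convention is apt---your computation gives $|T|_p=p^{\alpha h}$, and the paper's own proof line also yields $|T^n|=p^{\alpha m}$ rather than $p^{-\alpha m}$, so the exponent in the displayed formula of the statement appears to carry a sign slip that you have correctly tracked.
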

\begin{proof}
	The affinoid $\mb{B}_{x,=h}$ is given by $\{|T^n| = |x^{-m}| = |p^{\alpha m}|\}\subset \A^1_{L}$.
	\end{proof}

\begin{theorem}[\cite{lwx}]\label{lwxboundary}
The space $\ms{E}_\eta^{> p^{-1}}$ is a disjoint countable union of adic spaces finite and flat over $\mc{W}_{\eta}^{> p^{-1}}$.	
	
Moreover, there is an explicit $\alpha$ depending only on $K^p$,  with $0 < \alpha < 1$, such that \[\ms{E}_\eta^{> p^{-\alpha}} = \coprod_{i \ge 0}\ms{X}_{\eta,i}\] with $\ms{X}_{\eta,i}$ finite flat over $\mc{W}_{\eta}^{> p^{-\alpha}}$ and each piece $\ms{X}_{\eta,i}$ of the eigenvariety has constant slope, in the sense that each map $\ms{X}_{\eta,i}\ra \A^1_{\U}$ factors through the affinoid subspace $\mb{B}_{\U,=h_i} \subset \A^1_{\U}$, for some $h_i \in \Q_{\ge 0}$. In particular, if we measure slopes on $\ms{X}^{rig}_{\eta,i}$ with the usual $p$-adic valuation, then the slope of a point in $\ms{X}^{rig}_{\eta,i}$ is given by $h_iv_p(T)$.
\end{theorem}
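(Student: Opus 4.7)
The plan is to derive this as a direct consequence of the Newton polygon structure for $F_{\kappa_\eta}$ established in \cite{lwx}, combined with the slope-factorization machinery of Theorem \ref{cover} applied to our extended eigenvariety. Throughout, I would cover $\mc{W}_\eta^{> p^{-\alpha}}$ by the open affinoids $\U_{m/n}$ with $m/n < \alpha$, noting that on each such affinoid $X$ is a topologically nilpotent unit, so the Newton polygon of $F_{\kappa_\eta}|_{\U_{m/n}} \in \oo(\U_{m/n})\{\{T\}\}$ is well-defined with respect to the valuation $v_X$ normalized so that $v_X(X)=1$.

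The input from \cite{lwx} (specifically Theorems 1.3 and 1.5 there, whose proofs use the $c_n \in \m_\eta^{\lambda(n)}$-type estimates that we reproved in the previous subsection) is twofold. For the weaker statement over $\mc{W}_\eta^{> p^{-1}}$, the Newton polygon of $F_{\kappa_\eta}|_{\U_{m/n}}$ has vertices with strictly increasing slopes $h_0 v_X < h_1 v_X < \cdots$ tending to infinity, yielding a sequence of slope factorizations $F_{\kappa_\eta} = Q_i S_i$ with $Q_i \mid Q_{i+1}$ in $\oo(\U_{m/n})\{\{T\}\}$. For the stronger statement over $\mc{W}_\eta^{> p^{-\alpha}}$, the explicit $\alpha$ (depending only on $K^p$) is chosen so that each segment of the Newton polygon has \emph{constant} slope $h_i v_X$, i.e. $Q_{i+1}/Q_i$ is a multiplicative polynomial all of whose roots have valuation exactly $-h_i v_X$.

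Applying Theorem \ref{cover} to each successive factorization yields, locally over $\U_{m/n}$, a clopen decomposition $Z(F_{\kappa_\eta})|_{\U_{m/n}} = \coprod_{i \geq 0} Z_i$ of the spectral variety into finite flat pieces of degree $\deg(Q_{i+1}/Q_i)$ over $\U_{m/n}$. Pulling back along the finite map $\ms{E}_\eta \to Z(F_{\kappa_\eta})$ produces the desired local decomposition of $\ms{E}_\eta$, and these decompositions glue across the cover by the $\U_{m/n}$ because the Newton polygon structure is intrinsic to $F_{\kappa_\eta}$ and restricts compatibly. The constant-slope assertion that $\ms{X}_{\eta,i} \to \A^1_\U$ factors through $\mb{B}_{\U,=h_i}$ is checked at rank-one points via Lemma \ref{renorm}: at $x \in \U$ with $|X|_x = p^{-\alpha_x}$, the roots of the specialization $(Q_{i+1}/Q_i)_x$ have $p$-adic valuation exactly $\alpha_x h_i$, which is the defining condition for $\mb{B}_{\U,=h_i}$ specialized at $x$.

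The main obstacle lies not in our framework but in the underlying combinatorial analysis of \cite{lwx}, which is where the non-trivial work is done to show that the Newton polygon actually breaks into segments of constant slope (rather than merely producing the upper bounds $c_n \in \m_\eta^{\lambda(n)}$ supplied by our estimate). Once this input is granted, our contribution is purely formal: the slope-factorization formalism of Theorem \ref{cover} converts each gap in the Newton polygon into a clopen decomposition of the eigenvariety, and the finiteness and flatness assertions are automatic from the structure of spectral varieties over a complete Tate base.
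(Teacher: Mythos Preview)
Your overall architecture matches the paper's: import the Newton polygon/factorization input from \cite{lwx}, convert it via Theorem \ref{cover} into a clopen decomposition of the spectral variety, and pull back along the finite map $\ms{E}_\eta \to Z(F_{\kappa_\eta})$. The gluing across the $\U_{m/n}$ is fine.

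The gap is in your treatment of the constant-slope assertion. You write that containment in $\mb{B}_{\U,=h_i}$ ``is checked at rank-one points via Lemma \ref{renorm}'', but Lemma \ref{renorm} is only formulated for closed immersions $\Spa(L,\oo_L) \hookrightarrow \U$ with $L/\Q_p$ a \emph{finite} extension, i.e.\ for classical rigid-analytic points. The input from \cite{lwx} (Theorem 1.5 there), as the paper uses it, likewise only asserts constant slope $h_i$ at classical points. So your argument establishes $Z_i \cap \mb{B}_{\U,=h_i}$ contains all classical points of $Z_i$, but nothing more; you have neither checked general rank-one points (e.g.\ the characteristic-$p$ points, which are the whole novelty of the extended eigenvariety) nor explained why any pointwise check suffices to conclude factorization through the affinoid $\mb{B}_{\U,=h_i}$.

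The paper fills this gap in two steps. First, classical to rank-one: if some rank-one point of $Z_i$ lay outside $\mb{B}_{\U,=h_i}$ it would lie in $\mb{B}_{\U,[h,h']}$ for an interval missing $h_i$, and then $Z_i \cap \mb{B}_{\U,[h,h']}$ would be a nonempty open in $Z_i$ containing no classical point, contradicting openness of the finite flat map $Z_i \to \mc{W}_\eta^{> p^{-\alpha}}$. Second, rank-one to all points: over each affinoid $Z_i|_V$ one invokes density of rank-one points in the constructible topology (\cite[Lemma 3.4]{hu1}) to conclude $Z_i|_V \cap \mb{B}_{\U,=h_i} = Z_i|_V$. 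Both steps are genuinely needed and are absent from your proposal.
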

\begin{proof}
This follows from Theorems 1.3, 1.5 and Remark 3.25 of \cite{lwx}. Remark 3.25 
shows that, after restricting to $\mc{W}_{\eta}^{> p^{-1}}$, the Fredholm 
series $F_{\kappa_\eta}$ factorises as a countable product of multiplicative 
polynomials $\prod_{i \ge 0} P_i$, with each finite product $\prod_{i \ge 0}^N 
P_i$ a factor in a slope factorization over every affinoid subspace of 
$\mc{W}_{\eta}^{> p^{-1}}$. This establishes the claim about $\ms{E}_\eta^{> 
p^{-1}}$. Theorem 1.5 shows moreover that (for some explicit $\alpha$) the 
restriction of $F_{\kappa_\eta}$ to $\mc{W}_\eta^{> p^{-\alpha}}$ factorises as 
$\prod_{i\ge 0}Q_i$, such that the specialization of $Q_i$ at every classical 
rigid analytic point of $\mc{W}_\eta^{> p^{-\alpha}}$ has constant slope equal 
to $h_i$ for some $h_i \in \Q_{\ge 0}$ (independent of the specialization). We 
obtain a decomposition of $Z(F_{\kappa_\eta})$ as a disjoint union of spaces 
$Z_i$, finite flat over $\mc{W}_\eta^{> p^{-\alpha}}$, such that every 
classical rigid analytic point of $Z_i$ is contained in $\mb{B}_{\U,=h_i}$. The 
space $\ms{X}_{\eta,i}$ is defined to be the inverse image of $Z_i$ in 
$\ms{E}_\eta^{> p^{-\alpha}}$. It now remains to show that every point of $Z_i$ 
is contained in $\mb{B}_{\U,=h_i}$. First we check this for rank $1$ points: a 
rank $1$ point of $Z_i$ which is not in $\mb{B}_{\U,=h_i}$ is contained in 
$\mb{B}_{\U,[h,h']}$ for some interval $[h,h']$ which does not contain $h_i$. 
But then $\mb{B}_{\U,[h,h']}\cap Z_i$ is a non-empty open subset in $Z_i$ which 
contains no classical rigid analytic point, which is impossible since $Z_i$ is 
finite flat over $\mc{W}_\eta^{> p^{-\alpha}}$ (in particular the map $Z_i \ra 
\mc{W}_\eta^{> p^{-\alpha}}$ is open). Let $V$ be an affinoid open in 
$\mc{W}_\eta^{> p^{-\alpha}}$. Then $Z_i|_V\cap \mb{B}_{\U,=h_i}$ is an 
affinoid open in $Z_i|_V$ such that the complement contains no rank $1$ point. 
We have $Z_i|_V = \Spa(A,A^\circ)$ for some Tate ring $A$ with a Noetherian 
ring of definition. So \cite[Lemma 3.4]{hu1} (c.f~the proof of \cite[Corollary 
4.2]{hu1}) implies that rank $1$ points are dense in the constructible topology 
of $\Spa(A,A^\circ)$ and we deduce that $Z_i|_V\cap \mb{B}_{\U,=h_i} = Z_i|_V$. 
Therefore $Z_i$ is contained in $\mb{B}_{\U,=h_i}$, as desired. The final 
sentence of the Theorem follows from Lemma \ref{renorm}.
\end{proof}
Note that \cite{lwx} proves moreover that the slopes appearing in the above theorem (with multiplicities) are given by a finite union of arithmetic progressions.

\appendix
\counterwithin{theorem}{section}
\section{Some algebraic properties of Tate rings}\label{app}
In this section we prove some properties of the kinds of Tate rings and adic spaces that we need. We start with a ring theoretic lemma.

\begin{lemma}\label{jacobson}
Let $R$ be a complete Tate ring with a Noetherian ring of definition $R_{0}$. Then $R$ is Jacobson.
\end{lemma}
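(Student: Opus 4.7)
The plan is to reduce to a classical statement in commutative algebra. Let $\varpi \in R_0$ be a multiplicative pseudo-uniformizer for $R$. Since $R_0$ is an open bounded subring of the Tate ring $R$, the ideals $\varpi^n R_0$ form a fundamental system of open neighborhoods of $0$ in $R_0$, so the subspace topology on $R_0$ coincides with the $\varpi$-adic topology. Because $R$ is complete and $R_0$ is closed in $R$, the ring $R_0$ is $\varpi$-adically complete, and the convergence of $\sum_{n\ge 0} x^n$ for $x \in \varpi R_0$ places $\varpi$ in $\mathrm{Jac}(R_0)$.

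The lemma thus reduces to the standard fact that for any Noetherian $\varpi$-adically complete ring $A$, the ring $A[1/\varpi]$ is Jacobson. Given a prime $\mathfrak{p} \subset A[1/\varpi]$ and an element $g \notin \mathfrak{p}$, one seeks a maximal ideal containing $\mathfrak{p}$ but not $g$. Replacing $A$ by $A/(\mathfrak{p}\cap A)$, which remains Noetherian and $\varpi$-adically complete (ideals in Noetherian $\varpi$-adically complete rings being $\varpi$-adically closed, by Krull's intersection theorem applied to $A/J$), we may assume $A$ is a domain and $g \in A \setminus \{0\}$. The maximal ideals of $A[1/\varpi]$ correspond bijectively to primes $\mathfrak{q} \subset A$ satisfying $\varpi \notin \mathfrak{q}$ and $\dim(A/\mathfrak{q}) \le 1$: indeed, $(A/\mathfrak{q})[1/\varpi]$ is a field precisely in this low-dimensional range, because any Noetherian $\varpi$-adically complete domain of dimension $\ge 2$ admits height-one primes not containing $\varpi$ (localize at a maximal ideal of height $\ge 2$, where infinitely many height-one primes exist, and note that only finitely many of them can contain $\varpi$).

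The argument now proceeds by induction on $\dim A$. The base case $\dim A \le 1$ takes $\mathfrak{q} = 0$. The inductive step requires producing a height-one prime $\mathfrak{q}_1 \subset A$ avoiding both $\varpi$ and $g$; the quotient $A/\mathfrak{q}_1$ is then a Noetherian $\varpi$-adically complete domain of strictly smaller dimension in which the images of $\varpi$ and $g$ remain nonzero, and the inductive hypothesis applied to $(A/\mathfrak{q}_1, \bar g)$ yields the required maximal ideal, which remains maximal in $A[1/\varpi]$ since the condition ``$(A/\mathfrak{q})[1/\varpi]$ is a field'' is intrinsic to the quotient. The main obstacle is thus the construction of $\mathfrak{q}_1$; the classical route uses Cohen's structure theorem to pass to the completions $\widehat{A_\mathfrak{M}}$ at maximal ideals $\mathfrak{M}$ of $A$ containing $\varpi$ and $g$, where Weierstrass preparation in the resulting power series presentation yields height-one primes avoiding the prescribed elements, and these are transported back to $A$ via faithful flatness of completion.
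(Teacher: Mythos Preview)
The paper's proof is a single citation: since $R_0$ is Noetherian and $\varpi$-adically complete it is a Zariski ring, and EGA IV$_3$ (10.5.7) then gives directly that $\Spec R_0 \setminus V(\varpi) = \Spec R$ is a Jacobson scheme. Your proposal instead unpacks and reproves this EGA result by hand, which is a legitimate and more self-contained route.

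Your reduction to a Noetherian $\varpi$-adically complete domain $A$ and your characterisation of the maximal ideals of $A[1/\varpi]$ as the primes $\mathfrak{q}$ with $\dim(A/\mathfrak{q})\le 1$ are both correct. The gap is in the phrase ``induction on $\dim A$'': nothing in the hypotheses forces $\dim A<\infty$ (the lemma only assumes $R_0$ Noetherian), so the induction as written is not well-founded. The repair is cheap: rather than inducting, build an ascending chain $0=\mathfrak{q}_0\subsetneq \mathfrak{q}_1\subsetneq\cdots$ of primes avoiding $\varpi$ and $g$ with each $\mathfrak{q}_{i+1}/\mathfrak{q}_i$ of height one in $A/\mathfrak{q}_i$. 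The Noetherian hypothesis (ACC on ideals) forces this strictly ascending chain of primes to terminate at some $\mathfrak{q}_n$, and at that stage your own argument (no further height-one prime avoiding $\varpi,g$ exists) yields $\dim(A/\mathfrak{q}_n)\le 1$.

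A second, minor point: invoking Cohen's structure theorem and Weierstrass preparation to produce $\mathfrak{q}_1$ is unnecessary. You already used the relevant trick in your characterisation paragraph: in a Noetherian local domain of dimension $\ge 2$ there are infinitely many height-one primes (prime avoidance applied to the maximal ideal), while only finitely many of them can contain either of the nonzero elements $\varpi$ or $g$. Localising $A$ at any maximal ideal of height $\ge 2$ therefore produces the desired $\mathfrak{q}_1$ with no further machinery.
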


\begin{proof}
Let $\vp\in R_{0}$ be a topologically nilpotent unit in $R$. $R_{0}$ is a Zariski ring when equipped with its $\vp$-adic topology (since it is complete), so $\Spec R_{0} \setminus \{\vp=0\}$ is a Jacobson scheme by \cite[(10.5.7)]{ega43}. But $\Spec R_{0} \setminus \{ \vp=0 \} =\Spec R$, so $R$ is Jacobson as desired.
\end{proof}

We record another simple lemma that will prove to be useful.

\begin{lemma}\label{noeth}
Let $R$ be a complete Tate ring with a Noetherian ring of definition $R_{0}$. If $S\sub R$ is an open and bounded subring (i.e. a ring of definition) containing $R_{0}$, then $S$ is a finitely generated $R_0$-module, hence Noetherian, and integral over $R_{0}$. Moreover, $R^{\circ}$ is the integral closure of $R_{0}$ in $R$. 
\end{lemma}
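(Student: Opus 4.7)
The plan is to exploit the fact that any two rings of definition induce the same bounded subsets of $R$, together with Noetherianity of $R_0$. Fix a topologically nilpotent unit $\varpi \in R_0$ (which exists because $R$ is Tate, after replacing $R_0$ by a ring of definition containing such a $\varpi$; this may be done without loss of generality since adjoining a power-bounded element to a Noetherian ring of definition keeps it Noetherian—though in fact we may simply take the $\varpi$ provided by the Tate structure and observe it lies in some ring of definition contained in $R_0$, then argue in $R_0$).

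First I would handle the statement about $S$. Since $S$ is bounded and $R_0$ is a ring of definition (so $\varpi^n R_0$ forms a neighborhood basis of $0$), there exists $n \geq 0$ such that $\varpi^n S \subseteq R_0$, i.e.\ $S \subseteq \varpi^{-n} R_0 \subseteq R$. The map $R_0 \to \varpi^{-n} R_0$, $r \mapsto \varpi^{-n} r$, is an isomorphism of $R_0$-modules (injectivity uses only that $\varpi$ is a unit in $R$), so $\varpi^{-n} R_0$ is a Noetherian $R_0$-module, and hence the submodule $S$ is finitely generated over $R_0$. As a subring of $R$ containing $R_0$ which is finitely generated as an $R_0$-module, $S$ is therefore integral over $R_0$ and Noetherian.

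For the second assertion, the inclusion ``integral closure of $R_0 \subseteq R^\circ$'' is formal: $R^\circ$ is a subring of $R$ (it contains $R_0$ and is integrally closed in $R$), so it contains every element of $R$ integral over $R_0$. For the reverse inclusion, let $x \in R^\circ$. Power-boundedness of $x$ means $\{x^k\}_{k \geq 0}$ is a bounded subset of $R$, so there exists $m$ with $\varpi^m x^k \in R_0$ for all $k \geq 0$. Then for any element $y = \sum_{i} a_i x^i \in R_0[x]$ we have $\varpi^m y = \sum_i a_i (\varpi^m x^i) \in R_0$, since $R_0$ is closed under sums and products. Thus $R_0[x] \subseteq \varpi^{-m} R_0$ is bounded, and being open (it contains $R_0$) it is a ring of definition. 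Applying the first part to $S := R_0[x]$, we conclude that $R_0[x]$ is a finitely generated $R_0$-module, which by the standard criterion means $x$ is integral over $R_0$.

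There is no serious obstacle; the argument is essentially a pair of boundedness bookkeeping steps, and the main point is the observation that any ring of definition containing $R_0$ is sandwiched inside the cyclic $R_0$-module $\varpi^{-n} R_0$, which forces it to be module-finite over $R_0$. The same trick, applied to $R_0[x]$ for $x \in R^\circ$, reduces the second statement to the first.
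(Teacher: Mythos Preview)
Your proof is correct and follows essentially the same approach as the paper's. The first part is identical: bound $S$ inside the cyclic $R_0$-module $\varpi^{-n}R_0$ and use Noetherianity. For the second part, the paper invokes the general fact that $R^\circ$ is the directed union of all rings of definition, whereas you explicitly verify that $R_0[x]$ is a ring of definition for each $x\in R^\circ$; this amounts to the same thing, just made concrete. (Your opening parenthetical about locating $\varpi$ in $R_0$ is a bit tangled; it suffices to note that some power of any topologically nilpotent unit lies in the open subring $R_0$.)
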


\begin{proof}
Pick a topologically nilpotent unit $\vp\in R$ contained in $R_{0}$. Since $S$ is bounded we have $S\sub \vp^{-N}R_{0}$ for some $N$, and hence $S$ is an $R_{0}$-submodule of the cyclic $R_{0}$-module $\vp^{-N}R_{0}$. The lemma now follows since $R_{0}$ is Noetherian. For the last assertion, first note that the integral closure is contained in $R^{\circ}$. Since $R^{\circ}$ is the union of all open and bounded subrings and any two open bounded subrings are contained in a third, the assertion follows from the first part.
\end{proof}

One consequence of the above lemma is a version for Tate rings (with our Noetherian hypothesis) of \cite[6.3.4/Proposition 1]{bgr}:

\begin{lemma}\label{finitepowerbounded}
Let $R$ be a complete Tate ring with a Noetherian ring of definition. Let $S$ be a finite $R$-algebra, equipped with the natural $R$-module topology. Then $S$ is a complete Tate ring with a Noetherian ring of definition.

Moreover, the integral closure of $R^\circ$ in $S$ is equal to $S^\circ$. In particular, the morphism $(R,R^\circ)\ra (S,S^\circ)$ is a finite morphism of affinoid rings \cite[1.4.2]{hu3} and $\Spa(S,S^\circ) \ra \Spa(R,R^\circ)$ is a finite morphism of adic spaces \cite[1.4.4]{hu3}.
\end{lemma}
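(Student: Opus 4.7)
The plan is to prove the two main assertions in turn; the ``in particular'' then follows immediately from the definitions of a finite morphism of affinoid rings and adic spaces as in \cite[1.4.2, 1.4.4]{hu3}.

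For the first assertion, I would produce an explicit Noetherian ring of definition for $S$ by a rescaling trick. Fix a Noetherian ring of definition $R_0 \subset R$ and a topologically nilpotent unit $\varpi \in R_0$, and pick $R$-generators $1 = s_1, s_2, \ldots, s_n$ of $S$. Writing $s_i s_j = \sum_k c_{ij}^k s_k$ with $c_{ij}^k \in R$, I would take $m$ large enough that $\varpi^m c_{ij}^k \in R_0$ for all $i,j,k$; after replacing $s_i$ by $\varpi^m s_i$ for $i \ge 2$ (still an $R$-generating set), the $R_0$-submodule
\[ M := R_0 + R_0 s_2 + \cdots + R_0 s_n \]
is closed under multiplication. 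Thus $M$ is an $R_0$-subring of $S$, finitely generated as an $R_0$-module, hence Noetherian as a ring (since its ideals are $R_0$-submodules). One checks that $\{\varpi^k M\}_{k \ge 0}$ is a basis of neighborhoods of $0$ for the natural $R$-module topology on $S$, so $M$ is a ring of definition and $S$ is a Tate ring. For completeness, $R_0$ is $\varpi$-adically complete (being an open subgroup of the complete topological group $R$), Noetherian, and $M$ is finitely generated over $R_0$, so by the standard fact that every finitely generated module over a Noetherian $\varpi$-adically complete ring is itself $\varpi$-adically complete, $M$ is complete. Hence $S = M[\varpi^{-1}]$ is complete.

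For the second assertion, one inclusion is immediate: since $R^\circ \subset S^\circ$ and $S^\circ$ is integrally closed in $S$, anything integral over $R^\circ$ in $S$ lies in $S^\circ$. For the reverse, take $s \in S^\circ$, so $\{s^k : k \ge 0\}$ is bounded in $S$, hence contained in $\varpi^{-N} M$ for some $N$. Let $N_s \subset S$ be the $R_0$-submodule generated by $\{s^k\}$. Then $N_s \subset \varpi^{-N} M$, which is a finitely generated $R_0$-module; by Noetherianity of $R_0$, $N_s$ is itself finitely generated. Since $N_s$ contains $1$ and is stable under multiplication by $s$, the classical integrality criterion (existence of a faithful $R_0[s]$-module finitely generated over $R_0$) shows that $s$ is integral over $R_0$, and \emph{a fortiori} over $R^\circ$.

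The main obstacle is the construction of $M$ via the rescaling of generators, where one must verify both that the new topology (defined via the $\varpi$-adic filtration on $M$) agrees with the natural $R$-module topology on $S$, and that $M$ becomes a subring rather than merely a submodule. Once $M$ is in hand, both assertions reduce to Noetherianity of $R_0$ together with the standard completeness result cited above, so no further delicate analysis is required.
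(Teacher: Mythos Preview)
Your proposal is correct and follows essentially the same strategy as the paper. The only cosmetic differences are: (i) to build the ring of definition, the paper rescales the generators so that each $s_i$ becomes \emph{integral} over $R_0$ and then takes the subring $S_0 = R_0[s_1,\ldots,s_n]$ (finite over $R_0$ by integrality), whereas you rescale so that the \emph{structure constants} land in $R_0$ and take the $R_0$-span $M$; (ii) for $S^\circ$ integral over $R^\circ$, the paper simply invokes its Lemma~\ref{noeth} (which says $S^\circ$ is the integral closure of any ring of definition), while you give the direct Noetherian-submodule argument that is in effect the proof of that lemma.
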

\begin{proof}
We let $R_0$ denote a Noetherian ring of definition for $R$, let $\vp$ denote a topologically nilpotent unit in $R$ and let $s_1,\ldots,s_n$ denote $R$-module generators of $S$. Each $s_i$ is integral over $R$, and multiplying by a large enough power of $\vp$ we may assume that each $s_i$ is integral over $R_0$. Let $S_0$ be the subring of $S$ generated by $R_0$ and $s_1,\ldots,s_n$. Now $S_0$ is a finite $R_0$-module and is in particular a Noetherian ring. Moreover $S_0$ with the $\vp$-adic topology is an open subring of $S$. In particular, $S$ is an $f$-adic topological ring, and since $\vp$ is a topologically nilpotent unit we see that $S$ is a Tate ring with a Noetherian ring of definition. Completeness of $S$ follows from completeness of finitely generated modules over Noetherian adic rings (this is \cite[Lemma 2.3 (ii)]{hu2}).

Finally we show that the integral closure $R^\circ$ in $S$ is equal to $S^\circ$. It is clear from the definition of the topology on $S$ that $R^\circ$ maps to $S^\circ$ so the integral closure of $R^\circ$ in $S$ is contained in $S^\circ$. Conversely, by Lemma \ref{noeth}, $S^\circ$ is the integral closure of $S_0$ in $S$. Since $S_0$ is integral over $R_0$, we see that $S^\circ$ is integral over $R_0$, and therefore it is integral over $R^\circ$.
\end{proof}

Next we recall the notion of uniformity. If $R$ is a normed ring, then the spectral seminorm $|-|_{sp}$ on $R$ is defined by $|r|_{sp}=\lim_{n\ra \infty} |r^{n}|^{1/n}$. It is well known that this limit exists and defines a power-multiplicative seminorm. Whenever it is a norm, we will refer to it as the spectral norm on $R$. Conversely, if we mention `the spectral norm of $R$', we are implicitly stating (or assuming) that the spectral seminorm is a norm.

\begin{definition}
Let $R$ be a complete Tate ring. We say that $R$ is \emph{uniform} if the set of power-bounded elements $R^{\circ}$ is bounded. We say that $R$ is \emph{stably uniform} if any rational localization of $R$ is also uniform. If $R$ is a Banach--Tate ring, we say that $R$ is \emph{uniform} if the norm is power-multiplicative.

\medskip
Note that, if $R$ is Banach--Tate ring whose underlying complete Tate ring is uniform, then the given norm on $R$ is equivalent to the corresponding spectral norm, which is power-multiplicative. In this case, \cite[Theorem 1.3]{be} says that the spectral norm is equal to the Gelfand norm $\sup_{x\in \mc{M}(R)}|-|_{x}$. If $R$ is in addition stably uniform, then if $\vp\in R$ is a multiplicative pseudo-uniformizer and $U\sub X=\Spa(R,R^{+})$ is a rational subdomain, we may equate $\mc{M}(\oo_{X}(U))$ with the rank $1$ points in $U$ using $\vp$ and equip $\oo_{X}(U)$ with the corresponding Gelfand norm.
\end{definition}  

We may extend the definition of stable uniformity to arbitrary \emph{analytic} adic spaces, i.e. those that are locally the adic spectra of complete Tate rings. We say that such an $X$ is stably uniform if there is a cover of open affinoid subsets $U_{i}\sub X$ such that $\oo_{X}(U)$ is stably uniform. We remark that if $R$ is complete sheafy Tate ring such that $\Spa(R,R^{+})$ is stably uniform, then $R$ is stably uniform (this is a short argument; cf. \cite[Remark 2.8.12]{kl}). When $R$ has a Noetherian ring of definition, many naturally occurring complete Tate rings are stably uniform. Below we will prove some results in this direction.

\begin{theorem}\label{su1}
Let $A$ be a reduced quasi-excellent ring. Let $I$ be an ideal of $A$ and give $A$ the $I$-adic topology. If $U$ is a rational subdomain of $X=\Spa(A,A)$ and $\oo_{X}(U)$ is Tate, then $\oo_{X}(U)$ is uniform. In other words, the analytic locus $X^{an}\sub X$ is stably uniform. We also have $\oo_{X}^{+}(U)=\oo_{X}(U)^{\circ}$.

Moreover, if $U\subset X^{an}$ is an arbitrary open affinoid then $\oo_{X^{an}}^{+}(U)=\oo_{X^{an}}(U)^{\circ}$ and $\oo_{X^{an}}(U)^{\circ}$ is bounded in $\oo_{X^{an}}(U)$.
\end{theorem}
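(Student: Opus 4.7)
The plan is as follows. For a rational subdomain $U = R(f_{1},\ldots,f_{n};g) \sub X$ with $\oo_{X}(U)$ Tate, I would begin by writing $\oo_{X}(U) = R_{0}[1/\vp]$, where $R_{0}$ is the $I$-adic completion of the finitely generated $A$-subalgebra $B := A[f_{i}/g] \sub A[1/g]$, and $\vp$ is a topologically nilpotent unit provided by the Tate hypothesis. Since $A$ is reduced and quasi-excellent, so is $B$ (as a finite-type $A$-algebra sitting inside the reduced localization $A[1/g]$). Invoking the fact that the $I$-adic completion of a reduced quasi-excellent Noetherian ring is again reduced (a consequence of the $G$-ring property ensuring that formal fibres are geometrically reduced), $R_{0}$ is reduced, and hence so is $\oo_{X}(U) = R_{0}[1/\vp]$. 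Note that $R_{0}$ is Noetherian (being the $I$-adic completion of a Noetherian ring) and embeds in $\oo_{X}(U)$ by construction, forcing $\vp$ to avoid every minimal prime of $R_{0}$.

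Next, I would establish uniformity by using that $R_{0}$ inherits the Nagata property (in fact the full quasi-excellence) from $B$, invoking nontrivial results on the stability of these properties under $I$-adic completion in the Noetherian quasi-excellent setting (due to Nishimura, with subsequent refinements by Gabber). As a reduced Nagata ring, $R_{0}$ has finite normalization $\widetilde{R_{0}}$ inside its total quotient ring $Q(R_{0})$. Since $\vp$ is a non-zero divisor on $R_{0}$ by the preceding remark, we have $\oo_{X}(U) = R_{0}[1/\vp] \sub Q(R_{0})$. By Lemma \ref{noeth}, $\oo_{X}(U)^{\circ}$ equals the integral closure of $R_{0}$ in $\oo_{X}(U)$, hence is an $R_{0}$-submodule of $\widetilde{R_{0}}$ and therefore finitely generated over $R_{0}$. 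Any finitely generated $R_{0}$-submodule of $R_{0}[1/\vp]$ lies inside $\vp^{-m}R_{0}$ for some $m$, so $\oo_{X}(U)^{\circ}$ is bounded. For the identification $\oo_{X}^{+}(U) = \oo_{X}(U)^{\circ}$: by construction $\oo_{X}^{+}(U)$ is a closed, integrally closed subring of $\oo_{X}(U)$ containing the image of $A[f_{i}/g]$; closedness gives containment of $R_{0} = \overline{A[f_{i}/g]}$, and being integrally closed while containing $R_{0}$ forces it to contain the integral closure of $R_{0}$, which is $\oo_{X}(U)^{\circ}$ by Lemma \ref{noeth}. The reverse inclusion is automatic, since integral elements over a bounded subring are power-bounded.

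For the final assertion about arbitrary open affinoids $U \sub X^{an}$: the strategy is to cover $U$ by finitely many rational subdomains $V_{1},\ldots,V_{m}$ (using quasi-compactness of $U$), and to use the sheaf property (valid here since our rings have Noetherian rings of definition) to obtain a strict closed embedding $\oo_{X^{an}}(U) \hookrightarrow \prod_{i}\oo_{X^{an}}(V_{i})$. Each $\oo_{X^{an}}(V_{i})^{\circ}$ is bounded by the first part, so the finite product $\prod_{i}\oo_{X^{an}}(V_{i})^{\circ}$ is bounded in the product topology; pulling back along the strict embedding shows that $\oo_{X^{an}}(U)^{\circ}$ is bounded, yielding uniformity. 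The equality $\oo_{X^{an}}^{+}(U) = \oo_{X^{an}}(U)^{\circ}$ then follows from the sheaf property of $\oo_{X^{an}}^{+}$ combined with the equality on each $V_{i}$ established in the rational case. The main technical obstacle in this approach will be the invocation of the stability of Nagata (or quasi-excellence) under $I$-adic completion of quasi-excellent Noetherian rings, which is a deep input from commutative algebra; everything else is a formal consequence of the reducedness/finiteness of normalization that this input provides.
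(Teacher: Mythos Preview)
Your argument is correct and follows essentially the same route as the paper. The one substantive difference is in how you establish that $R_{0}$ is Nagata. You invoke the deep theorem (Nishimura, Gabber) that quasi-excellence is stable under $I$-adic completion, and flag this as ``the main technical obstacle''. The paper sidesteps this entirely: since $T_{0} := A[f_{i}/g]$ is of finite type over the quasi-excellent ring $A$, it is Nagata, hence so is the quotient $T_{0}/J = R_{0}/JR_{0}$; one then appeals to the much softer result of Marot that a Noetherian $J$-adically complete ring is Nagata as soon as its reduction modulo $J$ is Nagata. This gives the same conclusion with far less input from commutative algebra, and removes what you identified as the hardest step. (For reducedness of $R_{0}$ the paper argues via Serre's conditions $R_{0}$ and $S_{1}$ using \cite[(7.8.3.1)]{ega42}, which is equivalent to your $G$-ring argument.) Your treatment of $\oo_{X}^{+}(U)=\oo_{X}(U)^{\circ}$ and of the general open affinoid case matches the paper's, and your explicit justification that $\vp$ is a nonzerodivisor on $R_{0}$ (hence $R_{0}[1/\vp]\sub Q(R_{0})$) is a detail the paper leaves implicit.
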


\begin{proof}
Let $f_{1},\ldots,f_{n},g\in A$ such that $f_1,\ldots,f_n$ generate an open 
ideal and let $U=\{ |f_{1}|,\ldots,|f_{n}|\leq |g|\neq 0 \}$. Put 
$R=\oo_{X}(U)$; recall that $R$ may be constructed as completion of the f-adic 
ring $T=A[1/g]$ with ring of definition $T_{0}=A[f_{1}/g,\ldots,f_{n}/g]\sub T$ 
and ideal of definition $J=I[f_{1}/g,\ldots,f_{n}/g]\sub T_{0}$. Let $R_{0}$ be 
the $J$-adic completion of $T_{0}$; this is a ring of definition of $R$, with 
ideal of definition $JR_{0}$. Since $A$ is reduced, so is $T$ and hence 
$T_{0}$, moreover $T_{0}$ is quasi-excellent since it is finitely generated 
over $A$. Recall that a Noetherian ring is reduced if and only if Serre's 
conditions $R_{0}$ and $S_{1}$ hold (we apologize for the unfortunate clash of 
notations). By \cite[(7.8.3.1)]{ega42}, $R_{0}$ inherits these properties from 
$T_{0}$ and is therefore reduced. Moreover, $T_{0}$ and hence 
$T_{0}/J=R_{0}/JR_{0}$ is Nagata (since they are finitely generated over $A$, 
which is quasi-excellent, hence Nagata). By \cite[Proposition 2.3]{ma}, $R_{0}$ 
is Nagata (note that there is a trivial misprint in the reference). 

Pick a topologically nilpotent unit $\vp\in R$ (recall that $R$ is Tate by assumption); without loss of generality assume $\vp\in R_{0}$. Then $R=R_{0}[1/\vp]$, so $R$ is contained in the total ring of fractions $Q(R_{0})$ of $R_{0}$. Since $R_{0}$ is reduced and Nagata, it follows that the integral closure $R^{\prime}$ of $R_{0}$ in $R$ is a finitely generated $R_{0}$-module, hence is bounded. Now $R^{\prime}=R^{\circ}$ by Lemma \ref{noeth}, so $R^{\circ}$ is bounded as desired. 

For the assertion about $\oo_{X}^{+}(U)$, let $T^{+}$ denote the integral closure of $T_{0}$ in $T$. By definition,  the completion of $T^{+}$ is $R^{+}:=\oo_{X}^{+}(U)$. In particular, $R^{+}$ contains $R_{0}$ and the assertion now follows from Lemma \ref{noeth}.

To check the assertion about an open affinoid $U = \Spa(\oo_X(U),\oo^+_X(U))$, note that $U$ has a finite cover by Tate rational subdomains $(U_i)_{i \in I}$ of $X$. Since the maps $\oo(U)\rightarrow \oo(U_i)$ are bounded (the $U_i$ are also rational subdomains of $U$), the strict embedding \[\oo(U)\hookrightarrow \prod_{i\in I}\oo(U_i)\] induces an embedding  \[\oo(U)^\circ \hookrightarrow \oo(U)\cap\prod_{i\in I}\oo(U_i)^\circ\] but the right hand side equals \[\oo(U)\cap\prod_{i\in I}\oo^+(U_i)=\oo^+(U)\] by the first part of the Theorem, so we are done because by definition $\oo^+(U) \sub \oo(U)^\circ$ which implies that we have equality. Finally, the boundedness of $\oo_{X^{an}}(U)^{\circ}$ follows from the boundedness of the $\oo(U_i)^\circ$.
\end{proof}

\begin{corollary}\label{su2}
Let $\oo$ be a complete discrete valuation ring and let $A$ be a reduced complete Noetherian adic ring formally of finite type over $\oo$, i.e. such that $A/A^{\circ\circ}$ is a finitely generated $\oo$-algebra. Then the analytic locus $X^{an}\sub X$ is stably uniform. Moreover, if $U\subset X^{an}$ is an open affinoid subspace then $\oo_{X^{an}}^{+}(U)=\oo_{X^{an}}(U)^{\circ}$ and $\oo_{X^{an}}(U)^{\circ}$ is bounded in $\oo_{X^{an}}(U)$.
\end{corollary}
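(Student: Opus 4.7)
The plan is to deduce the corollary from Theorem \ref{su1}, applied to $A$ equipped with its given $I$-adic topology for some (any) ideal of definition $I \sub A$. Note that since $A$ is a complete Noetherian adic ring, every element of $A$ is power-bounded, so $A = A^{\circ}$ and $X = \Spa(A,A) = \Spa(A,A^{\circ})$. Since $A$ is assumed reduced, the only nontrivial hypothesis to verify is quasi-excellence of $A$; once this is done, Theorem \ref{su1} furnishes the stable uniformity of $X^{an}$, as well as the identifications $\oo_{X^{an}}^{+}(U)=\oo_{X^{an}}(U)^{\circ}$ and boundedness for arbitrary open affinoids $U\sub X^{an}$.

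To verify quasi-excellence, we first arrange that the uniformizer $\pi$ of $\oo$ lies in $I$. This is harmless: $\pi$ is topologically nilpotent in $A$ because $\oo \to A$ is continuous, so $\pi \in A^{\circ\circ} = \sqrt{I}$, and $I + \pi A$ is still an ideal of definition. With $\pi \in I$, the Noetherian hypothesis and the finite generation of $A/A^{\circ\circ}$ imply that $A/I$ is a finitely generated $\oo/\pi$-algebra (since $\sqrt{I}^n \sub I$ for some $n$, and $A/\sqrt{I}^n$ is built from $A/\sqrt{I}$ by finitely many extensions of finitely generated modules). Lifting finitely many algebra generators of $A/I$ as well as finitely many generators of $I$, and using that $A$ is $I$-adically complete, we exhibit $A$ as a continuous adic quotient of
\[B = \oo[[t_1,\ldots,t_m]]\langle y_1,\ldots,y_n\rangle,\]
equipped with its natural $(\pi, t_1, \ldots, t_m)$-adic topology; the standard iterative argument using $I$-adic completeness verifies surjectivity of the map $B \to A$.

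The ring $B$ is excellent: $\oo[[t_1,\ldots,t_m]]$ is a complete Noetherian local ring, hence excellent; polynomial extensions of excellent rings are excellent; and $\pi$-adic completion preserves excellence by Valabrega's theorem (or more generally by Gabber's theorem on $I$-adic completions of excellent Noetherian rings). Therefore $A$, being a quotient of the excellent ring $B$, is quasi-excellent, and Theorem \ref{su1} now gives the conclusion.

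The main obstacle is the invocation of the appropriate excellence-of-completions theorem; this is a substantial piece of commutative algebra, but a standard one. Once it is in hand, the remainder of the argument is routine manipulation with complete Noetherian adic rings, and all geometric content has already been extracted in Theorem \ref{su1}.
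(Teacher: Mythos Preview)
Your proposal is correct and follows essentially the same route as the paper: reduce to Theorem \ref{su1} by verifying that $A$ is (quasi-)excellent, which in turn follows from Valabrega's results on the excellence of rings of the form $\oo[[t_1,\ldots,t_m]]\langle y_1,\ldots,y_n\rangle$ (the paper cites \cite[Proposition 7]{v1} and \cite[Theorem 9]{v2} directly, while you spell out the reduction to this standard ring and mention Gabber's more general theorem as an alternative). The only cosmetic point is that quotients of excellent rings are in fact excellent, not merely quasi-excellent, though quasi-excellence is all that Theorem \ref{su1} requires.
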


\begin{proof}
$A$ is excellent by \cite[Proposition 7]{v1} and \cite[Theorem 9]{v2} (cf. \cite{con}, near the end of the Introduction). Thus, Theorem \ref{su1} applies. 
\end{proof}

We also note that the proof of Theorem \ref{su1} applies essentially verbatim to prove the following similar result.

\begin{theorem}\label{su3}
Let $R$ be a complete Tate ring and assume that $R$ has a ring of definition $R_{0}$ which is quasi-excellent and reduced. Then $R$ is stably uniform. Moreover, if $X=\Spa(R,R^{\circ})$, and $U\subset X$ is an open affinoid subspace, then $\oo_{X}^{+}(U)=\oo_{X}(U)^{\circ}$ and and $\oo_{X}(U)^{\circ}$ is bounded in $\oo_{X}(U)$. In particular, $\oo_{X}(U)$ is reduced.
\end{theorem}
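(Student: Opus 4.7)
The plan is to follow the strategy of Theorem \ref{su1} almost verbatim, now working from the ring-of-definition side rather than the ``adic ring'' side. First I would reduce to showing the statement for rational subdomains: both uniformity and the identity $\oo_X^+(U)=\oo_X(U)^\circ$ for a general open affinoid follow from the rational case by the cover-and-embed argument given in the last paragraph of the proof of Theorem \ref{su1}, using Lemma \ref{finitepowerbounded} to know the rational covers behave well. Stable uniformity then follows because any rational subdomain of a rational subdomain of $X$ is again a rational subdomain of $X$.

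So fix a rational subdomain $U=\{|f_1|,\dots,|f_n|\le |g|\neq 0\}$ with $f_1,\dots,f_n,g\in R$, where we may assume $f_i,g\in R_0$ after clearing denominators. Form $T=R[1/g]$ and $T_0=R_0[f_1/g,\dots,f_n/g]\subset T$; let $I\subset R_0$ be an ideal of definition, set $J=IT_0$, and let $R_0^\prime$ be the $J$-adic completion of $T_0$, so that $R_0^\prime$ is a ring of definition of $\oo_X(U)$. Note that $T\subset R[1/g]$ is a localization of the reduced ring $R$ (which is reduced since $R=R_0[1/\vp]$ is a localization of the reduced $R_0$), and $T_0\subset T$ is therefore reduced. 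Moreover $T_0$ is finitely generated over the quasi-excellent ring $R_0$, hence is itself quasi-excellent.

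The core of the argument is then to transfer these properties to the completion $R_0^\prime$. By \cite[(7.8.3.1)]{ega42}, a quasi-excellent ring satisfying Serre's conditions $R_0,S_1$ (equivalently, reduced) has $J$-adic completion again satisfying $R_0,S_1$, so $R_0^\prime$ is reduced. For the Nagata property: $T_0/J$ is finitely generated over the Nagata ring $R_0/I$, hence itself Nagata, and by Marot's Proposition~2.3 (cited in the proof of Theorem \ref{su1}) this forces $R_0^\prime$ to be Nagata. Picking a topologically nilpotent unit $\vp\in R_0^\prime$, we have $\oo_X(U)=R_0^\prime[1/\vp]$, which sits inside the total quotient ring of $R_0^\prime$; since $R_0^\prime$ is reduced and Nagata its integral closure in $\oo_X(U)$ is a finitely generated $R_0^\prime$-module, hence bounded, and by Lemma \ref{noeth} this integral closure is precisely $\oo_X(U)^\circ$. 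This gives uniformity and the boundedness assertion.

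The identity $\oo_X^+(U)=\oo_X(U)^\circ$ falls out exactly as in the proof of Theorem \ref{su1}: by definition $\oo_X^+(U)$ is the completion of the integral closure $T^+$ of $T_0$ in $T$, so it contains $R_0^\prime$, and Lemma \ref{noeth} (applied to $\oo_X^+(U)$ as an open bounded subring of $\oo_X(U)$ containing $R_0^\prime$) gives the equality. Reducedness of $\oo_X(U)$ is then immediate since $R_0^\prime$ is reduced and $\oo_X(U)=R_0^\prime[1/\vp]$. The main obstacle is simply checking that the commutative-algebra inputs from \cite{ega42} and Marot still apply in this more abstract setting, but since we only needed that $R_0$ (and hence $T_0$) is quasi-excellent and reduced, the proof goes through without modification.
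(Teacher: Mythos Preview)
Your proposal is correct and follows essentially the same approach as the paper, which explicitly notes that the proof of Theorem \ref{su1} applies verbatim. The only (cosmetic) difference is that you front-load the reduction to rational subdomains, whereas the paper handles the rational case first and then deduces the general open affinoid statement afterward; the commutative-algebra core (quasi-excellence and reducedness pass to $T_0$, then to its completion via \cite[(7.8.3.1)]{ega42} and Marot, then the Nagata/integral-closure argument combined with Lemma \ref{noeth}) is identical.
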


This Theorem also allows us to develop the theory of the nilreduction of an adic space. We only give a sketch here --- one can check that everything in \cite[\S 9.5.1]{bgr} works in our setting.

\begin{definition}
Let $X$ be an adic space. Define the \emph{nilradical} $\rad \oo_X$ to be the sheaf associated to the presheaf $U \mapsto \rad(\oo_X(U))$, where $\rad(\oo_X(U))$ is the nilradical of the ring $\oo_X(U)$. 
\end{definition}

\begin{proposition}
Let $R$ be a complete Tate ring and assume that $R$ has a ring of definition $R_{0}$ which is quasi-excellent. Let $X = \Spa(R,R^{\circ})$. Then $\rad \oo_X \subset \oo_X$ is a coherent $\oo_X$-ideal, associated to the ideal $\rad(R)$ of $R$. 

More generally, if $X$ is an adic space which is locally of the form $\Spa(R,R^{\circ})$ where $R$ is a complete Tate ring with a quasi-excellent ring of definition, then $\rad \OO_X$ is a coherent $\oo_X$-ideal.
\end{proposition}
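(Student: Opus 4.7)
The plan is to identify $\rad\oo_X$ with the quasi-coherent $\oo_X$-ideal $\widetilde{\rad(R)}$ associated to the ideal $\rad(R) \subset R$, and then observe that this ideal is finitely generated (hence the associated sheaf is coherent). Concretely, I will verify that for every rational subdomain $U \subset X$ the identity
\[ \rad(\oo_X(U)) = \rad(R) \cdot \oo_X(U) \]
holds; this says the presheaf $U \mapsto \rad(\oo_X(U))$ and the presheaf $U \mapsto \rad(R)\cdot \oo_X(U)$ agree on a basis, so they have the same sheafification.

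First I would check that $\rad(R)$ is in fact nilpotent. Since $R_0$ is Noetherian, $\rad(R_0)$ is a finitely generated nilpotent ideal, say with $\rad(R_0)^N = 0$. A short argument using the multiplicative pseudo-uniformizer $\vp \in R_0$ shows that every nilpotent $x \in R$ can be written as $x = y\vp^{-n}$ with $y$ nilpotent in $R_0$, so $\rad(R) = \rad(R_0)\cdot R$. In particular $\rad(R)^N = 0$, and $\rad(R)$ is finitely generated as an ideal of $R$. The inclusion $\rad(R)\cdot\oo_X(U) \subseteq \rad(\oo_X(U))$ is then immediate, since $\rad(R)\cdot\oo_X(U)$ is again nilpotent.

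For the reverse inclusion I would pass to the quotient $R' := R/\rad(R)$. This is a complete Tate ring with ring of definition $R_0/\rad(R_0)$, which is a Noetherian quasi-excellent reduced quotient of $R_0$ (quasi-excellence is stable under taking finite type and quotient rings). Hence Theorem \ref{su3} applies to $X' := \Spa(R', (R')^\circ)$: every ring of sections $\oo_{X'}(U')$ on an open affinoid is reduced. Since rational localization commutes with quotients by (closed, hence in particular nilpotent) ideals, one has a natural identification
\[ \oo_X(U)/\rad(R)\oo_X(U) \;\cong\; \oo_{X'}(U') \]
for the rational subdomain $U' \subset X'$ corresponding to $U$. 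Reducedness of the right-hand side gives $\rad(\oo_X(U)) \subseteq \rad(R)\oo_X(U)$, as required.

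The only real technicality, which I expect to be the main point to check carefully, is the base change identification $\oo_X(U)/\rad(R)\oo_X(U) \cong \oo_{X'}(U')$ at the level of completions. This is routine but needs the explicit description of rational localizations in terms of $R\langle f_1/g,\ldots,f_n/g\rangle$ together with the fact that completion commutes with quotients by finitely generated ideals over a Noetherian ring of definition; nilpotence of $\rad(R)$ makes all topological subtleties disappear. Once the affinoid identity is established, the sheafification of $U \mapsto \rad(R)\cdot\oo_X(U)$ is the coherent ideal $\widetilde{\rad(R)}$ on $X = \Spa(R, R^\circ)$ (coherence coming from finite generation of $\rad(R)$ over $R$), which proves the affinoid statement. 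The global case then follows immediately by gluing, since coherence of an ideal sheaf is a local property on the adic space.
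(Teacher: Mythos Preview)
Your proposal is correct and follows essentially the same approach as the paper: the paper's proof simply asserts that $\Spa(\oo_X(U)/\rad(R),(\oo_X(U)/\rad(R))^\circ) \to \Spa(R^{red},(R^{red})^\circ)$ is a rational subdomain and then invokes Theorem~\ref{su3} to conclude $\oo_X(U)/\rad(R)$ is reduced, which is exactly your reduction to $R' = R/\rad(R)$ together with the base change identification. Your write-up is more detailed (in particular you make explicit that $\rad(R) = \rad(R_0)\cdot R$ is nilpotent and finitely generated, and you flag the completion/quotient compatibility), but the underlying argument is the same.
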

\begin{proof}
The key point is that if $U \subset X = \Spa(R,R^{\circ})$ is a rational subdomain, then \[\Spa(\OO_X(U)/\rad(R),(\OO_X(U)/\rad(R))^\circ) \ra\Spa(R^{red},(R^{red})^{\circ})\] is a rational subdomain, so Theorem \ref{su3} implies that $\oo_X(U)/\rad(R)$ is reduced, which implies that $\rad(\oo_X(U)) = \rad(R)\oo_X(U)$.
\end{proof}
\begin{definition}\label{defred}
Let $X$ be an adic space which is locally of the form $\Spa(R,R^{\circ})$, where $R$ is a complete Tate ring with a quasi-excellent ring of definition. Then we define $X^{red}$ to be the closed subspace of $X$ cut out by $\rad \oo_X$ (see \cite[1.4]{hu3}).
\end{definition}

In this paper the analytic adic spaces encountered will locally be of the form $\Spa(R,R^{\circ})$, where $R$ is a complete Tate ring with a ring of definition $R_{0}$ which is formally of finite type over $\Zp$. We will need a few properties of these rings, all of which follow from the material in \cite{ab}. We recall the following definition from \cite{ab}, specialized to our Noetherian situation.

\begin{definition}
A Noetherian adic ring $B$ is called a \emph{$1$-valuative order} if it is an integral domain which is local of Krull dimension $1$, and has no $J$-torsion, where $J$ is an ideal of definition (this is independent of the choice of ideal of definition).
\end{definition}

This is \cite[Definition 1.11.1]{ab}, except that we demand that $B$ is Noetherian. If $B$ is a $1$-valuative order, then the integral closure $\ol{B}$ in $L={\rm Frac}(B)$ is finite over $B$ and is a complete discrete valuation ring, so $L$ is a complete discrete valuation field \cite[Proposition 1.11.4]{ab}. If $A$ is any Noetherian adic ring with an ideal of definition $I$ and $\mf{p}\in \Spec A$, then $A/\mf{p}$ is a $1$-valuative order if and only if $\mf{p}$ is a closed point in $\Spec A \setminus V(I)$ \cite[Proposition 1.11.8]{ab}. 

\begin{lemma}\label{points}
Let $R$ be a complete Tate ring with a ring of definition $R_{0}$ which is formally of finite type over $\Zp$, and let $\mf{m}\sub R$ be a maximal ideal. Then $R/\mf{m}$ is a local field.
\end{lemma}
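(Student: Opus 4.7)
The plan is to analyze the maximal ideal $\mf{m}$ via its contraction to the ring of definition $R_0$, exploiting the Tate structure to reduce to the algebraic theory of $1$-valuative orders developed by Abbes.

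First I would fix a topologically nilpotent unit $\vp \in R$, which we may assume lies in $R_0$. Then $R = R_0[\vp^{-1}]$, so prime ideals of $R$ correspond bijectively to primes of $R_0$ not containing $\vp$, i.e.~to points of $\Spec R_0 \setminus V(\vp)$. Let $\p = \m \cap R_0$. Since $\m$ is maximal in $R$, the ideal $\p$ is maximal among primes of $R_0$ not containing $\vp$; equivalently, $\p$ defines a closed point of $\Spec R_0 \setminus V(\vp)$. By \cite[Proposition 1.11.8]{ab}, this is exactly the condition ensuring that $R_0/\p$ is a $1$-valuative order (note $R_0$ is Noetherian since it is formally of finite type over $\Zp$).

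Next I would identify $R/\m$ with $\mathrm{Frac}(R_0/\p)$. Since $R_0/\p$ is a local integral domain of Krull dimension $1$ with maximal ideal containing the image of $\vp$ (as $\vp$ is topologically nilpotent), inverting $\vp$ kills the unique nonzero prime, so $(R_0/\p)[\vp^{-1}] = \mathrm{Frac}(R_0/\p) =: L$. On the other hand $R/\m = R_0[\vp^{-1}]/\p[\vp^{-1}] = (R_0/\p)[\vp^{-1}]$, giving $R/\m = L$. By \cite[Proposition 1.11.4]{ab}, the integral closure of $R_0/\p$ in $L$ is a complete discrete valuation ring, and hence $L$ is a complete discretely valued field.

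The hard part will be showing that the residue field of $L$ is finite, which upgrades ``complete DVF'' to ``local field''. For this I would use the formally-of-finite-type hypothesis: choose a surjection $\Zp \langle X_1,\dots,X_n \rangle [[Y_1,\dots,Y_m]] \twoheadrightarrow R_0$, so that $R_0/\p$ is a quotient of a $1$-valuative order $A/\q$ where $A = \Zp \langle X_1,\dots,X_n \rangle [[Y_1,\dots,Y_m]]$ and $\q$ is maximal among primes of $A$ not containing $p$. It thus suffices to show that such quotients $A/\q$ have residue field finite over $\Fp$, equivalently that $\mathrm{Frac}(A/\q)$ is a finite extension of $\Qp$. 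This is a direct computation with the explicit structure of $A$: modulo $\q$, each $X_i$ and each $Y_j$ satisfies a nonzero relation (else $\q$ would not be maximal away from $p$), which, combined with the $(p)$-adic completeness inherited from $A$, forces $A/\q$ to be module-finite over $\Zp$, hence its fraction field is a finite extension of $\Qp$. (Alternatively, one can appeal to the corresponding statement for closed points of the generic fibre of $\Spf A$ in Abbes' framework.) Combining these steps, $R/\m = L$ is a local field.
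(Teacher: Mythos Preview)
Your reduction to the $1$-valuative order $R_0/\mathfrak{p}$ via \cite[Proposition 1.11.8]{ab} and the identification $R/\mathfrak{m}=\mathrm{Frac}(R_0/\mathfrak{p})$ are correct and match the paper's argument exactly. The gap is in the finiteness of the residue field.

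You assert that the contraction $\mathfrak{q}\subset A$ is ``maximal among primes of $A$ not containing $p$'' and then aim to show $\mathrm{Frac}(A/\mathfrak{q})$ is a finite extension of $\Qp$. But $\mathfrak{q}$ need not avoid $p$: the ideal of definition of $A$ is $(p,Y_1,\dots,Y_m)$, and all you know is that $\mathfrak{q}$ does not contain \emph{this} ideal. In characteristic $p$ (e.g.\ $R_0=\Fp[[T]]$, $R=\Fp((T))$, which is formally of finite type over $\Zp$), one has $p\in\mathfrak{q}$, $\mathrm{Frac}(A/\mathfrak{q})$ is a Laurent series field over a finite field, and your ``module-finite over $\Zp$'' argument collapses. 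Since the whole point of the paper is to treat such characteristic $p$ points of weight space, this case cannot be dismissed.

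The paper's fix is more uniform and avoids splitting into cases: since $R_0/\mathfrak{p}$ is a $1$-valuative order, its unique maximal ideal is open, hence pulls back to an \emph{open} maximal ideal of $A=\Zp[[T_1,\dots,T_m]]\langle X_1,\dots,X_n\rangle$. Open maximal ideals of $A$ are exactly those containing $(p,T_1,\dots,T_m)$, so they correspond to maximal ideals of $A/(p,T_1,\dots,T_m)=\Fp[X_1,\dots,X_n]$, whose residue fields are finite by the Nullstellensatz. This gives finiteness of the residue field of $R_0/\mathfrak{p}$, and since the integral closure of $R_0/\mathfrak{p}$ in $R/\mathfrak{m}$ is finite (by \cite[Proposition 1.11.4]{ab}), the residue field of $R/\mathfrak{m}$ is finite as well.
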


\begin{proof}
Let $\mf{p}=R_{0}\cap \mf{m}$ and let $\vp\in R_{0}$ be a topologically 
nilpotent unit. Then $\mf{p}$ is a closed point in $\Spec R_{0} \setminus 
V((\vp))=\Spec R$, so $R_{0}/\mf{p}$ is a $1$-valuative order and hence its 
fraction field $R/\mf{m}$ is a complete discrete valuation field. It remains to 
prove that the residue field is finite. For this, it suffices to show that the 
residue field of the local ring $R_{0}/\mf{p}$ is finite since the integral 
closure of $R_{0}/\mf{p}$ in $R/\mf{m}$ is finite over $R_{0}/\mf{p}$. Pick an 
adic surjection $A=\Zp[[T_{1},\ldots,T_{m}]]\langle X_{1},\ldots,X_{n} \rangle 
\twoheadrightarrow R_{0}$ for some $m,n\in \Z_{\geq 0}$. The maximal ideal of 
$R_{0}/\mf{p}$ is open and so corresponds to an open maximal ideal of $A$, and 
hence to a maximal ideal of $\Fp[X_{1},\ldots,X_{n}]$ in a way that preserves 
residue fields. It follows that $R_{0}/\mf{p}$ is finite as desired.
\end{proof}

\begin{lemma}\label{maxideals}
Let $f : A \ra B$ be a morphism of topologically finite type between Noetherian adic rings. Let $I$ be an ideal of definition of $A$ and assume that $A/I$ is Jacobson. Let $J=IB$; this is an ideal of definition of $B$. If $\mf{q}\in \Spec B \setminus V(J)$ is a closed point, then $\mf{p}=f^{-1}(\mf{q})$ is a closed point in $\Spec A \setminus V(I)$.
\end{lemma}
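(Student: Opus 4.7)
The plan is to reduce to the case $B = A\langle X\rangle$ with $\mathfrak{p} = 0$, and then to show via topological Nakayama that $C := B/\mathfrak{q}$ is a finite $A$-module; the conclusion then falls out of Cohen--Seidenberg.

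First, since $f$ is topologically of finite type, there is a continuous surjection $A\langle X_1, \ldots, X_n\rangle \twoheadrightarrow B$; lifting $\mathfrak{q}$ gives a prime $\tilde{\mathfrak{q}}$ of $A\langle X_1, \ldots, X_n\rangle$ with the same contraction $\mathfrak{p}$ to $A$, and $\tilde{\mathfrak{q}}$ is closed in $\Spec A\langle X_1, \ldots, X_n\rangle \setminus V(IA\langle X_1, \ldots, X_n\rangle)$ because $\Spec B \setminus V(J)$ is a closed subspace. Factoring $A \to A\langle X_1, \ldots, X_n\rangle$ as an iterated composition and inducting on $n$, I reduce to $B = A\langle X\rangle$. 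Then replacing $A$ by $A/\mathfrak{p}$ (still Noetherian, adic, complete, with Jacobson quotient by its ideal of definition), I further reduce to the case $\mathfrak{p} = 0$, so that $A$ is a domain and $A \hookrightarrow C := A\langle X\rangle/\mathfrak{q}$.

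The main step is to show that $C$ is a finite $A$-module. By \cite[Proposition 1.11.8]{ab}, $C$ is a 1-valuative order, i.e., a complete Noetherian local integral domain of Krull dimension one, $I$-torsion-free. Since $\dim C = 1$ and $IC \neq 0$, the radical of $IC$ is the maximal ideal $\mathfrak{m}_C$, so $IC$ is $\mathfrak{m}_C$-primary and $C/IC$ is Artinian. On the other hand,
\[
C/IC \;=\; A\langle X\rangle / (\mathfrak{q} + IA\langle X\rangle)
\]
is a quotient of $A\langle X\rangle/IA\langle X\rangle = (A/I)[X]$, hence is a finitely generated $A/I$-algebra. Because $A/I$ is Jacobson, the Nullstellensatz gives that the residue field of the local Artinian ring $C/IC$ is finite over a residue field of $A/I$; filtering $C/IC$ by powers of its maximal ideal then shows that $C/IC$ is a finite $A/I$-module. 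Topological Nakayama for the Noetherian $I$-adically complete ring $A$, applied to the $I$-adically complete Hausdorff $A$-module $C$, now yields that $C$ is a finite $A$-module.

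Given this finiteness, $A \hookrightarrow C$ is integral, so Cohen--Seidenberg gives $\dim A = \dim C = 1$, and since $C$ is local while every maximal ideal of $A$ arises as the contraction of a maximal ideal of $C$ (of which there is only one), $A$ is local. Combined with the fact that $A$ is a Noetherian integral domain with no $I$-torsion (since $\mathfrak{p} \not\supseteq I$), this shows $A$ is a 1-valuative order, i.e., $\mathfrak{p}$ defines a closed point of $\Spec A \setminus V(I)$. The main obstacle is the finiteness of $C/IC$ over $A/I$, which combines the Jacobson property of $A/I$ with the identification of the $IC$-adic and $\mathfrak{m}_C$-adic topologies on $C$ (this is what makes $C/IC$ Artinian); the subsequent lifting and Cohen--Seidenberg arguments are routine.
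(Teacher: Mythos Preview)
Your proof is correct and follows the same overall architecture as the paper's argument: both show that $C = B/\mathfrak{q}$ is a finite $A$-module and then deduce, via Cohen--Seidenberg, that $A/\mathfrak{p}$ is a $1$-valuative order, so $\mathfrak{p}$ is closed by \cite[Proposition 1.11.8]{ab}.

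The genuine difference lies in how the finiteness of $C$ over $A$ is obtained. The paper simply cites \cite[Proposition 1.11.2]{ab}, which states directly that a topologically finite type morphism into a $1$-valuative order is finite. You instead unpack this: you reduce to $B = A\langle X\rangle$ with $\mathfrak{p}=0$, observe that $C/IC$ is an Artinian local quotient of $(A/I)[X]$, use the Jacobson Nullstellensatz to make its residue field finite over $A/I$, filter by powers of the maximal ideal to get $C/IC$ finite over $A/I$, and finish with Nakayama. This is essentially a proof of \cite[Proposition 1.11.2]{ab} in the present situation. Your route is more self-contained and makes visible exactly where the Jacobson hypothesis enters; the paper's route is a two-line appeal to Abbes. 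Note also that your reduction to one variable is not strictly necessary: once you know $C$ is a $1$-valuative order, the same Artinian/Nakayama argument works with $(A/I)[X_1,\dots,X_n]$ in place of $(A/I)[X]$. Finally, your ``topological Nakayama'' can be replaced by ordinary Nakayama over the local ring $C$ (since $IC \subset \mathfrak{m}_C$ and $C/M$ is cyclic over $C$), which avoids any appeal to completeness of $A$ at that step.
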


\begin{proof}
The morphism $A \ra B/\mf{q}$ is topologically of finite type and $B/\mf{q}$ is a $1$-valuative order, so by \cite[Proposition 1.11.2]{ab} $A \ra B/\mf{q}$ is finite. It is then easy to check that this forces $A/\mf{p}$ to be a $1$-valuative order as well, and hence $\mf{p}$ to be closed in $\Spec A \setminus V(I)$ by \cite[Proposition 1.11.8]{ab}.
\end{proof}

\begin{corollary}\label{maxideals2}
Let $g : R \ra S$ be a continuous morphism between between two complete Tate rings with a ring of definition that is formally of finite type over $\Zp$. Then $g$ is topologically of finite type\footnote{i.e.~$g$ factors through a surjective, continuous and open morphism $R\langle X_1,\ldots,X_n\rangle \rightarrow S$, see \cite[Lemma 3.3]{hu2}} and pulls back maximal ideals to maximal ideals.
\end{corollary}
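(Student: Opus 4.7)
The plan is to reduce to rings of definition and then apply Lemma \ref{maxideals}.

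For the first assertion, I would pick Noetherian rings of definition $R_0 \subset R$ and $S_0 \subset S$ formally of finite type over $\Zp$. Continuity of $g$ makes $g(R_0)$ bounded in $S$, so the subring $S_0[g(R_0)] \subset S$ is open and bounded, hence another ring of definition. Lemma \ref{noeth} shows it is finite over $S_0$, and adjoining its power-bounded module generators to the presentation of $S_0$ shows it remains formally of finite type over $\Zp$. After this enlargement I may assume $g(R_0) \subset S_0$. Writing $S_0 = \Zp[[T_1,\ldots,T_m]]\langle X_1,\ldots,X_n\rangle/\mathfrak{a}$, the images $t_i, x_j \in S_0$ of the $T_i, X_j$ are topologically nilpotent and power-bounded in $S$, respectively. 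Adjoining $m$ new formal-power-series variables and $n$ new Tate variables to $R_0$ and mapping them to these $t_i, x_j$ produces a continuous adic surjection onto $S_0$, exhibiting $g_0 = g|_{R_0}$ as topologically of finite type in the sense of Huber. Inverting a topologically nilpotent unit $\varpi_R \in R_0$ (and arranging $\varpi_S = g(\varpi_R)$, which is automatically a topologically nilpotent unit of $S$) passes this to a presentation of $g$ itself as topologically of finite type.

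For the second assertion, let $\mathfrak{q} \subset S$ be a maximal ideal and set $\mathfrak{p} = g^{-1}(\mathfrak{q})$. I would apply Lemma \ref{maxideals} to the morphism $g_0 \colon R_0 \to S_0$ just shown to be topologically of finite type, with ideal of definition $I = (p, T_1,\ldots,T_m) \subset R_0$: then $R_0/I$ is of finite type over $\Fp$, hence Jacobson, and $J = IS_0$ is an ideal of definition of $S_0$ inherited from the adic surjection above. Since all ideals of definition of $S_0$ have the same radical, $\Spec S$ is identified with $\Spec S_0 \setminus V(J)$, and under this identification the closed point $\mathfrak{q}$ corresponds to the closed point $\mathfrak{q}_0 = \mathfrak{q} \cap S_0$. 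Lemma \ref{maxideals} then delivers that $\mathfrak{p}_0 = \mathfrak{p} \cap R_0$ is a closed point of $\Spec R_0 \setminus V(I) \cong \Spec R$, which is precisely the statement that $\mathfrak{p}$ is maximal in $R$.

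The main obstacle will be the first step: correctly combining topologically-nilpotent and power-bounded variables so that the adic surjection onto $S_0$ is genuine, and verifying that the enlargement of $S_0$ to contain $g(R_0)$ preserves the formally-of-finite-type hypothesis over $\Zp$. Once $g_0$ is topologically of finite type as a morphism of adic rings, the pullback of maximal ideals follows essentially directly from Lemma \ref{maxideals}.
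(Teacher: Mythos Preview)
Your overall strategy matches the paper's: enlarge $S_0$ so that $g(R_0) \subset S_0$ while keeping $S_0$ formally of finite type over $\Zp$, show the induced map of rings of definition is topologically of finite type, and then invoke Lemma~\ref{maxideals}. The enlargement step and the final application of Lemma~\ref{maxideals} are fine.

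There is, however, a gap in your presentation of the topologically-finite-type step. Adjoining \emph{formal power series} variables $U_1,\ldots,U_m$ to $R_0$ produces $R_0[[U_1,\ldots,U_m]]\langle Y_1,\ldots,Y_n\rangle$, and a surjection from this onto $S_0$ only exhibits $S_0$ as \emph{formally} of finite type over $R_0$, not topologically of finite type. After inverting your $\vp_R$, the ring $R_0[[U]]\langle Y\rangle[1/\vp_R]$ is not of the form $R\langle Z_1,\ldots,Z_k\rangle$: its ring of definition carries the $(\vp_R,U_1,\ldots,U_m)$-adic topology rather than the $\vp_R$-adic one, so you have not produced the factorization through a Tate algebra over $R$ that the footnote demands. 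The fix is easy but must be made: since the $t_i$ are already power-bounded in $S_0$, one may use Tate variables throughout, and the resulting map $R_0\langle U_1,\ldots,U_m,Y_1,\ldots,Y_n\rangle \to S_0$ is surjective because modulo the ideals of definition it becomes a surjection of finitely generated $\Fp$-algebras.

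The paper bypasses this by taking $I=\vp R_0$ and $J=g(\vp)S_0$ for a topologically nilpotent unit $\vp \in R_0$ and observing directly that $S_0/J$ is a finitely generated $\Zp$-algebra (since $S_0$ is formally of finite type over $\Zp$ and $J$ is an ideal of definition), hence of finite type over $R_0/I$; this is exactly the statement that $R_0 \to S_0$ is topologically of finite type, and it also makes the Jacobson hypothesis in Lemma~\ref{maxideals} transparent. Finally, your line ``$I = (p,T_1,\ldots,T_m) \subset R_0$'' is a slip: the $T_i$ belong to the presentation of $S_0$, not $R_0$.
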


\begin{proof}
Choose a ring of definition $R_{0}$ for $R$ which is formally of finite type over $\Zp$. By \cite[Proposition 1.10]{hu1} $g$ is adic, and therefore $g(R_0)$ is contained in a ring of definition for $S$. Since any two rings of definition are contained in another, we can find a ring of definition $S_0$ for $S$ such that $g(R_0)\sub S_0$ and $S_0$ contains a ring of definition $S_1$ which is formally of finite type over $\Z_p$. It follows from Lemma \ref{noeth} that $S_0$ is finite over $S_1$, and hence $S_0$ is also formally of finite type over $\Z_p$.

Let $\vp\in R_{0}$ be a topologically nilpotent unit in $R$. Then $I=\vp R_{0}$ and $J=g(\vp)S_{0}$ are ideals of definition, and $R_{0}/I \ra S_{0}/J$  is of finite type. Therefore $R_{0} \ra S_{0}$ is topologically of finite type, hence so is $g$. This proves the first assertion. The second then follows from Lemma \ref{maxideals}, since maximal ideals of $R$ (resp. $S$) correspond to closed points in $\Spec R=\Spec R_{0} \setminus V(I)$ (resp. $\Spec S=\Spec S_{0} \setminus V(J)$).
\end{proof}

\begin{proposition}\label{localrings}
Let $S$ be a complete Tate ring with a Noetherian ring of definition $S_{0}$ and a topologically nilpotent unit $\pi \in S_{0}$ such that $S_{0}/\pi S_{0}$ is Jacobson.
\begin{enumerate}
\item Let $A$ be a Noetherian adic ring with an ideal of definition $I$ such that $A/I$ is Jacobson. Let $f : A \ra S$ be a continuous morphism such that the induced map $\Spa(S,S^{\circ}) \ra \Spa(A,A)$ is an open immersion, and let $\mf{q}$ be a maximal ideal of $S$ with preimage $\mf{p} =f^{-1}(\mf{q})$ in $A$. Then the natural map $A_{\mf{p}} \ra S_{\mf{q}}$ induces an isomorphism on completions (with respect to the maximal ideals).

\item Let $R$ be a complete Tate ring with a Noetherian ring of definition $R_{0}$ and a topologically nilpotent unit $\vp\in R_{0}$ such that $R_{0}/\vp$ is Jacobson. Let $h : R \ra S$ be a continuous morphism such that the induced map $\Spa(S,S^{\circ}) \ra \Spa(R,R^{\circ})$ is an open immersion, and let $\mf{q}^{\prime}$ be a maximal ideal of $S$ with preimage $\mf{p}^{\prime} =h^{-1}(\mf{q}^{\prime})$ in $R$. Then the natural map $R_{\mf{p}^{\prime}} \ra S_{\mf{q}^{\prime}}$ induces an isomorphism on completions (with respect to the maximal ideals).
\end{enumerate}
\end{proposition}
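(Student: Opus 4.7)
The plan is to reduce Part (1) to the case where the open immersion is a rational localization of Noetherian adic rings, prove that case directly, and then deduce Part (2) by applying Part (1) twice.

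First I would reduce Part (1) to the rational-localization case. Since $\Spa(S, S^\circ) \hookrightarrow \Spa(A, A)$ is an open immersion and rational subdomains form a basis for the topology, we can choose a rational subdomain $U = \{|f_1|, \ldots, |f_n| \le |g| \ne 0\}$ of $\Spa(A, A)$ lying in the image and containing the point corresponding to $\mathfrak{q}$. Because $f : A \to S$ is adic (every morphism of adic spaces being adic), the ideal $(f(f_i))$ is open in $S$, so $U$ is simultaneously a rational subdomain of $\Spa(S, S^\circ)$, and the assignment $B := \OO(U)$ is unambiguous. Writing $\mathfrak{r}$ for the maximal ideal of $B$ at our chosen point, we have $\mathfrak{p} = \mathfrak{r} \cap A$ and $\mathfrak{q} = \mathfrak{r} \cap S$, and the factorization $A \to S \to B$ presents $A \to B$ as a rational localization of the Noetherian adic ring $A$, and (after passing to the ring of definition $S_0$ and using $\hat S_\mathfrak{q} = \hat{(S_0)}_{\mathfrak{q} \cap S_0}$, valid because the topologically nilpotent unit is invertible at $\mathfrak{q}$) presents $S_0 \to B$ as a rational localization of the Noetherian adic ring $S_0$. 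It therefore suffices to prove the following: for a rational localization $A \to B = \OO(U)$ of a Noetherian adic ring with $A/I$ Jacobson and a maximal ideal $\mathfrak{r}$ of $B$ with preimage $\mathfrak{p}$ in $A$, the induced map $\hat A_\mathfrak{p} \to \hat B_\mathfrak{r}$ on completions is an isomorphism.

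Next I would handle this rational-localization case directly. Write $B = \widehat{C_0}[1/g]$, where $C_0 = A[T_1, \ldots, T_n]/(gT_i - f_i)$ and $\widehat{\cdot}$ denotes $I$-adic completion. Set $\mathfrak{r}_0 = \mathfrak{r} \cap \widehat{C_0}$ and $\mathfrak{r}_0' = \mathfrak{r}_0 \cap C_0$. Since $g \notin \mathfrak{r}$, it is a unit in $B_\mathfrak{r}$, so $B_\mathfrak{r} = (\widehat{C_0})_{\mathfrak{r}_0}$; likewise $C_0[1/g] = A[1/g]$, because the relations force $T_i = f_i/g$, and this gives $(C_0)_{\mathfrak{r}_0'} = A_\mathfrak{p}$. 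The problem thus reduces to showing that $\hat{(C_0)}_{\mathfrak{r}_0'} \to \hat{(\widehat{C_0})}_{\mathfrak{r}_0}$ is an isomorphism. I would prove this by showing that for each $n \ge 1$ the natural map $C_0/(\mathfrak{r}_0')^n \to \widehat{C_0}/(\mathfrak{r}_0' \widehat{C_0})^n$ is an isomorphism, and that $\mathfrak{r}_0' \widehat{C_0}$ coincides with $\mathfrak{r}_0$ in the localization at $\mathfrak{r}_0$; passing to inverse limits then produces the desired isomorphism of completed local rings.

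The hard part will be the first assertion, which amounts to showing that the finitely generated $C_0$-module $C_0/(\mathfrak{r}_0')^n$ is already $I$-adically complete, so that applying the flat functor $\widehat{C_0} \otimes_{C_0} -$ leaves it unchanged. The ring $C_0/(\mathfrak{r}_0')^n$ is Noetherian and $I$-adically separated by Krull's intersection theorem (using that $IC_0$ lies in the Jacobson radical of $C_0$, a standard consequence of topological nilpotence), and it is semi-local with a unique analytic maximal ideal at $\mathfrak{r}_0'$ and the remaining non-analytic maximal ideals containing $I$; completeness then has to be extracted from this semi-local structure, comparing the $I$-adic topology with the one defined by the full Jacobson radical. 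The identification $\mathfrak{r}_0' \widehat{C_0} \cdot (\widehat{C_0})_{\mathfrak{r}_0} = \mathfrak{r}_0 \cdot (\widehat{C_0})_{\mathfrak{r}_0}$ should then follow from matching residue fields at $\mathfrak{r}_0$ via Lemma \ref{points}. Finally, Part (2) follows from two applications of Part (1): once to the analytic-locus inclusion $\Spa(R, R^\circ) \hookrightarrow \Spa(R_0, R_0)$, giving $\hat R_{\mathfrak{p}'} \cong \hat{(R_0)}_{\mathfrak{p}' \cap R_0}$, and once to the composed open immersion $\Spa(S, S^\circ) \hookrightarrow \Spa(R_0, R_0)$, giving $\hat S_{\mathfrak{q}'} \cong \hat{(R_0)}_{\mathfrak{q}' \cap R_0}$; since $\mathfrak{p}' \cap R_0 = \mathfrak{q}' \cap R_0$, these combine to yield $\hat R_{\mathfrak{p}'} \cong \hat S_{\mathfrak{q}'}$.
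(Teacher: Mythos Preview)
Your overall strategy matches the paper's: reduce to a rational subdomain $U$ of $\Spa(A,A)$ contained in the image, and show that both $A_{\mf{p}} \to \oo(U)_{\mf{r}}$ and $S_{\mf{q}} \to \oo(U)_{\mf{r}}$ induce isomorphisms on completions. Your deduction of Part (2) from Part (1), by factoring through the open immersion $\Spa(R,R^{\circ}) \hookrightarrow \Spa(R_{0},R_{0})$, is correct and a little cleaner than the paper's ``virtually identical argument''.

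The gap is in your ``hard part''. First, a concrete error: the claim that $IC_{0}$ lies in the Jacobson radical of $C_{0}$ is false. Take $A=\Zp$, $I=(p)$, $g=1$, no $f_{i}$; then $C_{0}=\Zp[T]$ and $(pT-1)$ is a maximal ideal of $\Zp[T]$ not containing $p$. What is true (and what you actually need) is that the image of $I$ lies in the Jacobson radical of the \emph{quotient} $C_{0}/(\mf{r}_{0}')^{n}$: the maximal ideals of this quotient are the maximal ideals of $C_{0}$ containing $\mf{r}_{0}'$, and since $A/\mf{p}$ is a $1$-valuative order whose unique closed point lies in $V(I)$, every such maximal ideal contains $I$. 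Second, and more seriously, your completeness argument is only a gesture (``has to be extracted from this semi-local structure''). The paper avoids all of this by citing \cite[Proposition 1.12.18]{ab} for the statement that $T_{\mf{r}} \to \wh{T}_{\mf{r}}$ induces an isomorphism on completions, where $T=A[f_{i}/g]\sub A[1/g]$. If you want a direct argument, the cleanest route is to show that $T/(\mf{r}_{0}')^{n}$ is a \emph{finite} $A$-module, hence $I$-adically complete since $A$ is. The key point is that $A/\mf{p}$ is a $1$-valuative order, so its normalization in ${\rm Frac}(A/\mf{p})$ is a complete DVR finite over $A/\mf{p}$; since the chosen point lies in $U$ one has $\bar{f_{i}}/\bar{g}$ integral over $A/\mf{p}$, so $T/\mf{r}_{0}'=(A/\mf{p})[\bar{f_{i}}/\bar{g}]$ is contained in this normalization and hence finite over $A/\mf{p}$. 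Filtering by powers of $\mf{r}_{0}'$ then shows $T/(\mf{r}_{0}')^{n}$ is finite over $A$. Your invocation of Lemma \ref{points} for the identification $\mf{r}_{0}'\wh{C_{0}}\cdot(\wh{C_{0}})_{\mf{r}_{0}}=\mf{r}_{0}\cdot(\wh{C_{0}})_{\mf{r}_{0}}$ is also not to the point; this follows instead from faithful flatness of $C_{0}\to\wh{C_{0}}$ once completeness of the quotient is established.
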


\begin{proof}
We prove part (1); the proof of part (2) is virtually identical. Since 
$S/\mf{q}$ is a complete discretely valued field it defines a point $v$ in 
$\Spa(S,S^{\circ})\sub \Spa(A,A)$; let $U=\{|f_{1}|,\ldots,|f_{n}|\leq |g| \neq 
0 \}$ be a rational subdomain of $\Spa(A,A)$ which contains this point and is 
contained in $\Spa(S,S^{\circ})$. Let $T=A[f_{1}/g,\ldots,f_{n}/g]\sub A[1/g]$ 
and let $\wh{T}$ be the $IT$-adic completion of $T$.  Since 
$\wh{T}[1/g]=\oo(U)$ we see that the valuation $v$ extends to a valuation $w$ 
on $\wh{T}[1/g]$, and hence $\mf{q}$ extends to a maximal ideal $\mf{r}=\Ker w$ 
of $\wh{T}[1/g]$. We will abuse notation and let $\mf{r}$ denote its preimage 
in any of the rings $T, T[1/g]$ and $\wh{T}$ as well; then $\mf{r}$ is a closed 
point in $\Spec \wh{T} \setminus V(I\wh{T})$. 

\medskip

By \cite[Proposition 1.12.18]{ab}, the natural map $T_{\mf{r}} \ra \wh{T}_{\mf{r}}$ induces an isomorphism of completions. We claim that the natural maps $A_{\mf{p}} \ra T_{\mf{r}}$ and $\wh{T}_{\mf{r}} \ra \wh{T}[1/g]_{\mf{r}}$ are isomorphisms. For the second map this is clear (by the general fact that if $B$ is any ring, $f\in B$, and $P\in \Spec B[1/f]\sub \Spec B$, then the natural map $B_{P} \ra B[1/f]_{P}$ is an isomorphism). For the first map, we have natural maps $A_{\mf{p}} \ra T_{\mf{r}} \ra T[1/g]_{\mf{r}}=A[1/g]_{\mf{r}}$ and it is clear that the second map and the composite are isomorphisms, so the first map is an isomorphism as well. Summing up, we see that the natural map $A_{\mf{p}} \ra \oo(U)_{\mf{r}}$ induces an isomorphism on completions. By an almost identical argument, the natural map $S_{\mf{q}} \ra \oo(U)_{\mf{r}}$ induces an isomorphism on completions. It then follows that the natural map $A_{\mf{p}} \ra S_{\mf{q}}$ induces an isomorphism on completions, as desired.
\end{proof}

\bibliographystyle{alpha}
\bibliography{halo}

\end{document}